\title[Six model structures for DG-modules over DGAs]{Six model structures for DG-modules over DGAs: \\ Model category theory in homological action}
\author{Tobias Barthel}
\address{Department of Mathematics, Harvard University,
Cambridge, MA \ 02138}
\email{tbarthel@math.harvard.edu}
\author{J.P.~May}
\address{Department of Mathematics, University of Chicago, Chicago, IL \ 60637}
\email{may@math.uchicago.edu}
\author{Emily Riehl}
\address{Department of Mathematics, Harvard University,
Cambridge, MA \ 02138}
\email{eriehl@math.harvard.edu}
\newtheorem{thm}{Theorem}[section]
\newtheorem{cor}[thm]{Corollary}
\newtheorem{prop}[thm]{Proposition}
\newtheorem{lem}[thm]{Lemma}
\theoremstyle{definition}
\newtheorem{defn}[thm]{Definition}
\newtheorem{ex}[thm]{Example}
\newtheorem*{uconv}{Convention}
\newtheorem*{unotn}{Notation}
\theoremstyle{remark}
\newtheorem{rmk}[thm]{Remark}
\let\c@equation\c@thm
\numberwithin{equation}{section}
\newcommand{\Sq}{\underline{\mathrm{Sq}}}
\newcommand{\cod}{\mathrm{cod}}
\newcommand{\Set}{\mathbf{Set}}
\newcommand{\co}{\colon}
\newcommand{\Alg}{\mathbf{Alg}}
\newcommand{\Coalg}{\mathbf{Coalg}}
\newcommand{\bs}{\boxslash}
\newcommand{\cE}{\mathcal{E}}
\newcommand{\cI}{\mathcal{I}}
\newcommand{\cJ}{\mathcal{J}}
\newcommand{\cM}{\mathcal{M}}
\newcommand{\cL}{\mathcal{L}}
\newcommand{\cR}{\mathcal{R}}
\newcommand{\mL}{L}
\newcommand{\cK}{\mathcal{K}}
\newcommand{\hathom}{\underline{\widehat{\mathrm{hom}}}}
\newcommand{\TotP}{TP}
\let\SK@label\label\fi
 \let\your@thm\@thm
 \def\@thm#1#2#3{\gdef\currthmtype{#3}\your@thm{#1}{#2}{#3}}
 \def\mylabel#1{{\let\your@currentlabel\@currentlabel\def\@currentlabel
  {\currthmtype~\your@currentlabel}
 \SK@label{#1@}}\label{#1}}
 \def\myref#1{\ref{#1@}}
\date{\today}
\begin{document}

\begin{abstract}
In Part 1, we describe six projective-type model structures on the category
of differential graded modules over a differential graded algebra
$A$ over a commutative ring $R$.  When $R$ is a field, the six collapse 
to three and are well-known, at least to folklore, but in the 
general case the new relative and mixed model structures offer interesting 
alternatives to the model structures in common use.  The construction of
some of these model structures requires two new variants of the small
object argument, an enriched and an algebraic one, and we describe these
more generally.  In Part 2, we present a variety 
of theoretical and calculational 
cofibrant approximations in these model categories. The classical
bar construction gives cofibrant approximations in the relative model structure,
but generally not in the usual one.  In the usual model structure, there are two
quite different ways to lift cofibrant approximations from
the level of homology modules over homology algebras, where they are 
classical projective resolutions, to the level of DG-modules over DG-algebras. 
The new theory makes model theoretic sense of earlier explicit calculations based
on one of these constructions.  A novel phenomenon we encounter is isomorphic cofibrant 
approximations with different combinatorial structure such that things proven in one 
avatar are not readily proven in the other. 
\end{abstract}

\maketitle

\begin{center} Overview \end{center}

We aim to modernize differential homological algebra model theoretically and to exhibit several 
new general features of model
category theory, the theme being how nicely the generalities of model category 
theory can interact with the calculational specificities of the subject at hand, giving concrete
results inaccessible to either alone.  This protean feature of model category
theory distinguishes it from more abstract and general foundations of homotopical algebra.

The subject of differential homological algebra began with the hyperhomology groups
of Cartan and Eilenberg \cite{CE} and continued with work of Eilenberg and Moore 
\cite{EM, Moore} in which they introduced relative homological algebra and its 
application to differential graded (abbreviated DG hencefoward) modules over a differential 
graded algebra. In \cite{EM2}, they developed the Eilenberg-Moore spectral sequence for the 
computation of the cohomology $H^*(D;R)$ in terms of differential torsion products,
where $D$ is the pullback in a diagram 
\[  \xymatrix {
D \ar[r]  \ar[d] & E \ar[d]^p\\
A \ar[r] &  B\\} \]
in which $p$ is a fibration. This work dates from the mid 1960's, and it all works
with bigraded chain bicomplexes  $X$: $X_n = \sum_{p+q=n} X_{p,q}$ is a bigraded $R$-module with 
commuting horizontal and vertical differentials and a total differential given by their 
sum (with suitable signs).

In the early 1970's, Gugenheim and May \cite{GM,M} gave an ad hoc alternative
treatment of differential homological algebra that was based on bigraded multicomplexes $X$:  
now $d\colon X_n \rtarr X_{n-1}$ is the sum over $r\geq 0$ of partial differentials
$d^r\colon X_{p,q} \rtarr X_{p-r,q+r-1}$, $r\geq 0$. Bicomplexes are the special case with
$d^r=0$ for $r\geq 2$.  The advantage of the generalization was computability, as the cited
papers show and we will illustrate shortly.  While the applications worked, the foundations
were so obscure that it was not even clear that the several definitions of differential torsion
products in sight agreed. 

This paper has several distinct purposes.  The primary purpose is to establish model theoretic foundations for
differential homological algebra over a commutative ground ring $R$ and to integrate the early work into the
modern foundations.  Specialization to a field $R$ simplifies the theory, but the force of the early applications 
depends on working more generally. Then relative homological algebra enters the picture: for DG modules over a 
DG $R$-algebra $A$, there are three natural choices for the weak equivalences: quasi-isomorphism, homotopy equivalence 
of underlying DG $R$-modules, and homotopy equivalence of DG $A$-modules.  We shall explain six related model category structures
on the category $\sM_A$ of DG $A$-modules, one or more for each of these choices.  

Here a second purpose enters.  Some of these model structures cannot be constructed using previously 
known techniques. We develop new enriched and algebraic versions of the classical small object argument
that allow the construction of model category structures that are definitely not cofibrantly generated
in the classical sense.  The classical bar construction always gives cofibrant approximations in 
one of these new relative model structures, but not in the  model structure in (implicit) common use.  
The model category foundations are explained generally, since they will surely have
other applications.

The model categorical cell complexes that underpin our model structures are given by multicomplexes,
not bicomplexes, and a third purpose is to explain the interplay between the several kinds of resolutions
in early work and our model structures.   In particular, we show that the ``distinguished resolutions'' of \cite{GM} are essentially 
model categorical cofibrant approximations.  Our work in this paper is largely model theoretic but, as we explain 
in \S\ref{TopEMSS}, 
the applications in \cite{GM, M, MN} show that it applies {\em directly} to concrete explicit 
calculations.  Here is an example whose statement makes no reference to model categorical machinery.

\begin{thm}\mylabel{Yeah} Let $H$ be a compact Lie group with maximal torus $T^n$ such that $H^*(BT^n;R)$ 
is a free 
$H^*(BH;R)$-module and let $G$ be a connected topological group such that $H^*(BG;R)$ is a polynomial algebra.
Then for any map $f\colon BH\rtarr BG$, 
\[  H^*(Ff;R) \iso \Tor^*_{H^*(BG;R)}(H^*(BH;R),R). \]
\end{thm}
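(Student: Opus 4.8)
The plan is to realize $Ff$ as a homotopy pullback, run it through the Eilenberg--Moore machine, and then use the two hypotheses to force the resulting spectral sequence to degenerate. First I would write $Ff$ as the homotopy pullback of $f$ along the path--loop fibration $PBG\to BG$. The classical Eilenberg--Moore theorem identifies $H^*(Ff;R)$ with the homology of the two--sided bar construction $B\big(C^*(BH;R),C^*(BG;R),R\big)$, and by Part~2 this computes the differential torsion product $\Tor_{C^*(BG;R)}\big(C^*(BH;R),R\big)$, since $B\big(C^*(BH;R),C^*(BG;R),C^*(BG;R)\big)$ is a cofibrant approximation of $C^*(BH;R)$ as a DG $C^*(BG;R)$--module in the relative model structure. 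There is then the Eilenberg--Moore spectral sequence
\[ E_2^{*,*}=\Tor^{*,*}_{H^*(BG;R)}\big(H^*(BH;R),R\big)\ \Longrightarrow\ H^*(Ff;R), \]
and the entire content of the theorem is that it collapses at $E_2$ with no additive extension problems, i.e.\ that the differential $\Tor$ coincides with the classical $\Tor$ of homology. Note that, since $H^*(BG;R)=R[x_1,\dots,x_k]$ is polynomial on generators of even degree, the classical side is already the small Koszul complex $\big(M\otimes_R\Lambda_R[\sigma x_1,\dots,\sigma x_k],\,d\big)$, with $d(\sigma x_j)$ acting as multiplication by the image of $x_j$, for any $H^*(BG;R)$--module $M$.

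The second step uses the maximal torus to trade $H^*(BH;R)$ for a polynomial ring. I would form the composite $g=f\circ Bj\colon BT^n\to BH\to BG$ and pull the fibration $H/T^n\to BT^n\to BH$ back along $Ff\to BH$, obtaining a fibration $H/T^n\to Fg\to Ff$ whose total space $Fg\simeq BT^n\times^{h}_{BH}Ff$ is also the homotopy fibre of $g$. The freeness hypothesis says that $H^*(BT^n;R)$ is free over $H^*(BH;R)$ with $1$ part of a basis; hence, projecting onto the unit coordinate, $H^*(BH;R)$ is an $H^*(BG;R)$--module direct summand of $H^*(BT^n;R)$. Applying $\Tor_{H^*(BG;R)}(-,R)$, the map of Eilenberg--Moore spectral sequences induced by the evident map of pullback squares (covering $Bj\colon BT^n\to BH$) is a split monomorphism on $E_2$, while Leray--Hirsch gives $H^*(Fg;R)\iso H^*(Ff;R)\otimes_R H^*(H/T^n;R)$ with structure map $m\mapsto m\otimes 1$. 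A split monomorphism of spectral sequences into a collapsing one is itself collapsing, and filtered homological algebra then carries the ``no extensions'' statement back from $Fg$ to $Ff$; so it suffices to prove the theorem for $g\colon BT^n\to BG$, where now \emph{both} source and target have polynomial cohomology.

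This last case is the one genuinely hard step. Here I would invoke Part~2: a cofibrant approximation of $C^*(BT^n;R)$ over $C^*(BG;R)$ in the \emph{usual} model structure is a ``distinguished resolution'' in the sense of \cite{GM} --- a multicomplex built on a projective resolution of $H^*(BT^n;R)$ over $H^*(BG;R)$ --- and the assertion is that, under the polynomiality hypotheses, it may be taken with all higher differentials zero, so that it is literally the classical Koszul resolution, whence the Eilenberg--Moore spectral sequence for $Fg$ collapses and its extension problems vanish on the nose. This is where one lifts the polynomial generators of $H^*(BG;R)$ to cocycles of $C^*(BG;R)$, where the non-strict commutativity of the cochain DGA forces one to build the resolution via the enriched and algebraic small object arguments of Part~1 rather than by writing the Koszul model down by hand, and where one is in effect proving a formality statement for the pair $\big(C^*(BG;R),C^*(BT^n;R)\big)$. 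The remaining verifications --- that the split-retract bookkeeping of the second step really does transport the conclusion from $g$ to $f$, and compatibility with the grading conventions in the statement --- are routine.
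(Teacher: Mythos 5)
Your first two steps essentially parallel the paper (and \cite{GM}): identify $H^*(Ff;R)$ with the differential torsion product $\Tor_{C^*(BG;R)}(C^*(BH;R),R)$, and use the freeness of $H^*(BT^n;R)$ over $H^*(BH;R)$, with $1$ as part of a basis, to split off $H^*(BH;R)$ as an $H^*(BG;R)$-module summand and reduce to $g=f\circ Bj\colon BT^n\rtarr BG$; the bookkeeping that a split monomorphism at $E_2$ into a collapsing spectral sequence collapses, and that a retraction at the $E_\infty$ level transports the no-extension statement back along the finite filtration, is sound. The genuine gap is your third step, which is where all the content lies. The assertion that a distinguished resolution of $C^*(BT^n;R)$ over $C^*(BG;R)$ ``may be taken with all higher differentials zero, so that it is literally the classical Koszul resolution'' is unsubstantiated: it is essentially a restatement of the collapse-plus-no-extensions conclusion you are trying to prove, dressed up as a formality claim. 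Nothing in the polynomiality hypotheses yields it, and the enriched and algebraic SOAs of Part 1 give no control whatsoever over the components $d^r$, $r\geq 2$; the point of Part 2 is precisely the opposite, namely that explicit resolutions built by hand, not SOA output, carry the calculational information. (Two further inaccuracies: distinguished resolutions are generally \emph{not} $q$-cofibrant approximations, since they need not be $q$-fibrations, and $q$-cofibrant approximations need not be resolutions in the sense of \myref{resdef}; and a projective resolution of $H^*(BT^n;R)$ over $H^*(BG;R)$ is not a Koszul complex unless the images of the polynomial generators form a regular sequence, which fails in general, e.g.\ for constant $f$. Schochet's two-stage Postnikov example shows the degeneration you assert is not automatic even over a polynomial base.)

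What actually makes the $BT^n$ case work (\S\ref{cupone}, \S\ref{TopEMSS}, \myref{goody}) is a different mechanism. One resolves the trivial module $R$, not $C^*(BT^n;R)$, over $A=C^*(BG;R)$ by the explicit distinguished resolution $K(A)=A\otimes E\{y_i\}$ lifting the Koszul resolution of $R$ over $H^*(BG;R)$; its higher differentials are \emph{nonzero}, being given by iterated $\cup_1$-products as in (\ref{Koszul}), and the Hirsch formula is what allows their construction. The decisive input is then not that $H^*(BT^n;R)$ is polynomial but the cochain-level fact \cite[4.1]{GM} that, because $BT^n$ is a product of $K(\bZ,2)$'s, there is a quasi-isomorphism of DG $R$-algebras $C^*(BT^n;R)\rtarr H^*(BT^n;R)$ annihilating all $\cup_1$-products. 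Base-changing along this map kills exactly the terms $a_U$ with $\ell(U)\geq 2$, so that $H^*(BT^n;R)\otimes_A K(A)$ is \emph{literally} the classical Koszul complex $H^*(BT^n;R)\otimes E\{y_i\}$ over $H^*(BG;R)$; it is this identity of DG $R$-modules, not a spectral sequence degeneration argument, that gives both $E_2=E_\infty$ and the absence of additive extensions (\myref{goody}, \myref{goodytoo}). Your outline is missing this mechanism, and replacing it by an appeal to the SOA and an unproved formality of the pair is not a viable route.
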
 
Here $H^*(BH;R)$ is an $H^*(BG;R)$-module via $f^*$. The space $Ff$ is the fiber of $f$, and it is 
$G/H$ when $f = Bi$ for an inclusion $i$ of $H$ as a closed subgroup of $G$.  
The hypothesis on $H$ holds if $H_*(H;\bZ)$ has no $p$-torsion for any prime $p$ that divides the characteristic 
of $R$. A generalization to $H$-spaces is given in \cite{MN}.  

The use of explicit distinguished resolutions given by model categorical cell complexes is the central feature of the proof. 
The connection between model categorical foundations and explicit calculations is rarely 
as close as it is here.  
\tableofcontents

\section*{Introduction}\label{algexamch}

We shall show that there are (at least) six compellingly reasonable related model 
structures on the category of DG modules over a DG algebra, and we shall show how some
of these model structures relate to explicit computations.  In fact,
calculational applications
were announced in 1968 \cite{M} and explained in the 1974 memoir \cite{GM} and its 2002 
generalization \cite{MN}.  In \cite{GM}, we gave ad hoc definitions of differential $\Tor$
functors (called ``torsion products'' in those days) and Ext functors in terms of certain 
general types of resolutions.  We wrote then that our definitions have 
``the welcome merit of brevity, although we should admit that this is largely due to the
fact that we can offer no categorical justification (in terms of projective objects, etc)
for our definitions.''  Among other things, at the price of some sacrifice of brevity, we belatedly give
model categorical justifications here.  

We let $\sM_R$ denote the category of unbounded chain complexes over a fixed ring $R$, which we
always call DG $R$-modules. We have two natural categories of weak equivalences in $\sM_R$.  We define 
$h$-equivalences to be homotopy equivalences of DG $R$-modules and $q$-equivalences to be quasi-isomorphisms, 
namely those maps of DG $R$-modules that induce an isomorphism on passage to the homology 
of the underlying chain complexes. We call the subcategories consisting of these classes 
of weak equivalences $\sW_{h}$ and $\sW_q$.  Since chain homotopic maps induce the same 
map on homology, $\sW_{h}\subset \sW_q$. Both categories are closed under retracts 
and satisfy the two out of three property.  Similarly, it will be evident that all
classes of cofibrations and fibrations that we define in this paper are subcategories 
closed under retracts. 

As usual, let $\sK_R$ denote the homotopy category of $\sM_R$ obtained by identifying
homotopic maps; it is called the classical homotopy  category of $\sM_R$.  Also as usual,
let $\sD_R$ denote the category obtained from $\sM_R$ (or $\sK_R$) by inverting the 
quasi-isomorphisms; it is called the derived category of $R$.  
We recall three familiar model structures on $\sM_R$ that lead to these homotopy
categories  in \S\ref{SecqmodelR}, \S\ref{SechmodelR}, 
and \S\ref{1mixed}. They are analogues of the Quillen, classical, 
and mixed model structures on spaces \cite{Cole, MP}. We name them as follows.

The Quillen, or projective, model structure is denoted by
\begin{equation}\label{four}  (\sW_q, \sC_q, \sF_q).  \end{equation}
The $q$-fibrations are the degreewise surjections. Its homotopy category is $\sD_R$. 

The classical, or Hurewicz, model structure is denoted by
\begin{equation}\label{one}  (\sW_h, \sC_h, \sF_h).   \end{equation}
Its homotopy category is $\sK_R$. The $h$-fibrations are the degreewise split surjections.  
We sometimes use the alternative notation $(\sW_r, \sC_r, \sF_r)$, the ``$r$'' standing for ``relative.''
In fact, we give two a priori different definitions of fibrations and cofibrations 
that turn out to be identical. When we generalize to DG $A$-modules, where $A$ is a DGA over a commutative ring
$R$, we will give different $h$- and $r$-model structures; they happen to coincide when $A=R$, but 
not in general.

We can mix these two model structures. Since we will shortly have several mixed model structures in sight, 
we denote this one\footnote{The cofibrations
were denoted $\sC_m$ in \cite{MP}, $m$ standing for mixed.} by
\begin{equation}\label{sixtoo}  (\sW_q, \sC_{q,h}, \sF_{h}).
\end{equation}
Its homotopy category is again $\sD_R$.  
For clarity of exposition, we defer all discussion of mixed model structures like this to 
\S\ref{6model}. 

 In   \S\ref{chainch} and \S\ref{1mixed}, we allow the ring $R$ to be non-commutative. Except in these sections, we use the short-hand $\otimes$ and $\Hom$ for $\otimes_R$ and $\Hom_R$.  As we explain in 
\S\ref{Rmod}, when $R$ is commutative the $r$-model 
structure on $\sM_R$ has an alternative conceptual interpretation in terms of {\em enriched} 
lifting properties and {\em enriched} weak
factorization systems.  It is compactly generated in an enriched sense,
although it is {\em not} compactly or cofibrantly generated in the traditional sense.  Here ``compactly generated'' is a 
variant of ``cofibrantly generated'' that applies when only sequential cell complexes are required.
It is described in \myref{compgen} and discussed in detail in \cite[\S15.2]{MP}. 
The variant is essential to the philosophy expounded in this paper since use of 
sequential cell complexes is needed if one is to forge a close calculational connection between the 
abstract cell complexes of model category theory and the concrete cell complexes that arise from
analogues of projective resolutions.  After all, projective resolutions in homological 
algebra are never given transfinite filtrations.

Starting in \S\ref{DGAch}, we also fix a ($\bZ$-graded) DG $R$-algebra $A$. Thus 
$A$ is a DG $R$-module and an $R$-algebra with a unit cycle in 
degree zero and a product $A\otimes A\rtarr A$ that commutes with the differentials.   
Our conventions on graded 
structures are that we never add elements in different degrees.  The product is given 
by maps $A_i\otimes A_j \rtarr A_{i+j}$ and the differential is given by maps
$d\colon A_n\rtarr A_{n-1}$.  We can shift to cohomological 
grading, $A^i = A_{-i}$, without changing the mathematics.  

We let $\sM_A$ denote the category of left DG $A$-modules.\footnote{We suppress the adjective
``left'', but we use the adjective ``right'' when appropriate. }
An object $X$ in $\sM_A$ is a DG $R$-module $X$ with an
$A$-module structure $A\otimes X\rtarr X$ that commutes with the differentials. 
We use the term $A$-module when we choose to forget the differential and consider 
only the underlying (graded) $A$-module structure, as we shall often have occasion to do.  

In \S\ref{DGAch}, which is parallel to \S\ref{chainch}, we define Quillen and classical model
structures on $\sM_A$,  using the same notations as in (0.1) and (0.2). The maps in $\sW_q$ 
are the quasi-isomorphisms and the maps in $\sW_h$ are the homotopy equivalences of DG $A$-modules.  
The $q$-fibrations, like the $q$-equivalences, are created in $\sM_R$ and thus depend 
only on the underlying DG $R$-modules.  The $h$-fibrations are the maps that satisfy 
the covering homotopy property in the category $\sM_A$; they do not appear  to admit an easily 
verifiable characterization in more familiar algebraic terms. We defer
discussion of the associated mixed model structure generalizing (0.3) to \S\ref{3mixed}.

There is a subtlety in proving the factorization axioms for the $h$-model structure,
but to minimize interpolations of general theory in the direct line of 
development, we have deferred the relevant model categorical underpinnings to \S\ref{appendix}.
If $A$ has zero differential, the $h$-and $q$-model structures and the associated mixed model structure 
are the obvious generalizations from ungraded rings $R$ to graded rings $A$ of the model 
structures in \S\ref{SecqmodelR} and \S\ref{SechmodelR}, and the differential adds relatively
little complication.  These model structures are independent of the assumption that $A$ is an 
$R$-algebra, encoding no more information than if we regard $A$ as a DG ring.
 
We are interested in model structures that remember that $A$ is an $R$-algebra.
We define $\sW_r$ to be the category of maps of DG $A$-modules that are homotopy
equivalences of DG $R$-modules.  These are the appropriate equivalences for {\em relative} 
homological algebra, which does remember $R$.  Of course, 
\[ \sW_h\subset \sW_r\subset \sW_q. \] 
We consider $\sW_r$ to be a very natural category of weak equivalences in $\sM_A$, and we 
are interested in model structures with these weak equivalences and their relationship with 
model structures that take $\sW_h$ or $\sW_q$ as the weak equivalences.

We have three homotopy categories of DG $A$-modules. We let $\sK_A$ denote the ordinary homotopy 
category of $\sM_A$ 
and call it the absolute homotopy category.  It is obtained from $\sM_A$ by passing to homotopy classes 
of maps or, equivalently, by inverting the homotopy equivalences of DG $A$-modules.
We let $\sD_A^r$ denote the homotopy category obtained by formally inverting the 
$r$-equivalences.  We let 
$\sD_A$  denote the category obtained from $\sM_A$, or equivalently from 
$\sK_A$ or $\sD^r_A$, by formally inverting the quasi-isomorphisms.  It is called the 
{\em derived category} of the category of DG $A$-modules.  We call $\sD_A^r$ the 
{\em relative derived category} of $A$. We hope to convince the reader that $\sD_A^r$ is 
as natural and perhaps even as important as $\sD_A$. 

In \S\ref{sec:rmodel}, we  construct the relative model structure
\begin{equation}\label{two} (\sW_{r}, \sC_r, \sF_{r}). \end{equation}
The $r$-fibrations are the maps in $\sM_A$ that are $r$-fibrations (= $h$-fibrations) when 
regarded as maps in $\sM_R$.  That is, like the $r$-equivalences, the $r$-fibrations are created 
by the $r$-model structure on $\sM_R$.  Here again there is a subtlety in the proof of the
factorization axioms, discussion of which is deferred to \S\ref{appendix}.

Along with the inclusions $\sW_h\subset \sW_r\subset \sW_q$, we have inclusions
\[ \sF_h\subset \sF_r\subset \sF_q. \]
There result three mixed model structures on $\sM_A$, the $(r,h)$-model structure 
\begin{equation}\label{three}  
(\sW_{r}, \sC_{r,h}, \sF_{h}) 
\end{equation}
and the $(q,r)$-model structure
\begin{equation}\label{six} (\sW_q, \sM_{q,r}, \sF_{r}).
\end{equation}
joining the $(q,h)$-model structure $(\sW_q, \sM_{q,h}, \sF_{h})$ that generalizes (0.3). We 
discuss these in \S\ref{3mixed}. 
They have advantages over the $q$- and $r$-model structures analogous to those
described in \S\ref{1mixed} and in more detail in \cite[\S18.6]{MP} in the classical 
case of model structures on $\sM_R$. 

In all of these model structures, all objects are fibrant.  By an observation of
Joyal, two model structures with the same cofibrations and fibrant objects are the same (cf.~\cite[15.3.1]{Riehl}).  Thus, in principle, our six model structures differ only in their cofibrations.  We shall see in \S\ref{appendix} that recent work in model category theory \cite{BR, Garner, Riehl} illuminates the cofibrations in our new model structures.  

However, the distinction we emphasize is seen most clearly in the fibrations.  The lifting property that defines $q$-fibrations implies that they are degreewise surjections.  The lifting property that defines $r$-fibrations implies that they are degreewise split surjections.  The splittings promised by the lifting properties are merely functions in the former case, but they are maps of $R$-modules in the latter case.  The new theory explains the distinction in terms of enriched model category theory.  As we describe in \S\ref{Rmod}, when $R$ is commutative the $(h=r)$-model structure on $\sM_R$ is the $R$-module enrichment of its $q$-model structure, in a sense that we shall make precise.
Similarly, as we explain in \S\ref{sec:rmodel}, the $r$-model structure on $\sM_A$ is the $R$-module enrichment of its $q$-model structure.

The construction of our $h$- and $r$-model structures on $\sM_A$ requires new model theoretic foundations,
without which we would not know how to prove the factorization axioms.  In \S\ref{appendix},
we introduce ``enriched'' and ``algebraic'' generalizations of Garner's variant of Quillen's small object argument (SOA). We shall implicitly use Garner's variant in all of our model theoretic work, and we shall use its generalized 
versions to obtain the required factorizations.  

This material is of independent interest in model category theory, and we have collected it in \S\ref{appendix} 
both to avoid interrupting the flow and to make it more readily accessible to readers interested in other applications.  
These results expand on work of two of us in \cite{BR}, where mistakes in the literature concerning 
$h$-model structures in topology are corrected.  The new variants of the SOA provide systematic general ways to construct interesting model structures that are not cofibrantly generated in the classical sense. As the quite different applications in \cite{BR} and here illustrate, the new theory can be expected to apply to a variety of situations to which the ordinary SOA does not apply.  That is a central theme of Part 1. 

In Part 2, we are especially interested in understanding $q$- and 
$r$-cofibrant approximations and relating them to projective resolutions in traditional homological 
algebra. We shall give three homological constructions of cofibrant approximations that a priori bear no 
obvious relationship to the model theoretic cofibrant approximations provided by either the classical or the
enriched SOA.  

Beginning with $q$-cofibrant approximations, we show in \S\ref{versus} that the classical projective 
resolutions of DG $R$-modules that Cartan and Eilenberg introduced and used to construct the K\"unneth 
spectral sequence in \cite[XVII]{CE} give $q$-cofibrant approximations of DG $R$-modules, 
even though they are specified as bicomplexes with no apparent relationship to the retracts of
$q$-cell complexes that arise from model category theory.  They are {\em isomorphic} to such retracts,
but there is no obvious way to construct the isomorphisms, which can be viewed as changes of filtrations.

More generally, in \S\ref{versustoo} we show that we can obtain $q$-cofibrant approximations of DG $A$-modules as the total complexes $\TotP$ of projective resolutions $P$, where the $P$ are suitable bicomplexes. 
The construction is due to Moore \cite{Moore}, generalizing Cartan and Eilenberg \cite[XVII]{CE}.
The $\TotP$ must be retracts of $q$-cell complexes, but, as bicomplexes, they come in nature with entirely 
different non-cellular filtrations and it is not obvious how to compare filtrations. 
Precisely because they are given in terms of bicomplexes, they allow us to prove some things that are not 
readily accessible to $q$-cell complexes.  For example, these $q$-cofibrant approximations allow us to derive 
information from the assumption that the underlying $A$-module of a DG $A$-module is flat and to view the 
Eilenberg-Moore spectral sequence (EMSS) as a generalized K\"unneth spectral sequence under appropriate hypotheses. 

We head towards alternative cofibrant approximations in \S\ref{GMStuff}.   We give theorems that characterize the $q$- and $r$-cofibrations and cofibrant objects in the parallel sections \S\ref{qchar} and \S\ref{rchar}. In \S\ref{splitone} and
\S\ref{splittwo}, we introduce a common generalization of model theoretic cell DG $A$-modules and the total complexes
$\TotP$ of projective resolutions, together with a concomitant generalization of model theoretic cofibrations.  The key notion is that of a split DG $A$-module, which was already defined in \cite{GM}.  The model theoretic $q$-cell and $r$-cell 
DG $A$-modules, the projective DG $A$-modules of \S\ref{versustoo}, and the classical bar resolutions are
all examples of split DG $A$-modules.

We single out a key feature of split DG $A$-modules.  Prior to \cite{GM}, differential homological algebra used only 
bicomplexes, as in our \S\ref{projqcof}.  Split DG $A$-modules are
``multicomplexes,''\footnote{Multicomplexes in the sense used here were first introduced 
in a brief paper of Wall \cite{Wall}.} which means that they are filtered and have differentials with filtration-lowering components that are closely related to the differentials of the associated spectral sequences. We now see that the generalization from bicomplexes to multicomplexes in \cite{GM}, which then seemed esoteric and artificial,   
is forced by model theoretic considerations: our $q$- and $r$-model structures are constructed in 
terms of $q$-cell and $r$-cell complexes, as dictated by the SOA, and these are multicomplexes,
almost never bicomplexes. 

In \S\ref{GMStuff1}, we head towards applications by relating split DG $A$-modules to the EMSS.  The differentials in
the EMSS are built into the differentials of the relevant multicomplexes and they have interpretations in 
terms of matric Massey products, as we indicate briefly. We illustrate the use of this interpretation in
\S\ref{ExtA}, where we recall from \cite{GM} that when $A$ is a connected algebra (not DG algebra) over a field $R$, 
$\Ext_A(R,R)$ is generated under matric Massey products by its elements of degree $1$, which are the duals of
the indecomposable elements of $A$.

In \S\ref{barsec}, we return to the relationship between the $q$- and $r$-model structures. We show that the 
bar construction always gives $r$-cofibrant approximations.  Unless $R$ is a field, the bar construction is 
usually not $q$-cofibrant, but when $A$ is $R$-flat, for example 
when $A=C^*(X;R)$ for a space $X$ and a commutative Noetherian ring $R$, bar constructions very often behave homologically as if they were 
$q$-cofibrant or at least $(q,h)$-cofibrant, although they are generally not.  Precisely, we prove
that they give ``semi-flat resolutions'' under mild hypotheses.  This implies that the two different definitions 
of differential torsion products obtained by applying homology to the tensor product derived from the $q$- and 
$r$-model structures agree far more often than one would expect from model categorical considerations alone.

In \S\ref{GMStuff2}, which follows \cite{GM}, we show how to start from a classical projective resolution 
of $H_*(M)$ as an $H_*(A)$-module and construct from it a ``distinguished resolution'' $\epz\colon X\rtarr M$
of any given DG $A$-module $M$.  This resolution is very nearly a $q$-cofibrant approximation: $X$ is $q$-cofibrant, 
and $\epz$ is a $q$-equivalence.  However, $\epz$ need not be a degreewise epimorphism, which means
that $\epz$ need not be a $q$-fibration.  It follows that $X$ is $h$-equivalent over $M$ to any chosen 
$q$-cofibrant approximation  $Y\rtarr M$, so there is no loss of information.  The trade-off is a huge gain in
calculability.  We show how this works explicitly when $H_*(A)$ is a polynomial algebra in \S\ref{cupone}.
In turn, we show how this applies to prove \myref{Yeah} in \S\ref{TopEMSS}. 

Our work displays a plethora of different types of cell objects, ranging from general
types of cell objects used in our enriched and algebraic variants of the SOA in \S{\ref{appendix}
to special types of cell objects used for both calculations and theoretical
results in our specific category $\sM_A$ of DG $A$-modules.  Focusing on cellular
approximations, we have two quite different special types of $q$-cofibrant approximations, 
namely {\em distinguished resolutions}, which are defined in \myref{ancdef} and 
constructed in \S{\ref{resolve}}, and {\em projective resolutions}, which are defined and 
constructed in \S{\ref{versustoo}}. The former are multicomplexes and the latter are bicomplexes.  
It is almost never the case that a resolution is both distinguished and projective, and 
each is used to prove things we do not know how to prove with the other.  Both are
examples of K\"unneth resolutions, which are defined in \S\ref{Kunndef} and which give
precisely the right generality to construct the algebraic EMSS but are not always $q$-cofibrant. 
We also have the bar resolution in \S\ref{barsec}}, which always gives $r$-cofibrant 
approximations and sometimes gives $q$-cofibrant approximations. Without exception, all of these 
types of DG $A$-modules are examples of split DG $A$-modules, as defined in \myref{ancdef}. 

We are moved to offer some philosophical comments about model category theory in general.
In serious applications within a subject, it is rarely if ever true that 
all cofibrant approximations of a given object are of equal calculational
value.  The most obvious example is topological spaces, where the 
general cell complexes given by the SOA are of no particular interest
and one instead works with CW complexes, or with special types of CW
complexes. This is also true of spectra and much more so of $G$-spectra, 
where the calculational utility of different types of cell complexes depends 
heavily on both the choice of several possible Quillen equivalent model categories 
in which to work and the choice of cell objects within the chosen category; 
see \cite[\S IV.1]{MM} and \cite[\S 24.2]{MaySig} for discussion.

Philosophically, our theory epitomizes the virtues of model category theory, illustrating the 
dictum ``It is the large generalization, limited by a happy particularity, which is the fruitful conception.''\footnote{G.H. Hardy \cite[p.~109]{Hardy}, quoting 
A.N. Whitehead.}  Because model category theory axiomatizes structure that is already present in the 
categories in which one is working, it can be combined directly with those particulars that enable concrete calculations: it works within the context at hand rather than translating it to one that is chosen for purposes of 
greater generality and theoretical convenience, however useful that may sometimes be (albeit rarely if ever for purposes of calculation).

It will be clear to the experts that some of our work can be generalized from DG algebras to DG categories.  
We will not go into that, but we hope to return to it elsewhere.  It should be clear to everyone that generalizations and analogues in other contexts must abound. Model structures as in Part I should appear whenever one has a category $\sM$ of structured objects enriched in a  category $\sV$ with two canonical model structures 
(like the $h$- and $q$-model structures on spaces and on DG $R$-modules).  The category $\sM$ then has three natural notions of weak equivalences, the structure preserving homotopy equivalences ($h$-equivalences), the homotopy equivalences of underlying objects in $\sV$ ($r$-equivalences), and the weak equivalences of underlying objects in 
$\sV$ ($q$-equivalences).  These can be expected to yield $q$-, $r$-, and $h$-model structures with accompanying mixed $(r,h)$-, $(q,h)$-, and $(q,r)$-model structures.

\vspace{2mm}

{\em Dedication:} The authors dedicate this paper to John Moore, who pioneered this area of mathematics.  
He was the senior author's adviser, and his mathematical philosophy pervades this work and indeed pervades
algebraic topology at its best. 

{\em Acknowledgments:} We thank Takashi Suzuki for reminding us of Moore's early paper 
\cite{Moore},\footnote{It appears
in a 1959-60 Cartan Seminar and is not on MathSciNet.} which he has found useful
in new applications of Mac\,Lane homology in algebraic geometry.  

The third author was supported by a National Science Foundation postdoctoral research fellowship  DMS-1103790.

\part{Six model structures for DG-modules over DGAs}

\section{The $q$- and $h$-model structures on the category $\sM_R$}\label{chainch}

Although $R$ will be required to be commutative later, $R$ can be any
ring in this section.  We describe the $q$- and $h$-model structures 
on the category $\sM_R$ of (left) DG $R$-modules. 
In particular, of course, we could replace $R$ by an algebra $A$ regarded 
just as a ring.  This section is a summary of material treated in
detail in \cite{MP}, to which we refer the reader for all proofs. 

\subsection{Preliminaries}\label{ssec:prelim}

The category $\sM_R$ is bicomplete. Limits and colimits 
in $\sM_R$ are just limits and colimits of the underlying graded 
$R$-modules, constructed degreewise, with the naturally induced differentials.  We
reserve the term $R$-module for an ungraded $R$-module, and we often regard
$R$-modules as DG $R$-modules concentrated in degree zero.  

It is convenient to use the category theorists' notion of a cosmos, namely a
bicomplete closed symmetric monoidal category.  When $R$ is commutative, 
$\sM_{R}$ is a cosmos under $\otimes_R$ and $\Hom_R$.  In this section, 
we use the cosmos $\sM_{\bZ}$, and we write $\otimes$ and $\Hom$ for tensor products
and hom functors over $\bZ$.  Recall that 
\[(X\otimes Y)_n= \sum_{i+j=n} X_i\otimes Y_j\quad \mathrm{and}\quad
\Hom(X,Y)_n = \prod_{i} \Hom(X_i,Y_{i+n})\] 
with differentials given by 
\[ d(x\otimes y) = d(x)\otimes y + (-1)^{\mathrm{deg}x}x\otimes d(y)
\ \ \ \mathrm{and}\ \ \  (df)(x) = d(f(x)) - (-1)^nf(d(x)). \]

The category $\sM_R$ is enriched, tensored, and cotensored over $\sM_{\bZ}$.  We 
say that it is a bicomplete $\sM_{\bZ}$-category.  The chain complex
(DG $\mathbf{\bZ}$-module) of morphisms $X\rtarr Y$ is $\Hom_R(X,Y)$, 
where $\Hom_R(X,Y)$ is the subcomplex of $\Hom(X,Y)$ consisting of those 
maps $f$ that are maps of underlying $R$-modules.  Tensors are given
by tensor products $X\otimes K$, noting that the tensor product of a 
left $R$-module and an abelian group is a left $R$-module.  Similarly,
cotensors are given by $X^K = \Hom(K,X)$.  Explicitly,
for $X\in\sM_R$ and $K\in\sM_{\bZ}$, the chain complexes $X\otimes K$ and $\Hom(K,X)$ are 
DG $R$-modules with $r(x\otimes k) = (rx)\otimes k$ and $(rf)(k) = rf(k)$ 
for $r\in R$, $x\in X$, $k\in K$, and $f\in \Hom(K,X)$. We have the adjunctions
\[ \Hom_R(X\otimes K,Y) \iso \Hom(K,\Hom_R(X,Y))\iso \Hom_R(X,Y^K). \]

To emphasize the analogy with topology, we give algebraic objects topological
names.  Since the zero module $0$ is initial and terminal in $\sM_R$, the analogy 
is with based rather than unbased spaces.
For $n\in\bZ$, we define $S^n$, the $n$-sphere chain complex, to be $\bZ$
concentrated in degree $n$ with zero differential. For any integer $n$, we define 
the $n$-fold suspension $\SI^nX$ of a DG $R$-module $X$ to be $X\otimes S^n$. 
Thus $(\SI^nX)_{n+q}\iso X_q$.  The notation is motivated by the observation that if 
we define $\pi_n(X)$ to be the abelian group of chain homotopy classes of maps 
$S^n\rtarr X$ (ignoring the $R$-module structure on $X$), then $\pi_n(X) = H_n(X)$.  

Analogously, we define $D^{n+1}$ to be
the $(n+1)$-disk chain complex.  It is $\bZ$ in degrees $n$ and $n+1$ and zero in
all other degrees. There is only one differential that can be non-zero, and 
that differential is the identity map
$\bZ\rtarr \bZ$.  The copy of $\bZ$ in degree $n$ is identified with $S^n$ and is
the boundary of $D^{n+1}$.  We write $S^n_R = R\otimes S^n$
and $D^{n+1}_R = R\otimes D^{n+1}$. 

We define $I$ to be the chain complex with one basis element $[I]$  in degree $1$, 
two basis elements $[0]$ and $[1]$ in degrees $0$, and differential $d([I]) = [0]-[1]$.  
A homotopy $f\htp g$ between maps of DG $R$-modules
$X\rtarr Y$ is a map of DG $R$-modules $h\colon X\otimes I\rtarr Y$ that restricts 
to $f$ and $g$ on $X\otimes [0]$ and $X\otimes [1]$.  Letting
$s(x) =(-1)^{\mathrm{deg}\, x}h(x\otimes [I])$, 
$h$ specifies a chain homotopy $s\colon f\htp g$ in the usual sense. In all
of our model structures, this notion of homotopy can be used interchangeably
with the model categorical notion of homotopy. 

\begin{rmk}
To elaborate, the natural cylinder object $X \otimes I$ is not necessarily a cylinder object in the model theoretic sense because the map $X \coprod X \rtarr X \otimes I$ is not necessarily a cofibration. We will see that it is always an $h$- and $r$-cofibration, but $X$ must be $q$-cofibrant to 
ensure that it is a $q$-cofibration.  However this subtlety is immaterial since \cite[16.4.10 and 16.4.11]{MP} ensure that the classical and model theoretic notions of homotopy really can be used
interchangeably.
\end{rmk}

\subsection{The $q$-model structure}\label{SecqmodelR} This is the model structure in standard use. 

\begin{defn}\mylabel{qRmod} Let $\cI_R$ denote the set of inclusions 
$S^{n-1}_R\rtarr D^n_R$
for all $n\in \bZ$ and let $\cJ_R$ denote the set of maps $0 \rtarr D_R^n$ for all $n \in \bZ$.
A map in $\sM_R$ is a {\em $q$-fibration} if it satisfies the right lifting property (RLP) against $\cJ_R$.  
A map is a {\em $q$-cofibration} if it satisfies the left lifting property (LLP) against all $q$-acyclic $q$-fibrations, 
which are the maps that have the RLP against $\cI_R$.  Let $\sC_q$ and $\sF_q$ denote the subcategories of 
$q$-cofibrations and $q$-fibrations. Recall that $\sW_q$ denotes the subcategory of quasi-isomorphisms of DG $R$-modules. 
\end{defn}

\begin{rmk} In \cite{MP}, $\cJ_R$ was taken to be the set of maps 
$i_0\colon D^n_R\rtarr D^n_R\otimes I$ for all $n\in \bZ$ in order to emphasize 
the analogy with topology.  The proof of \cite[18.4.3]{MP} makes clear that either 
set can be used.
\end{rmk}

One proof of the following result is precisely parallel to that of its topological
analogue, but there are alternative, more algebraically focused, arguments.
Full details are given in \cite{MP} and elsewhere.

\begin{thm}\mylabel{qmodelR}  
The subcategories $(\sW_q,\sC_q,\sF_{q})$ 
define a compactly generated model category structure on $\sM_R$ 
called the $q$-model structure.  The sets $\cI_R$ and $\cJ_R$ are generating 
sets for the $q$-cofibrations and the $q$-acyclic $q$-cofibrations.
Every object is $q$-fibrant and the $q$-model structure 
is proper.  If $R$ is commutative, the cosmos $\sM_R$ is a 
monoidal model category under $\otimes$. In general,
$\sM_R$ is an $\sM_{\bZ}$-model category. 
\end{thm}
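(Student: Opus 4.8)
The plan is to deduce the whole statement from the recognition principle for compactly generated model categories --- the variant of Kan's theorem recorded in \cite[15.2.3]{MP} --- applied to the sets $\cI_R$ and $\cJ_R$ of \myref{qRmod}. Several hypotheses come essentially for free: $\sM_R$ is bicomplete; $\sW_q$ satisfies two-out-of-three and is closed under retracts, as noted above; and the domains $S^{n-1}_R$ and $0$ of the maps in $\cI_R$ and $\cJ_R$ are compact, since $\Hom_R(S^{n-1}_R,-)\iso(-)_{n-1}$ and $\Hom_R(0,-)=0$ commute with all colimits, in particular with the sequential colimits along relative cell complexes. This compactness is also exactly what will make the resulting model structure \emph{compactly} generated with $\cI_R$ and $\cJ_R$ the asserted generating sets --- the $q$-cofibrations being $\cI_R$-cof by definition, and the $q$-acyclic $q$-cofibrations coming out of the theorem as $\cJ_R$-cof.

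The real content is two computations. First I would unwind the lifting properties. A map $p\colon X\rtarr Y$ has the RLP against $\cJ_R$ iff it is a degreewise epimorphism: a map $D^n_R\rtarr Y$ is just an element of $Y_n$, and a lift is a preimage in $X_n$. And $p$ has the RLP against $\cI_R$ iff $p$ is a degreewise epimorphism with acyclic kernel: the RLP asserts that every cycle $x\in X_{n-1}$ and every $y\in Y_n$ with $p(x)=dy$ admit $\tilde x\in X_n$ with $d\tilde x=x$ and $p\tilde x=y$; taking $y=0$ forces $\ker p$ to be acyclic, and conversely, given such a $p$, one lifts $y$ to some $x_0$, writes the cycle $x-dx_0\in\ker p$ as $dk$ with $k\in\ker p$, and sets $\tilde x=x_0+k$ (surjectivity is the case $x=0$). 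Since $p$ degreewise epi yields a short exact sequence $0\rtarr\ker p\rtarr X\rtarr Y\rtarr 0$, the long exact homology sequence shows $\ker p$ is acyclic iff $p\in\sW_q$, giving the compatibility identity $\cI_R\text{-inj}=\sW_q\cap\cJ_R\text{-inj}$. Second, for the acyclicity condition: $\cJ_R\subseteq\cI_R\text{-cof}$ (any map out of $D^n_R$ lifts against a degreewise epi), and any relative $\cJ_R$-cell complex $X\rtarr Z$ is a sequential composite of inclusions of the form $W\rtarr W\oplus\bigoplus_\alpha D^{n_\alpha}_R$; as each disk complex is contractible, each such inclusion, hence their composite colimit, is a homotopy equivalence and so a $q$-equivalence. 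The hard part of the whole proof is really the forward direction of the first computation --- producing a lift against $\cI_R$ out of a surjective quasi-isomorphism --- where the acyclicity of the kernel is precisely the ingredient that lets one correct an arbitrary preimage into an honest lift; everything after that is bookkeeping.

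The remaining clauses of the theorem are routine. Every object is $q$-fibrant because $X\rtarr 0$ trivially has the RLP against $\cJ_R$ (the zero lift works). For properness I would argue with the long exact homology sequence and the five lemma. Since $q$-fibrations are degreewise epimorphisms, pulling one back along an arbitrary map produces a morphism of short exact sequences with a common kernel, whence right properness. Since $q$-cofibrations are degreewise split monomorphisms (each $\cI_R$-cell attachment is, degreewise, an isomorphism or a split inclusion $M\rtarr M\oplus R$, and this property is stable under coproduct, pushout, transfinite composite, and retract), pushing one out along an arbitrary map produces a morphism of short exact sequences with a common cokernel, whence left properness.

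Finally, for the monoidal statement when $R$ is commutative, and the $\sM_{\bZ}$-model structure in general, it suffices to verify the pushout--product axiom on generating (acyclic) cofibrations, since the pushout--product $\square$ preserves colimits in each variable and $\cI_R$-cof and $\cJ_R$-cof are the respective saturations. Here I would use $\SI^k S^n_R\iso S^{n+k}_R$, $\SI^k D^n_R\iso D^{n+k}_R$, the contractibility of $D^m_R\otimes D^n_R$, and explicit identifications such as $(0\rtarr D^m_R)\,\square\,(S^{n-1}_R\rtarr D^n_R)$ being the inclusion $D^{m+n-1}_R\rtarr D^m_R\otimes D^n_R$ with cokernel $D^{m+n}_R$, hence an acyclic $q$-cofibration; running through the finitely many cases (both factors generating cofibrations, or one a generating acyclic cofibration) establishes the axiom, and the unit axiom is immediate because the unit $S^0_R=R$ (resp. $S^0=\bZ$) is $q$-cofibrant. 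The same computations, taking one tensor factor in $\sM_{\bZ}$ and using $R\otimes_{\bZ}\bZ\iso R$, give the enriched version. Full details for all of this are in \cite{MP}.
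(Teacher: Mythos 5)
Your proposal is correct and follows essentially the same route as the paper, which gives no independent argument but defers to \cite[18.4.3]{MP}, where exactly this compactly generated recognition argument is run: identify the $\cJ_R$-injectives as the degreewise epimorphisms, the $\cI_R$-injectives as the $q$-acyclic ones, check acyclicity of relative $\cJ_R$-cell complexes, and verify properness and the pushout--product axiom on the generators. Two small points to tighten: the represented functor is $\sM_R(S^{n-1}_R,Y)\iso Z_{n-1}Y$ rather than $Y_{n-1}$ (compactness still holds, since $Z_{n-1}$ preserves sequential colimits), and in the forward direction of your first computation the case $x=0$ only yields surjectivity of $p$ onto cycles, so full degreewise surjectivity of an $\cI_R$-injective requires one further application of the RLP after first lifting $d(y)$ to a cycle of $X$.
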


It is easy to characterize the $q$-fibrations
directly from the definitions.

\begin{prop}\mylabel{fibepi} A map is a $q$-fibration 
if and only if it is a degreewise epimorphism. 
\end{prop}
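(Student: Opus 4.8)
The plan is to decode the defining lifting property directly. The key preliminary computation identifies the functor represented by $D^n_R$: since $D^n_R$ is $R$ in degrees $n-1$ and $n$ with the identity differential, a map of DG $R$-modules $D^n_R\rtarr Y$ is completely determined by the image $y\in Y_n$ of the degree-$n$ generator, the degree-$(n-1)$ generator being forced to map to $dy$; conversely any choice of $y\in Y_n$ extends $R$-linearly to such a chain map. Hence $\Hom_R(D^n_R,Y)\iso Y_n$, naturally in $Y$, and likewise a lifting problem for a map $p\co X\rtarr Y$ against the inclusion $0\rtarr D^n_R$ in $\cJ_R$ is nothing but an element $y\in Y_n$.

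With this in hand, unravelling the RLP is immediate. A solution to the lifting problem given by $y\in Y_n$ is a map $D^n_R\rtarr X$ over $Y$, i.e. an element $x\in X_n$ with $p(x)=y$. The only compatibility to check is in degree $n-1$, where the induced map sends the boundary generator to $dx$; but $p(dx)=d(p(x))=dy$ automatically because $p$ is a chain map, so this is free. Therefore $p$ has the RLP against $\cJ_R$ — that is, $p\in\sF_q$ — if and only if for every $n\in\bZ$ and every $y\in Y_n$ there exists $x\in X_n$ with $p(x)=y$, which says precisely that each $p_n$ is surjective. This proves both implications at once.

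I do not expect any genuine obstacle: the one point that might masquerade as a difficulty, namely producing an honest DG $R$-module lift rather than a mere set-theoretic preimage of $y$, evaporates because the remaining value of the lift is determined and its compatibility with differentials is a formal consequence of $p$ being a map in $\sM_R$. The same identification $\Hom_R(D^n_R,-)\iso(-)_n$ is also what underlies the remark following Definition~\myref{qRmod} that the alternative generating set of \cite{MP} detects the same maps, so no separate argument is needed there.
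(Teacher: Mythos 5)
Your proof is correct and is exactly the argument the paper has in mind: it omits the proof as "easy to characterize... directly from the definitions," and the same representability computation $\sM_R(D^n_R,Y)\iso Y_n$ appears verbatim in its enriched analogue (\myref{cofgenrtrivialcofibrations}). Nothing further is needed.
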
 

Of course, one characterization of the $q$-cofibrations and $q$-acyclic $q$-cofibrations
is that they are retracts of relative $\cI_R$-cell complexes and relative $\cJ_R$-cell 
complexes; cf.~\myref{cellob} and \myref{SOA}. We record several alternative
characterizations. 

\begin{defn} A DG $R$-module $X$ is {\em $q$-semi-projective}  if it is degreewise projective
and if $\Hom_R(X,Z)$ is $q$-acyclic for all $q$-acyclic DG $R$-modules $Z$. 
\end{defn}

\begin{prop}\mylabel{qcofibrant}  Let $X$ be a DG $R$-module and consider the following 
statements.
\begin{enumerate}[(i)]
\item $X$ is $q$-semi-projective
\item $X$ is $q$-cofibrant
\item $X$ is degreewise projective
\end{enumerate}
Statements (i) and (ii) are equivalent and imply (iii); if $X$ is bounded below,
then (iii) implies (i) and (ii).  Moreover, $0\rtarr X$ is a $q$-acyclic $q$-cofibration 
if and only if $X$ is a projective object of the category $\sM_R$. 
\end{prop}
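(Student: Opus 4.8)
\emph{Proof proposal.} The plan is to prove (i) $\Leftrightarrow$ (ii) and that both imply (iii) by a standard combination of cell‑complex and obstruction‑theoretic arguments, to treat the bounded below case by building a contracting homotopy one degree at a time, and to obtain the ``moreover'' clause from the description of the $q$‑acyclic $q$‑cofibrations as retracts of relative $\cJ_R$‑cell complexes; all of this is essentially contained in \cite{MP}. First, a $q$‑cofibrant $X$ is a retract of a relative $\cI_R$‑cell complex (cf.~\myref{cellob}, \myref{SOA}), which only adjoins free modules along the cells $S^{n-1}_R\rtarr D^n_R$ and is therefore degreewise free, so $X$, being a degreewise retract of a degreewise free module, is degreewise projective; this is (ii) $\Rightarrow$ (iii). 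For (i) $\Rightarrow$ (ii), let $X$ be $q$‑semi‑projective and let $p\colon E\rtarr B$ be a $q$‑acyclic $q$‑fibration, hence a degreewise epimorphism by \myref{fibepi} and, being a quasi‑isomorphism, one with $q$‑acyclic kernel (long exact homology sequence). Given $f\colon X\rtarr B$, degreewise projectivity of $X$ lets us lift $f$ through $p$ to a map $g_0$ of graded $R$‑modules; then $\theta=dg_0-g_0d$ is a degree $-1$ cycle of $\Hom_R(X,\ker p)$, which is $q$‑acyclic by $q$‑semi‑projectivity, so $\theta$ bounds, say $\theta=ds-sd$, and $g=g_0-s$ is a chain‑map lift of $f$. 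Thus $0\rtarr X$ has the left lifting property against all $q$‑acyclic $q$‑fibrations.

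For the remaining implication (ii) $\Rightarrow$ (i), it suffices, given (iii), to show $\Hom_R(X,Z)$ is $q$‑acyclic whenever $Z$ is. Since homology preserves retracts we may take $X$ to be a relative $\cI_R$‑cell complex, $X=\mathrm{colim}_k X_k$ with $X_0=0$ and $X_{k+1}$ a pushout over $X_k$ of a coproduct of cells. Each $\Hom_R(D^n_R,Z)$ is contractible, hence $q$‑acyclic, because $D^n_R$ is contractible; each $\Hom_R(S^{n-1}_R,Z)\iso\SI^{1-n}Z$ is $q$‑acyclic; and applying $\Hom_R(-,Z)$ to a cell‑attaching pushout produces a pullback square along a degreewise epimorphism, so its Mayer--Vietoris sequence shows by induction that every $\Hom_R(X_k,Z)$ is $q$‑acyclic. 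Finally $\Hom_R(X,Z)=\lim_k\Hom_R(X_k,Z)$ is an inverse limit along degreewise epimorphisms, so the associated $\lim^1$ term vanishes and $\Hom_R(X,Z)$ is $q$‑acyclic.

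Now suppose $X$ is degreewise projective and bounded below. I would show that every chain map $f\colon X\rtarr W$ into a $q$‑acyclic $W$ is null‑homotopic, which gives $q$‑acyclicity of $\Hom_R(X,Z)$ and hence (i) and (ii): construct $s_k\colon X_k\rtarr W_{k+1}$ with $ds_k+s_{k-1}d=f_k$ by upward induction from the bottom nonzero degree, observing at each stage that $f_k-s_{k-1}d$ takes values in the cycles of $W$, hence in the boundaries $B_k(W)$ since $H_k(W)=0$, and lifts through the epimorphism $W_{k+1}\rtarr B_k(W)$ because $X_k$ is projective. For the ``moreover'' clause, \myref{qmodelR} identifies the $q$‑acyclic $q$‑cofibrations with retracts of relative $\cJ_R$‑cell complexes, and a relative $\cJ_R$‑cell complex with source $0$ is precisely a coproduct $\bigoplus_i D^{n_i}_R$; so $0\rtarr X$ is a $q$‑acyclic $q$‑cofibration iff $X$ is a retract in $\sM_R$ of such a coproduct. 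Each $D^n_R$ is a projective object of the abelian category $\sM_R$ since $\Hom_{\sM_R}(D^n_R,-)$ is naturally the exact functor $Y\mapsto Y_n$, and projectivity passes to coproducts and retracts; conversely any object $Y$ receives a degreewise epimorphism from a coproduct of copies of the $D^n_R$ (one summand per homogeneous element of $Y$, with top generator mapping to that element), so a projective object is a retract of such a coproduct. Combining the two directions gives the equivalence.

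The only step requiring genuine care is (ii) $\Rightarrow$ (i): since $X$ need not be bounded, one must propagate $q$‑acyclicity of $\Hom_R(-,Z)$ all the way up the cell filtration and then through the inverse limit, and it is exactly here that the \emph{sequential} (``compactly generated'') character of the $q$‑model structure is what keeps the $\lim^1$ term under control. Everything else is routine homological bookkeeping.
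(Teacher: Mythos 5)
Your proof is correct and is essentially the standard argument that the paper itself omits (Section \ref{chainch} defers all proofs to \cite{MP} and related sources): the obstruction-cycle lifting through a degreewise-epi quasi-isomorphism with $q$-acyclic kernel for (i)$\Rightarrow$(ii), induction up the cell filtration followed by the limit argument for (ii)$\Rightarrow$(i), the degreewise construction of a null homotopy in the bounded-below case, and the identification of relative $\cJ_R$-cell complexes under $0$ with direct sums of disks $D^n_R$ for the projective-object clause. The only wording to tighten is the final step of (ii)$\Rightarrow$(i): what does the work there is the Milnor sequence $0\rtarr {\lim}^1 H_{n+1}\Hom_R(X_k,Z)\rtarr H_n\Hom_R(X,Z)\rtarr \lim H_n\Hom_R(X_k,Z)\rtarr 0$, which is available because the restriction maps of the tower are degreewise (split) surjections and whose outer terms vanish because each $\Hom_R(X_k,Z)$ is acyclic, rather than a Mittag--Leffler statement about the tower of complexes itself.
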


We return to the $q$-cofibrant objects in \S\ref{versus}, where we use 
\myref{qcofibrant} to show that every DG $R$-module $M$ has 
a $q$-cofibrant approximation that a priori looks nothing like a 
retract of an $\cI_R$-cell complex.  We prove a generalization of
\myref{qcofibrant} in \myref{qcharcofibrant}.

\begin{rmk}\mylabel{projectives} If all $R$-modules are projective, that is if $R$ is semi-simple, then all objects of $\sM_R$ are $q$-cofibrant (see \myref{Ohyeah}). However, in general (iii) does not imply (i) and (ii). Here is a well-known counterexample (see e.g.~\cite[1.4.2]{Weibel}). Let $R = \bZ/4$ and let $X$ be the degreewise free $R$-complex
\[\xymatrix@1{ \cdots \ar[r]^{2} & \bZ/4 \ar[r]^{2} & \bZ/4 \ar[r]^{2} & \cdots}. \]
Then $X$ is $q$-acyclic.  Remembering that all objects are $q$-fibrant, so that a $q$-equivalence
between $q$-cofibrant objects must be an $h$-equivalence, we see that $X$ cannot be $q$-cofibrant
since it is not contractible.  
\end{rmk}

\begin{prop}\mylabel{qcofibration}  A map $i\colon W\rtarr Y$ is a $q$-cofibration if and only if it is a mono\-morphism and $Y/W$ is $q$-cofibrant, and then $i$ is a degreewise split monomorphism.
\end{prop}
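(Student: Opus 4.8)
The plan is to establish the two implications separately; the ``degreewise split'' clause will drop out of \myref{qcofibrant} in both cases. \emph{If $i$ is a $q$-cofibration}, then, since the $q$-cofibrations are defined by a left lifting property they are closed under cobase change, and $Y/W$ is the cobase change of $i$ along $W\rtarr 0$; hence $0\rtarr Y/W$ is a $q$-cofibration, i.e.\ $Y/W$ is $q$-cofibrant, and therefore degreewise projective by \myref{qcofibrant}. To see that $i$ is a monomorphism I would invoke the characterization of the $q$-cofibrations as retracts of relative $\cI_R$-cell complexes (\myref{SOA}): each generating map $S^{n-1}_R\rtarr D^n_R$ is a monomorphism, the monomorphisms of $\sM_R$ are closed under coproducts, cobase change and transfinite composition (the last because filtered colimits in $\sM_R$ are computed degreewise and are exact), and a retract of a monomorphism is a monomorphism. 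With $i$ a monomorphism and $Y/W$ degreewise projective, the short exact sequence $0\rtarr W\xrightarrow{\,i\,}Y\rtarr Y/W\rtarr 0$ of DG $R$-modules splits in each degree by a map of $R$-modules, so $i$ is a degreewise split monomorphism.

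\emph{Conversely, suppose $i$ is a monomorphism with $Q:=Y/W$ $q$-cofibrant.} By \myref{qcofibrant}, $Q$ is degreewise projective, so the short exact sequence $0\rtarr W\xrightarrow{\,i\,}Y\rtarr Q\rtarr 0$ splits in each degree; in particular $i$ is already a degreewise split monomorphism, and this degreewise splitting survives after applying $\Hom_R(-,Z)$ for any $Z$. Now fix a $q$-acyclic $q$-fibration $f\colon E\rtarr B$; it is a degreewise epimorphism by \myref{fibepi}, and the long exact homology sequence of $0\rtarr\ker f\rtarr E\rtarr B\rtarr 0$ shows that $\ker f$ is $q$-acyclic. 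I would then analyze the induced map of chain complexes
\[ \Hom_R(Y,E)\ \rtarr\ \Hom_R(W,E)\times_{\Hom_R(W,B)}\Hom_R(Y,B),\qquad c\longmapsto (ci,\,fc). \]
Because $Q$ is degreewise projective and $f$ is a degreewise epimorphism, this map is a degreewise epimorphism, and its kernel is precisely $\Hom_R(Q,\ker f)$ (a map $c\colon Y\rtarr E$ with $ci=0$ and $fc=0$ factors uniquely as $Y\rtarr Q\rtarr\ker f\rtarr E$). Since $Q$ is $q$-semi-projective (\myref{qcofibrant}) and $\ker f$ is $q$-acyclic, $\Hom_R(Q,\ker f)$ is $q$-acyclic. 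A commutative square for $i$ against $f$ is exactly a $0$-cycle in the target above, and a lift is a $0$-cycle of $\Hom_R(Y,E)$ lying over it; so I would lift the given $0$-cycle to some $c\in\Hom_R(Y,E)_0$, observe that $dc$ is then a $(-1)$-cycle in $\Hom_R(Q,\ker f)$, hence a boundary $dc=de$ by $q$-acyclicity, and replace $c$ by the $0$-cycle $c-e$, which still lies over the original square. Thus $i$ has the left lifting property against every $q$-acyclic $q$-fibration, so $i$ is a $q$-cofibration.

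The routine ingredients are the closure properties of monomorphisms and the identification of the kernel of the pullback-hom map; the step carrying the real content is the last one of the converse, where $q$-acyclicity of $\Hom_R(Q,\ker f)$ is upgraded to the \emph{existence} of a lift by the ``lift, measure the obstruction, correct it'' manoeuvre inside $\Hom_R(Y,E)$. This is exactly the point at which $q$-semi-projectivity of the cofibrant object $Q$ is needed: mere degreewise projectivity would not suffice, as \myref{projectives} shows.
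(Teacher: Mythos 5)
Your argument is correct and is essentially the paper's: the proof of this proposition is deferred to \cite{MP}, but the paper's own DG $A$-module generalization (\myref{qcharcofibration0}, used to prove \myref{qcharcofibration}) runs exactly this way, splitting $Y$ degreewise via projectivity of $Y/W$ and cancelling the obstruction to a lift by the $q$-acyclicity of $\Hom_R(Y/W,\ker f)$ guaranteed by semi-projectivity (\myref{qcofibrant}). Your packaging of that step as ``a degreewise epimorphism of hom complexes with $q$-acyclic kernel is surjective on $0$-cycles,'' and the forward implication via retracts of relative $\cI_R$-cell complexes, are only cosmetic variants of the same argument.
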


Regarding an ungraded $R$-module $M$ as a DG $R$-module concentrated in
degree $0$, a $q$-cofibrant approximation of $M$ is exactly a projective 
resolution of $M$.  There is a dual model structure that encodes injective 
resolutions \cite[2.3.13]{Hovey}, but we shall say nothing about that in this paper.

\subsection{The $h$-model structure}\label{SechmodelR}

The topological theory of $h$-cofibrations and $h$-fibrations transposes directly
to algebra. 

\begin{defn}\mylabel{coffibalg}  An {\em $h$-cofibration}
is a map $i$ in $\sM_R$ that satisfies the 
homotopy extension property (HEP).
That is, for all DG $R$-modules $B$, $i$ satisfies the LLP against the map 
$p_0\colon B^I\rtarr B$ given by evaluation at the zero cycle $[0]$.
An {\em $h$-fibration} is a map $p$ that satisfies the 
covering homotopy property (CHP). That is, for all DG $R$-modules $W$,
$p$ satisfies the RLP against the map $i_0\colon W\rtarr W\otimes I$. 
Let $\sC_h$ and $\sF_h$ denote the classes of $h$-cofibrations and $h$-fibrations.
Recall that $\sW_h$ denotes the subcategory of homotopy equivalences of DG $R$-modules. 
\end{defn}  

An elementary proof of the model theoretic versions of the lifting properties 
of $h$-cofibrations and $h$-fibrations can be found in \cite{MP}, but here we want to emphasize a parallel set of
definitions that set up the framework for our later work.
In fact, the $h$-cofibrations and $h$-fibrations admit a more familiar description, which 
should be compared with the description of $q$-cofibrations and $q$-fibrations given by Propositions \ref{fibepi} and \ref{qcofibration}.

\begin{defn}  A map of DG $R$-modules is an 
{\em $r$-cofibration} if it is a degreewise split monomorphism. It is an 
{\em $r$-fibration} if it is a  degreewise split epimorphism.  We use the
term $R$-split for degreewise split from now on.  
\end{defn}

Of course, such splittings are given by maps of underlying 
graded $R$-modules that need not be maps of DG $R$-modules.
However, the splittings can be deformed to DG $R$-maps if 
the given $R$-splittable maps are $h$-equivalences. 

\begin{prop}\mylabel{Rsplitrh} Let
\[\xymatrix@1{ 0 \ar[r] & X\ar[r]^-{f} & Y \ar[r]^-{g} & Z \ar[r] & 0\\} \]
be an exact sequence of DG $R$-modules whose underlying exact sequence of 
$R$-modules splits.  If $f$ or $g$ is an $h$-equivalence, then the 
sequence is isomorphic under $X$ and over $Z$ to the canonical split exact sequence
of DG $R$-modules
\[\xymatrix@1{ 0 \ar[r] & X\ar[r] & X\oplus Z \ar[r] & Z \ar[r] & 0.\\} \]
\end{prop}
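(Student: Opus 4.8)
The plan is to reduce the statement to the vanishing of a single cohomology class and then to extract that vanishing from the $h$-equivalence hypothesis by a mapping cone argument. The key observation is that a termwise $R$-splitting records the extension as a ``twisted differential'' whose twist is an obstruction class in $H_{-1}(\Hom_R(Z,X))$, and that this group is killed by the hypothesis.

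First I would set up coordinates. The chosen $R$-module splitting identifies the underlying graded $R$-module of $Y$ with $X\oplus Z$, and then the differential of $Y$ necessarily has the form $d_Y(x,z)=(d_Xx+\tau z,\, d_Zz)$ for a unique graded $R$-map $\tau\colon Z\rtarr X$ of degree $-1$; the identity $d_Y^2=0$ says precisely that $\tau$ is a cycle in the morphism complex $\Hom_R(Z,X)$, hence represents a class $[\tau]\in H_{-1}(\Hom_R(Z,X))$. A routine change-of-variables computation — replace $(x,z)$ by $(x+\sigma z,z)$ for a degree-zero map $\sigma\colon Z\rtarr X$ — shows that this replaces $\tau$ by a cohomologous twist and is an isomorphism of short exact sequences under $X$ and over $Z$. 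Consequently it suffices to prove $[\tau]=0$: if $\tau=d\sigma$, then the change of variables by $\sigma$ identifies the given sequence with the one whose twist is $0$, which is exactly the canonical split extension $0\rtarr X\rtarr X\oplus Z\rtarr Z\rtarr 0$. (Equivalently, one has produced a DG retraction $r$ of $f$ by correcting the graded splitting, and then $(r,g)\colon Y\rtarr X\oplus Z$ is the desired isomorphism, being a map of DG $R$-modules that is the identity on $X$, covers the identity of $Z$, and is invertible since it is so on underlying graded $R$-modules — this is the ``deformation of the splitting'' alluded to in the text.)

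Next I would show that the $h$-equivalence hypothesis forces $\Hom_R(Z,X)$ to be acyclic, which then finishes the proof. An $R$-split short exact sequence of DG $R$-modules determines a distinguished triangle $X\rtarr Y\rtarr Z\rtarr \SI X$ in the classical homotopy category $\sK_R$; concretely, the canonical comparison map $Cf\rtarr Z$ from the algebraic mapping cone is a homotopy equivalence because its kernel is the cone on $\mathrm{id}_X$, which is contractible, and the sequence $0\rtarr C(\mathrm{id}_X)\rtarr Cf\rtarr Z\rtarr 0$ is again $R$-split, so DG-splits by the easy (hypothesis-free) half of the argument above. If $f$ is an $h$-equivalence it becomes an isomorphism in $\sK_R$, whence $Z\iso 0$ in $\sK_R$, i.e.\ $Z$ is contractible; dually, if $g$ is an $h$-equivalence then $X$ is contractible. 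In either case $\Hom_R(Z,X)$ is contractible — a contracting homotopy on $Z$ induces one on $\Hom_R(Z,X)$ by precomposition, and a contracting homotopy on $X$ induces one by postcomposition — so in particular $H_{-1}(\Hom_R(Z,X))=0$ and $[\tau]=0$.

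The point that needs care is exactly this last reduction. The naive hope would be to correct a homotopy inverse of $f$ (or of $g$) directly into an honest one-sided DG inverse and read the splitting off from it, but a homotopy-invertible self-map need not admit an honest one-sided inverse — the zero endomorphism of a nonzero contractible complex is the standard counterexample — so one genuinely cannot bypass the obstruction class. Thus the substantive content is the identification of the obstruction in $H_{-1}(\Hom_R(Z,X))$ together with the (standard, but not purely formal) mapping cone argument showing that an $h$-equivalence on either side makes one of $X$, $Z$ contractible and hence annihilates that group.
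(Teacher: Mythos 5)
Your proof is correct and is essentially the argument the paper has in mind (it defers the proof to \cite{MP}): encode the $R$-splitting as a twisted differential $\tau$, use the $h$-equivalence hypothesis to conclude that $Z$ (resp.\ $X$) is contractible, and then deform the graded splitting to a DG splitting by killing $\tau$ with the contracting homotopy. Your only cosmetic deviations are phrasing the deformation as the vanishing of the obstruction class $[\tau]\in H_{-1}(\Hom_R(Z,X))$ rather than writing the explicit correction $\sigma = s\tau$, and deducing the contractibility via the mapping cone and the triangulated structure of $\sK_R$ rather than directly from the split-extension lemma.
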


This result does {\em not} generalize to DG $A$-modules.  In the present context, it leads 
to a proof of the $r$-notion half of the following result.  The $h$-notion half
is proven in analogy with topology and will generalize directly to DG $A$-modules.

\begin{prop}\mylabel{liftrh} Consider a commutative diagram of DG $R$-modules
\[ \xymatrix{
W\ar[r]^-g \ar[d]_i & E \ar[d]^p\\
X \ar@{-->}[ur]^{\la} \ar[r]_-f & B.\\} \]
Assume either that $i$ is an $h$-cofibration and $p$ is an $h$-fibration
or that $i$ is an $r$-cofibration and $p$ is an $r$-fibration. If 
either $i$ or $p$ is an $h$-equivalence, then there exists a lift $\la$.
\end{prop}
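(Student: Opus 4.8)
The plan is to treat the $h$-notion and $r$-notion cases uniformly where possible, but to expect the $r$-notion case to rest on \myref{Rsplitrh} while the $h$-notion case is a direct transcription of the classical topological argument (as in \cite{MP}). In both cases the key reduction is the standard one: to produce a lift in a square with sides $(i,p)$ it suffices to show that $i$ has the LLP against $p$, and by a retract argument it is enough to handle the universal case. So first I would recall the two ``dual'' mapping-object constructions: given $p\colon E\rtarr B$, form the ``mapping path object'' $Np$, the pullback of $p$ along $p_0\colon B^I\rtarr B$, and given $i\colon W\rtarr X$, form the ``mapping cylinder'' $Mi$, the pushout of $i$ along $i_0\colon W\rtarr W\otimes I$. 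A lift in the given square is equivalent to a section (in the appropriate over/under category) of a canonical map built from $Np$ or $Mi$.

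Next I would split into the two hypotheses. \emph{Case: $i$ an $h$-cofibration, $p$ an $h$-fibration.} If $p$ is also an $h$-equivalence, then $p$ is an $h$-fibration that is a homotopy equivalence; one shows the canonical map $E\rtarr Np$ is an $h$-equivalence of DG $R$-modules with a section, using the CHP of $p$ to deform the candidate lift; then the HEP of $i$ lets one solve the lifting problem exactly as in the topological proof that trivial fibrations lift against cofibrations. Dually, if $i$ is an $h$-equivalence, one uses that the inclusion $X\rtarr Mi$ is an $h$-equivalence admitting a deformation retraction (built from a contracting homotopy of the ``cofiber'' combined with the HEP), and then the CHP of $p$ produces the lift. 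These are precisely the arguments of \cite[Chapter~17--18]{MP}, so I would cite them rather than reproduce them, noting that nothing uses more than the $\sM_{\bZ}$-enrichment and hence transcribes verbatim from spaces.

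\emph{Case: $i$ an $r$-cofibration, $p$ an $r$-fibration.} Here I would argue more concretely. Suppose $p$ is an $r$-fibration and an $h$-equivalence; then its kernel $F = \ker p$ fits in an $R$-split short exact sequence $0\rtarr F\rtarr E\rtarr B\rtarr 0$ in which $E\rtarr B$ is an $h$-equivalence, so $F$ is $h$-acyclic, i.e.\ contractible. By \myref{Rsplitrh} (applied with $g$ the $h$-equivalence) this sequence is isomorphic, under $F$ and over $B$, to the trivial one $0\rtarr F\rtarr F\oplus B\rtarr B\rtarr 0$; thus up to isomorphism over $B$ we may assume $p$ is the projection $F\oplus B\rtarr B$ with $F$ contractible, say with contracting homotopy $s$. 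Now a lift in the square amounts to choosing the $F$-component of a map $X\rtarr F\oplus B$ extending the given $F$-component on $W$; since $W\rtarr X$ is an $R$-split monomorphism, pick an $R$-linear (not necessarily chain) splitting $r\colon X\rtarr W$, set the naive $F$-component to be $g_F\circ r$, and correct the failure to be a chain map using the contracting homotopy $s$ of $F$ in the usual way (the correction term involves $s$ applied to $d(g_F r) - (g_F r)d$, which vanishes on $W$ because $g_F$ is a chain map there). Dually, if instead $i$ is an $r$-cofibration and an $h$-equivalence, then $0\rtarr W\rtarr X\rtarr X/W\rtarr 0$ is $R$-split with $X/W$ contractible, \myref{Rsplitrh} trivializes it, and the symmetric argument — now using an $R$-splitting of the $r$-fibration $p$ together with a contracting homotopy of $X/W$ — produces $\la$.

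The main obstacle I anticipate is the bookkeeping in the $r$-notion case: one must verify that the homotopy-theoretic correction producing a genuine chain map lands in the correct coset so that it restricts correctly over $W$ (resp.\ under $B$), and that the two subcases really are symmetric given that \myref{Rsplitrh} is stated so as to cover ``$f$ or $g$ an $h$-equivalence.'' Everything else is either a formal retract/adjunction manipulation or a citation to \cite{MP}. One should also remark explicitly — as the paper does — that this $r$-notion argument genuinely uses \myref{Rsplitrh} and therefore does \emph{not} extend to $\sM_A$, which is exactly why the $h$- and $r$-model structures will later need the enriched small object argument of \S\ref{appendix}.
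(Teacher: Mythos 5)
Your proposal is correct and takes essentially the same route the paper intends: the $h$-notion half by transcribing the topological CHP/HEP argument, and the $r$-notion half by using \myref{Rsplitrh} to reduce to a trivialized $R$-split short exact sequence (with contractible kernel, resp.\ cokernel) and then correcting a naive $R$-linear lift by the contracting homotopy. The paper defers the details to \cite[\S 18.2]{MP}, which argues exactly along these lines, so no further comparison is needed.
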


In turn, this leads to a proof that our $r$-notions and $h$-notions coincide.

\begin{prop}\mylabel{reassure}  A map of DG $R$-modules is an $h$-cofibration if and only if it is an 
$r$-cofibration; it is an $h$-fibration if and only if it is an $r$-fibration.
\end{prop}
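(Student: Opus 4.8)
The plan is to prove the two "if and only if" statements by a pair of lifting arguments, using Proposition~\ref{liftrh} as the central tool. I will do the cofibration statement in detail; the fibration statement is formally dual and I would only indicate the changes.

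First, the easy inclusions. Suppose $i\co W\rtarr Y$ is an $h$-cofibration. To see that $i$ is an $r$-cofibration, i.e.\ a degreewise split monomorphism, I would run the standard topological argument: $i$ has the HEP, so it has the LLP against $p_0\co B^I\rtarr B$ for every $B$; taking $B=W$ and using the homotopy $W\otimes I\rtarr W$ which is the projection composed with nothing interesting — more precisely, one produces the usual retraction $Y\rtarr W$ (equivalently the mapping cylinder argument) showing $i$ is a degreewise split monomorphism of underlying graded $R$-modules. This is exactly the classical proof that an $h$-cofibration of spaces is a closed inclusion with a retraction, transported to the algebraic setting, and it is routine. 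Dually, if $p$ is an $h$-fibration then the CHP against $i_0\co W\rtarr W\otimes I$, applied with a suitable $W$, produces a degreewise section of $p$ as a map of graded $R$-modules, so $p$ is an $r$-fibration.

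The substantive direction is the converse: an $r$-cofibration is an $h$-cofibration, and an $r$-fibration is an $h$-fibration. For the cofibration case, let $i\co W\rtarr Y$ be a degreewise split monomorphism; I must verify the HEP, i.e.\ the LLP against $p_0\co B^I\rtarr B$ for all $B$. Note that $p_0$ is always a degreewise split epimorphism (split by the map sending $b$ to the "constant" path), hence an $r$-fibration, and moreover $p_0$ is an $h$-equivalence because $B^I\htp B$ via the contraction of $I$ onto $[0]$. So in any lifting square with $i$ on the left and $p_0$ on the right, the hypotheses of Proposition~\ref{liftrh} are met — $i$ an $r$-cofibration, $p_0$ an $r$-fibration, and $p_0$ an $h$-equivalence — and Proposition~\ref{liftrh} delivers the lift. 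Hence $i$ has the HEP and is an $h$-cofibration. Dually, if $p\co E\rtarr B$ is a degreewise split epimorphism, then for the CHP I must lift against $i_0\co W\rtarr W\otimes I$ on the left; here $i_0$ is a degreewise split monomorphism (since $W\otimes I\cong W\oplus W\oplus \SI W$ as graded $R$-modules, with $i_0$ the evident inclusion of the first summand — $r$-cofibration) and $i_0$ is an $h$-equivalence (the inclusion of $W$ into its cylinder is a deformation retract). Again Proposition~\ref{liftrh} applies, now with $i_0$ an $r$-cofibration that is an $h$-equivalence and $p$ an $r$-fibration, producing the lift; so $p$ has the CHP and is an $h$-fibration.

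The only points requiring care, and where I expect the bookkeeping to live, are the explicit identifications of the relevant maps as $r$-(co)fibrations and as $h$-equivalences: that $p_0\co B^I\rtarr B$ is an $R$-split epimorphism and an $h$-equivalence, and that $i_0\co W\rtarr W\otimes I$ is an $R$-split monomorphism and an $h$-equivalence. These follow directly from the explicit description of $I$ (basis $[I]$ in degree $1$, $[0],[1]$ in degree $0$, $d[I]=[0]-[1]$) and of the cotensor $B^I=\Hom(I,B)$ given in \S\ref{ssec:prelim}; the splittings and the contracting homotopies are written down by hand from the contraction of $I$ onto the vertex $[0]$. None of this is deep, but it is the one place where the concrete model of the interval has to be used rather than cited. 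With those identifications in place, Proposition~\ref{liftrh} does all the real work, and the proposition follows.
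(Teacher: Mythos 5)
Your proposal is correct and is essentially the paper's own argument (the proofs being those of \cite[\S 18.2]{MP}): \myref{liftrh}, applied to $p_0\colon B^I\rtarr B$ (an $R$-split epimorphism and $h$-equivalence) and to $i_0\colon W\rtarr W\otimes I$ (an $R$-split monomorphism and $h$-equivalence), gives that $r$-cofibrations are $h$-cofibrations and $r$-fibrations are $h$-fibrations, while the converse halves are the standard mapping cylinder/cocylinder retraction arguments. The only slip is the aside ``taking $B=W$'': one cannot even pose the homotopy extension problem with target $W$ without already having the retraction one seeks, so the target must be the mapping cylinder $Mi$ (dually, the test object for the CHP must be the cocylinder $Np$), exactly as your parenthetical correction indicates.
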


\begin{thm}\mylabel{hmodelR}  
The subcategories $(\sW_h,\sC_h,\sF_{h})$ 
define a model category structure on $\sM_R$ called the $h$-model 
structure.  The identity functor is a Quillen right adjoint from the 
$h$-model structure to the $q$-model structure. 
Every object is $h$-cofibrant and $h$-fibrant, hence
the $h$-model structure is proper.  If $R$ is commutative, the cosmos 
$\sM_R$ is a monoidal model category under $\otimes$. In general,
$\sM_R$ is an $\sM_{\bZ}$-model category.  
\end{thm}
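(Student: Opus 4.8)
The plan is to verify the model-category axioms one at a time, leaning on the splitting and lifting results already in hand, and then to read off the remaining assertions. Bicompleteness of $\sM_R$ is recorded in \S\ref{ssec:prelim}; $\sW_h$ satisfies two-out-of-three and is closed under retracts, as noted when it was introduced; and by \myref{reassure} the classes $\sC_h$ and $\sF_h$ are exactly the $R$-split monomorphisms and the $R$-split epimorphisms, which, being defined by lifting properties, are closed under retracts. The two lifting axioms --- that $\sC_h\cap\sW_h$ has the LLP against $\sF_h$, and that $\sC_h$ has the LLP against $\sF_h\cap\sW_h$ --- are precisely the two cases of \myref{liftrh}. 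So the substance of the proof is the two factorization axioms.

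For these I would transcribe the classical mapping-cylinder and mapping-cocylinder constructions, with the cylinder $X\otimes I$ and the path object $\Hom(I,X)$ of \S\ref{ssec:prelim} in place of their topological counterparts. Given $f\colon X\rtarr Y$, the $(\sC_h,\,\sF_h\cap\sW_h)$-factorization is $X\rtarr Mf\rtarr Y$, where $Mf$ is the pushout $Y\cup_f(X\otimes I)$ formed along the endpoint $X\iso X\otimes[0]\rtarr X\otimes I$, the first map is the complementary endpoint inclusion $X\iso X\otimes[1]\rtarr Mf$, and the second is $[\mathrm{id}_Y,fp]$ for $p\colon X\otimes I\rtarr X$ the canonical map collapsing $I$. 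Degreewise $(Mf)_n\iso Y_n\oplus X_{n-1}\oplus X_n$, so $X\rtarr Mf$ is visibly an $R$-split monomorphism and $Mf\rtarr Y$ an $R$-split epimorphism, and the usual contracting homotopy on the cylinder exhibits $Mf\rtarr Y$ as an $h$-equivalence. Dually, the $(\sC_h\cap\sW_h,\,\sF_h)$-factorization is $X\rtarr Nf\rtarr Y$, where $Nf$ is the pullback of $X\xrightarrow{f}Y\xleftarrow{p_0}\Hom(I,Y)$, the first map is induced by the constant-path section $Y\rtarr\Hom(I,Y)$, and the second is evaluation at $[1]$. Degreewise $(Nf)_n\iso X_n\oplus Y_{n+1}\oplus Y_n$, so $X\rtarr Nf$ is an $R$-split monomorphism and $Nf\rtarr Y$ an $R$-split epimorphism, and the homotopy dual to the previous one exhibits $X\rtarr Nf$ as an $h$-equivalence. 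These verifications are elementary and are carried out in detail in \cite{MP}; the only step calling for genuine (if routine) computation is producing the two chain homotopies that identify $Mf\rtarr Y$ and $X\rtarr Nf$ as $h$-equivalences, and that is where I expect the real work to lie.

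With the model structure established, the remaining claims follow quickly. Every object $X$ is $h$-cofibrant because $0\rtarr X$ is trivially an $R$-split monomorphism, and $h$-fibrant because $X\rtarr 0$ is an $R$-split epimorphism; a model category in which every object is both cofibrant and fibrant is automatically left and right proper, so the $h$-model structure is proper. Regarded as going from the $h$-model structure to the $q$-model structure, the identity functor sends $h$-fibrations to $q$-fibrations --- an $R$-split epimorphism is a degreewise epimorphism, hence a $q$-fibration by \myref{fibepi} --- and $h$-equivalences to $q$-equivalences, since $\sW_h\subset\sW_q$, hence acyclic $h$-fibrations to acyclic $q$-fibrations; so it is a right Quillen functor.

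Finally, the monoidal and enriched statements. When $R$ is commutative I would verify the pushout-product axiom for $(\sM_R,\otimes_R)$. If $i$ and $j$ are $h$-cofibrations, i.e.\ $R$-split monomorphisms, then so is their pushout-product $i\,\bs\,j$: this is a degreewise computation using that over a ring the tensor product of split monomorphisms is a split monomorphism, that $\otimes_R$ preserves pushouts, and that a pushout of a split monomorphism along a split monomorphism is a split monomorphism. For the acyclicity clause, note that by \myref{Rsplitrh} a \emph{trivial} $h$-cofibration is exactly an $R$-split monomorphism with contractible cokernel; since the cokernel of $i\,\bs\,j$ is the $\otimes_R$-product of the cokernels of $i$ and $j$, and tensoring a contracting homotopy on one factor with the other factor gives a contracting homotopy on the product, $i\,\bs\,j$ has contractible cokernel whenever $i$ or $j$ does, and so is a trivial $h$-cofibration. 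The unit axiom is automatic because $R$, like every object, is $h$-cofibrant. For a possibly non-commutative $R$ the identical argument, now with tensors $-\otimes_{\bZ}K$ and cotensors $\Hom(K,-)$ over the cosmos $\sM_{\bZ}$, shows that the pushout-product of an $h$-cofibration in $\sM_R$ with a degreewise split monomorphism in $\sM_{\bZ}$ is an $h$-cofibration in $\sM_R$, trivial whenever either factor is; hence $\sM_R$ is an $\sM_{\bZ}$-model category.
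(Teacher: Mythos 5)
Your proposal is correct and follows essentially the same route as the paper: the lifting axioms are exactly \myref{liftrh} (with \myref{Rsplitrh} and \myref{reassure} identifying $h$- and $r$-notions), and the factorizations are the mapping cylinder and cocylinder, which — as the paper notes in the remark following \myref{naivefactorization} — are model-theoretic factorizations when $A=R$ precisely because $j,\nu$ are $R$-split monomorphisms and $r,\rho$ are $R$-split epimorphisms; the monoidal and $\sM_{\bZ}$-enriched statements are likewise handled by the same pushout-product computation that \cite{MP} records. No gaps worth flagging.
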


\begin{rmk}\mylabel{factrmk}
Implicitly, we have two model structures on $\sM_R$ that 
happen to coincide. If we define an $r$-equivalence to be an $h$-equivalence, 
then \myref{reassure} says that the $h$-model structure and the $r$-model 
structure on $\sM_R$ are the same.  An elementary proof of the factorization axioms for the $(h=r)$-model structure is given in \cite{MP} and sketched above. However, that argument does {\em not} extend to either the $h$-model structure or the $r$-model structure on $\sM_A$.  
\end{rmk}

\begin{rmk}  Christensen and Hovey \cite{CH}, Cole \cite{Cole3}, and Schw\"anzl and Vogt \cite{SV}
all noticed the $h$-model structure on $\sM_R$ around the year 2000.
\end{rmk}

\section{The $r$-model structure on $\sM_R$ for commutative rings $R$}\label{Rmod}

\subsection{Compact generation in the $R$-module enriched sense}

Let us return to the $r$-model structure on $\sM_R$, which happened to coincide with the $h$-model structure.  
While that observation applies to any $R$, we can interpret it more conceptually when $R$ is
commutative, which we assume from here on out (aside from \S\ref{1mixed}).  Recall that a map $p \colon E \rtarr B$ is an $r$-fibration if and only if it is 
an $R$-split epimorphism, that is, if and only if it admits a section as a map of graded $R$-modules. A key observation
is that this definition can be encoded via an {\em enriched} reformulation of the lifting property
\begin{equation}
\label{eq:samplelift} \xymatrix{ 0 \ar[d] \ar[r] & E \ar[d]^p \\ D^n_R \ar@{-->}[ur] \ar[r] & B.}
\end{equation} 
Letting the bottom arrow vary and choosing lifts, if $p$ is a $q$-fibration
we obtain a section of $p_n$ in the category of sets for each $n \in \mathbb{Z}$. For $p$ 
to be an $r$-fibration, we must have sections that are maps of \emph{$R$-modules} and not 
just of sets, and that is what the enrichment of the lifting property encodes.

Our interest in this enrichment is two-fold.  Firstly, it precisely characterizes the $r$-fibrations, proving that they are ``compactly generated'' in the $R$-module enriched sense, despite the fact that this class is generally   \emph{not} compactly generated in the usual sense \cite[\S 5]{CH}. This observation will allow us to construct the $r$-model structure on 
$\sM_A$ by an enriched variant of the standard procedure for lifting compactly generated model structures along adjunctions.

Secondly, and more profoundly, our focus on enrichment in the category of (ungraded) $R$-modules precisely describes the difference between the $r$-model structure and the $q$-model structure on both $\sM_R$ and $\sM_A$.  Interpreted in the usual (set-based) sense, the lifting property displayed in \eqref{eq:samplelift} characterizes the 
$q$-fibrations: $q$-fibrations are degreewise epimorphisms, that is, maps admitting a section given by a map of underlying graded \emph{sets}. 
The notion of $R$-module enrichment transforms $q$-fibrations into $r$-fibrations. Similarly, $R$-module enrichment transforms $q$-acyclic $q$-fibrations into $r$-acyclic $r$-fibrations.  We summarize these results in a theorem, which will be proven in \S\ref{subsec:enrichedr} below.

\begin{thm}\mylabel{enriched-comparison}
Let $R$ be a commutative ring and define \[ \cI_R = \{ S^{n-1}_R \rtarr D^n_R \mid n \in \mathbb{Z} \} \qquad \text{and} \qquad \cJ_R = \{ 0 \rtarr D^n_R \mid n \in \mathbb{Z}\}.\]  Then $\cI_R$ and $\cJ_R$ are generating sets of cofibrations and acyclic cofibrations for the $q$-model structure, when compact generation is understood in the usual set based sense, and for the $r$-model structure, when compact generation is understood in the $R$-module enriched sense.
\end{thm}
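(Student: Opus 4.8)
The plan is to reduce the theorem to computing the two classes of maps cut out by the \emph{enriched} lifting property, and then to invoke the enriched small object argument of \S\ref{appendix}. View $\sM_R$ as enriched, tensored, and cotensored over the category of ungraded $R$-modules, with tensors $X\otimes_R M$, cotensors $\Hom_R(M,X)$, and hom-objects the $R$-modules $Z_0\Hom_R(X,Y)$ of chain maps; here $R$ must be commutative. For a set $\cK$ of maps of $\sM_R$, write $\cK^{\boxslash}$ for the class of maps $p$ such that the Leibniz cotensor $\hathom(i,p)$ is a \emph{split epimorphism of $R$-modules} for every $i\in\cK$ --- the enriched form of ``$p$ has the right lifting property against $\cK$''. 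The $q$-assertion is \myref{qmodelR}, so only the $r$-assertion remains, and for it it suffices to check: (a) the domains and codomains of the maps in $\cI_R$ and $\cJ_R$ are suitably small (compact) in the enriched sense, so that the \emph{sequential} enriched small object argument factors every map of $\sM_R$ as an enriched $\cI_R$- or $\cJ_R$-cell complex followed by a map in $\cI_R^{\boxslash}$ or $\cJ_R^{\boxslash}$; and (b) $\cJ_R^{\boxslash}=\sF_r$ and $\cI_R^{\boxslash}=\sF_r\cap\sW_r$. Granting (a) and (b), the left factors produced lie in ${}^{\boxslash}(\sF_r)=\sC_r\cap\sW_r$ and in ${}^{\boxslash}(\sF_r\cap\sW_r)=\sC_r$, so these are genuine model-structure factorizations for the $r$-model structure of \myref{hmodelR}; hence $\cI_R$ and $\cJ_R$ generate that model structure in the enriched sense, and a retract argument identifies $\sC_r$ and $\sC_r\cap\sW_r$ with the retract closures of the enriched cell complexes.

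For (a), the cotensor hom-functors $Z_0\Hom_R(D^n_R,-)\iso(-)_n$ and $Z_0\Hom_R(S^{n-1}_R,-)\iso Z_{n-1}(-)$ preserve sequential colimits, since colimits in $\sM_R$ are formed degreewise and filtered colimits of $R$-modules are exact. For the first half of (b), one identifies $\hathom(0\rtarr D^n_R,p)$ with the $R$-module map $p_n\colon E_n\rtarr B_n$; thus $p\in\cJ_R^{\boxslash}$ exactly when every $p_n$ is a split epimorphism of $R$-modules, i.e.\ when $p$ is $R$-split, i.e.\ an $r$-fibration by \myref{reassure}.

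The heart of the argument is the second half of (b), for which one identifies $\hathom(S^{n-1}_R\rtarr D^n_R,p)$ with the $R$-module map $E_n\rtarr Z_{n-1}E\times_{Z_{n-1}B}B_n$, $e\mapsto(de,pe)$. If $p$ is an $r$-acyclic $r$-fibration, set $K=\ker p$; then $0\rtarr K\rtarr E\rtarr B\rtarr 0$ is $R$-split exact and $p$ is an $h$-equivalence, so by \myref{Rsplitrh} it is isomorphic over $B$ and under $K$ to the canonical split sequence. Hence $p$ is isomorphic as an arrow to the projection $K\oplus B\rtarr B$, which forces $K$ to be contractible; for that projection the map above becomes $(x,b)\mapsto(dx,b)$, a split epimorphism of $R$-modules because $K$ is contractible. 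As $\cK^{\boxslash}$ is invariant under isomorphism of arrows, $p\in\cI_R^{\boxslash}$. Conversely, let $p\in\cI_R^{\boxslash}$. Each generator in $\cI_R$ has the enriched left lifting property against every member of $\cI_R^{\boxslash}$ tautologically --- an $R$-linear section of the Leibniz cotensor \emph{is} an enriched lift --- and this is inherited by enriched cell complexes; since $0\rtarr D^n_R$ factors as $0\rtarr S^{n-1}_R\rtarr D^n_R$, with the first map a pushout of $S^{n-2}_R\rtarr D^{n-1}_R\in\cI_R$ along $S^{n-2}_R\rtarr 0$ and the second in $\cI_R$, it is such a cell complex, whence $\cI_R^{\boxslash}\subseteq\cJ_R^{\boxslash}=\sF_r$ and $p$ is $R$-split. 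Restricting an $R$-linear section of $E_n\rtarr Z_{n-1}E\times_{Z_{n-1}B}B_n$ to the pairs $(z,0)$ with $z$ a cycle of $K$ yields an $R$-linear section of $d\colon K_n\rtarr Z_{n-1}K$ for every $n$, and such a family assembles into a contracting homotopy of $K$; finally, an $R$-split epimorphism with contractible kernel is automatically a homotopy equivalence, since a null-homotopy of the off-diagonal term of the differential on $E\iso K\oplus B$ promotes the graded section of $p$ to a chain homotopy inverse. Hence $p\in\sF_r\cap\sW_r$, so $\cI_R^{\boxslash}=\sF_r\cap\sW_r$, which completes (b).

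The step I expect to be the main obstacle is precisely this identification of $\cI_R^{\boxslash}$. The \emph{unenriched} lifting property against $\cI_R$ sees only that $\ker p$ is acyclic, and --- as the $\bZ/4$ example of \myref{projectives} shows --- acyclicity does not imply contractibility; it is the $R$-module enrichment that upgrades acyclicity of the kernel to contractibility, through the $R$-linear sections above, and this is exactly what separates the $r$-picture from the $q$-picture. Setting all of this up carefully --- the enriched weak factorization systems, the closure properties of enriched cell complexes used in (b), and the compactness bookkeeping in (a) --- is the technical work carried out in \S\ref{appendix}.
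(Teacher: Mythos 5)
Your proposal is correct and follows essentially the paper's own route: the $q$-half is \myref{qmodelR}, and the $r$-half is exactly the identification of the enriched RLP classes of $\cJ_R$ and $\cI_R$ with $\sF_r$ and $\sF_r\cap\sW_r$ (the paper's \myref{cofgenrtrivialcofibrations} and \myref{cofgenrcofibrations}), including the same key device of restricting the postulated $R$-linear section to pairs $(z,0)$ to manufacture a contracting homotopy of $\ker p$, after which one concludes since an (enriched) WFS is determined by its right class. The only divergence is at the tail of the converse, where the paper constructs an explicit DG section $\sigma_n(b)=\eta_n(b,\eta_{n-1}d(b))$ and identifies $p$ with the projection $B\oplus\ker p\rtarr B$ via \cite[18.2.7]{MP}, whereas you obtain degreewise splitness from the $\cI_R$-cell factorization $0\rtarr S^{n-1}_R\rtarr D^n_R$ and then upgrade to a homotopy equivalence by the graded-split/contractible-kernel lemma (\myref{Splitting0} with $A=R$); both finishes are sound.
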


The role of $R$-module enrichment in differentiating the $r$- and $q$-model structures is also visible on the side of the cofibrations. Among the $q$-cofibrations are the relative cell complexes.  They are maps that can be built as countable composites of pushouts of coproducts of the maps $S^{n-1}_R \rtarr D^n_R$; 
see~\myref{cellob} . We refer to
these as the $q$-cellular cofibrations. Any $q$-cofibration is a retract of a $q$-cellular cofibration. 

By contrast, among the $r$-cofibrations are the enriched relative cell complexes.
They are maps that can be built as countable composites of pushouts of coproducts of tensor products of the maps 
$S^{n-1}_R \rtarr D^n_R$ with any (ungraded) $R$-module $V$; see~\myref{cellob2}. If $R$ is not semi-simple, we have $R$-modules $V$ that are not projective, 
and they are allowed.  We refer to these as the $r$-cellular cofibrations. Any $r$-cofibration is a retract 
of an $r$-cellular cofibration. Clearly $q$-cofibrations are $r$-cofibrations, but not conversely.

This discussion, including \myref{enriched-comparison}, will generalize without change to $\sM_A$, as we 
shall see in \S\ref{subsec:rforDGA}.

\begin{rmk} We have often used the term enrichment, and it will help if the reader has seen some enriched category theory.  In fact, the category $\sM_R$ is naturally enriched in three different
categories: the category $\sV_R$ of ungraded $R$-modules, the category of graded $R$-modules, and itself (since it is closed symmetric monoidal).  Our discussion focuses on enrichment in $\sV_R$ for simplicity and relevance.  The $\sV_R$-enriched hom objects in $\sM_R$ are just the $R$-modules $\sM_R(M,N)$ of maps of DG $R$-modules $M\rtarr N$, so the reader unfamiliar with 
enriched category theory will nevertheless be familiar with the example we use. 
\end{rmk}

\subsection{The enriched lifting properties} 

We recall the definition of a weak factorization system (WFS) in \myref{wfs}, but this structure is already familiar:   The most succinct 
among the equivalent definitions of a model structure is that it consists of a class $\sW$ of maps that satisfies
the two out of three property together with two classes of maps $\sC$ and $\sF$ such that $(\sC \cap \sW,\sF)$ 
and $(\sC,\sF \cap \sW)$ are WFSs.  This form of the definition is due to Joyal and
Tierney \cite[7.8]{JoyalTierney}, and expositions are given in  \cite{MP, Riehl}.  Quillen's SOA, which we use in the original sequential form given in \cite{Quillen},  codifies a procedure for constructing (compactly generated) WFSs.   

There are analogous {\em enriched} WFSs, as defined in \myref{ewfs}.  A general treatment
is given in \cite[Chapter 13]{Riehl}, but we shall only consider enrichment in the cosmos
$\sV_R$ of $R$-modules, with monoidal structure given by the tensor product. Henceforth, we say ``enriched'' to mean 
``enriched over $\sV_R$''.  From now on, for DG $R$-modules $M$ and $N$ we agree to write $M\otimes N$ and 
$\Hom(M,N)$ for the DG $R$-modules $M\otimes_R N$ and $\Hom_R(M,N)$, to simplify notation.  With this notation, $\sM_R(M,N)$ is the $R$-module of degree zero cycles 
in $\Hom(M,N)$.

Since $\sV_R$ embeds in $\sM_R$ as the chain complexes concentrated in degree zero, 
$M\otimes V$ and $\Hom(V,M)$ are defined for $R$-modules $V$ and DG $R$-modules $M$.
Categorically, these give tensors and cotensors in the $\sV_R$-category $\sM_R$. Since $\sM_R$ is
bicomplete in the usual sense, this means that $\sM_R$ is a bicomplete $\sV_R$-category: it has all enriched limits and colimits, and the ordinary limits and colimits satisfy enriched universal properties. 

Enriched WFSs are defined in terms of enriched lifting properties, which we specify here.
Let $i \colon W \rtarr X$ and $p \colon E \rtarr B$ be maps of DG $R$-modules. 
Let $\Sq(i,p)$ denote the $R$-module (not DG $R$-module) of commutative squares 
from $i$ to $p$ in $\sM_R$. It is defined via the pullback square of $R$-modules
\begin{equation}\label{plsquare}
\xymatrix{ \Sq(i,p) \ar[d] \ar[r] \ar@{}[dr] & \sM_R(W,E) \ar[d]^{p_*} \\ \sM_R(X,B) \ar[r]_{i^*} & \sM_R(W,B).}\\
\end{equation}
The underlying set of the $R$-module $\Sq(i,p)$ is the set of commutative squares
\begin{equation}\label{ptsquare}
 \xymatrix{
W\ar[d]_{i}  \ar[r] & E \ar[d]^{p} \\
X \ar[r] & B \\} 
\end{equation}
of maps of DG $R$-modules. The unlabeled maps in \eqref{plsquare} pick out the unlabeled maps 
in (\ref{ptsquare}).  The maps $p_*$ and $i^*$ induce a map of $R$-modules
\begin{equation}\label{epsilon-defn} \epz \colon \sM_R(X,E) \rtarr \Sq(i,p).\end{equation}

\begin{defn}\mylabel{enrichedLP} The map $i$ has the \emph{enriched left lifting property} against $p$, or 
equivalently the map $p$ has the \emph{enriched right lifting property} against $i$, written 
$i \underline{\boxslash} p$, if $\epz \colon \sM_R(X,E) \rtarr \Sq(i,p)$ is a split epimorphism  
of $R$-modules.  That is, $i \underline{\boxslash} p$ if there is an $R$-map $\et\colon \Sq(i,p)\rtarr \sM_R(X,E)$
such that $\epz \et = \id$.
\end{defn} 

\begin{lem} If $i$ has the enriched LLP against $p$, then $i$ has the usual unenriched LLP
against $p$.
\end{lem}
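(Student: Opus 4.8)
The plan is to unwind the two definitions and observe that the enriched lifting property is, on underlying sets, a stronger statement than the ordinary one. Recall that $i \underline{\boxslash} p$ means that the map $\epz \colon \sM_R(X,E) \rtarr \Sq(i,p)$ of \eqref{epsilon-defn} admits an $R$-linear section $\et$ with $\epz\et = \id$. In particular $\epz$ is then a surjection of the underlying sets. So the first step is simply to recall what the underlying sets of $\sM_R(X,E)$ and $\Sq(i,p)$ are: by construction, the underlying set of $\sM_R(X,E)$ is the set of maps $X\rtarr E$ of DG $R$-modules, and the underlying set of $\Sq(i,p)$ is, as noted just after \eqref{ptsquare}, the set of commutative squares \eqref{ptsquare} from $i$ to $p$. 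The map $\epz$ sends a map $\la\colon X\rtarr E$ to the square whose top composite is $\la i$ and whose bottom composite is $p\la$.

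The second step is to interpret surjectivity of $\epz$ on underlying sets. Given an arbitrary commutative square \eqref{ptsquare}, i.e. maps $g\colon W\rtarr E$ and $f\colon X\rtarr B$ with $pg = fi$, surjectivity of $\epz$ produces some $\la\colon X\rtarr E$ with $\la i = g$ and $p\la = f$; that is precisely a solution to the lifting problem \eqref{ptsquare}. Hence the existence of the set-level section (a fortiori, of the $R$-linear one) gives a lift in every commutative square, which is exactly the statement that $i$ has the ordinary (unenriched) LLP against $p$.

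So the proof is the short chain: $i\underline{\boxslash}p$ $\Rightarrow$ $\epz$ has an $R$-linear section $\Rightarrow$ $\epz$ is surjective on underlying sets $\Rightarrow$ every lifting square \eqref{ptsquare} has a solution $\Rightarrow$ $i$ has the unenriched LLP against $p$. There is essentially no obstacle here: the only thing to be a little careful about is keeping straight that $\Sq(i,p)$ was \emph{defined} (via the pullback \eqref{plsquare}) so that its underlying set is the set of commutative squares and that $\epz$ is compatible with this identification — but this is built into the setup in \S\ref{Rmod} and \eqref{epsilon-defn}, so nothing new needs to be checked. The one conceptual point worth stating explicitly in the write-up is that we discard the $R$-linearity of $\et$ and only use that $\epz$ is (split, hence) surjective as a map of sets; the enriched hypothesis is genuinely stronger than needed for this direction, which is of course the point of the lemma.
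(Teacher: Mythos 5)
Your proof is correct and is essentially the paper's argument: the paper simply applies the section $\et$ to the element of $\Sq(i,p)$ given by a commutative square and observes that the result is a lift, which is exactly your "discard $R$-linearity, use surjectivity of $\epz$ on underlying sets" step. No further comment is needed.
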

\begin{proof} If $\epz\et = \id$, then $\et$ applied 
to the element of $\Sq(j,f)$ displayed in \eqref{ptsquare} is a lift 
$X\rtarr E$ in that square.
\end{proof}

The notion of an enriched WFS is obtained by replacing lifting properties by
enriched lifting properties in the definition of the former; see \myref{ewfs}. It is easy to verify 
from the lemma that an enriched WFS is also an ordinary WFS. 
In particular, a model structure can be specified using a pair of enriched WFSs. 

Our interest in enriched lifting properties is not academic: we will shortly characterize the $r$-fibrations and 
$r$-acyclic $r$-fibrations as those maps satisfying enriched RLPs.  These characterizations will later be used to construct appropriate factorizations for the $r$-model structures on $\sM_A$.  

The proofs employ a procedure called the \emph{enriched} SOA.  As in our work in this paper, it can be used in situations to which the ordinary SOA does not apply.  Just as the classical SOA gives a uniform method for constructing compactly (or cofibrantly) generated WFSs, so the enriched SOA gives a uniform method for constructing compactly (or cofibrantly) generated enriched WFSs.
To avoid interrupting the flow and to collect material of independent interest in model category theory in one place, we defer technical discussion of the enriched SOA and related variant forms of the  SOA to \S\ref{appendix}, but we 
emphasize that the material there is essential to several later proofs. 
The following trivial example may help fix ideas.

\begin{ex}\mylabel{trivial} Consider $j \colon 0 \rtarr R$ and $p \colon E \rtarr B$ in $\sV_R$. Since $\sM_R(0,B) = 0$ and 
$\sM_R(R,B) \iso B$, $\Sq(j,p) \cong B$ and $j \underline{\boxslash} p$ if and only if $p \colon E \rtarr B$ is a split epimorphism. 
For the moment, write $\cJ$ for the singleton set $\{0 \rtarr R\}$.
Via the unenriched SOA, $\cJ$ generates a WFS on 
$\sV_R$ whose right class consists of the epimorphisms and whose left class consists of the monomorphisms with projective cokernel. Garner's variant of Quillen's SOA factors a map $X \rtarr Y$ in $\sV_R$ more economically through the direct sum  $X \oplus (\oplus_Y R)$ of $X$ with the free 
$R$-module on the underlying set of $Y$. Via the enriched SOA, $\cJ$ generates an enriched WFS on $\sV_R$ whose right class consists of the  $R$-split epimorphisms and whose left class consists of the monomorphisms. 
The enriched version of Garner's SOA (which is the enriched version we focus on) factors a map $X \rtarr Y$ as 
$X \rtarr X \oplus Y \rtarr Y$.
\end{ex}

\subsection{Enriching the $r$-model structure}\label{subsec:enrichedr}

With enriched WFSs at our disposal, we turn to the proof of statements
about the $r$-model structure on $\sM_R$ in \myref{enriched-comparison}.
We first expand \myref{trivial}. Recall from \myref{fibepi} that the set $\cJ_R$ generates a WFS
 on $\sM_R$ whose right class consists of the degreewise epimorphisms. 

\begin{ex}\mylabel{cofgenrtrivialcofibrations} Consider $j_n \colon 0 \rtarr D^n_R$ and a map $p \colon E \rtarr B$ in 
$\sM_R$. Since $\sM_R(0,B) = 0$ and $\sM_R(D^n_R, B) \cong B_n$, 
$$\epz \colon \sM_R(D^n,E) \rtarr \Sq(j_n,p)$$ is isomorphic to 
$$p_n \colon E_n \rtarr B_n.$$ 
Thus $j_n \underline{\boxslash} p$ if and only if $p_n$ is an $R$-split epimorphism. If this
holds for all $n$, then $\cJ_R \underline{\boxslash} p$. That is, $p$ has the enriched RLP against each map in $\cJ_R$ if and only if 
$p$ is an $R$-split epimorphism, which means that $p$ is an $r$-fibration. Since an enriched WFS, like an ordinary one, is determined by its right class, we conclude that the enriched WFS generated by $\cJ_R$ is the ($r$-acyclic $r$-cofibration, $r$-fibration) WFS.
\end{ex}

\begin{rmk}\mylabel{efficient-factorization}
The factorization produced by the enriched SOA 
applied to $\cJ_R$ is the precise algebraic analogue of the standard 
topological mapping {\em cocylinder} construction, as 
specified in \myref{mapping-factorization} and \eqref{MfNffact}. See \cite[\S 13.2]{Riehl}.
\end{rmk} 

\begin{ex} Consider $i_n \colon S^{n-1}_R \rtarr D^n_R$ and $p \colon E \rtarr B$ in $\sM_R$. We have a natural isomorphism $\sM_R(S^{n-1}_R,B) \cong Z_{n-1}B$ since a DG $R$-map $S^{n-1}_R \rtarr B$ is specified by an $(n-1)$-cycle in $B$.  It follows that
$$\epz \colon \sM_R(D^n,E) \rtarr \Sq(i_n,p)$$ 
is isomorphic to 
$$ (p_n,d)\colon E_n \rtarr B_n \times_{Z_{n-1}B} Z_{n-1}E.$$ 
By definition, $i_n\underline{\boxslash} p$ if and only if this map of $R$-modules has a section $\et_n$.
\end{ex}

It turns out that the enriched right lifting property against $\cI_R$ characterizes 
the $r$-acyclic $r$-fibrations. This is analogous to \myref{cofgenrtrivialcofibrations}, but less obvious.

\begin{lem}\mylabel{cofgenrcofibrations} A map $p\colon E\rtarr B$ in $\sM_R$ satisfies the enriched 
RLP against $\cI_R$ if and only if $p$ is an $r$-acyclic $r$-fibration.
\end{lem}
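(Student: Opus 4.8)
The plan is to prove both implications, working throughout with the concrete description of $\epz\colon\sM_R(D^n,E)\rtarr\Sq(i_n,p)$ as the map of $R$-modules $(p_n,d)\colon E_n\rtarr B_n\times_{Z_{n-1}B}Z_{n-1}E$ recorded in the Example immediately preceding the statement, so that $i_n\,\underline{\boxslash}\,p$ unwinds to the existence of a section $\et_n$ of this map of $R$-modules for every $n$. First I would prove the easy direction: assume $p$ is an $r$-acyclic $r$-fibration, i.e.\ $p$ is an $R$-split epimorphism and an $h$-equivalence. By \myref{Rsplitrh} (applied to the short exact sequence $0\rtarr\ker p\rtarr E\rtarr B\rtarr 0$, which is $R$-split and whose surjection is an $h$-equivalence, so its kernel is contractible), $p$ is isomorphic under $E$-ish data to the canonical split surjection $B\oplus\ker p\rtarr B$ with $\ker p$ a contractible complex of $R$-modules. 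For such a map the fibered map $(p_n,d)$ visibly admits an $R$-linear section: one uses the $R$-splitting on the $B_n$-coordinate together with a contracting homotopy on $\ker p$ to hit the prescribed cycle in $E_n$; this constructs $\et_n$ compatibly in $n$, so $\cI_R\,\underline{\boxslash}\,p$.

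For the converse, assume $\cI_R\,\underline{\boxslash}\,p$, so we have $R$-linear sections $\et_n$ of $(p_n,d)$ for all $n$. I would first extract that $p$ is an $r$-fibration: feeding $\et$ a square in which the top map $S^{n-1}_R\rtarr E$ is zero recovers exactly the condition that $p$ satisfies the enriched RLP against $\cJ_R$, i.e.\ by \myref{cofgenrtrivialcofibrations} that $p$ is an $R$-split epimorphism. (Alternatively, forgetting enrichment, the unenriched lemma shows $p$ has the unenriched RLP against $\cI_R$, hence is a $q$-acyclic $q$-fibration, but we want the $R$-linear refinement.) It remains to show $p$ is an $h$-equivalence; equivalently, since $p$ is an $R$-split epimorphism, that its kernel $K=\ker p$ is contractible. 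The section $\et_n$, together with the $R$-splittings of $p$, should assemble into an $R$-linear contracting homotopy on $K$: the compatible family $\{\et_n\}$ gives, for each cycle-plus-lift datum, a canonical $R$-linear choice of $n$-chain in $E$ mapping to it, and restricting attention to elements lying over $0\in B$ produces for each cycle in $K$ a bounding chain in $K$, $R$-linearly and naturally; the standard manipulation turns such data into a chain contraction $s\colon K\rtarr K$ with $ds+sd=\id$.

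The main obstacle is this last step — manufacturing the contracting homotopy on $K$ from the sections $\et_n$. The subtlety is that $\et_n$ a priori only bounds \emph{cycles} of $K_{n-1}$ (those are what a map $S^{n-1}_R\rtarr K$ sees), not arbitrary chains, so one does not immediately get $ds+sd=\id$; one must bootstrap, using the sections in all degrees simultaneously and the fact that $p$ is already known to be $R$-split, to upgrade "every cycle bounds, $R$-linearly and coherently" to a genuine contraction. Concretely, I expect to define $s$ by a downward induction (or a direct formula) on a chosen $R$-module decomposition $K_n\cong Z_nK\oplus B_nK'$ coming from the splittings, using $\et$ on the cycle part and $d$-preimages on the rest; checking $ds+sd=\id$ is then the real computation. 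Once $K$ is contractible, $p$ is an $h$-equivalence, and combined with "$p$ is an $r$-fibration" this gives that $p$ is an $r$-acyclic $r$-fibration, completing the proof. An alternative packaging of the hard step is to invoke \myref{liftrh}: knowing $p$ is an $r$-fibration, one solves the lifting problem of $0\rtarr K$ against $p$ after building a DG section of $p$, but the enriched hypothesis is exactly what lets us keep everything $R$-linear, which is the content we are after.
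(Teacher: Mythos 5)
Your forward direction is fine and essentially the paper's argument: split $p$ as $B\oplus\ker p\rtarr B$ with contractible kernel and use a contracting homotopy $s$ to define the section $(b,c)\mapsto (b,s(c))$ of $(p_n,d)$. The converse, however, has a genuine gap, in two places. First, your claim that ``feeding $\et$ a square in which the top map $S^{n-1}_R\rtarr E$ is zero recovers exactly the enriched RLP against $\cJ_R$'' is not correct: commutativity of such a square forces the bottom element $b\in B_n$ to satisfy $d(b)=p(0)=0$, so the zero-top-map squares are parametrized by $Z_nB$, not $B_n$. Restricting $\et_n$ therefore only gives an $R$-linear section of $p$ over the cycles, $Z_nB\rtarr Z_nE$, which is strictly weaker than $p$ being an $R$-split epimorphism, and your parenthetical fallback (the unenriched RLP) concedes it does not supply the $R$-linear refinement either. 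The paper closes this exact hole with the formula $\si_n(b)=\et_n(b,\et_{n-1}d(b))$: using $\epz\et=\id$ and $d^2=0$ one checks $p_n\si_n=\id$ and $d\si_n=\si_{n-1}d$, so $\si$ is a section of $p$ \emph{as a map of DG $R$-modules}, whence $E\iso B\oplus\ker p$ over $B$. Nothing in your sketch produces this section, and it is the pivot of the whole converse.

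Second, the step you yourself flag as ``the real computation''---manufacturing a contracting homotopy on $K=\ker p$ from data that a priori only bounds cycles---is left as a plan rather than an argument, and the plan as stated (choose a decomposition $K_n\iso Z_nK\oplus\cdots$ and induct) is exactly where the difficulty lives. The paper resolves it with the explicit formula $s_n(c)=\et_{n+1}\bigl(0,\,c-\et_n(0,d(c))\bigr)$: the correction term $-\et_n(0,d(c))$ is precisely what makes the second coordinate a cycle of $K$ lying over $0\in B$, so that $\et_{n+1}$ may legitimately be applied, and then $ds_n+s_{n-1}d=\id$ follows by a short direct computation using $d\et=\pi_2\epz\et=\pi_2$. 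Without either this formula (or an equivalent construction) and the DG section $\si$, your proposal establishes neither that $p$ is an $r$-fibration nor that its kernel is contractible, so the converse implication is not yet proved.
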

\begin{proof} Recall that the $r$-acyclic $r$-fibrations are exactly the $h$-acyclic $h$-fibrations. 
By \cite[Corollary 18.2.7]{MP}, $p$ is an $h$-acyclic $h$-fibration if and only if $p$ is isomorphic to the projection map $B \oplus C \rtarr B$ where $C \cong \ker p$ is contractible. Suppose given such a map and let maps  $s_n \colon C_n \rtarr C_{n+1}$ give a contracting homotopy, so that $ds + sd = \mathrm{id}_C$. The pullback $B_n \times_{Z_{n-1}B} (Z_{n-1}B \oplus Z_{n-1}C)$ is isomorphic to $B_n \oplus Z_{n-1}C$. We can define a section of the map $B_n \oplus C_n \rtarr B_n \oplus Z_{n-1}C$ by sending $(b,c)$ to $(b, s(c))$; here $c = ds(c) +sd(c) = ds(c)$ since $c$ is a boundary. This shows that the $r$-acyclic $r$-fibrations satisfy the enriched RLP against $\cI_R$.

Conversely, suppose that $p$ has the enriched RLP. Identify $Z_nB$ with the submodule 
$Z_nB \times \{0\}$ of the pullback $B_n \times_{Z_{n-1}B}Z_{n-1}E$. Restriction of the postulated 
section $\et_n$ gives a section $\et_n\colon Z_nB \rtarr Z_nE$ of $p_n\vert_{Z_nE}$. Define 
$\si_n \colon B_n \rtarr E_n$  by
\[ \si_n(b) = \et_n(b, \et_{n-1}d(b)). \]
Since $\epz = (p_n,d)$, $\epz \et_n = \id$, and $d^2=0$, we see that $p_n\si_n(b) = b$ and 
\[ d\si_n(b)= \pi_2\epz \et_n(b,\et_{n-1}d(b)) = \et_{n-1}d(b) =  \si_{n-1}d(b). \]
Therefore $\si$ is a section of $p_n$ and a map of DG $R$-modules. 

The section $\si$ and the inclusion $\ker p\subset E$ define a chain map $B \oplus \ker p \rtarr E$ over $B$. We claim that it is an isomorphism. It is injective since if $(b,c)\in B \oplus \ker p$ maps to zero then $\si(b)+c=0$, hence $b=p\si(b)+p(c)=0$, and thus $c=-\si(b) = 0$. It is surjective since it sends
$(p(c), c-\si p(c))$ to $c$.

It remains to show that $\ker p$ is $h$-acyclic.  We define a contracting homotopy $s$ on $\ker p$
by letting $s_n \colon \ker p_n \rtarr \ker p_{n+1}$ send $c$ to $\et_{n+1}(0,c - \et_n(0,d(c)))$. Then
\begin{align*} (d s_n + s_{n-1} d)(c) &= d\et_{n+1}(0,c - \et_n(0,d(c))) + 
\et_n(0, d(c) - \et_{n-1}(0,d^2(c)))  \\ &= c- \et_n(0,d(c)) + \et_n(0,d(c) - \et_{n-1}(0,0)) \\ 
&= c - \et_n(0,d(c)) + \et_n (0,d(c)) = c. \qedhere \end{align*} 
\end{proof}

\begin{rmk}\mylabel{efficient-factorization-2}
The factorization produced by the enriched SOA 
applied to $\cI_R$ is the precise algebraic analogue of the standard 
topological mapping {\em cylinder} construction, as 
specified in \myref{mapping-factorization} and \eqref{MfNffact}. See \cite[\S 13.4]{Riehl}.
This observation and the less surprising \myref{efficient-factorization} illustrate some advantages of the variant forms of the SOA we promote in \S\ref{appendix}. Like Quillen's SOA, these are a priori infinite constructions; however in practice, they may converge much sooner.
\end{rmk}

\myref{enriched-comparison} is immediate from \myref{cofgenrcofibrations} and \myref{cofgenrtrivialcofibrations}:
The $r$-model structure was established in \myref{hmodelR} and the cited results
show that its two constituent WFSs are generated in the enriched sense by the  sets $\cI_R$ and $\cJ_R$. 

For commutative rings $R$, we now have a structural understanding of the $r$-cofibrant and $r$-acyclic and $r$-cofibrant
objects that was invisible to our original proof of the model structure. It is a special case of \myref{enrichedboxslashclosure} below.

\begin{cor}\mylabel{retractr}  A DG $R$-module is $r$-cofibrant or $r$-acyclic and $r$-cofibrant if and only if it is a 
retract of an enriched $\cI_R$-cell complex or an enriched $\cJ_R$-cell complex.
\end{cor}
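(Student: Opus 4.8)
The plan is to deduce Corollary~\ref{retractr} as the special case of a general closure result for classes of maps defined by an enriched right lifting property, exactly analogously to the unenriched fact that the left class of a compactly generated WFS consists of the retracts of relative cell complexes. Concretely, I would argue as follows.

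\textbf{Step 1: Identify the two enriched WFSs.} By Theorem~\ref{hmodelR}, the $r$-model structure on $\sM_R$ exists, so its two constituent weak factorization systems are $(\sW_r\cap\sC_r,\sF_r)$ and $(\sC_r,\sW_r\cap\sF_r)$. By Lemma~\ref{cofgenrtrivialcofibrations}, a map lies in $\sF_r$ iff it has the enriched RLP against $\cJ_R$, and by Lemma~\ref{cofgenrcofibrations}, a map lies in $\sW_r\cap\sF_r$ iff it has the enriched RLP against $\cI_R$. Thus both WFSs are generated, in the enriched sense, by a set of maps.

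\textbf{Step 2: Invoke the enriched SOA closure property.} Here I would cite the general theory of the enriched small object argument (deferred to \S\ref{appendix}; cf.~\cite[Chapter 13]{Riehl}): when an enriched WFS on a bicomplete $\sV_R$-category is generated by a set $\cK$ of maps, its left class consists precisely of the retracts of enriched relative $\cK$-cell complexes, where an enriched relative $\cK$-cell complex is (as described after Theorem~\ref{enriched-comparison} and in \myref{cellob2}) a transfinite — here countable — composite of pushouts of coproducts of maps of the form $k\otimes V$ for $k\in\cK$ and $V$ an ungraded $R$-module. Applying this with $\cK=\cI_R$ gives that the $r$-cofibrations are exactly the retracts of enriched $\cI_R$-cell complexes, and with $\cK=\cJ_R$ that the $r$-acyclic $r$-cofibrations are exactly the retracts of enriched $\cJ_R$-cell complexes.

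\textbf{Step 3: Specialize to objects.} An object $X$ is $r$-cofibrant iff $0\rtarr X$ is an $r$-cofibration, and $r$-acyclic and $r$-cofibrant iff $0\rtarr X$ is an $r$-acyclic $r$-cofibration. Since $0$ is the initial object, an enriched relative cell complex with domain $0$ is exactly what we call an enriched cell complex, and a retract under $0$ of such a map is the same as a retract of the object $X$ itself. So Step~2, applied to maps out of $0$, yields precisely the statement of the corollary.

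The only genuine content is Step~2, and it is really a bookkeeping consequence of the construction in \S\ref{appendix} rather than something to be proved afresh here; the main obstacle — already resolved by the time this corollary appears — is setting up the enriched SOA so that its output WFS has the retract-of-cell-complexes description, and checking (done in Lemmas~\ref{cofgenrtrivialcofibrations} and \ref{cofgenrcofibrations}) that the enriched WFSs so generated by $\cI_R$ and $\cJ_R$ are the two halves of the already-established $r$-model structure. Given those, the corollary is immediate, and indeed, as the text notes, it is subsumed by the more general \myref{enrichedboxslashclosure}, so the proof can simply read: this is the case $M=0$ of \myref{enrichedboxslashclosure}, using that $\cI_R$ and $\cJ_R$ enrichedly generate the two WFSs of the $r$-model structure by Lemmas~\ref{cofgenrtrivialcofibrations} and \ref{cofgenrcofibrations}.
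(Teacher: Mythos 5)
Your proposal is correct and follows exactly the paper's route: the corollary is stated there as an immediate special case of \myref{enrichedboxslashclosure}, given that Lemmas \ref{cofgenrtrivialcofibrations} and \ref{cofgenrcofibrations} identify the enriched WFSs generated by $\cJ_R$ and $\cI_R$ with the two constituent WFSs of the already-established $r$-model structure. The only point worth making explicit is the hypothesis in \myref{enrichedboxslashclosure} that enriched relative cell complexes be monomorphisms (needed for the ``precisely retracts'' direction), which holds here since enriched relative $\cI_R$- and $\cJ_R$-cell complexes are degreewise split monomorphisms.
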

 
\section{The $q$- and $h$-model structures on the category $\sM_A$}\label{DGAch}

Now return to the introductory context of a commutative ring $R$ and a 
DG $R$-algebra $A$.  If we forget the differential and the 
$R$-module structure on $A$, then \S\ref{chainch} (applied to modules over graded rings) gives the category 
of left $A$-modules $q$- and $h$-model structures.  The fact that $A$ is an $R$-algebra 
is invisible to these model structures. Similarly, as we explain in this 
section, we can forget the $R$-module structure or, equivalently, let $R = \bZ$, 
and give the category $\sM_A$ of (left) DG $A$-modules $q$-, $h$-, and therefore $(q,h)$-model 
structures. 
Most of the proofs are similar or identical to those given in \cite{MP} 
for the parallel results in \S\ref{chainch},  and we indicate points of
difference and alternative arguments. The main exception is the verification of the factorization axioms for the $h$-model structure, which requires an algebraic generalization of the small object argument discussed in \S\ref{ASOA}. 

\subsection{Preliminaries and the adjunction $\bF \dashv \bU$} 
Remember that $\otimes$ and $\Hom$ mean $\otimes_R$ and $\Hom_R$. The category $\sM_A$ is bicomplete; its 
limits and colimits are limits and colimits in $\sM_R$ with the induced actions of $A$.  It is also 
enriched, tensored, and cotensored over the cosmos $\sM_R$.  The internal hom objects  are 
the DG $R$-modules $\Hom_A(X,Y)$, where $\Hom_A(X,Y)$ is the subcomplex of 
$\Hom(X,Y)$ consisting of those maps $f$ that commute with the action of $A$. Precisely, remembering 
signs, for a map $f\colon X\rtarr Y$ of degree $n$ with components $f_i\colon X_i\rtarr Y_{i+n}$,
$ f(ax) = (-1)^{n\text{deg}(a)}a f(x)$.\footnote{As usual, we are invoking the rule of 
signs which says that whenever two things with a degree are permuted, the appropriate sign should be 
introduced.}
For a DG $A$-module $X$ and a DG $R$-module $K$, 
the tensor $X\otimes K$ and cotensor $X^K = \Hom(K,X)$ are the evident DG 
$R$-modules with left $A$-actions given by $a(x\otimes k) = (ax)\otimes k$ and 
$(af)(k) = (-1)^{\deg(a)\deg(f)}f(ak)$. 
We have the adjunctions
\begin{equation}\label{2variable}
 \Hom_A(X\otimes K,Y) \iso \Hom(K,\Hom_A(X,Y))\iso \Hom_A(X,\Hom(K,Y)).
\end{equation}

If $A$ is commutative, where of course the graded sense of commutativity is
understood, then $\sM_A$ is a cosmos; the tensor product $X\otimes_A Y$ and
internal hom $\Hom_A(X,Y)$ inherit $A$-module structures from $X$ or, equivalently, $Y$.

Define the extension of scalars functor $\bF\colon \sM_R\rtarr \sM_A$ by $\bF X = A\otimes X$. 
It is left adjoint to the underlying DG $R$-module functor $\bU\colon \sM_A\rtarr \sM_R$.  The action 
maps $A\otimes X\rtarr X$ of $A$-modules $X$ give the counit $\al$ of the adjunction. The unit of $A$ 
induces maps $K = R\otimes K \rtarr A\otimes K$ of DG $R$-modules that give the unit $\io$ of the adjunction.
Categorically, a DG $R$-algebra $A$  is a monoid in the symmetric monoidal category $\sM_R$, and a DG $A$-module 
is the same structure as an algebra over the monad $\bU\bF$ associated to the monoid $A$.  That is, the adjunction 
is monadic.  

Logically, we have two adjunctions $\bF \dashv \bU$ in sight, one between graded $R$-modules and graded 
$A$-modules and the other between DG $R$-modules and DG $A$-modules, but we shall only use the latter here. We briefly use the former in \S\ref{relproj}, where we discuss the sense in which $\bF$
should be thought of as a ``free $A$-module functor''.  Unless $X$ is free as an $R$-module, 
$\bF X$ will not be free as an $A$-module.  In general, $\bF X$ is free in a relative sense 
that we make precise there.  We use $\bF$ to construct our model structures on 
$\sM_A$, but when developing the $q$-model structure we only apply it to free $R$-modules.

\subsection{The $q$-model structure}\label{SecqmodelA}  Again, this is the model structure in common use.
We can construct it directly, without reference to $\sM_R$, or we can use a standard
argument recalled in \myref{cofcrit} to lift the $q$-model structure from $\sM_R$ to $\sM_A$.  
We summarize the latter approach because its enriched variant will appear when we transfer the 
$r$-model structure from $\sM_R$ to $\sM_A$ in \S\ref{sec:rmodel}. 
Thus define the $q$-model structure on $\sM_A$ by requiring $\bU$ to create the
weak equivalences and fibrations from the $q$-model structure on $\sM_R$.
Recall \myref{qRmod}.

\begin{defn} Define $\bF\cI_R$ and $\bF\cJ_R$ to be the sets of maps
in $\sM_A$ obtained by applying $\bF$ to the sets of maps $\cI_R$ and $\cJ_R$
in $\sM_R$. Define $\sW_q$ and $\sF_q$ to be the subcategories of maps $f$ in $\sM_A$ such 
that $\bU f$ is in $\sW_q$ or $\sF_q$ in $\sM_R$; that is, $f$ is a quasi-isomorphism
or surjection.  Define $\sC_q$ to be the subcategory of maps that have the LLP with
respect to $\sF_q\cap \sW_q$.
\end{defn}

\begin{thm}\mylabel{qmodelA}  
The subcategories $(\sW_q,\sC_{q},\sF_{q})$ 
define a compactly generated model category structure on $\sM_A$ 
called the $q$-model structure.  The sets $\bF\cI_R$ and $\bF\cJ_R$ 
are generating sets for the $q$-cofibrations and the $q$-acyclic $q$-cofibrations.
Every object is $q$-fibrant and the $q$-model structure is proper.  
If $A$ is commutative, the cosmos $\sM_A$ is a monoidal model category under $\otimes_A$. In general,
$\sM_A$ is an $\sM_R$-model category, and $\bF\dashv\bU$ is a Quillen adjunction
between the $q$-model structures on $\sM_A$ and $\sM_R$.  In particular, $\bF$ preserves $q$-cofibrations and $q$-acyclic $q$-cofibrations.
\end{thm}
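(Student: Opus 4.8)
The plan is to obtain the $q$-model structure on $\sM_A$ by transferring the compactly generated $q$-model structure of $\sM_R$ (\myref{qmodelR}) along the monadic adjunction $\bF\dashv\bU$, using the standard lifting criterion recalled in \myref{cofcrit}. Since $\sM_A$ is bicomplete with all colimits created by $\bU$ in $\sM_R$, the functor $\bU$ preserves colimits; together with compactness of the domains of $\cI_R$ and $\cJ_R$ in $\sM_R$ this shows that the domains of $\bF\cI_R$ and $\bF\cJ_R$ are compact in $\sM_A$ (for instance $\sM_A(\bF S^{n-1}_R,-)\iso Z_{n-1}\bU(-)$, and passing to $(n-1)$-cycles commutes with sequential colimits). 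Hence the small object argument applies to $\bF\cI_R$ and $\bF\cJ_R$, producing the functorial factorizations, and by adjunction from \myref{qmodelR} the $\bF\cJ_R$-injective maps are exactly the maps $f$ with $\bU f$ a degreewise epimorphism, that is, the putative $q$-fibrations, while the $\bF\cI_R$-injective maps are the putative $q$-acyclic $q$-fibrations. To conclude via \myref{cofcrit} it then remains only to verify the acyclicity condition.

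That condition---every relative $\bF\cJ_R$-cell complex lies in $\sW_q$---is the one substantive point, and I would argue it in two steps. First, $\bF\cJ_R$ consists of the maps $0\rtarr\bF D^n_R$, and $\bF D^n_R=A\otimes D^n_R$ is contractible as a DG $A$-module: the evident contracting homotopy $s$ on $D^n_R$ (a degree-one $R$-map with $ds+sd=\id$) induces the $A$-linear contracting homotopy $\id_A\otimes s$. Hence a coproduct $\coprod_i(0\rtarr\bF D^{n_i}_R)$ is still contractible in its target, and its pushout along $0\rtarr Y$ is the inclusion $Y\rtarr Y\oplus\bigl(\bigoplus_i\bF D^{n_i}_R\bigr)$ of a direct summand whose complement is $h$-contractible, hence a $q$-equivalence. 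Second, since $\bU$ preserves colimits and $H_*$ commutes with sequential colimits, a sequential composite of $q$-equivalences in $\sM_A$ is a $q$-equivalence; so every relative $\bF\cJ_R$-cell complex is a $q$-equivalence. This establishes the compactly generated model structure $(\sW_q,\sC_q,\sF_q)$ with $\bF\cI_R$ and $\bF\cJ_R$ as generating sets for the $q$-cofibrations and the $q$-acyclic $q$-cofibrations. Moreover $\bF$ preserves colimits and retracts and carries $\cI_R$, $\cJ_R$ to $\bF\cI_R$, $\bF\cJ_R$, so $\bF$ preserves $q$-cofibrations and $q$-acyclic $q$-cofibrations and $\bF\dashv\bU$ is a Quillen adjunction.

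The remaining claims are formal. Every object $X$ is $q$-fibrant because the lifting problem posed by $0\rtarr\bF D^n_R$ against $X\rtarr 0$ trivially has a solution (equivalently, $\bU X$ is $q$-fibrant in $\sM_R$). Right properness holds because $q$-fibrations and $q$-equivalences are created by $\bU$, which preserves pullbacks, and $\sM_R$ is right proper. For left properness, note that the generating $q$-cofibration $\bF S^{n-1}_R\rtarr\bF D^n_R$ is a degreewise split monomorphism of DG $R$-modules, so---since $\bU$ preserves colimits and retracts, and monomorphisms are closed under pushout, coproduct, sequential composite and retract---every $q$-cofibration of $\sM_A$ is a monomorphism of DG $R$-modules; and a pushout of a $q$-equivalence along a monomorphism of DG $R$-modules is a $q$-equivalence by the five lemma applied to the associated long exact homology sequences. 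Finally, when $A$ is commutative the natural isomorphisms $\bF X\otimes_A\bF Y\iso\bF(X\otimes Y)$ reduce the pushout-product axiom to the monoidal axiom for the $q$-model structure on $\sM_R$ together with the fact that $\bF$ preserves $q$-cofibrations and $q$-acyclic $q$-cofibrations, while the unit axiom holds because the unit $A=\bF R$ is $q$-cofibrant, $R$ being $q$-cofibrant in $\sM_R$ (\myref{qcofibrant}); the same argument with the isomorphisms $\bF X\otimes K\iso\bF(X\otimes K)$ shows that $\sM_A$ is an $\sM_R$-model category in general.

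The main obstacle is the acyclicity condition of the second paragraph; every other step is either a direct transfer from $\sM_R$ along $\bF\dashv\bU$ or a formal consequence of the model axioms.
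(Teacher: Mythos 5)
Your proposal is correct and follows essentially the same route as the paper: transfer the compactly generated $q$-model structure along $\bF\dashv\bU$ via \myref{cofcrit}, with compactness from the free generators and acyclicity from the contracting homotopy on $A\otimes D^n_R$. The paper merely outsources the acyclicity check to the (enriched) proof of \myref{FJacyclicity} and cites \cite{MP} for properness, whereas you spell out these same arguments directly.
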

\begin{proof}  We refer to \myref{cofcrit}.
The sets of maps $\bF\cI_R$ and $\bF\cJ_R$ are compact in $\sM_A$ 
since their domains are free $A$-modules on $0$ or $1$ generator. Acyclicity follows from the proof of \myref{FJacyclicity} below: the relative $\bF\cJ_R$-cell complexes are contained in the enriched relative $\bF\cJ_R$-cell complexes, and the argument given there shows that these are $r$-equivalences, and hence $q$-equivalences.
Properness is proven in the same way as for $\sM_R$ in \cite[\S18.5]{MP}.

For $X, Y \in \sM_R$, the associativity isomorphism $(A \otimes X) \otimes Y\iso A\otimes (X\otimes Y)$ shows that $\bF$ 
preserves cotensors by $\sM_R$. Therefore \myref{cofcrit} implies that the $q$-model structure makes $\sM_A$ an $\sM_R$-model 
category. When $A$ is commutative,  $(A \otimes X) \otimes_A (A \otimes Y) \cong A \otimes (X \otimes Y)$ so that $\bF$ is 
a monoidal functor. Since the unit $A$ for $\otimes_A$ is cofibrant, it follows that the $q$-model structure on $\sM_A$ is monoidal.
\end{proof}

\subsection{The $h$-model structure}\label{SechmodelA}

The basic definitions are the same as for the $h$-model structure on $\sM_R$. 
We write $I$ for the  DG $R$-module $R \otimes I$ in this section.

\begin{defn}\mylabel{coffibalgtoo}  Just as in \myref{coffibalg}, an \emph{$h$-cofibration} 
is a map in $\sM_A$ that satisfies the homotopy extension property (HEP) and an
\emph{$h$-fibration} is a map that satisfies the 
covering homotopy property (CHP).  Let $\sC_h$ and $\sF_h$ denote the subcategories of 
$h$-cofibrations and $h$-fibrations. An \emph{$h$-equivalence} is a homotopy equivalence of
DG $A$-modules, and $\sW_h$ denotes the subcategory of $h$-equivalences.  
\end{defn}  

\begin{thm}\mylabel{hmodelA}  
The subcategories $(\sW_h,\sC_h,\sF_{h})$ 
define a model category structure on $\sM_A$ called the $h$-model 
structure.  The identity functor is a Quillen right adjoint from the 
$h$-model structure to the $q$-model structure. Every object is $h$-cofibrant 
and $h$-fibrant, hence
the $h$-model structure is proper. If $A$ is commutative, then 
$\sM_A$ is a monoidal model category.  In general, $\sM_A$ is an 
$\sM_R$-model category.
\end{thm}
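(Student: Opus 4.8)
The plan is to check the model category axioms for $(\sW_h,\sC_h,\sF_h)$ and then to read off the remaining assertions. Since $\sW_h$ is the class of homotopy equivalences of DG $A$-modules it satisfies the two out of three property and is closed under retracts, so by the Joyal--Tierney characterization of model structures it suffices to produce the two weak factorization systems $(\sC_h\cap\sW_h,\sF_h)$ and $(\sC_h,\sF_h\cap\sW_h)$. The lifting halves of these are free: the $h$-notion half of \myref{liftrh}, whose proof transcribes verbatim from $\sM_R$ to $\sM_A$ as the text indicates, supplies a lift in every square with an $h$-cofibration on the left and an $h$-fibration on the right as soon as one of the two is an $h$-equivalence. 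Granting the factorizations, the usual retract argument then identifies $\sC_h\cap\sW_h$, $\sF_h\cap\sW_h$, $\sC_h$, and $\sF_h$ with the appropriate lifting classes, each of which is closed under retracts; so everything comes down to the factorization axiom.

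This is the crux, and the elementary constructions available for $\sM_R$ do not survive the passage to $\sM_A$: an $h$-fibration of DG $A$-modules admits no handy algebraic description --- it is \emph{not} merely an $R$-split, or even $A$-split, epimorphism, since \myref{Rsplitrh} fails over $A$ --- so one cannot verify the covering homotopy property for the maps coming out of a mapping cocylinder construction by writing down splittings. Instead I would run the \emph{algebraic} small object argument of \S\ref{ASOA} on the (large) generating family $\{\,i_0\colon W\rtarr W\otimes I \mid W\in\sM_A\,\}$, exactly as is done for the topological Hurewicz structure in \cite{BR}. This produces a functorial factorization of each $f\colon X\rtarr Y$ through an object $Z$ in which the map $Z\rtarr Y$ lies in the right class of the resulting algebraic weak factorization system --- hence has the right lifting property against every $i_0$, that is, is an $h$-fibration --- while $X\rtarr Z$ is a sequential cell complex built from pushouts of coproducts of the maps $i_0$. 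Since each $i_0$ is a strong deformation retract inclusion and this property is preserved under coproduct, pushout, and sequential composition (which is part of what \S\ref{ASOA} provides), $X\rtarr Z$ is an $h$-acyclic $h$-cofibration; this gives the $(\sC_h\cap\sW_h,\sF_h)$ factorization. For the $(\sC_h,\sF_h\cap\sW_h)$ factorization I would first interpose the mapping cylinder $Mf=(X\otimes I)\cup_X Y$: the canonical map $X\rtarr Mf$ is an $h$-cofibration (the standard ``the mapping cylinder is a cofibration'' argument, using only the cylinder and transcribing from the topological and $\sM_R$ cases of \cite{MP}) and $Mf\rtarr Y$ is an $h$-equivalence, so applying the first factorization to $Mf\rtarr Y$ and composing writes $f$ as an $h$-cofibration followed by an $h$-fibration which is an $h$-equivalence by two out of three.

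The remaining assertions are then quick. Every object $X$ is $h$-cofibrant because $0\rtarr X$ satisfies the homotopy extension property --- for each $B$ the map $p_0\colon B^I\rtarr B$ has a section induced by the augmentation $I\rtarr S^0_R$ --- and $h$-fibrant because $X\rtarr 0$ satisfies the covering homotopy property, since each $i_0\colon W\rtarr W\otimes I$ has a retraction; left and right properness follow formally from all objects being $h$-cofibrant and $h$-fibrant. The identity functor from the $h$-model structure to the $q$-model structure on $\sM_A$ preserves all weak equivalences and all fibrations, since $\sW_h\subseteq\sW_q$ and $\sF_h\subseteq\sF_q$, and so is a right Quillen functor. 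Finally, when $A$ is commutative the pushout--product and unit axioms for $\otimes_A$, and in general the pushout--product axiom making $\sM_A$ an $\sM_R$-model category, are proved exactly as in the $\sM_R$ case in \cite{MP}: the cylinder $-\otimes I$ is compatible with $\otimes_A$ and with the $\sM_R$-tensoring, so a pushout--product of $h$-cofibrations has the homotopy extension property and is an $h$-equivalence whenever one of its factors is, while the unit $A$ is itself $h$-cofibrant, which disposes of the unit axiom.

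The step I expect to be genuinely hard is making the algebraic SOA run on this proper class of generating maps and checking that its left and right classes are precisely $\sC_h\cap\sW_h$ and $\sF_h$. For this one needs the ``large'' form of the algebraic SOA developed in \S\ref{appendix}; one must confirm that having the covering homotopy property against the single family $\{i_0\colon W\rtarr W\otimes I\}$ is really what cuts out the right class of the algebraic weak factorization system (essentially the definition of $h$-fibration); and one must show that the cellular maps it manufactures are exactly the $h$-acyclic $h$-cofibrations, for which the deformation retract description of the latter together with the lifting lemma are used. Everything else is either formal or a transcription of the corresponding arguments over $\sM_R$.
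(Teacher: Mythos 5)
Your proposal follows essentially the same route as the paper: the lifting axioms and the identification of the classes come from the transcribed topological arguments (\myref{liftrhtoo} and the deformation-retract descriptions), the crucial $(\sC_h\cap\sW_h,\sF_h)$ factorization is produced by the algebraic SOA of \S\ref{appendix} applied to the large family $\{i_0\colon W\rtarr W\otimes I\}$ exactly as in \cite{BR} (this is \myref{hfactor}, proved there via the pointed endofunctor represented by the mapping cocylinder $Nf$), and the second factorization, bifibrancy, properness, the Quillen adjunction, and the monoidal/$\sM_R$-enriched statements are handled just as you indicate, via the mapping cylinder and the arguments of \cite{MP}. The only cosmetic difference is that you argue the left SOA factor is an $h$-acyclic $h$-cofibration by cellular deformation-retract reasoning, whereas the paper gets this from the lifting-class characterization; both are fine.
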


The starting point of the proof, up through the verification of the factorization
axioms, is the same as the starting point in the special case $A=R$, and the proofs 
in \cite[\S18.2]{MP} of the following series of results work in precisely the same fashion.

Suppose we have DG $A$-modules $X$ and $Y$ under a DG $A$-module $W$, with given maps 
$i\colon W\rtarr X$ and $j\colon W\rtarr Y$.  Two maps  $f,g\colon X\rtarr Y$ under $W$ are
homotopic under $W$ if there is a homotopy $h\colon X\otimes I\rtarr Y$ between them such 
that $h(i(w)\otimes [I])=0$ for $w\in W$.  A cofiber homotopy equivalence
is a homotopy equivalence under $W$.  The notion of fiber homotopy equivalence is 
defined dually.

\begin{lem}\mylabel{CofEquiv} Let $i\colon W\rtarr X$ and $j\colon W\rtarr Y$ be $h$-cofibrations 
and $f\colon X\rtarr Y$ be a map under $W$.  If $f$
is a homotopy equivalence, then $f$ is a cofiber homotopy equivalence.
\end{lem}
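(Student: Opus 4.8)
The plan is to imitate the classical topological argument (as in \cite[\S18.2]{MP}), exploiting the fact that $i$ and $j$ are $h$-cofibrations, hence satisfy the HEP, and that the natural cylinder $X\otimes I$ together with its structure maps behaves exactly as in the topological setting. First I would pick a homotopy inverse $g'\colon Y\rtarr X$ for $f$ together with homotopies $H\colon g'f\htp \id_X$ and $K\colon fg'\htp \id_Y$; these need not respect $W$. The point is to repair $g'$ and the homotopies step by step so that everything takes place under $W$, using only the HEP for $i$ and $j$ and the two-out-of-six / standard cancellation tricks for homotopy equivalences.

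The key steps, in order, are: (1) Replace $g'$ by a map $g\colon Y\rtarr X$ under $W$. Since $j$ is an $h$-cofibration, the composite $g'j\colon W\rtarr X$ is homotopic to $i\colon W\rtarr X$ (via $H$ restricted along $j$, using $g'jf' \simeq \ldots$; more precisely $g'f i \htp i$ and $fi = j$ so $g'j \htp i$), and the HEP for $j$ lets us extend this homotopy to deform $g'$ into a map $g$ with $gj = i$. Standard arguments show $g$ is still a homotopy inverse of $f$. (2) Show $gf\htp \id_X$ \emph{under} $W$. We have a homotopy $L\colon gf\htp\id_X$; restricting $L$ along $i$ gives a self-homotopy of $i$ (a loop at $i$ in the space of maps $W\rtarr X$), and using the HEP for $i$ once more one deforms $L$ rel the appropriate end into a homotopy under $W$. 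This is the usual ``a homotopy equivalence which is a map under a cofibration is a homotopy equivalence under that cofibration, provided the inverse is also under it'' lemma, and the proof is a direct transcription since $\sM_A$ is an $\sM_R$-category with the cylinder $X\otimes I$ and the HEP available. (3) Similarly promote $fg\htp\id_Y$ to a homotopy under $W$, completing the proof that $f$ is a cofiber homotopy equivalence.

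The only genuine content beyond bookkeeping is step (2): turning an \emph{unbased} homotopy $gf\htp\id_X$ into one that is constant on $i(W)\otimes[I]$. The standard device is to glue the given homotopy to (a reparametrized version of) the restricted homotopy of $i$ run backwards, obtaining a homotopy whose restriction along $i$ is nullhomotopic rel endpoints, and then apply the HEP for $i$ to rectify it; one must check the homotopy relating $g'f$ to $gf$ (produced in step (1)) is compatible so that the composite homotopy $gf\htp\id_X$ can indeed be arranged to be under $W$. I expect this gluing-and-reparametrization bookkeeping — keeping track of which homotopies are rel $W$ and which endpoints are being fixed — to be the main obstacle, but it is entirely formal: no feature of DG $A$-modules is used beyond the existence of the cylinder functor $(-)\otimes I$, its faithful interaction with the $A$-action, and the HEP characterizing $h$-cofibrations, all of which hold verbatim as in \cite[\S18.2]{MP}. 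Hence the proof reduces to citing or transcribing those arguments.
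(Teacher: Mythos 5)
Your step (1) is fine and is how the argument the paper invokes (the transcription of \cite[\S 18.2]{MP}) also begins: straighten a homotopy inverse $g'$ of $f$ to a map $g$ under $W$ using the HEP for $j$. The gap is step (2). It is simply not true that two maps under $W$ which are freely homotopic are homotopic under $W$, even when the structure map is an $h$-cofibration, and the HEP for $i$ cannot ``rectify'' the given homotopy $L\colon gf\htp \id_X$: the loop $L\circ(i\otimes\id_I)$ at $i$ is a genuine obstruction. A counterexample already in $\sM_{\bZ}$ (where $h$-cofibration means degreewise split monomorphism): let $X = D^0_{\bZ}\oplus S^0_{\bZ}$ with generators $a$ in degree $0$, $b=d(a)$ in degree $-1$, and a cycle $e$ in degree $0$; let $W\subset X$ be spanned by $b$ and $e$, and let $\ph\colon X\rtarr X$ be the chain map with $\ph(a)=a+e$, $\ph(b)=b$, $\ph(e)=e$. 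Then $\ph$ is the identity on $W$ and is freely homotopic to $\id_X$ (via $s$ with $s(b)=e$, $s(a)=s(e)=0$), but any homotopy vanishing on $W$ vanishes identically since $X_1=0$, so $\ph$ is not homotopic to $\id_X$ under $W$. Taking $Y=X$, $j=i$, $f=\ph$ and the homotopy inverse $g=\id_X$ (already under $W$), your step (2) would assert exactly the false statement. Note that $\ph$ is nevertheless a cofiber homotopy equivalence (it is an isomorphism, with inverse $a\mapsto a-e$): the point is that the inverse produced in step (1) need not itself be rectifiable, which is why the lemma cannot be finished the way you propose.

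What the classical proof (and hence the paper, via \cite[\S 18.2]{MP}) does instead is weaker at this stage and then formal. From the loop $\la = L\circ(i\otimes\id_I)$ one uses the HEP for $i$ a second time to extend the reversed loop over $X$ starting at $\id_X$; the end of that extension is a correction map $e\colon X\rtarr X$ under $W$. Concatenating the two homotopies gives a homotopy between $\id_X$ and $e(gf)$ whose restriction to $W$ is $\la$ followed by its reverse, hence nullhomotopic rel endpoints, and a relative application of the HEP (extension of homotopies of homotopies, which needs its own small argument) then yields $e(gf)\htp\id_X$ \emph{under} $W$. This produces only a left homotopy inverse $eg$ of $f$ under $W$. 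One then applies the same one-sided statement to the map $eg\colon Y\rtarr X$ (which is under $W$ and a homotopy equivalence) to get a left inverse for it under $W$, and a purely formal argument combining the two relations shows that $(eg)f\htp\id_X$ and $f(eg)\htp\id_Y$ under $W$, so $f$ is a cofiber homotopy equivalence with inverse $eg$. So your toolkit (cylinder, HEP, transcription from \cite{MP}) is the right one, but the core of the proof is precisely the correction map $e$ and the double application, not a direct rectification of $L$.
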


\begin{prop}\mylabel{DefRetCor1}  A map $i\colon W\rtarr Y$ is an $h$-acyclic $h$-cofibration if and only if $i$ is a monomorphism, $Y/W$ is contractible, and $i$ is isomorphic under $W$ to the 
inclusion $W\rtarr W\oplus Y/W$.
\end{prop}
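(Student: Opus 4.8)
## Proof proposal for Proposition \ref{DefRetCor1@}

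The plan is to mimic the structure of the proof of the corresponding result for $\sM_R$ in \cite[\S18.2]{MP}, using \myref{CofEquiv} as the key input that makes everything work in $\sM_A$ rather than just $\sM_R$. The equivalence has three parts: (a) the explicit form $W \rtarr W \oplus Y/W$ is an $h$-acyclic $h$-cofibration; (b) any map isomorphic under $W$ to such an inclusion is an $h$-acyclic $h$-cofibration; (c) conversely, an $h$-acyclic $h$-cofibration has this form. Part (b) is immediate from (a) since $h$-cofibrations and $h$-equivalences are closed under isomorphism. Part (a) is straightforward: the inclusion $W \rtarr W \oplus C$ of a summand always has the HEP (extend a homotopy on $W$ by the constant homotopy on $C$), it is a monomorphism by construction, the quotient is $C \simeq Y/W$, and if $C$ is contractible then the projection $W \oplus C \rtarr W$ is an $h$-equivalence, so by two-out-of-three the inclusion is an $h$-equivalence. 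So the content is entirely in part (c).

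For part (c), suppose $i \colon W \rtarr Y$ is an $h$-acyclic $h$-cofibration. First I would show $i$ is a monomorphism: since $i$ has the HEP and is an $h$-equivalence, \myref{liftrh} (the $h$-notion half, which the excerpt explicitly states generalizes to DG $A$-modules) gives a retraction $r \colon Y \rtarr W$ with $ri = \id_W$, using the lifting square with $i$ on the left, the identity of $W$ on top, and $0$ on the bottom; a split monomorphism is in particular a monomorphism. Next, form the quotient DG $A$-module $Y/W$ (a colimit in $\sM_A$) and the short exact sequence $0 \rtarr W \rtarr Y \rtarr Y/W \rtarr 0$. To see $Y/W$ is contractible, note that $i$ being an $h$-equivalence means $Y/W$ is $h$-equivalent to the cofiber of an $h$-equivalence, hence contractible — more carefully, since $ri = \id_W$, the map $Y \rtarr W \oplus Y/W$ given by $(r, \pi)$ (where $\pi$ is the quotient map) is an isomorphism of DG $A$-modules, and under this isomorphism $i$ corresponds to the summand inclusion $W \rtarr W \oplus Y/W$; then the hypothesis that $i$ is an $h$-equivalence, combined with two-out-of-three against the (always) $h$-equivalence $W \oplus Y/W \rtarr W$, forces $W \oplus Y/W \rtarr W$ and hence the projection onto $Y/W$ to exhibit $Y/W$ as contractible.

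The main obstacle, and the place where one must be careful, is establishing that the retraction $r$ can be chosen so that $(r,\pi)\colon Y \rtarr W \oplus Y/W$ is an isomorphism \emph{under $W$} — i.e. that $i$ is literally isomorphic under $W$ to the summand inclusion, not merely that $Y \cong W \oplus Y/W$ abstractly. This is automatic once $r$ is any DG $A$-module retraction of $i$: the map $(r,\pi)$ is a map of DG $A$-modules, it is an isomorphism because $i$ is a monomorphism with cokernel $Y/W$ and $r$ splits it (a standard diagram chase, valid in any abelian-like setting, here the category of graded $A$-modules degreewise), and it commutes with the maps from $W$ since $ri = \id_W$ and $\pi i = 0$. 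The one genuinely $\sM_A$-specific point is that \myref{Rsplitrh} is \emph{not} available — the excerpt warns explicitly that it fails for DG $A$-modules — so the splitting must come from the lifting property \myref{liftrh} rather than from deforming an $R$-module splitting; this is exactly why \myref{CofEquiv} and the $h$-notion half of \myref{liftrh} are invoked instead of the $r$-arguments used in \cite{MP}. Finally, I would double-check that \myref{CofEquiv} is used where needed: it guarantees that the homotopy equivalence $f$ appearing implicitly (when comparing $i$ to the summand inclusion via the two $h$-cofibrations into $Y$ and into $W \oplus Y/W$) can be upgraded to a cofiber homotopy equivalence, which is what pins down the isomorphism-under-$W$ statement cleanly.
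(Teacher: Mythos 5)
The algebra in your parts (a), (b) and in the splitting step of (c) is fine, but the way you produce the retraction $r$ introduces a circularity in the paper's development. You get $r$ from the $h$-half of \myref{liftrh} ``generalized to DG $A$-modules,'' i.e.\ from \myref{liftrhtoo}. But \myref{liftrhtoo} is stated \emph{after} the present proposition, and its proof --- the one ``in analogy with topology,'' following \cite[\S 18.2]{MP} --- runs through exactly the characterizations being proved here: one first identifies $h$-acyclic $h$-cofibrations as inclusions of deformation retracts (this proposition) and $h$-acyclic $h$-fibrations as projections $B\oplus C\rtarr B$ with $C$ contractible (\myref{DefRetCor2}), and only then constructs lifts; the paper itself uses \myref{DefRetCor2} in just this way in the proof of \myref{naivefactorization}. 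The alternative $r$-style proof of lifting via \myref{Rsplitrh} is precisely what is unavailable for DG $A$-modules, as you note. So, as written, your argument assumes a later result whose proof depends on the statement you are proving.

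The repair is short, and it is where \myref{CofEquiv} is genuinely needed --- not, as your closing remark suggests, to ``pin down'' the isomorphism under $W$, which (as you correctly observe) is pure algebra once a strict retraction exists. Apply \myref{CofEquiv} to the $h$-cofibrations $\id_W\colon W\rtarr W$ and $i\colon W\rtarr Y$ and to $i$ itself regarded as a map under $W$: since $i$ is an $h$-equivalence, it is a cofiber homotopy equivalence, so there is a map $r\colon Y\rtarr W$ of DG $A$-modules with $ri=\id_W$ (indeed a homotopy inverse under $W$). From there your argument goes through: $(r,\pi)\colon Y\rtarr W\oplus Y/W$ is an isomorphism of DG $A$-modules under $W$, and $Y/W$ is contractible because the inclusion $W\rtarr W\oplus Y/W$ and the projection to $W$ are then inverse $h$-equivalences, so that $\id_{Y/W}$, being the restriction along the second summand of $\id_{W\oplus Y/W}\htp(\mathrm{incl})(\mathrm{proj})$, is null homotopic. (One slip in your sketch: the projection $W\oplus Y/W\rtarr W$ is not ``always'' an $h$-equivalence; it is one here by the two out of three property, or a posteriori because $Y/W$ is contractible.)
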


\begin{lem}\mylabel{FibEquiv} Let $p\colon E\rtarr B$ and $q \colon F\rtarr B$ be 
$h$-fibrations and $f\colon E\rtarr F$ be a map over $B$, so that $q f = p$. 
If $f$ is a homotopy equivalence, then $f$ is a fiber homotopy equivalence.
\end{lem}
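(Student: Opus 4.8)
The plan is to dualize the proof of \myref{CofEquiv}: since $\sM_A$ is not self-dual we dualize the \emph{argument}, not the statement, using only the covering homotopy property defining $h$-fibrations together with the $\sM_A$-analogue of \myref{liftrh} (an $h$-cofibration that is also an $h$-equivalence has the LLP against every $h$-fibration). First I would straighten a homotopy inverse. Pick any homotopy inverse $g\colon F\rtarr E$ of $f$. From $pg=qfg\htp q$ we get a homotopy in $B$ from $pg$ to $q$; the CHP for $p$ lifts it, starting at $g$, to a homotopy $g\htp g'$ with $pg'=q$. Then $g'$ lies over $B$ and is again a homotopy inverse of $f$, so $g'f\colon E\rtarr E$ lies over $B$ with $g'f\htp\id_E$, and symmetrically $fg'\colon F\rtarr F$ lies over $B$ with $fg'\htp\id_F$.

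The proof now reduces to a sub-lemma: \emph{if $p\colon E\rtarr B$ is an $h$-fibration and $j\colon E\rtarr E$ is a map over $B$ with $j\htp\id_E$, then $j$ is a fiber homotopy equivalence.} Granting it, apply it to $g'f$ and to $fg'$ to get maps $v\colon E\rtarr E$ and $w\colon F\rtarr F$ over $B$ with $v(g'f)\htp_B\id_E$ and $(fg')w\htp_B\id_F$, where $\htp_B$ denotes homotopy over $B$ (an equivalence relation closed under composition with maps over $B$). Then $vg'$ is a left and $g'w$ a right fiber homotopy inverse of $f$, and the usual computation $vg'\htp_B(vg'f)(g'w)\htp_B g'w$ shows their common value is a two-sided fiber homotopy inverse of $f$, as required.

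To prove the sub-lemma, pick $\phi\colon E\otimes I\rtarr E$ with $\phi|_0=\id_E$, $\phi|_1=j$; its shadow $p\phi$ is a homotopy from $p$ to $p$. Using the CHP for $p$, lift the reversed homotopy $\overline{p\phi}$, starting at $\id_E$, to $\psi\colon E\otimes I\rtarr E$ with $\psi|_0=\id_E$ and $p\psi=\overline{p\phi}$; set $k:=\psi|_1$, so $pk=p$, $k$ lies over $B$, and $k\htp\id_E$. Splicing $\phi$ with $j\circ\psi$ gives an ordinary homotopy $\Phi\colon\id_E\htp jk$ whose shadow is the concatenation $(p\phi)\ast\overline{p\phi}$, which is canonically homotopic, rel its two ends, to the constant homotopy at $p$. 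Lifting this homotopy-of-homotopies against $p$ relative to the three faces $E\otimes I\otimes\{0\}$, $E\otimes\{0\}\otimes I$, $E\otimes\{1\}\otimes I$ of the double cylinder $E\otimes I\otimes I$ — on which it is prescribed to be $\Phi$, the constant homotopy at $\id_E$, and the constant homotopy at $jk$ — straightens $\Phi$ to a homotopy over $B$ from $\id_E$ to $jk$. Hence $jk\htp_B\id_E$. Since also $k\htp\id_E$, the same construction gives a fiber homotopy right inverse $k'$ of $k$, and then $j\htp_B j(kk')\htp_B(jk)k'\htp_B k'$ makes $k$ a two-sided fiber homotopy inverse of $j$.

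\textbf{Main obstacle.} The one real point is this relative lifting: one must know that the inclusion into $E\otimes I\otimes I$ of the union of those three faces is an $h$-acyclic $h$-cofibration, so that \myref{liftrh} applies — together with the (easier) facts about cylinder objects used in the splicing. These rest on the standard closure of $h$-cofibrations under pushout-products, which holds in $\sM_A$ by the arguments of \cite[\S18.2]{MP}; beyond that the proof is just careful bookkeeping of the homotopies and their shadows.
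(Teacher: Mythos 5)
Your proof is correct and follows the same classical Dold-style strategy that the paper simply imports from \cite[\S 18.2]{MP} (no separate argument for $\sM_A$ is written out there). Two comments on the step you flag as the main obstacle. First, it should be justified from results already available rather than from ``pushout-product closure of $h$-cofibrations in $\sM_A$'': that closure is really the $\sM_R$-model category clause of \myref{hmodelA}, which the paper establishes only after the factorization axioms, and \cite[\S 18.2]{MP} concerns $\sM_R$, not $\sM_A$. But the instance you need is elementary: the union $J$ of your three faces is a subcomplex of $I\otimes I$ with quotient $D^2_R$, and the quotient map has an evident chain-level section (send the top generator to $[I]\otimes[I]$), so $E\otimes J\rtarr E\otimes I\otimes I$ is an $A$-split monomorphism with contractible cokernel $E\otimes D^2_R$, hence an $h$-acyclic $h$-cofibration by \myref{DefRetCor1}; its LLP against the $h$-fibration $p$ then needs only \myref{DefRetCor1} and the CHP (lift the contraction of the complementary summand), so there is no circularity with \myref{liftrhtoo}, which in the paper is stated afterwards. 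Second, the detour is in fact unnecessary in the DG setting: reversal of a homotopy is negation of its associated chain homotopy and splicing is addition, so if $s_\phi$ and $s_\psi$ denote the chain homotopies of your $\phi$ and $\psi$, then $p\circ(s_\phi+j\circ s_\psi)=p s_\phi+p s_\psi=0$, since $pj=p$ and $p s_\psi=-p s_\phi$ on the nose. Thus your spliced homotopy $\Phi\colon \id_E\htp jk$ is already a homotopy over $B$, and the lifting over the double cylinder trivializes; this linearization is precisely what makes the algebraic proof shorter than its topological model.
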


\begin{prop}\mylabel{DefRetCor2}  A map $p\colon E\rtarr B$ is an $h$-acyclic $h$-fibration if and only if
$p$ is an epimorphism, $\ker(p)$ is contractible, and $p$ is isomorphic over $B$ to the 
projection $B\oplus \ker(p)\rtarr B$. 
\end{prop}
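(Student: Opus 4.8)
The plan is to prove this as the precise dual of \myref{DefRetCor1}, with \myref{FibEquiv} playing the role that \myref{CofEquiv} plays there. I would dispatch the converse first. Given a projection $\pi\colon B\oplus C\rtarr B$ with $C$ contractible, the inclusion $B\rtarr B\oplus C$ is a strict section, and contractibility of $C$ makes the other composite homotopic to the identity, so $\pi$ is an $h$-equivalence. To see $\pi$ is an $h$-fibration, one unwinds the covering homotopy problem against $i_0\colon W\rtarr W\otimes I$: since $\pi$ is the projection, solving it amounts to extending a prescribed DG $A$-map $W\rtarr C$ along $i_0$, and that extension is supplied directly by the contracting homotopy of $C$ as a DG $A$-module. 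Hence $\pi$, and anything isomorphic to it over $B$, is an $h$-acyclic $h$-fibration.

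For the forward direction, suppose $p\colon E\rtarr B$ is an $h$-acyclic $h$-fibration. Apply \myref{FibEquiv} with $q=\mathrm{id}_B\colon B\rtarr B$ (an $h$-fibration) and $f=p$, which is a map over $B$ and a homotopy equivalence since $p\in\sW_h$; it follows that $p$ is a fiber homotopy equivalence over $B$. In particular there is a DG $A$-map $s\colon B\rtarr E$ with $ps=\mathrm{id}_B$ and a homotopy $sp\simeq\mathrm{id}_E$. Since $ps=\mathrm{id}_B$, the map $p$ is a split epimorphism; and the mutually inverse DG $A$-maps $e\mapsto(p(e),\,e-sp(e))$ and $(b,c)\mapsto s(b)+c$ exhibit an isomorphism $E\iso B\oplus\ker(p)$ over $B$ under which $p$ becomes the projection. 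It remains to show $\ker(p)$ is contractible. Transporting the homotopy $sp\simeq\mathrm{id}_E$ across this isomorphism, the endomorphism $sp$ of $E$ corresponds to $\mathrm{id}_B\oplus 0$ on $B\oplus\ker(p)$, so $\mathrm{id}_B\oplus\mathrm{id}_{\ker(p)}\simeq\mathrm{id}_B\oplus 0$; restricting this homotopy to the direct summand $\ker(p)$ yields $\mathrm{id}_{\ker(p)}\simeq 0$, i.e.\ $\ker(p)$ is contractible. This gives the nontrivial implication, and the equivalence follows.

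The one genuinely delicate point I anticipate is the claim, used in the converse, that the projection $B\oplus C\rtarr B$ is an $h$-fibration: in contrast to $\sM_R$, where $h$-fibrations coincide with the $R$-split epimorphisms, a split epimorphism of DG $A$-modules need not have the covering homotopy property, so this must be checked by hand and the check genuinely uses that $C$ is contractible \emph{as a DG $A$-module}. I would also be careful to invoke only what is available before the factorization axioms in the proof of \myref{hmodelA}, namely \myref{CofEquiv}, \myref{FibEquiv} and their consequences together with the elementary fact that $0\rtarr B$ is always an $h$-cofibration (its HEP is immediate since $p_0\colon C^I\rtarr C$ is split by the constant map), which also gives a section of $p$ as an alternative to the use of \myref{FibEquiv}. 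Beyond that, everything is the same formal manipulation of homotopies and direct summands that appears in the $\sM_R$ case and in the dual statement \myref{DefRetCor1}.
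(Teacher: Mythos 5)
Your proof is correct and follows essentially the same route as the paper, which proves this result by deferring to the argument of \cite[\S 18.2]{MP}: apply \myref{FibEquiv} to $p$ over $\mathrm{id}_B$, use the resulting section $s$ to split $E\iso B\oplus\ker(p)$ over $B$ and to contract the kernel, and check the converse directly. Two tiny remarks: in the kernel step you should compress the transported homotopy $H$ by the projection onto $\ker(p)$ (it need not preserve the summand, but $dH+Hd=0\oplus\mathrm{id}$ makes the compressed map a contracting homotopy; alternatively take the homotopy from \myref{FibEquiv} to be a fiber homotopy, which does preserve $\ker(p)$), and the projection $B\oplus C\rtarr B$ is in fact an $h$-fibration for arbitrary $C$ (the ``constant homotopy'' $C\otimes I\rtarr C$ supplies the required extension), so contractibility of $C$ is needed only for the acyclicity, not for the CHP.
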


\begin{lem}\mylabel{Splitting0} Let
\[\xymatrix@1{ 0 \ar[r] & X\ar[r]^-{f} & Y \ar[r]^-{g} & Z \ar[r] & 0\\} \]
be an exact sequence of $A$-chain complexes whose underlying exact sequence of 
$A$-modules, with differentials ignored, splits.  Then $f$ is an $h$-equivalence 
if and only if $Z$ is contractible and $g$ is an $h$-equivalence if and only if $X$ 
is contractible.  
\end{lem}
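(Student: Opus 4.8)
The plan is to deduce both equivalences from the standard fact that a chain map of DG $A$-modules is an $h$-equivalence if and only if its mapping cone is contractible, using the degreewise splitness hypothesis to pin the relevant cones down up to isomorphism.

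First I would isolate the elementary ``change of splitting'' observation that carries the argument. Choose a splitting of the underlying exact sequence of graded $A$-modules, so that $Y = X\oplus Z$ with $f$ and $g$ the inclusion and the projection; since $f$ is a chain map, the differential $d_Y$ then has diagonal components $d_X$ and $d_Z$ and a single off-diagonal component, an $A$-linear map $t\colon Z\rtarr X$ of degree $-1$ satisfying $d_Xt + td_Z = 0$. If $X$ is contractible, via an $A$-linear contracting homotopy $s$, then conjugating $d_Y$ by the $A$-linear automorphism $(x,z)\mapsto(x - st(z),\,z)$ of $X\oplus Z$ makes it block diagonal, exhibiting the sequence as the split exact sequence $0\rtarr X\rtarr X\oplus Z\rtarr Z\rtarr 0$ of DG $A$-modules; a symmetric conjugation works when $Z$ is contractible. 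In either case $g$ (resp.\ $f$) becomes the projection onto (resp.\ the inclusion of) a direct summand with contractible complement, and hence an $h$-equivalence. This settles the ``if'' directions of both statements; it is the fragment of \myref{Rsplitrh} that survives the passage from $\sM_R$ to $\sM_A$, where the full statement fails.

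For the ``only if'' directions I would pass to mapping cones. The canonical surjection $\pi\colon Cf\rtarr Z$ is an epimorphism of DG $A$-modules whose kernel is isomorphic to the (always contractible) complex $C(\id_X)$, and the chosen splitting of the original sequence induces a degreewise $A$-module splitting of $0\rtarr C(\id_X)\rtarr Cf\rtarr Z\rtarr 0$. Applying the ``if'' case to this new sequence, whose first term is contractible, yields $Cf\iso C(\id_X)\oplus Z$. Since $f$ is an $h$-equivalence, $Cf$ is contractible, hence so is its direct summand $Z$. Dually, the canonical monomorphism $X\rtarr\SI^{-1}Cg$ is degreewise $A$-split with cokernel isomorphic to the contractible complex $\SI^{-1}C(\id_Z)$, so $\SI^{-1}Cg\iso X\oplus\SI^{-1}C(\id_Z)$; since $g$ is an $h$-equivalence, $\SI^{-1}Cg$, and therefore $X$, is contractible.

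The conceptual content lies entirely in the one-line conjugation argument; the rest is the cone formalism, which runs over $A$ exactly as over $R$. Accordingly, the main obstacle is bookkeeping: fixing consistent sign and shift conventions for the mapping cone, and checking that the comparison maps $Cf\rtarr Z$ and $X\rtarr\SI^{-1}Cg$ are honest maps of DG $A$-modules with the stated kernels and cokernels, and that the resulting splittings are $A$-linear rather than merely $R$-linear. One could instead try to show that every degreewise $A$-split monomorphism is an $h$-cofibration and every such epimorphism an $h$-fibration and then cite \myref{DefRetCor1} and \myref{DefRetCor2}; the cone argument avoids this and, unlike \myref{reassure}, is not sensitive to the failure of the $r$- and $h$-notions to coincide on $\sM_A$.
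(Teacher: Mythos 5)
Your proof is correct, but note that the paper offers no argument of its own for this lemma: it defers to \cite[\S 18.2]{MP}, asserting that the chain-level proof for $A=R$ carries over verbatim to DG $A$-modules. Your ``if'' half coincides with that direct argument: write $Y=X\oplus Z$ as graded $A$-modules with differential $d(x,z)=(d(x)+t(z),d(z))$ and re-split using a contracting homotopy; with the paper's sign conventions $t$ and $s$ are $A$-linear in the graded sense, so the change-of-splitting automorphism is honestly $A$-linear (only the direction of conjugation, hence the sign on $st$, needs the care you flag). Where you genuinely differ is the ``only if'' half: instead of extracting a contracting homotopy on $Z$ (resp.\ $X$) by hand from a homotopy inverse of $f$ (resp.\ $g$), you apply the ``if'' half to the graded-split sequences $0\rtarr C(\id_X)\rtarr Cf\rtarr Z\rtarr 0$ and $0\rtarr X\rtarr \SI^{-1}Cg\rtarr \SI^{-1}C(\id_Z)\rtarr 0$ and quote the classical fact that an $h$-equivalence of DG $A$-modules has contractible cone; the direction you need follows, e.g., from the exact sequences of homotopy classes $[W,-]$ attached to the cone sequence, or from the triangulated structure of $\sK_A$, and a DG $A$-module retract of a contractible module is contractible, so you are done. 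This is a clean self-strengthening of the easy half, at the cost of one standard external input not stated in the paper. Incidentally, your closing worry is unnecessary: graded $A$-split epimorphisms (monomorphisms) do satisfy the CHP (HEP), by pairing a given homotopy with the zero homotopy on the complementary summand, so the route through \myref{DefRetCor1} and \myref{DefRetCor2} would also have been available; your cone argument simply makes that detour unnecessary.
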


\begin{prop}\mylabel{liftrhtoo} Consider a commutative diagram of $A$-chain complexes
\[ \xymatrix{
W\ar[r]^-g \ar[d]_i & E \ar[d]^p\\
X \ar@{-->}[ur]^{\la} \ar[r]_-f & B\\} \]
in which $i$ is an $h$-cofibration and $p$ is an $h$-fibration.
If either $i$ or $p$ is an $h$-equivalence, then there exists a lift $\la$.
\end{prop}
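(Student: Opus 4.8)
The plan is to establish Proposition~\ref{liftrhtoo@} by reducing the general lifting problem to the two special cases governed by the preceding structural results, exactly as in the topological prototype. First I would observe that the situation is symmetric in the sense that the two cases (``$i$ is an $h$-equivalence'' and ``$p$ is an $h$-equivalence'') are formally dual: one uses the characterization of $h$-acyclic $h$-cofibrations in \myref{DefRetCor1} and the other uses the characterization of $h$-acyclic $h$-fibrations in \myref{DefRetCor2}. So it suffices to treat one of them carefully, say the case where $i$ is an $h$-acyclic $h$-cofibration, and then indicate the dual argument.

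In that case, \myref{DefRetCor1} tells us that, up to isomorphism under $W$, the map $i\colon W\rtarr X$ is the inclusion $W\rtarr W\oplus C$ where $C=X/W$ is contractible, with contracting homotopy $s\colon C\rtarr C$ satisfying $ds+sd=\id_C$. Thus a lift $\la\colon X\rtarr E$ amounts to a map $W\oplus C\rtarr E$ restricting to $g$ on $W$; equivalently, a map $\mu\colon C\rtarr E$ with $p\mu = f|_C$ (and $\mu$ a chain map). Here one uses that $p$ is an $h$-fibration, hence in particular a degreewise split epimorphism by the algebraic CHP (applied to $W = \ker p$, compare \myref{DefRetCor2}); choose a set-theoretic or graded section and then correct it to a chain map using the contracting homotopy $s$, in the same style as the computation in the proof of \myref{cofgenrcofibrations}. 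Concretely, one first lifts $f|_C$ to a graded map $\mu_0$ of the underlying graded $A$-modules (possible since $p$ is a graded split epimorphism), and then sets $\mu = \mu_0 - (\text{correction built from } d\mu_0 - \mu_0 d \text{ and } s)$ so that $\mu$ becomes a chain map over $B$; the CHP for $p$ is what guarantees this correction can itself be chosen compatibly. Since $C$ is contractible this process terminates cleanly.

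For the dual case, where $p$ is an $h$-acyclic $h$-fibration, \myref{DefRetCor2} identifies $p$, up to isomorphism over $B$, with the projection $B\oplus K\rtarr B$ where $K=\ker p$ is contractible; then a lift is the same as a map $X\rtarr K$ extending $g$'s $K$-component along the $h$-cofibration $i$, which exists by the HEP defining $h$-cofibrations together with contractibility of $K$ (using \myref{CofEquiv} or \myref{Splitting0} to deform the relevant homotopy to be constant on $W$). I would phrase both halves uniformly by noting that \myref{liftrh} is the $A=R$ analogue and the present proof is ``word for word the same'' as in \cite[\S18.2]{MP}, with the only inputs being \myref{DefRetCor1}, \myref{DefRetCor2}, \myref{CofEquiv}, \myref{FibEquiv}, and \myref{Splitting0}, all of which have already been asserted for $\sM_A$.

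The main obstacle, and the only place where one must be slightly careful rather than citing topology verbatim, is the passage from ``$h$-fibration'' to ``graded split epimorphism'' (and dually from ``$h$-cofibration'' to ``graded split monomorphism'') in the $A$-linear setting: unlike Proposition~\ref{Rsplitrh@} for $\sM_R$, we do not have a clean $A$-module splitting theorem, so one cannot simply invoke an $r$-style normal form. Instead one must extract the needed graded section directly from the CHP (respectively HEP) by evaluating the lifting/extension property against the generating cylinder or cocylinder, and then feed it through the contracting homotopy on the contractible fiber (respectively cofiber) to upgrade it to a chain map. This is routine but is the step where the $h$- versus $r$-distinction emphasized throughout the paper actually bites, so I would write it out in modest detail rather than leaving it to the reader.
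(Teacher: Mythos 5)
Your overall plan---use \myref{DefRetCor1} and \myref{DefRetCor2} to reduce to a contractible summand and then exploit its contracting homotopy---is indeed the paper's route (the paper simply refers this whole block of results to \cite[\S 18.2]{MP}, whose proofs ``work in precisely the same fashion''). But your execution of the case where $i$ is the acyclic map has a genuine gap. After writing $i$ as $W\rtarr W\oplus C$ with $C$ contractible, you need a map $\mu\colon C\rtarr E$ of DG $A$-modules with $p\mu=f|_C$, and you propose to produce it by first choosing a lift $\mu_0$ of $f|_C$ that is a map of underlying graded \emph{$A$-modules} (``possible since $p$ is a graded split epimorphism'') and then correcting by the contracting homotopy. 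That first step is unjustified, and it is precisely the $h$-versus-$r$ distinction you flag at the end: the CHP only makes $\bU p$ a split epimorphism of graded $R$-modules ($\sF_h\subset\sF_r$, \myref{hqrSum3}); it does not split $p$ as a map of graded $A$-modules, and $C$, the cokernel of an arbitrary acyclic $h$-cofibration (for instance any contractible DG $A$-module, taking $W=0$; cf.\ \myref{rmk:naive}), need not be projective or even relatively projective as an $A$-module. So no graded $A$-linear $\mu_0$ need exist; and if you instead use the $R$-linear section that the CHP does provide, the corrected map $\mu_0+(d\mu_0-\mu_0 d)s$ is only $R$-linear, hence not a lift in $\sM_A$. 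The homotopy correction fixes the differential but cannot repair the failure of $A$-linearity. (Your parenthetical appeal to \myref{DefRetCor2} here is also misplaced, since $p$ is not assumed acyclic in this case.)

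The intended argument needs no section of $p$ at all. Since $C$ is contractible through DG $A$-module homotopies, $f|_C=f|_C\circ\id_C\htp f|_C\circ 0=0$ via an $A$-linear homotopy $h\colon C\otimes I\rtarr B$ with $hi_0=0$; the zero map $C\rtarr E$ lifts its starting point, so the CHP for $p$ (the RLP against $i_0\colon C\rtarr C\otimes I$) lifts $h$ to $H\colon C\otimes I\rtarr E$ with $Hi_0=0$, and $\mu=Hi_1$ is a map of DG $A$-modules with $p\mu=f|_C$, giving $\la=(g,\mu)$. Dually, when $p$ is acyclic, \myref{DefRetCor2} writes $p$ as the projection $B\oplus C\rtarr B$ with $C=\ker p$ contractible, and one extends the $C$-component $g_C$ of $g$ along $i$ by applying the HEP to the constant map $0\colon X\rtarr C$ together with the null homotopy of $g_C$ obtained from the contraction of $C$; its end is an extension $\tilde g_C$ with $\tilde g_C i=g_C$, and $\la=(f,\tilde g_C)$. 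In particular no preliminary deformation of the homotopy ``to be constant on $W$'' (your appeal to \myref{CofEquiv} or \myref{Splitting0}) is needed. So your second half is essentially correct once streamlined, but the first half as written would fail and should be replaced by the CHP-plus-null-homotopy argument above.
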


Whereas the proofs of the results above are the same as in \cite[\S 18.2]{MP},  the proofs there of the factorization axioms do not generalize. 
We no longer have an identity of $r$- and $h$-model structures 
since we no longer have an analogue of \myref{Rsplitrh} when $A$ has non-zero
differential.  Therefore, we no longer have simple explicit descriptions
of the $h$-fibrations and $h$-cofibrations.  To begin with, we mimic a standard argument in 
topology.

\begin{defn}\mylabel{mapping-factorization}  Let $f\colon X\rtarr Y$ be a map of DG $A$-modules.  Define 
the \emph{mapping cylinder} $Mf$ to be the pushout $Y\cup_f (X\otimes I)$ of the diagram
\[ \xymatrix@1{   Y & X \ar[l]_-{f} \ar[r]^-{i_0} & X\otimes I.\\} \]
Define the \emph{mapping cocylinder} $Nf$ to be the pullback $X\times_f Y^I$ of the 
diagram
\[ \xymatrix@1{   X \ar[r]^-{f} & Y & Y^I  \ar[l]_-{p_0} .\\} \]
\end{defn}

Just as in topology, we have the following naive factorization results.

\begin{lem}\mylabel{naivefactorization}
Any map $f: X \rtarr Y$ in $\sM_A$ factors as composites
\begin{equation}\label{MfNffact} 
\xymatrix@1{X\ar[r]^-{j} & Mf \ar[r]^-{r} & Y}
\ \ \ \text{and} \ \ \ \xymatrix@1{X\ar[r]^-{\nu} & Nf \ar[r]^{\rh} & Y \\} \
\end{equation}
where $r$ and $\nu$ are $h$-equivalences, $j$ is an $h$-cofibration, and $\rh$ 
is an $h$-fibration.
\end{lem}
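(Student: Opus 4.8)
The plan is to mimic the standard topological argument for the mapping cylinder and mapping cocylinder factorizations, which works in any nice category enriched, tensored, and cotensored over a cosmos with a good cylinder object; here the cosmos is $\sM_R$ and the cylinder is $I = R \otimes I$. All the formal properties we need have already been established: $\sM_A$ is bicomplete, the homotopy relation is defined via $X \otimes I$, and $\otimes I$ is a left adjoint (to $(-)^I = \Hom(I,-)$), so it preserves pushouts and the relevant identifications.

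First I would treat the mapping cylinder factorization $X \xrightarrow{j} Mf \xrightarrow{r} Y$. The map $j\colon X \rtarr Mf$ is the composite of $i_1\colon X \rtarr X\otimes I$ (inclusion at $[1]$) with the canonical map $X \otimes I \rtarr Mf$; the map $r\colon Mf \rtarr Y$ is induced on the pushout by $f\colon X \rtarr Y$ and the homotopy $X\otimes I \rtarr Y$ that is constant at... no, rather by $\mathrm{id}_Y$ on $Y$ and the composite $X\otimes I \rtarr X \xrightarrow{f} Y$ using the projection $X\otimes I \rtarr X$ coming from $I \rtarr S^0$. One checks $rj = f$ directly. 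That $r$ is an $h$-equivalence follows from a standard deformation retraction argument: $Y$ includes into $Mf$ as a deformation retract, because $X\otimes I$ deformation retracts onto $X\otimes[1]$ rel nothing — or more precisely, onto the image of $f$ inside $Mf$ — and this uses only the explicit homotopies in $I$ together with the functoriality of $\otimes I$. That $j$ is an $h$-cofibration requires exhibiting the HEP; here I would use that $i_1\colon X \rtarr X \otimes I$ is an $h$-cofibration (which can be checked directly from the structure of $I$, exactly as in \cite{MP}) and that $h$-cofibrations are closed under cobase change, so the pushout $X \rtarr Mf$ along $f$ of a map that contains $i_1$ as a retract... I should instead argue that $j$ itself has the HEP by the usual explicit retraction $Mf \otimes I \rtarr (Mf \otimes \{0\}) \cup (X \otimes I)$, built from the corresponding retraction for $i_1$; this is the standard ``mapping cylinders are cofibrations'' argument and transposes verbatim.

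Then the mapping cocylinder factorization $X \xrightarrow{\nu} Nf \xrightarrow{\rho} Y$ is handled dually, using the path object $Y^I = \Hom(I,Y)$ in place of the cylinder and the pullback in place of the pushout. The map $\nu\colon X \rtarr Nf = X \times_f Y^I$ is induced by $\mathrm{id}_X$ and the composite $X \xrightarrow{f} Y \rtarr Y^I$ (constant paths, via $S^0 \rtarr I$); the map $\rho\colon Nf \rtarr Y$ is the projection to $Y^I$ followed by evaluation at $[1]$, say. One checks $\rho\nu = f$. That $\rho$ is an $h$-fibration comes from the fact that $p_0\colon Y^I \rtarr Y$ (hence $p_1$) is an $h$-fibration together with closure of $h$-fibrations under base change, and that $\nu$ is an $h$-equivalence follows from the contractibility of the path object construction, dual to the deformation-retraction argument above. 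All of these use only formal enriched-category manipulations plus the concrete combinatorics of $I$, neither of which involves the algebra structure $A$ in any essential way beyond the fact that $\otimes I$ and $\Hom(I,-)$ land in $\sM_A$ with the $A$-actions specified in the preliminaries.

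The main point to be careful about — though it is not really an obstacle — is that these naive factorizations do \emph{not} have the ``acyclic'' halves one would want for a model structure: in $\xymatrix@1{X\ar[r]^-{j} & Mf \ar[r]^-{r} & Y}$ the map $j$ is an $h$-cofibration but need not be $h$-acyclic, and dually for $\rho$. So this lemma is genuinely only the easy half of the factorization axioms; the full factorizations with both halves controlled is exactly what requires the algebraic small object argument of \S\ref{ASOA}, and this lemma is merely the input (or a warm-up) for that. Consequently I expect the proof of this particular lemma to be short: cite \cite[\S18.2]{MP} for the $A = R$ case, note that every step is an enriched-categorical formality that never used commutativity or triviality of the differential on $R$, and that the constructions $Mf$, $Nf$ make sense verbatim in $\sM_A$ because $\otimes I$ and $(-)^I$ are defined on $\sM_A$ and preserve the colimits/limits used.
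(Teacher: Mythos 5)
Your construction of $Mf$, $Nf$, and the maps $j,r,\nu,\rho$ matches the paper's, and the $h$-equivalence claims for $r$ and $\nu$ are fine: the paper proves them exactly as you suggest, by writing down explicit chain homotopies (with some care about signs, since the topological formulas do not transcribe literally). But the two steps that carry the real content of the lemma --- that $j$ has the HEP and that $\rho$ has the CHP --- are asserted rather than proved, and the specific justifications you offer do not work. For $j$: your first idea (cobase change) fails because $j$ is \emph{not} a pushout of $i_1\colon X\rtarr X\otimes I$; the cobase change of $i_0$ along $f$ is the other leg $Y\rtarr Mf$, as you noticed mid-sentence. Your fallback, that the topological retraction $Mf\otimes I\rtarr (Mf\otimes[0])\cup(X\otimes I)$ ``transposes verbatim,'' is not a proof: the topological formula uses piecewise reparametrizations of the interval that have no chain-level analogue, and producing such a retraction of DG $A$-modules is essentially the whole difficulty. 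The paper instead factors $j$ as the summand inclusion $X\rtarr Y\oplus X$ followed by the cobase change of $i_0+i_1\colon X\oplus X\rtarr X\otimes I$ along $f\oplus\id$, and then proves that $i_0+i_1$ is an $h$-cofibration by lifting against the $h$-acyclic $h$-fibrations, which by \myref{DefRetCor2} are the projections $B\oplus C\rtarr B$ with $C$ contractible; the lift is built from a contracting homotopy of $C$.

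For $\rho$ the gap is similar: pulling back $p_0\colon Y^I\rtarr Y$ along $f$ gives the projection $Nf\rtarr X$, not $\rho$, and knowing that $p_0$ and $p_1$ are separately $h$-fibrations plus closure under base change does not yield $\rho$. What is needed is that $Y^I$ is a \emph{good} cocylinder, i.e.\ that $(p_0,p_1)\colon Y^I\rtarr Y\oplus Y$ is an $h$-fibration; then $(\pi_X,\rho)\colon Nf\rtarr X\oplus Y$ is its base change along $f\oplus\id$ and $\rho$ is the composite with the projection $X\oplus Y\rtarr Y$. Proving that $(p_0,p_1)$ is an $h$-fibration is the dual of the $i_0+i_1$ argument above and is exactly the step the paper supplies (``the dual argument works to show that $Y^I$ is a good cocylinder object''). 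So the outline and the surrounding context (that these are only the naive factorizations and the model-theoretic ones require the algebraic SOA) are right, but the proposal is missing the decomposition of $j$ and $\rho$ through the good (co)cylinder maps and the lifting argument that makes those maps an $h$-cofibration and an $h$-fibration.
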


\begin{rmk} When $A=R$, a quick inspection shows that $j$ and $\nu$ are $R$-split monomorphisms 
and $r$ and $\rho$ are $R$-split epimorphisms.  Therefore these factorizations 
are model theoretic factorizations, completing one proof of \myref{hmodelR}.  
\end{rmk}

\begin{proof} Since the topological proofs of the equivalences do not transcribe directly to algebra, we indicate a quick proof that $r$ is an $h$-equivalence. Here $j(x) = x\otimes [1]$, $r(y)=y$,
$r(x\otimes [1])=f(x)$, and $r(x\otimes [I]) =0$. Define
$i\colon Y\rtarr Mf$ by $i(y)=y$. Then $ri = \text{id}_Y$.  A homotopy 
$h\colon Mf\otimes I\rtarr Mf$ from $ir$ to 
$\text{id}_{Mf}$ is given by 
\[  h(z\otimes [I]) = \left \{  \begin{array}{ll} 
0 & \mbox{if $z\in Y$ (or $z=x \otimes [0]$)} \\
x\otimes [I] & \mbox{if $z = x\otimes [1]$}\\
0 & \mbox{if $z = x\otimes [I]$.} \end{array} \right.  \]
A small check, taking care with signs, shows that this works.  The definitions
of $\nu$ and $\rh$ are dual to those of $i$ and $r$, and a dual proof shows that
$\nu$ is an $h$-equivalence.  

We next prove that $j$ is an $h$-cofibration.  We can factor $j$ as the bottom composite in the 
diagram 
\[ \xymatrix{
& X\oplus X \ar[r]^-{i_0+i_1} \ar[d]_{f\oplus\text{id}}  & X\otimes I \ar[d] \\
X \ar[r]_-{(0,\text{id})} & Y\oplus X \ar[r]_-{i+i_1} & Mf,\\} \]
in which the square is a pushout.  Since a pushout of an $h$-cofibration is 
an $h$-cofibration, $j$ is an $h$-cofibration if $X\otimes I$ is a good 
cylinder object, that is, if the natural map $i_0+i_1\colon X \oplus X \rtarr X \otimes I$ 
is an $h$-cofibration. Recall from \myref{DefRetCor2} that a map 
$p\colon E \rtarr B$ is an $h$-acyclic $h$-fibration if and only if  $E$ is isomorphic
over $B$ to the projection 
$B \oplus C \rtarr B$ where $C=\mathrm{ker}(p)$ contractible. Therefore, we are reduced to 
showing that $i_0+i_1$ has the left lifting problem against $\mathrm{id}_B\colon B \rtarr B$ and against $C \rtarr 0$ for all $B$ and 
all contractible $C$.  The first part is obvious.  For the second, let $(u_0,u_1)\colon X \oplus X \rtarr C$ be a given map. If $h\colon C \otimes I \rtarr C$ denotes a homotopy between $\mathrm{id}_C$ and $0$ and $h'$ is $h$ with reversed direction, we construct the desired extension as $u=h(u_0 \otimes I) + h'(u_1 \otimes I)$,
\[ \xymatrix{X \oplus X \ar[d]_{(i_0, i_1)} \ar[r]^-{(u_0, u_1)} & C \\
X \otimes I. \ar[ru]_{u}}\]
The dual argument works to show that $Y^I$ is a good cocylinder object and therefore
$\rh$ is an $h$-fibration. 
\end{proof}

\begin{rmk}\mylabel{rmk:naive}
Although $j\colon X \rtarr Mf$ is an $h$-cofibration in $\sM_A$, it is not generally an 
$r$-cofibration (or $q$-cofibration). Indeed, take $X =0$ and let $Y$ be an object in $\sM_A$ 
that is not $r$-cofibrant. Then the map $0 \rtarr Mf = Y$ is not an $r$-cofibration.
\end{rmk}

Unfortunately, in \myref{naivefactorization} there is no reason to expect $r$ to be an 
$h$-fibration or $\nu$ to be an $h$-cofibration.  We therefore give an entirely different 
proof of the factorization axioms.  The idea is to iterate the construction 
of the mapping cocylinder, but the details are more subtle than one might expect. The same issues 
arose in the topological context and the solution is identical to the one given there in \cite{BR}. 
 We prove the following result in \S\ref{ASOA} where we discuss the algebraic SOA. 

\begin{prop}\mylabel{hfactor}
Any map $f: X \rtarr Y$ factors as the composite of an $h$-acyclic $h$-cofibration and an $h$-fibration.
\end{prop}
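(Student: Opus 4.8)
The plan is to read the factorization straight off the mapping cocylinder of \myref{mapping-factorization}, with no iteration. By \myref{naivefactorization} we already have a functorial factorization $f = \rh\circ\nu$ through $Nf = X\times_f Y^I$ in which $\rh\co Nf\rtarr Y$ is an $h$-fibration, so the whole task is to upgrade ``$\nu\co X\rtarr Nf$ is an $h$-equivalence'' to ``$\nu$ is an $h$-acyclic $h$-cofibration.''

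The point is that $\nu$ is split by the projection $\mathrm{pr}_X\co Nf\rtarr X$, which, unlike $\rh$, is a map of DG $A$-modules. Hence the idempotent $\nu\circ\mathrm{pr}_X$ splits $Nf$ as a direct sum $Nf\iso X\oplus C$ in $\sM_A$, where $C = \ker(\mathrm{pr}_X)$ is the DG $A$-module of pairs $(0,\gamma)$ with $p_0\gamma = 0$, that is, the based path complex $\{\gamma\in Y^I : p_0\gamma = 0\}$. First I would verify, by an explicit sign-careful contracting homotopy, that $C$ is contractible; this is the one routine computation, essentially the standard argument that a based path object is contractible. Under the splitting, $\nu$ becomes the canonical inclusion $X\rtarr X\oplus C$ and $C\iso Nf/X$, so $\nu$ is a monomorphism with contractible cokernel that is isomorphic under $X$ to the inclusion $X\rtarr X\oplus (Nf/X)$; by \myref{DefRetCor1} this is precisely what it means for $\nu$ to be an $h$-acyclic $h$-cofibration. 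Then $f = \rh\circ\nu$ is the required factorization.

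The only point requiring care here is the contractibility of $C$, i.e.\ that the interval $I$ is ``good''; granting that, there is no real obstacle. It is worth recording, though, why one nonetheless wants the algebraic small object argument of \S\ref{ASOA}, developed along the lines of the topological treatment in \cite{BR}: the one-step argument breaks down for the $r$-model structure on $\sM_A$, where $\rh$ need not be an $r$-fibration, since the splitting criteria of \myref{Rsplitrh} that recognize $r$-fibrations do not survive the passage to a DG algebra $A$ with nonzero differential, and indeed $\rh$ need not even be surjective. The systematic remedy, which re-proves \myref{hfactor} as a byproduct, is to iterate the mapping cocylinder; the subtlety isolated in \cite{BR} is that the mapping-cocylinder endofunctor on the arrow category is pointed but \emph{not} well-pointed, so one must form the free monad on it (a transfinite construction involving coequalizers, not merely the chain $Nf\rtarr N^2f\rtarr\cdots$). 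The construction terminates because $\sM_A$ is locally presentable and this endofunctor is accessible, $I$ being a compact object; the remaining and genuinely delicate point is to show that the resulting left factor, a retract of a transfinite composite of split monomorphisms with contractible cokernel, is again such a map, so that \myref{DefRetCor1} applies. This is handled exactly as in \cite{BR}.
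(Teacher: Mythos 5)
Your main argument is correct as far as I can check, and it is emphatically not the paper's argument. The paper proves \myref{hfactor} with the algebraic small object argument: it encodes the category $\cJ_h^{\boxslash}$ of maps equipped with lifts against all $i_0\colon W\rtarr W\otimes I$, natural in $W$, as the category of algebras for a pointed endofunctor over $\cod$ built from the mapping cocylinder, verifies the smallness condition $(\dagger)$, and applies \myref{algebraicfactorization}; indeed, immediately before the proposition the authors assert that there is ``no reason to expect'' $\nu$ to be an $h$-cofibration, and they state elsewhere that they know no proof avoiding this machinery. Your splitting observation, if it holds up, refutes that expectation, and I could not find a flaw in it: the pullback projection $\mathrm{pr}_X\colon Nf\rtarr X$ is a genuine map of DG $A$-modules retracting $\nu$ (this is the decisive difference from the topological setting of \cite{BR}, where the analogous retraction exists but, absent additivity, produces no complement and $X\rtarr Nf$ really can fail to be an $h$-cofibration); hence $Nf\iso X\oplus C$ under $X$ with $C\iso \ker(p_0)\iso \Hom(D^1_R,Y)$, whose contracting homotopy is induced from one on $D^1_R$ and is therefore $A$-linear in the graded sense, so \myref{DefRetCor1} applies, while \myref{naivefactorization} supplies that $\rh$ is an $h$-fibration. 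Given the tension with the paper's own remarks, you should write out in full the three small verifications your proof rests on: that $\mathrm{pr}_X$ and the constant-path section $Y\rtarr Y^I$ are maps of DG $A$-modules, and that the contraction of $C$ is $A$-linear; none of these uses $d_A=0$, so your one-step factorization would indeed make the proposition elementary.

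Your closing paragraph, by contrast, misdiagnoses the situation in two ways. First, $\rh$ \emph{is} an $r$-fibration: it is degreewise split surjective, via the $R$-linear (non-chain) section sending $y$ to $(0,\gamma)$ with $\gamma[0]=0$, $\gamma[1]=y$, $\gamma[I]=0$. What breaks for the $r$-model structure is the left factor: $\nu$ is generally not an $r$-cofibration, because its cokernel $C$, though contractible, need not be $r$-cofibrant (compare \myref{rcharcofibration} and \myref{rmk:naive}); and in any case the paper constructs the $r$-model structure with the \emph{enriched} SOA, reserving the algebraic SOA for the $h$-structure, so \myref{Rsplitrh} is not the relevant point there. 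Second, in the paper's actual proof the labor lies in the isomorphism $\cJ_h^{\boxslash}\iso\Alg_R$ over the arrow category and in the smallness hypothesis $(\dagger)$, not in a delicate argument that the left factor is a split monomorphism with contractible cokernel.
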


\begin{cor}  Any map $f$ also factors as the composite of an $h$-cofibration and an $h$-acyclic $h$-fibration.  
\end{cor}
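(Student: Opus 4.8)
The plan is to bootstrap from the naive mapping cylinder factorization of \myref{naivefactorization} together with \myref{hfactor}. Given $f\colon X\rtarr Y$, first write $f = r\circ j$ as in \eqref{MfNffact}, where $j\colon X\rtarr Mf$ is an $h$-cofibration and $r\colon Mf\rtarr Y$ is an $h$-equivalence. In general $r$ need not be an $h$-fibration, so the next step is to apply \myref{hfactor} to the map $r$ itself, obtaining a factorization $r = p\circ k$ in which $k$ is an $h$-acyclic $h$-cofibration and $p$ is an $h$-fibration.

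Now two out of three for $\sW_h$ finishes the argument: $k$ is an $h$-equivalence by hypothesis and $r$ is an $h$-equivalence by \myref{naivefactorization}, so $p$ is an $h$-equivalence, that is, an $h$-acyclic $h$-fibration. For the cofibration half I would take the composite $k\circ j$. Since the class $\sC_h$ of $h$-cofibrations is defined by a left lifting property (the HEP is the LLP against the maps $p_0\colon B^I\rtarr B$), it is closed under composition; as $k$ is in particular an $h$-cofibration and $j$ is an $h$-cofibration, $k\circ j$ is an $h$-cofibration. Then $f = p\circ(k\circ j)$ is the desired factorization.

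There is essentially no obstacle here: everything reduces to \myref{hfactor}, the elementary properties of $Mf$ recorded in \myref{naivefactorization}, the two out of three property for $\sW_h$, and the closure of a lifting-defined class under composition. One could equally run the dual argument, starting from the mapping cocylinder factorization $f = \rh\circ\nu$, applying \myref{hfactor} to the $h$-equivalence $\nu$ to split it as an $h$-acyclic $h$-cofibration followed by an $h$-fibration, and then composing $h$-fibrations on the right; but the version above is the cleanest. The only point that merits a word of care is confirming that $k\circ j$ is genuinely an $h$-cofibration, which is immediate once one records that $\sC_h$ is the left class determined by the set of maps $p_0\colon B^I\rtarr B$ and that such left classes are closed under composition.
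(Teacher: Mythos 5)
Your proposal is correct and is essentially the paper's own argument: the paper also starts from the mapping cylinder factorization $f = r\circ j$ of \myref{naivefactorization} and applies \myref{hfactor} to $r$, with the two-out-of-three property and closure of $h$-cofibrations under composition supplying the rest. Nothing further is needed.
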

\begin{proof}
We obtain the factorization from $\xymatrix@1{X  \ar[r]^-{j} & Mf \ar[r]^-{r} & Y}$ by factoring $r$ 
into an $h$-acyclic $h$-cofibration followed by an $h$-fibration.
\end{proof}

We have completed the proof that $\sM_A$ is a model category.
Since $h$-acyclic $h$-cofibrations and $h$-acyclic $h$-fibrations are inclusions and 
projections of deformation retractions, by Propositions \ref{CofEquiv} and \ref{FibEquiv}, 
every object is both $h$-cofibrant and $h$-fibrant, hence the model structure 
is proper. The proofs that $\sM_A$ is an $\sM_R$-model category and that 
$\sM_A$ is monoidal when $A$ is commutative are the same as in the case $A=R$ given in
\cite[p.~383]{MP}. 

\section{The $r$-model structure on $\sM_A$}\label{sec:rmodel} 

\subsection{Relatively projective $A$-modules}\label{relproj}
Forget the differentials for a moment and consider a graded $R$-algebra $A$.  Classically \cite{EM, MacHom},
absolute homological algebra considers exact sequences of (graded) $A$-modules, which of course 
are just sequences of $A$-modules whose underlying sequences of (graded) $R$-modules are exact in each degree.  
For us, relative homological algebra considers exact sequences of $A$-modules whose 
underlying sequences of $R$-modules are split exact, which means that they are degreewise split exact.  The two 
notions agree when $R$ is semi-simple.  

Graded $A$-modules of the form $\bF V = A\otimes V$, for a graded $R$-module $V$, are said to be 
{\em relatively free}.  They need not be free in the usual sense of having a basis, and they need not
be free or even projective as $R$-modules.  A direct summand (as an $A$-module) of such an $A$-module 
is said to be {\em relatively projective}.  The term is justified by the following result, which is peripheral to our work but relates it to a classical context.  It uses the 
notion of a projective class, which is the classical starting point of relative homological algebra. 
Projective classes axiomatize  the relationship between the projective objects and the epimorphisms in an abelian category. The full definition would be digressive here, but we recall it in \myref{projclass} below, 
to which we refer in the proof. 

\begin{lem}\mylabel{proj}  Let $\sP$ be the class of relatively projective $A$-modules (not DG $A$-modules) and 
let $\sE$ be the class of $R$-split epimorphisms of $A$-modules.  Then $(\sE,\sP)$ is a projective class
in the category of $A$-modules.
\end{lem}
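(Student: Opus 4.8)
The plan is to verify directly the three defining properties of a projective class recalled in \myref{projclass}: that $\sE$ is precisely the class of $\sP$-epimorphisms, that $\sP$ is precisely the class of $\sE$-projective $A$-modules, and that every $A$-module admits an $\sE$-epimorphism from an object of $\sP$. The only tool required is the adjunction $\bF\dashv\bU$ between graded $R$-modules and graded $A$-modules together with its triangle identities; recall that the counit $\al_M\colon \bF\bU M = A\otimes\bU M\rtarr M$ is the action map, and that $\bU\al_M$ has the $R$-linear section $\io_{\bU M}$ sending $x$ to $1\otimes x$.

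First I would record the elementary fact underlying everything: for a graded $R$-module $V$ and a map $p\colon E\rtarr B$ of $A$-modules, the adjunction identifies the function $\Hom_A(\bF V,p)$ with $\Hom_R(V,\bU p)$. Hence $\Hom_A(\bF V,p)$ is surjective for every $V$ if and only if $\Hom_R(V,\bU p)$ is surjective for every $V$, which (take $V=\bU B$ and lift the identity) holds if and only if $\bU p$ admits an $R$-linear section, that is, if and only if $p\in\sE$. Since relatively projective $A$-modules are retracts of relatively free ones, and surjectivity of $\Hom_A(Q,p)$ implies that of $\Hom_A(P,p)$ whenever $P$ is a retract of $Q$, it follows that every $P\in\sP$ is $\sE$-projective; conversely, a $\sP$-epimorphism is in particular a ``$\{\bF V\}$-epimorphism'' and hence lies in $\sE$. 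Together with the previous sentence this identifies $\sE$ with the class of $\sP$-epimorphisms.

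Next I would prove that $\sP$ is exactly the class of $\sE$-projective $A$-modules; one inclusion was just established. For the other, let $P$ be $\sE$-projective. The counit $\al_P\colon \bF\bU P\rtarr P$ is an $R$-split epimorphism of $A$-modules, by the triangle identity, and hence lies in $\sE$; since $P$ is $\sE$-projective, $\id_P$ lifts through $\al_P$ to a map $s\colon P\rtarr \bF\bU P$ with $\al_P s=\id_P$, exhibiting $P$ as an $A$-module retract of the relatively free module $\bF\bU P$. Thus $P\in\sP$. Finally, for every $A$-module $M$ the counit $\al_M\colon \bF\bU M\rtarr M$ is an $\sE$-epimorphism whose source $\bF\bU M$ is relatively free, hence in $\sP$; this provides enough projectives and finishes the verification. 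The whole argument is essentially formal once the adjunction identities are in hand; the only point deserving a little care is keeping straight that ``relatively projective $=$ retract of relatively free'' is being applied in the correct variable of the lifting characterizations, and I do not foresee a genuine obstacle.
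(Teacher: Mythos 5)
Your proof is correct and follows essentially the same route as the paper: the paper's explicit lifts (the adjoint of $j\circ(fr|_K)$ for the lifting property, and the lifted action map restricted along the unit for conditions (i)--(iii)) are exactly your adjunction/counit arguments written out at the element level. The only difference is cosmetic: you package the verification formally via $\Hom_A(\bF V,p)\cong\Hom_R(V,\bU p)$ and the triangle identities, while the paper chooses splittings and retraction data by hand.
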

\begin{proof} Let $P$ be a relatively projective $A$-module and 
$p\colon E\rtarr M$ be an $R$-split epimorphism of $A$-modules.  Then for any 
map of $A$-modules $f\colon P\rtarr M$, there is a map $\tilde{f}\colon  P\rtarr E$ 
of $A$-modules such that $p \tilde f = f$.  
\[ \xymatrix{
& P \ar[d]^f \ar@{-->}[dl]_{\tilde f} & \\
E \ar[r]_-{p} & M \ar[r] & 0 \\} \]
To see that, choose a map $j\colon M\rtarr E$ of underlying graded $R$-modules,
such that $pj = \text{id}$.  Let $i\colon P\rtarr A\otimes K$ and 
$r\colon A\otimes K\rtarr P$ be maps of $A$-modules such that $ri=\text{id}$.  
The composite of $j$ and the restriction of $fr$ to $K$ gives a map of graded 
$R$-modules $K\rtarr E$.  Its adjoint $\bar{f}\colon A\otimes K\rtarr E$ satisfies 
$p\bar f = fr$, hence the composite $\tilde{f} = \bar{f}i$ satisfies $p\tilde{f} = f$. 

We must still verify (i)-(iii) of \myref{projclass}. 
For (i), we must show that if $p\colon E\rtarr M$ is a map of $A$-modules such that the lifting 
property above holds for all $P\in \sP$, then $p\in \sE$.  The hypothesis gives that
the action map $f\colon A\otimes M\rtarr M$ of $A$-modules lifts to a map 
$\tilde f\colon A\otimes M\rtarr E$ of $A$-modules.   Its restriction to $M$ gives a map 
$s\colon M\rtarr E$ of $R$-modules such that $ps = \id$,
hence $p$ is an $R$-split epimorphism.  For (iii), we must show that for every $M$ there is a map 
$p\colon P\rtarr M$ of $A$-modules such that $P\in \sP$ and $p\in \sE$.  Since the action map $f$ is an $R$-split
epimorphism, it is such a map. For (ii), we must show that if $P$ is such that the lifting property
of the first paragraph holds for all $p\in \sE$, then $P\in \sP$.  As the action map 
$f\colon A\otimes P\rtarr P$ is in $\sE$, the hypothesis gives that it is a split surjection
of $A$-modules, so this is clear.
\end{proof} 

Returning to our DG context, we say that a DG $A$-module $P$ is relatively free or relatively projective 
if its underlying $A$-module is so. 
If $\sP$ denotes the class of relatively projective DG $A$-modules, then the corresponding class 
$\sE$ of maps that are $P$-surjective for all $P \in \sP$ has another name: it is the class of 
$r$-fibrations in the $r$-model category we construct next. However, it is not true 
in general that $(\sP,\sE)$ is a projective class in $\sM_A$.

It is natural to ask if there is a useful projective class $(\sP,\sE)$ in the
category of DG $A$-modules itself, and we shall show that there is in \S\ref{projclassA}. 

\begin{rmk} As already noted, projective classes in abelian categories are the starting point of the general
subject of relative homological algebra.  It was developed classically by Eilenberg, Mac\,Lane,
and Moore \cite{EM, MacHom, Moore} and model theoretically by Christensen and Hovey \cite{CH},
whose work has influenced ours. However, it does not apply to give the model structures 
we develop here; see \myref{why-enriched}.
\end{rmk}

\subsection{Construction of the $r$-model structure}\label{subsec:rforDGA}

Returning to our model theoretic work, recall that $\sW_r$ denotes the category of $r$-equivalences,
namely the maps that are homotopy equivalences of underlying DG $R$-modules. 

\begin{defn}\mylabel{rhdefns} A map $f$ of DG $A$-modules is an \emph{$r$-fibration} 
if it is an $R$-split epimorphism, that is, if $\bU f$ is an $r$-fibration.  A 
map is an \emph{$r$-cofibration} if it satisfies the LLP against the 
$r$-acyclic $r$-fibrations.  Let $\sC_r$ and $\sF_{r}$ denote
the classes of $r$-cofibrations and $r$-fibrations.
\end{defn}

By definition, the right adjoint $\bU$ creates the $r$-equivalences and $r$-fibrations in $\sM_A$ from the 
$r$-equivalences and $r$-fibrations in $\sM_R$.  Since the adjunction $\bF \dashv \bU$ is enriched over $R$-modules,  the $r$-fibrations in $\sM_A$ are exactly 
the maps that have the enriched RLP against $\bF \cJ_R$ and the $r$-acyclic $r$-fibrations are 
exactly the maps that have the enriched RLP against $\bF \cI_R$ \cite[\S 13.3]{Riehl}. 
As observed in the proof of \myref{qmodelA}, these sets of maps are compact in $\sM_A$, so we can construct factorizations using the enriched SOA described in \S\ref{sec:SOA2}.  Therefore, by \myref{ecofcrit}, which is the enriched analogue of the standard result for lifting model structures along adjunctions recalled in \myref{cofcrit}, to prove that these classes define a model structure on $\sM_A$, 
it suffices to prove the following acyclicity condition. As spelled out in detail 
in \myref{cellob2}, an enriched relative $\bF\cJ_R$-cell complex is a composite of pushouts of 
coproducts of tensors of maps in $\bF\cJ_R$ with $R$-modules.

\begin{prop}\mylabel{FJacyclicity}
Enriched relative $\bF\cJ_R$-cell complexes are $r$-equivalences.
\end{prop}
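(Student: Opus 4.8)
The plan is to reduce the claim to a statement about a single pushout of a generating enriched cell and then observe that $r$-equivalences are closed under the relevant colimits. First I would analyze a single building block: a pushout in $\sM_A$ of the form
\[ \xymatrix{ \bF(0) \otimes V \ar[r] \ar[d] & Z \ar[d] \\ \bF(D^n_R) \otimes V \ar[r] & Z' } \]
where $V$ is an (ungraded) $R$-module. Since $\bF(0) = 0$, this pushout is just $Z' = Z \oplus (\bF D^n_R \otimes V)$, so the map $Z \rtarr Z'$ is the inclusion of a direct summand whose complement is $\bF D^n_R \otimes V = A \otimes D^n_R \otimes V$. The key computation is that $D^n_R \otimes V$ is contractible as a DG $R$-module: its identity map is null-homotopic via the standard contracting homotopy on $D^n$, and tensoring with $A \otimes V$ over $R$ preserves that homotopy (here we only need a homotopy of underlying DG $R$-modules, which is exactly what $r$-equivalence sees). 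Hence $A \otimes D^n_R \otimes V$ is $r$-acyclic, so the summand inclusion $Z \rtarr Z \oplus (A\otimes D^n_R\otimes V)$ is an $r$-equivalence — indeed it is a strong deformation retract of underlying DG $R$-modules, with the projection as homotopy inverse and the contracting homotopy on the complementary summand exhibiting the required homotopy.

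Next I would assemble these blocks. An enriched relative $\bF\cJ_R$-cell complex is, by \myref{cellob2}, a (countable) composite of pushouts of coproducts of tensors $\bF\cJ_R \otimes (\text{$R$-modules})$. A coproduct of maps of the above form is again of the same form (replace $V$ by a direct sum and $n$ by a family, noting $A \otimes \bigoplus D^{n_i}_R \otimes V_i$ is still contractible as a DG $R$-module). A pushout of such a map along an arbitrary map $\bF(0)\otimes V = 0 \rtarr Z$ is, as computed above, the inclusion of $Z$ as a direct summand of $Z \oplus (\text{contractible DG $R$-module})$, hence an $r$-equivalence; moreover each such map is even an $h$-equivalence of underlying DG $R$-modules, i.e. lies in $\sW_h \subset \sW_r$. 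Finally, the transfinite (here sequential) composite of a tower of such summand inclusions is again an $r$-equivalence: each stage is a homotopy equivalence of underlying DG $R$-modules, and one can either invoke that $\sW_h$ on $\sM_R$ is the class of weak equivalences of a model structure whose cofibrations include all these summand inclusions (so weak equivalences are closed under transfinite composites of cofibrations, by \myref{hmodelR}) or argue directly that a sequential colimit of strong deformation retracts is a strong deformation retract onto the initial term.

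The main obstacle — really the only subtlety — is the passage to the transfinite (sequential) composite: homotopy equivalences are not in general closed under sequential colimits, so one must use that these particular maps are not merely $h$-equivalences but $h$-cofibrations of underlying DG $R$-modules (equivalently, $R$-split monomorphisms, by \myref{reassure}), and that $\sW_h$ is closed under sequential composites of $h$-cofibrations because it is the weak equivalence class of the $h$-model structure on $\sM_R$ established in \myref{hmodelR}. Concretely: each stage $Z_\alpha \rtarr Z_{\alpha+1}$ is an $R$-split inclusion of a summand with contractible complement, and such maps are $h$-acyclic $h$-cofibrations in $\sM_R$ by \myref{DefRetCor1} (applied with $A = R$); the colimit of a tower of $h$-acyclic $h$-cofibrations is an $h$-acyclic $h$-cofibration, hence in particular an $h$-equivalence, a fortiori an $r$-equivalence. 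This completes the proof.
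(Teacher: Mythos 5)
Your proof is correct, and it shares the paper's key insight: everything is checked on underlying DG $R$-modules, and the heart of the matter is that $A\otimes D^n_R$ is contractible because tensoring with $A$ preserves the contracting homotopy on $D^n_R$. Where you differ is in how the closure over the cell-complex construction is handled. The paper's proof first notes that $\bU$ creates colimits, tensors, and $r$-equivalences, so it suffices to work in $\sM_R$, and then invokes \myref{enriched-comparison}: since the enriched WFS generated by $\cJ_R$ has the $r$-acyclic $r$-cofibrations as its left class, one only has to check that each generator $0\rtarr A\otimes D^n_R$ is an $r$-acyclic $r$-cofibration, the closure under tensors, coproducts, pushouts, and sequential composites being formal for the left class of an (enriched) WFS. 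You instead exploit the special feature that the generating maps have domain $0$, so every attachment is literally the inclusion of a direct summand with contractible complement; you then close up using \myref{DefRetCor1} (with $A=R$) and the fact that $h$-acyclic $h$-cofibrations in the $h$-model structure of \myref{hmodelR} are closed under sequential composition (or, even more directly, the colimit is $Z_0$ plus a direct sum of contractible summands). Your route is more elementary and self-contained, making the deformation retractions explicit, whereas the paper's is shorter and would apply verbatim to generating maps with nonzero domains. One small caution: your parenthetical claim that ``weak equivalences are closed under transfinite composites of cofibrations'' is not a general model-category fact; what is true, and what you correctly use in the next sentence, is that \emph{acyclic} cofibrations, being the left class of a WFS, are closed under transfinite composition. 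You also tacitly use that $\bU$ preserves the relevant colimits so that the underlying DG $R$-module of the sequential colimit in $\sM_A$ is the colimit of the underlying tower; this is true (colimits in $\sM_A$ are created in $\sM_R$) and is stated explicitly at the start of the paper's proof, so it is worth a sentence in yours.
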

\begin{proof}  The adjunction $\bF \dashv \bU$ is monadic and the monad $A \otimes -$ preserves colimits 
in $\sM_R$ since $\sM_R$ is closed monoidal. It follows 
that the forgetful functor $\bU \colon \sM_A \rtarr \sM_R$ creates and therefore preserves both limits and colimits \cite[4.3.2]{BorII} and also tensors with $R$-modules. Because $\bU$ also creates the $r$-equivalences, it suffices to show that enriched relative $A \otimes \cJ_R$-cell complexes are $r$-equivalences in $\sM_R$. By \myref{enriched-comparison}, it suffices to show that $0 \to A \otimes D^n_R$ is an $r$-acyclic $r$-cofibration. But this is clear: all objects in $\sM_R$ are $(r=h)$-cofibrant and tensoring with $A$ preserves the contracting homotopy that witnesses the $(r=h)$-acyclicity of $D^n_R$.
\end{proof}

By \myref{ecofcrit}, this implies the following analogue of \myref{enriched-comparison}. 

\begin{thm}\mylabel{enrichedDGAcomparison}
Let $A$ be a DG-algebra over a commutative ring $R$. Then $\bF\cI_R$ and $\bF\cJ_R$ are sets of generating cofibrations and acyclic cofibrations for the $q$-model structure on $\sM_A$, when compact generation is meant in the usual sense, and for the $r$-model 
structure on $\sM_A$, when compact generation is meant in the $\sV_R$-enriched sense.
\end{thm}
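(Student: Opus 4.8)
The plan is to deduce the statement from machinery already in place, handling the $q$-case and the $r$-case separately. For the $q$-model structure there is nothing new to do: \myref{qmodelA} already asserts that $\bF\cI_R$ and $\bF\cJ_R$ are generating sets of cofibrations and acyclic cofibrations in the ordinary, set-based sense. So the work is entirely in the $r$-case, and the strategy is to observe that the $r$-model structure on $\sM_A$ was constructed precisely so that its two constituent enriched weak factorization systems are the ones cofibrantly generated, in the $\sV_R$-enriched sense, by $\bF\cJ_R$ and $\bF\cI_R$.

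The first step I would record is the enriched-adjunction bookkeeping that makes the transport along $\bF\dashv\bU$ work. Since $\bF\dashv\bU$ is a $\sV_R$-enriched adjunction, for a map $i$ of DG $R$-modules and a map $p$ of DG $A$-modules there is a natural isomorphism of $R$-modules $\Sq(\bF i,p)\cong\Sq(i,\bU p)$ compatible with the comparison maps $\epz$, so that $\bF i\,\underline{\boxslash}\,p$ in $\sM_A$ if and only if $i\,\underline{\boxslash}\,\bU p$ in $\sM_R$ (see \cite[\S 13.3]{Riehl}). Combining this with \myref{enriched-comparison}, which identifies the enriched right lifting properties against $\cJ_R$ and $\cI_R$ in $\sM_R$ with the $r$-fibrations and the $r$-acyclic $r$-fibrations there, and with the fact that $\bU$ creates the $r$-equivalences and $r$-fibrations of $\sM_A$ from those of $\sM_R$, one obtains: $p$ has the enriched RLP against $\bF\cJ_R$ iff $\bU p$ is an $r$-fibration iff $p$ is an $r$-fibration; and $p$ has the enriched RLP against $\bF\cI_R$ iff $\bU p$ is an $r$-acyclic $r$-fibration iff $p$ is an $r$-acyclic $r$-fibration (using that $\bU$ also detects $r$-equivalences). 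This is exactly the characterization already quoted in \S\ref{subsec:rforDGA}.

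The second step is to invoke \myref{ecofcrit}. Its acyclicity hypothesis is \myref{FJacyclicity}, which has been proven, and the compactness hypothesis holds because the domains of the maps in $\bF\cI_R$ and $\bF\cJ_R$ are $\bF S^{n-1}_R$ and $\bF 0=0$, free $A$-modules on one or zero generators, whose compactness in $\sM_A$ was noted in the proof of \myref{qmodelA}. Hence the $r$-model structure exists and, by the construction in \myref{ecofcrit}, the enriched weak factorization system $(\sC_r\cap\sW_r,\sF_r)$ is the one cofibrantly generated in the enriched sense by $\bF\cJ_R$ while $(\sC_r,\sF_r\cap\sW_r)$ is the one cofibrantly generated in the enriched sense by $\bF\cI_R$. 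An enriched weak factorization system is determined by its right class, and by the previous paragraph those right classes are precisely the maps with the enriched RLP against $\bF\cJ_R$, respectively $\bF\cI_R$; so $\bF\cJ_R$ and $\bF\cI_R$ are enriched generating sets, and compactness of their domains upgrades this to compact generation in the $\sV_R$-enriched sense. Assembling the $q$- and $r$-assertions completes the proof.

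I expect the only point genuinely requiring care to be the transport step: one must verify that $\bU$ creates enough of the $r$-structure — not merely the $r$-fibrations, which is definitional, but also the $r$-acyclic $r$-fibrations via their enriched lifting characterization — so that the enriched adjunction argument legitimately replaces the set-based one used for the $q$-case. Once the identification $\Sq(\bF i,p)\cong\Sq(i,\bU p)$ and the enrichedness of $\bF\dashv\bU$ are set up cleanly, everything else is formal, since the genuine analytic content — that enriched relative $\bF\cJ_R$-cell complexes are $r$-equivalences — has already been discharged in \myref{FJacyclicity}.
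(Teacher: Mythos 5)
Your proposal is correct and follows essentially the same route as the paper: the $q$-case is quoted from \myref{qmodelA}, and the $r$-case uses the $\sV_R$-enriched adjunction $\bF\dashv\bU$ to identify the enriched RLP against $\bF\cJ_R$ and $\bF\cI_R$ with the $r$-fibrations and $r$-acyclic $r$-fibrations (via \myref{enriched-comparison} and the fact that $\bU$ creates $r$-equivalences and $r$-fibrations), then applies \myref{ecofcrit} with compactness of the domains and the acyclicity of \myref{FJacyclicity}. This matches the paper's argument in \S\ref{subsec:rforDGA}, so no further comparison is needed.
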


As with our previous model categories, we have the following elaboration.

\begin{thm}\mylabel{rmodelA-monoidal}  If $A$ is commutative, the $r$-model structure is monoidal.  In general,
the $r$-model structure makes $\sM_A$ into an $\sM_R$-model category with respect to the $r$-model structure on $\sM_R$,
and $\bF\dashv\bU$ is a Quillen adjunction between the $r$-model structures on $\sM_A$ and $\sM_R$.  In particular,  $\bF$ preserves $r$-cofibrations and $r$-acyclic $r$-cofibrations.
\end{thm}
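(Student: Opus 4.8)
The plan is to deduce the three assertions from the structure of the $r$-model category $\sM_R$ established in \S\ref{Rmod}, the strong (symmetric) monoidality of $\bF$, and the $\sV_R$-enriched compact generation of the $r$-model structure on $\sM_A$ recorded in \myref{enrichedDGAcomparison}, following the template of the proof of \myref{qmodelA}. The Quillen adjunction is immediate: by \myref{rhdefns} the functor $\bU$ \emph{creates} the $r$-equivalences and $r$-fibrations of $\sM_A$ from those of $\sM_R$, so $\bU$ preserves $r$-fibrations and $r$-acyclic $r$-fibrations, whence $\bF\dashv\bU$ is Quillen and $\bF$ preserves $r$-cofibrations and $r$-acyclic $r$-cofibrations. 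In particular $A=\bF R$ is $r$-cofibrant, since every object of $\sM_R$ is $(r=h)$-cofibrant by \myref{hmodelR}; this settles the unit axiom in the monoidal statement.

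For the $\sM_R$-model category assertion I would verify the pushout-product axiom for the action $\otimes\colon \sM_A\times\sM_R\rtarr\sM_A$ with respect to the two $r$-model structures. Since this bifunctor preserves colimits in each variable, \myref{enrichedDGAcomparison} (which, via the enriched SOA of \S\ref{appendix}, cf.~\cite[Ch.~13]{Riehl}, describes the $r$-cofibrations of $\sM_A$ as retracts of enriched $\bF\cI_R$-cell complexes) together with the analogous description for $\sM_R$ reduces us to the enriched generators: a generating $r$-cofibration of $\sM_A$ has the form $(\bF i)\otimes V$ for $i\in\cI_R$ and an $R$-module $V$, one of $\sM_R$ the form $j\otimes W$ for $j\in\cI_R$ and an $R$-module $W$, and similarly with $\cJ_R$ in place of $\cI_R$ for the $r$-acyclic versions. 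Using the associativity isomorphisms $\bF X\otimes K\iso\bF(X\otimes K)$ and $\bF X\otimes V\iso\bF(X\otimes V)$, the pushout-product $((\bF i)\otimes V)\bs(j\otimes W)$ is isomorphic to $\bF((i\bs j)\otimes(V\otimes_R W))$. Now $i\bs j$ is an ($r$-acyclic) $r$-cofibration in $\sM_R$ because the $r$-model structure there is monoidal (\myref{hmodelR}); moreover the ($r$-acyclic) $r$-cofibrations of $\sM_R$ are closed under tensoring with an $R$-module, since by \myref{retractr} they are the retracts of enriched $\cI_R$- (resp.~$\cJ_R$-)cell complexes and tensoring with an $R$-module preserves cell complexes of that type. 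Hence $(i\bs j)\otimes(V\otimes_R W)$ is an ($r$-acyclic) $r$-cofibration of $\sM_R$, and applying $\bF$ and the first paragraph, $((\bF i)\otimes V)\bs(j\otimes W)$ is one of $\sM_A$. Propagating this from generators to all $r$-cofibrations---closure of the relevant classes under pushout, transfinite composite, retract, and tensoring with $R$-modules---is routine.

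The monoidal statement for commutative $A$ runs in parallel, now for $\otimes_A\colon\sM_A\times\sM_A\rtarr\sM_A$: the strong symmetric monoidality isomorphism $\bF X\otimes_A\bF Y\iso\bF(X\otimes_R Y)$ recorded in the proof of \myref{qmodelA} identifies the pushout-product of generators $(\bF i)\otimes V$ and $(\bF j)\otimes W$ with $\bF((i\bs j)\otimes(V\otimes_R W))$, and one concludes exactly as above from the monoidality of the $r$-model structure on $\sM_R$, the unit axiom being the $r$-cofibrancy of $A=\bF R$. Alternatively, just as \myref{cofcrit} upgraded the bare model structure of \myref{qmodelA} to an $\sM_R$-model structure and a Quillen adjunction from the single observation that $\bF$ preserves tensors by $\sM_R$, its enriched analogue \myref{ecofcrit} does the same for the $r$-structures in one stroke, leaving only the commutative-$A$ monoidal clause to be checked by the pushout-product argument above.

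I expect the reduction to generators to be the crux. Because the $r$-model structures are not cofibrantly generated in the classical sense, the usual Hovey-style verification of the pushout-product axiom on generating sets is unavailable, and the argument must be run in the $\sV_R$-enriched framework: one must carry the auxiliary tensor factors $V,W$ throughout, and one needs that tensoring with an arbitrary---possibly non-projective---$R$-module preserves $r$-cofibrations, a fact visible only through the enriched cellular description of \myref{retractr} and its $\sM_A$-analogue.
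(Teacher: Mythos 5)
Your proof is correct, and its overall strategy agrees with the paper's: reduce the pushout-product axiom to the enriched generators, then use that $\bF$ is strong monoidal and left Quillen together with the monoidality of the $(h=r)$-model structure on $\sM_R$. Where you diverge is in how the reduction to generators is justified. The paper does not argue through cell complexes at all: it invokes \myref{ecofcrit-monoidal} and \myref{ecofcrit-enriched}, whose proofs rest on \myref{boxclosure}, a purely formal statement that if $\cI\widehat{\otimes}\cJ\subset\cK$ then $\overline{\cI}\widehat{\otimes}\overline{\cJ}\subset\overline{\cK}$ for the enriched saturations, proved by juggling the $\sV_R$-enriched two-variable adjunctions $(i\widehat{\otimes}j)\underline{\bs}f$ iff $i\underline{\bs}\widehat{[j,f]}$ iff $j\underline{\bs}\hathom(i,f)$. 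With that lemma one checks the pushout-product only on the bare generating sets $\bF\cI_R$, $\bF\cJ_R$, $\cI_R$, $\cJ_R$; the auxiliary tensor factors $V,W$ never appear, because the enriched saturation already absorbs tensoring with $R$-modules. You instead run the reduction through the enriched cellular description (\myref{retractr}, \myref{enrichedboxslashclosure}), carrying $V,W$ explicitly and appealing to closure of the left classes under pushout, composite, retract, and tensoring. That route does work, but the step you label ``routine'' --- propagating from cells to arbitrary $r$-cofibrations in \emph{both} variables of the pushout-product --- is precisely the point the paper's \myref{boxclosure} was designed to dispatch; written out in your style it amounts to a two-variable transfinite/retract closure argument, i.e.\ essentially a reproof of that lemma in cellular language. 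So your version is more concrete but heavier at exactly the place the paper flags as the ``delicacy'' of the double enrichment, while the paper's is formally cleaner and makes clear that only the bare generators need to be checked.
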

\begin{proof}  By \myref{ecofcrit-monoidal}, this is a formal consequence of our characterization 
of the $r$-model structure on $\sM_A$ as a lift of the $\sV_R$-compactly generated $r$-model structure 
on $\sM_R$.  There is some delicacy to formulating the argument precisely since it involves the double
enrichment, over $\sM_R$ and $\sV_R$, of all categories in sight; details are given in \S\ref{EMC}. 
\end{proof}

\begin{rmk}\mylabel{why-enriched}  Parenthetically, \cite[3.4]{CH} claimed without proof that the $r$-model 
structure on $\sM_A$ exists. However, as noted in \cite[5.12]{CH}, the $r$-model structure on $\sM_R$ is 
usually not cofibrantly generated in the classical sense, and the arguments the authors had in mind cannot 
be applied, as they agree.\footnote{They say this in a nice postscript that they added to the arXived version of \cite{CH}. }  This emphasizes the importance of the enriched SOA: {\em we know of no other proof that $\sM_A$ has the factorizations necessary to construct the $r$-model structure}. 
\end{rmk}

\section{The six model structures on $\sM_A$}\label{6model} 

\subsection{Mixed model category structures in general}\label{mmodel}
We recall the following results of Cole \cite{Cole}; see also \cite[\S17.3]{MP}.
These sources give more detailed information than we will include here about 
mixed model structures in general. The notation in this section is generic: 
the pair $(q,h)$ can and will vary. 

\begin{thm}\mylabel{generalmixedstructure} Let $\sM$ be a model category with model structures 
$(\sW_h,\sC_h,\sF_h)$ and $(\sW_q,\sC_q,\sF_q)$ such that
$\sW_h\subset \sW_q$ and $\sF_h\subset \sF_q$ and therefore $\sC_q\subset \sC_h$.  
Then there is a mixed model structure, called the $(q,h)$-model structure,
\[ (\sW_q,\sC_{q,h},\sF_h).  \]
It satisfies the following properties.
\begin{enumerate}[(i)]
\item A map is a $(q,h)$-cofibration
if and only if it is an $h$-cofibration that factors as a 
composite $f i$, where $i$ is a $q$-cofibration and
$f$ is an $h$-equivalence.  
\item An object is 
$(q,h)$-cofibrant if and only if it is $h$-cofibrant
and has the $h$-homotopy type of a $q$-cofibrant object.
\item The identity functor on $\sM$ is a right Quillen equivalence from 
the $(q,h)$-model structure to the $q$-model structure, hence is 
a left Quillen equivalence from the $q$-model structure to the 
$(q,h)$-model structure.  
\item If $\sM$ is $q$-proper, then $\sM$ is $(q,h)$-proper. 
\item If $\sM$ is a cosmos that is monoidal in the $h$- and $q$-model structures, then $\sM$ is monoidal in the $(q,h)$-model structure.  
\item Under the hypotheses of (v), if $\sN$ is an $\sM$-bicomplete $\sM$-category 
that has an analogous pair of model structures such that $\sW_h\subset \sW_q$, 
$\sF_h\subset \sF_q$, and $\sN$ is an $\sM$-model category with respect to the 
$h$- and $q$-model structures, then $\sN$ is an $\sM$-model
category with respect to the $(q,h)$-model structures. 
\end{enumerate} 
\end{thm}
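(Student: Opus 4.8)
The plan is to prove the six statements in Theorem~\ref{generalmixedstructure} in sequence, using Cole's general machinery for mixed model structures as the backbone, since all of (i)--(vi) are formal consequences of the hypotheses $\sW_h \subset \sW_q$ and $\sF_h \subset \sF_q$ together with standard model-categorical bookkeeping. First I would establish the existence of the $(q,h)$-model structure itself: one verifies that $(\sW_q, \sC_{q,h}, \sF_h)$ satisfies the model category axioms, where $\sC_{q,h}$ is \emph{defined} to be the class of maps having the LLP against $\sF_h \cap \sW_q$. The two-out-of-three property and retract closure for $\sW_q$ are inherited from the $q$-model structure. For the factorization and lifting axioms, the key point is that $\sF_h \cap \sW_q$ is the right class of a weak factorization system whose left class is $\sC_{q,h}$: one takes a map $f$, factors it in the $q$-model structure as a $q$-cofibration followed by a $q$-acyclic $q$-fibration, then observes that a $q$-acyclic $q$-fibration is in particular an $h$-fibration (since $\sF_h \subset \sF_q$ and... actually one argues the other inclusion direction carefully) that is a $q$-equivalence; and dually one factors $f$ as an $h$-acyclic $h$-cofibration followed by an $h$-fibration, noting that $h$-acyclic $h$-cofibrations lie in $\sC_{q,h} \cap \sW_q$. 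This is exactly Cole's argument, so I would cite \cite{Cole} and \cite[\S17.3]{MP} for the details rather than reproduce them.

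Once the model structure exists, statements (i)--(iv) are precisely Cole's theorem as recalled in those references, so I would simply quote them. For (i), the characterization of $(q,h)$-cofibrations as $h$-cofibrations admitting a factorization $fi$ with $i$ a $q$-cofibration and $f$ an $h$-equivalence follows from the factorization argument above combined with the retract argument; (ii) is the object-level specialization applied to $0 \rtarr X$; (iii) follows because $\sW_q$ is the same class of weak equivalences in both the $q$- and $(q,h)$-model structures, so the identity adjunction is a Quillen equivalence; (iv) is the observation that left/right properness only depends on the weak equivalences together with, respectively, the cofibrations/fibrations, and the $(q,h)$-structure has the $q$-weak equivalences, $\sF_h \subseteq \sF_q$ fibrations (good for right properness via pullback stability of $q$-equivalences along $q$-fibrations, hence along $h$-fibrations) and $\sC_{q,h} \subseteq \sC_h \subseteq$ "cofibrations good enough" — more carefully, $q$-proper means $q$-equivalences are stable under pushout along $q$-cofibrations and pullback along $q$-fibrations, and since $(q,h)$-cofibrations are $q$-cofibrations-up-to-$h$-equivalence while $(q,h)$-fibrations are $h$-fibrations hence $q$-fibrations, both stability properties transfer.

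The genuinely new content is (v) and (vi), the monoidal and enriched-model-category statements, which are the only parts not already packaged in the cited literature in the form needed; I expect (vi) to be the main obstacle. For (v), I would verify the pushout-product axiom for the $(q,h)$-model structure: given $(q,h)$-cofibrations $i, j$, their pushout-product $i \square j$ is an $h$-cofibration (by the $h$-monoidal hypothesis, since $(q,h)$-cofibrations are $h$-cofibrations), and using characterization (i) to write $i = f i'$, $j = g j'$ with $i', j'$ $q$-cofibrations and $f, g$ $h$-equivalences, a diagram chase with the pushout-product functoriality shows $i \square j$ factors as an $h$-equivalence composed with (a pushout-product related to) $i' \square j'$, which is a $q$-cofibration by the $q$-monoidal hypothesis; if one of $i, j$ is a $(q,h)$-equivalence one checks the acyclicity of $i \square j$ using Ken Brown's lemma and the corresponding property in the $q$-structure. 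The unit axiom is inherited since the $q$-cofibrant objects include cofibrant replacements of the unit. For (vi), the hard part will be managing the interaction between the $\sM$-enrichment and the fact that $\sN$ now carries a mixed structure while $\sM$ also carries a mixed structure: one must verify the enriched pushout-product (tensoring) axiom $\otimes\colon \sM \times \sN \rtarr \sN$, where the two cofibrations feeding in are a $(q,h)$-cofibration in $\sM$ and a $(q,h)$-cofibration in $\sN$. The strategy is the same factorization-and-chase: rewrite each as an $h$-equivalence after a $q$-cofibration using (i), invoke the hypothesis that $\sN$ is an $\sM$-model category in both the $h$- and $q$-structures to control the two "pure" pushout-products, and then glue, checking that the $h$-equivalences produced interact correctly with the enriched tensor (which they do because tensoring with a fixed object preserves $h$-equivalences — this is where the $h$-enrichment hypothesis on $\sN$ is used). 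The main subtlety, and the step I would flag as the crux, is that one needs the enriched analogue of characterization (i) to hold \emph{in $\sN$}, i.e. that $(q,h)$-cofibrations in the $\sM$-category $\sN$ factor compatibly with the $\sM$-tensoring; this requires checking that the mixed-structure factorizations can be chosen to respect the enrichment, which follows from the fact that $\sN$ being an $\sM$-model category in the $q$-structure makes the relevant $q$-factorizations enriched, and tensoring by cofibrant objects of $\sM$ preserves $h$-equivalences. Once this is in hand, (vi) falls out of the same diagram chase as (v), now carried out in the enriched setting.
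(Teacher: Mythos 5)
The paper gives no proof of this theorem at all: it is recalled wholesale from Cole \cite{Cole} and \cite[\S 17.3]{MP}, \emph{including} parts (v) and (vi). So your strategy for the existence statement and (i)--(iv) --- cite Cole and \cite[\S 17.3]{MP} --- is exactly what the paper does, and your premise that (v) and (vi) are ``not already packaged in the cited literature'' is mistaken; the authors intend those sources to cover the entire statement. (Your parenthetical hesitation in the existence argument is warranted: $q$-acyclic $q$-fibrations need \emph{not} be $h$-fibrations under the stated hypotheses, and Cole's construction of the (mixed cofibration, mixed acyclic fibration) factorization is genuinely subtler than the sentence you wrote; since you defer to \cite{Cole} for details this is only a cosmetic issue.)

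The genuine gap is in your sketch of (v), which then infects (vi). The crux you assert --- that writing $i=f i'$ and $j=g j'$ as in (i), ``a diagram chase with pushout-product functoriality shows $i\widehat{\otimes} j$ factors as an $h$-equivalence composed with (a pushout-product related to) $i'\widehat{\otimes} j'$'' --- is not established and does not follow from the standard composition formula for pushout-products. Decomposing $i\widehat{\otimes} j$ along those factorizations yields cobase changes of $i'\widehat{\otimes} j'$ (which are indeed $q$-cofibrations) composed with maps of the form $f\widehat{\otimes} j$ or $i'\widehat{\otimes} g$, in which one factor is a bare $h$-equivalence; neither the $h$- nor the $q$-pushout-product axiom says anything about such maps, so the factorization demanded by (i) is not produced. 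The efficient route is different: since the mixed and $h$-model structures share the fibrations $\sF_h$, the mixed acyclic cofibrations are precisely the $h$-acyclic $h$-cofibrations, so the acyclicity clause of the pushout-product axiom is immediate from $h$-monoidality (no Ken Brown argument is needed, and your appeal to it is a detour). The remaining clause is best handled in adjoint form: one must show $\hathom(j,p)\in\sF_h\cap\sW_q$ whenever $j\in\sC_{q,h}$ and $p\in\sF_h\cap\sW_q$; the $\sF_h$ part is $h$-SM7, and the $\sW_q$ part uses the factorization $j=gj'$, the base-change/composition lemma for $\hathom$, $q$-SM7 applied to $j'$, and a separate argument that $\hathom(g,p)$ is a weak equivalence --- this last point is where the real work lies, and your sketch (like your sketch of the enriched statement (vi)) never engages with it. As written, (v) and (vi) are not proved; the fix consistent with the paper is simply to cite \cite[\S 17.3]{MP} for these parts as well, or else to supply the adjoint-form argument above in full.
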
 

Conceptually, the $(q,h)$-model structure is a resolvant (or colocalization) model structure.
The $(q,h)$-cofibrant, or resolvant, objects can be characterized as those $h$-cofibrant objects 
$C$ such that
\[ f_*\colon h\sM(C,Y) \rtarr  h\sM(C,Z) \]
is a bijection for all maps $f\colon Y\rtarr Z$ in $\sW_q$. A $(q,h)$-cofibrant approximation
$\GA X \rtarr X$ can be thought of as a resolution of $X$.  This includes approximations of spaces by 
CW complexes in topology and projective resolutions in algebra, but it allows such
approximations up to homotopy equivalence.  

\subsection{The mixed model structure on $\sM_R$}\label{1mixed}
In this section only, we drop the requirement that $R$ be commutative.
In $\sM_R$, every object is $h$-cofibrant and we have the following special case.

\begin{thm}\mylabel{mmodelR} $\sM_R$ has a proper $(q,h)$-model structure.  It is monoidal if $R$
is commutative and is an $\sM_{\bZ}$-model structure in general. It has the following properties.
\begin{enumerate}[(i)]  
\item The $(q,h)$-cofibrations are the $h$-cofibrations ($R$-split monomorphisms) that factor as composites of $q$-cofibrations and $h$-equivalences.
\item The $(q,h)$-cofibrant objects are the DG $R$-modules 
of the homotopy types of $q$-cofibrant DG $R$-modules; they are homotopy equivalent to 
degreewise projective $R$-modules, and the converse holds in the bounded below case. 
\item
The identity functor on $\sM_R$ is a right Quillen equivalence from the $(q,h)$-model structure 
to the $q$-model structure, hence is 
a left Quillen equivalence from the $q$-model structure to the 
$(q,h)$-model structure.
\end{enumerate}
\end{thm}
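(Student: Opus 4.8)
The plan is to deduce everything from the general mixed-model-structure theorem \myref{generalmixedstructure}, applied with $\sM = \sM_R$ equipped with its $q$-model structure (\myref{qmodelR}) and its $h$-model structure (\myref{hmodelR}). The only hypotheses of \myref{generalmixedstructure} not already recorded are the inclusions $\sW_h \subset \sW_q$ and $\sF_h \subset \sF_q$. The first is immediate and was noted at the outset: chain homotopic maps induce the same map on homology, so a homotopy equivalence of DG $R$-modules is a quasi-isomorphism. For the second, \myref{reassure} identifies the $h$-fibrations with the $R$-split epimorphisms, and these are in particular degreewise epimorphisms, hence $q$-fibrations by \myref{fibepi}. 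With these two inclusions in hand, \myref{generalmixedstructure} directly produces the $(q,h)$-model structure $(\sW_q, \sC_{q,h}, \sF_h)$ on $\sM_R$, and part~(iii) of that theorem is verbatim part~(iii) here.

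It then remains only to translate the abstract statements of \myref{generalmixedstructure}(i)--(ii) into the explicit form claimed. For (i): a $(q,h)$-cofibration is, by \myref{generalmixedstructure}(i), an $h$-cofibration factoring as a $q$-cofibration followed by an $h$-equivalence, and \myref{reassure} identifies $h$-cofibrations with $R$-split monomorphisms. For (ii): \myref{generalmixedstructure}(ii) describes the $(q,h)$-cofibrant objects as the $h$-cofibrant objects having the $h$-homotopy type of a $q$-cofibrant object; but every object of $\sM_R$ is $h$-cofibrant (\myref{hmodelR}), so this reduces to the class of DG $R$-modules homotopy equivalent to a $q$-cofibrant one, and \myref{qcofibrant} then says these are exactly those homotopy equivalent to degreewise projective $R$-modules, with the converse valid in the bounded below case. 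Properness will follow from \myref{generalmixedstructure}(iv) together with the $q$-properness of $\sM_R$ asserted in \myref{qmodelR}. The monoidal claim (for commutative $R$) will follow from \myref{generalmixedstructure}(v), since \myref{qmodelR} and \myref{hmodelR} make $\sM_R$ monoidal in both the $q$- and $h$-structures; and the $\sM_{\bZ}$-model structure claim will follow from \myref{generalmixedstructure}(vi) applied with $\sM = \sM_{\bZ}$, using that $\sM_R$ is an $\sM_{\bZ}$-model category in the $q$- and $h$-structures and that $\sM_{\bZ}$ carries a $(q,h)$-model structure, namely the case $R = \bZ$ of the construction just given (this is not circular, since the existence of the $(q,h)$-structure does not depend on the $\sM_{\bZ}$-enrichment).

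There is no substantial obstacle: all the real content was front-loaded into \myref{generalmixedstructure} and into the structural results \myref{reassure} and \myref{qcofibrant}, so the proof is essentially bookkeeping. The only points that require a moment's attention are the verification that $\sF_h \subset \sF_q$ and the matching of the abstract characterization of the cofibrant objects with the concrete one of (ii) --- in particular, the simplification of \myref{generalmixedstructure}(ii) afforded by the fact that every object of $\sM_R$ is $h$-cofibrant. That is where I would be most careful.
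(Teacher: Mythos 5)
Your proposal is correct and is essentially the paper's own argument: the paper presents \myref{mmodelR} as an immediate special case of Cole's mixed model structure theorem \myref{generalmixedstructure}, using that $\sW_h\subset\sW_q$ and $\sF_h\subset\sF_q$ (via \myref{reassure} and \myref{fibepi}), that every object of $\sM_R$ is $h$-cofibrant, and \myref{qcofibrant} to identify the $q$-cofibrant objects, with properness, monoidality, and the $\sM_{\bZ}$-enrichment coming from parts (iv)--(vi) of that theorem. Your extra care over the inclusion $\sF_h\subset\sF_q$ and the non-circularity of the $\sM_{\bZ}$ case is sound bookkeeping, not a deviation.
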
  

The $(q,h)$-model structure is sometimes more natural than the $q$-model structure.
For example, when $R$ is commutative, the dualizable objects of $\sD_R$ are 
the perfect complexes, namely the objects of $\sM_R$ that are homotopy equivalent to 
bounded complexes of finitely generated projective $R$-modules.\footnote{Dualizability is usually thought of in $\sD_R$, and for that it is equivalent to define perfect complexes to be complexes quasi-isomorphic to bounded chain complexes of finitely generated projective $R$-modules. But for work before passage to derived categories, where dualizability already
makes sense, it is much more natural to define perfect in terms of homotopy equivalence.}  The homotopy invariance
means that these objects live naturally as $(q,h)$-cofibrant objects, although they need not
be $q$-cofibrant.  Using a $(q,h)$-cofibrant approximation $X$ of a DG $R$-module $M$,
we have
\begin{equation}\label{TorExt} 
\Tor_*^R(N,M) = H_*(N\otimes_R X) \ \ \ \mathrm{and}\ \ \ \Ext^*_R(M,N) = H^*\Hom_R(X,N), 
\end{equation}
the latter regraded cohomologically.  We can think of these as obtained by first applying the 
derived functors of $N\otimes_R(-)$ and $\Hom_R(-,N)$ and then taking homology groups.  When 
$M$ is an $R$-module regarded as a DG $R$-module concentrated in degree $0$, these are the 
$\Tor$ and $\Ext$ functors of classical homological algebra.

\begin{rmk} 
Although the $(q,h)$-cofibrant objects are precisely analogous to spaces of the homotopy 
types of CW complexes in topology \cite[\S17.4]{MP} and are of comparable conceptual
interest \cite[\S18.6]{GM}, they are not comparably easy to recognize.  We lack an
analogue of Milnor's classic characterization \cite{Milnor} of CW homotopy types that 
would let us recognize objects of $\sM_R$ that are homotopy equivalent to $q$-cofibrant objects
when we see them.
\end{rmk}

\begin{rmk}\mylabel{Ohyeah}  If $R$ is semi-simple, then $\sF_h=\sF_q$ since all epimorphisms split.  
It is well-known that $\sW_q=\sW_h$ in this case.  Indeed, if $f\colon X\rtarr Y$ is in $\sW_q$,
then it is in $\sW_h$ since its cofiber, the evident pushout $C = Y \cup_f (X\otimes D^1_R)$, is 
$q$-acyclic and therefore contractible, splittings $C_n\iso  Z_n\oplus B_n$ determining a 
contracting homotopy.  Therefore our three model structures on $\sM_R$ coincide in this case.  
Our interest is in commutative ground rings that are not semi-simple.
\end{rmk}

\subsection{Three mixed model structures on $\sM_A$}\label{3mixed}
Returning to our usual context of a DG $R$-algebra $A$,
we display and compare the six projective-type model structures that we have in sight on $\sM_A$.
There are actually more, but these are the ones that seem to us to be of most obvious interest.  Let us write
$\sM_A^t$ generically for $\sM_A$ with the $t$-model structure, with a pair of superscripts for mixed model structures.  
In the previous two sections, we discussed $\sM_A^q$, $\sM_A^h$, and $\sM_A^r$.  We complete their comparison
to the model structures on $\sM_R$ in the following observation.

\begin{lem}\mylabel{hqrSum} The adjunction $\bF \dashv \bU$ is a Quillen adjunction with respect to the $h$-, $q$-, or $r$-model
structures on both $\sM_R$ and $\sM_A$.  
\end{lem}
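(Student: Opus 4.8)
\emph{Plan.} Recall that a left adjoint is left Quillen—equivalently, that an adjunction is a Quillen adjunction—as soon as it preserves cofibrations and acyclic cofibrations. For the $q$- and $r$-model structures the assertion is not new: it is the last clause of \myref{qmodelA} and of \myref{rmodelA-monoidal} respectively. So the only case requiring an argument is the $h$-model structure, and the plan is to verify that $\bF\colon\sM_R\rtarr\sM_A$ preserves $h$-cofibrations and $h$-equivalences; the two together give preservation of $h$-acyclic $h$-cofibrations, since an $h$-acyclic $h$-cofibration is by definition a map that is simultaneously an $h$-cofibration and an $h$-equivalence.

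First I would observe that $\bF$ preserves $h$-equivalences. Since $\bF = A\otimes(-)$ commutes with tensoring by the fixed interval complex $I$, there is a natural isomorphism $\bF(X\otimes I)\iso \bF X\otimes I$ compatible with the two endpoint inclusions and with the projection; hence $\bF$ carries a homotopy $X\otimes I\rtarr Y$ in $\sM_R$ to a homotopy $\bF X\otimes I\rtarr \bF Y$ in $\sM_A$, and therefore carries homotopy equivalences of DG $R$-modules to homotopy equivalences of DG $A$-modules. (Alternatively this is immediate from $\sW_h\subset\sW_r$ together with the fact that $\bU\bF$ preserves $R$-homotopy equivalences.)

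Next I would show $\bF$ preserves $h$-cofibrations, by transposing lifting problems. By \myref{coffibalg}, a map $i\colon W\rtarr X$ is an $h$-cofibration in $\sM_R$ if and only if it has the LLP against $p_0\colon B^I\rtarr B$ for every DG $R$-module $B$, and the same characterization (now with DG $A$-modules) defines the $h$-cofibrations in $\sM_A$. Because $\bF\dashv\bU$ is an $\sM_R$-enriched adjunction (recorded in \myref{qmodelA}), the right adjoint $\bU$ commutes with the cotensor by $I$: there is a natural isomorphism $\bU(C^I)\iso(\bU C)^I$ under which $\bU$ of $p_0\colon C^I\rtarr C$ is identified with $p_0\colon(\bU C)^I\rtarr\bU C$. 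Thus, transposing across the adjunction, a lifting problem of $\bF i$ against $p_0^{C}$ in $\sM_A$ becomes exactly a lifting problem of $i$ against $p_0^{\bU C}$ in $\sM_R$; the latter has a solution since $i$ is an $h$-cofibration, and transposing the solution back gives the required lift for $\bF i$. Combining this with the previous step completes the proof.

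I do not anticipate any real obstacle here. The only point that needs a moment of care is that $\bF$ (respectively $\bU$) genuinely commutes, up to coherent natural isomorphism, with the cylinder functor $(-)\otimes I$ (respectively the cocylinder functor $(-)^I$)—which is precisely the assertion that the adjunction is enriched over $\sM_R$, already noted in \myref{qmodelA}—so that the transposed diagram really is the expected lifting square; granting that, the argument is a routine manipulation of the adjunction and uses nothing about the (delicate) factorization axioms for the $h$-model structure on $\sM_A$.
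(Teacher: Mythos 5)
Your proof is correct, and it is essentially the paper's argument run on the dual side of the adjunction. The paper also disposes of the $q$- and $r$-cases by citing the lifted model structures, and for the $h$-case it verifies the right Quillen condition on $\bU$: it observes that $\bU$ obviously sends $A$-homotopy equivalences to $R$-homotopy equivalences, and that $\bU$ sends $h$-fibrations to $h$-fibrations by transposing lifting problems against $i_0\colon K\rtarr K\otimes I$ across the isomorphism $\bF(K\otimes I)\iso \bF K\otimes I$; since $\bU$ preserves fibrations and all weak equivalences, it preserves acyclic fibrations and the adjunction is Quillen. You instead verify the left Quillen condition on $\bF$ (preservation of $h$-cofibrations via $\bU(C^I)\iso(\bU C)^I$ and transposition against $p_0$, plus preservation of $h$-equivalences because $\bF$ commutes with the cylinder), which is the mirror image of the same transposition idea and is equally valid; neither argument touches the delicate factorizations. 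One small caution: your parenthetical alternative for $\bF$ preserving $h$-equivalences does not work, since knowing that $\bU\bF$ preserves $R$-homotopy equivalences only shows $\bF f\in\sW_r$ in $\sM_A$, and the inclusion $\sW_h\subset\sW_r$ points the wrong way; your main argument, tensoring the homotopy with $A$, is the one to keep.
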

\begin{proof} We have already observed that this holds for the $q$- and $r$-model structures since those were
created on $\sM_A$ by lifting the corresponding model structures on $\sM_R$.  Obviously $\bU$ takes $A$-homotopy
equivalences to $R$-homotopy equivalences.  It takes $h$-fibrations $p$ in $\sM_A$ to $h$-fibrations since 
$\bF(K\otimes I)\iso (\bF K)\otimes I$, so that $\bU p$ has the RLP against $i_0\colon K\rtarr K\otimes I$ for all
$K\in \sM_R$ by adjunction.
\end{proof}    

By their definitions, we have inclusions
\[  \sW_h\subset \sW_r \subset \sW_q. \]
The following further inclusions should be almost obvious, but it seems worthwhile to give proofs.

\begin{lem}\mylabel{hqrSum3}  The following inclusions hold:
\[ \sF_h \subset \sF_r\subset \sF_q \ \ \tand\ \ \sC_h\supset \sC_r\supset \sC_q. \]
\end{lem}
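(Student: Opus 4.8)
The plan is to establish the two chains of inclusions by exploiting the lifting-property characterizations of the various classes and the definitional relations $\sW_h\subset\sW_r\subset\sW_q$. First I would observe that $\sF_h\subset\sF_r$ is essentially immediate: an $h$-fibration $p$ in $\sM_A$ has the RLP against $i_0\colon K\rtarr K\otimes I$ for all $K\in\sM_R$, and specializing to the acyclic cofibrations $0\rtarr D^n_R$ (or rather $\bF$ applied to them) shows that the underlying map $\bU p$ is a degreewise split epimorphism, i.e.\ an $r$-fibration; alternatively one invokes \myref{DefRetCor2}, which presents any $h$-fibration as a projection $B\oplus\ker(p)\rtarr B$ up to isomorphism over $B$, and such a map is visibly $R$-split. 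The inclusion $\sF_r\subset\sF_q$ is trivial: an $R$-split epimorphism is in particular a degreewise epimorphism, so is a $q$-fibration by \myref{fibepi}.

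The cofibration chain $\sC_h\supset\sC_r\supset\sC_q$ is then obtained by a standard lifting-property argument. For $\sC_r\supset\sC_q$: a $q$-cofibration has the LLP against all $q$-acyclic $q$-fibrations; since $\sF_r\subset\sF_q$ and $\sW_r\subset\sW_q$, every $r$-acyclic $r$-fibration is a $q$-acyclic $q$-fibration, so a $q$-cofibration lifts against all $r$-acyclic $r$-fibrations and is therefore an $r$-cofibration by \myref{rhdefns}. For $\sC_h\supset\sC_r$: by \myref{reassure} (or rather its $\sM_A$-analogue implicit in the construction of the $h$-model structure), the $h$-cofibrations are exactly the maps with the LLP against $h$-acyclic $h$-fibrations; since $\sF_h\subset\sF_r$ and $\sW_h\subset\sW_r$, every $h$-acyclic $h$-fibration is an $r$-acyclic $r$-fibration, so an $r$-cofibration lifts against all $h$-acyclic $h$-fibrations, hence is an $h$-cofibration.

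A subtlety worth flagging is that the argument for $\sC_h\supset\sC_r$ requires knowing that the $h$-model structure on $\sM_A$ is genuinely a model structure—in particular that $h$-cofibrations are characterized by the LLP against $h$-acyclic $h$-fibrations—which is exactly the content of \myref{hmodelA} (with its nontrivial factorization axiom deferred to \S\ref{ASOA}). Granting that, all six containments reduce to the schematic implication: if $\sF_1\subset\sF_2$ and $\sW_1\subset\sW_2$ then $\sF_1\cap\sW_1\subset\sF_2\cap\sW_2$, whence the left class $^{\boxslash}(\sF_2\cap\sW_2)$ is contained in $^{\boxslash}(\sF_1\cap\sW_1)$.

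\textbf{Main obstacle.} I expect the only real point of care to be the first inclusion $\sF_h\subset\sF_r$: unlike the case $A=R$, there is no simple explicit description of $h$-fibrations in $\sM_A$, so one must either push the lifting property against the specific maps $\bF(0\rtarr D^n_R)$ through—using $\bF(K\otimes I)\iso(\bF K)\otimes I$ as in the proof of \myref{hqrSum}—or appeal to \myref{DefRetCor2}, which already packages the needed structural information about acyclic $h$-fibrations and extends (via \myref{FibEquiv}) to general $h$-fibrations after factoring. Everything downstream is then formal.
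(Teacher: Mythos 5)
Your overall plan is sound, and for the fibration inclusions it essentially reproduces the paper's argument: the paper proves $\sF_h\subset\sF_r$ precisely by the adjunction step you cite from the proof of \myref{hqrSum} (using $\bF(K\otimes I)\iso (\bF K)\otimes I$, so that $\bU p$ has the CHP in $\sM_R$, hence is an $(h=r)$-fibration there by \myref{reassure}, hence $p$ is an $r$-fibration by \myref{rhdefns}), and $\sF_r\subset\sF_q$ because an $R$-split epimorphism is in particular degreewise surjective. However, the two alternative routes you offer for $\sF_h\subset\sF_r$ do not work as stated. Unenriched lifting against $\bF(0\rtarr D^n_R)$ only yields set-level sections of each $p_n$, i.e.\ it shows $p$ is a $q$-fibration; to get sections that are maps of $R$-modules one needs either the enriched RLP or the CHP argument just described, so ``specializing to $0\rtarr D^n_R$'' proves the wrong thing, and the maps one should transpose along $\bF\dashv\bU$ are $i_0\colon \bF K\rtarr \bF K\otimes I$, not $\bF(0\rtarr D^n_R)$. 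Likewise \myref{DefRetCor2} describes only the $h$-\emph{acyclic} $h$-fibrations; a general $h$-fibration is not a projection $B\oplus\ker(p)\rtarr B$, and the proposed repair (``extends via \myref{FibEquiv} after factoring'') is not an argument, since factoring $p$ through the mapping cocylinder does not exhibit $p$ as an $R$-split epimorphism. Because you also state the correct adjunction argument, these are removable redundancies rather than a fatal gap, but they should be deleted.

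On the cofibration side your route genuinely differs from the paper's and is correct. You deduce $\sC_q\subset\sC_r\subset\sC_h$ formally from antitonicity of ${}^{\boxslash}(-)$, using that $\sC_q$ and $\sC_r$ are defined by lifting against acyclic fibrations and that $\sC_h$ admits that characterization once \myref{hmodelA} is in place (as you rightly flag, this leans on the algebraic SOA factorization). The paper is more economical at both steps: it gets $\sC_q\subset\sC_r$ from the cell-complex descriptions ($q$-cofibrations are retracts of $\bF\cI_R$-cell complexes, which are in particular enriched ones, hence $r$-cofibrations), and it gets $\sC_r\subset\sC_h$ without invoking the $h$-model structure at all, simply by observing that $p_0\colon B^I\rtarr B$ is an $r$-acyclic $r$-fibration for every $B$, so an $r$-cofibration lifts against it, which is literally the HEP defining $h$-cofibrations in \myref{coffibalgtoo}. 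Your version buys uniformity (one schematic implication handles both inclusions); the paper's buys independence from the harder parts of \myref{hmodelA} and avoids needing the lifting characterization of $h$-cofibrations.
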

\begin{proof}
The proof of \myref{hqrSum} shows that if $p$ is an $h$-fibration in $\sM_A$, then $\bU p$ is an $(h=r)$-fibration in $\sM_R$,  hence $p$ is an $r$-fibration.  If $i$ is an $r$-cofibration, it has the LLP against 
the $r$-acyclic $r$-fibrations $p_0\colon X^I\rtarr X$ and is thus an $h$-cofibration.

If $i$ is a $q$-cofibration, then it is a retract of an $\bF \cI_R$-cell complex
and thus of an enriched $\bF \cI_R$-cell complex, hence it is an $r$-cofibration.  Similarly, if $i$ is a $q$-acyclic
$q$-fibration, then it is a retract of an $\bF \cJ_R$-cell complex and thus of an enriched $\bF \cJ_R$-cell complex, 
hence it is an $r$-acyclic $r$-cofibration.  If $p$ is an $r$-fibration, it has the enriched RLP against $\bF\cJ_R$ and hence also the weaker unenriched RLP and is thus a $q$-fibration. 
\end{proof} 

Therefore \myref{generalmixedstructure} gives us mixed model structures
\[  \sM_A^{q,h} = (\sW_q, \sC_{q,h}, \sF_h), \]
\[  \sM_A^{q,r} = (\sW_q, \sC_{q,r}, \sF_r), \]
\[  \sM_A^{r,h} = (\sW_r, \sC_{r,h}, \sF_h). \]
The identity functor on $\sM_A$ gives right Quillen adjoints displayed in the diagram
\[ \xymatrix{ \sM_A^h \ar[r] \ar[dr] & \sM_A^r \ar[r] & \sM_A^q \\
& \sM_A^{r,h} \ar[r] \ar[dr] \ar[u] & \sM_A^{q,r} \ar[u] \\
& & \sM_A^{q,h}. \ar[u]\\} \]
From left to right, these arrows induce the evident functors $\sK_A \rtarr \sD_A^r \rtarr \sD_A$ on passage to 
homotopy categories.  From bottom to top they induce isomorphisms on homotopy categories. The comparisons of weak 
equivalences and fibrations are built into the definitions and \myref{hqrSum3}.

By \cite[17.3.3]{MP}, the following result is a formal consequence of \myref{hqrSum}. Note that $\sM_R^{q,h} = \sM_R^{q,r}$. 

\begin{lem}\mylabel{hqrSum2} The adjunction $\bF \dashv \bU$ is a Quillen adjunction with respect to the 
$(q,r)$- or $(q,h)$-model structures on both $\sM_R$ and $\sM_A$.
\end{lem}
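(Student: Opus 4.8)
The plan is to obtain this as a purely formal consequence of \myref{hqrSum}, via the general transfer principle for mixed model structures recorded in \cite[17.3.3]{MP}: if an adjunction $F \dashv U$ is a Quillen adjunction for the $h$-model structures and for the $q$-model structures on its source and target, with the inclusions $\sW_h \subset \sW_q$ and $\sF_h \subset \sF_q$ that make the mixed structures exist, then it is automatically a Quillen adjunction for the $(q,h)$-model structures. By \myref{hqrSum}, $\bF \dashv \bU$ is Quillen for the $q$-model structures and for the $r$- and $h$-model structures on both $\sM_R$ and $\sM_A$, and the requisite inclusions of weak equivalences and fibrations on $\sM_A$ were recorded in \myref{hqrSum3} (on $\sM_R$ one has $\sM_R^{q,h} = \sM_R^{q,r}$, so there is literally one mixed structure to treat). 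Feeding the pair $(q,h)$, and then the pair $(q,r)$, into the cited result gives both assertions at once.

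Should one prefer an argument in hand rather than a citation, the quickest route is to check directly that $\bU$ is right Quillen for the mixed structures, i.e.\ that it preserves fibrations and acyclic fibrations. The fibrations of $\sM_A^{q,h}$ (resp.\ $\sM_A^{q,r}$) are the $h$-fibrations (resp.\ $r$-fibrations), which $\bU$ preserves by \myref{hqrSum}; the acyclic fibrations are simply the maps lying in $\sW_q \cap \sF_h$ (resp.\ $\sW_q \cap \sF_r$), and $\bU$ preserves $\sW_q$ — indeed it creates the $q$-equivalences — so it preserves these intersections as well. The same two lines apply verbatim over $\sM_R$. (One could instead argue on the $\bF$ side, using that the $(q,h)$- and $(q,r)$-acyclic cofibrations coincide with the $q$-acyclic $q$-cofibrations, but the $\bU$ side avoids even that small point.)

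I expect no real obstacle here: the content is bookkeeping about which classes name the weak equivalences and fibrations of a mixed model structure, combined with facts about $\bF \dashv \bU$ already in place. The one point worth stating carefully — and the reason \cite[17.3.3]{MP} is convenient to invoke — is that the acyclic fibrations of the $(q,h)$- and $(q,r)$-model structures really are $\sW_q \cap \sF_h$ and $\sW_q \cap \sF_r$, so that preservation of $q$-equivalences together with preservation of $h$- (resp.\ $r$-) fibrations genuinely yields preservation of acyclic fibrations; this is exactly what the mixed model structure formalism of \cite{Cole} and \myref{generalmixedstructure} guarantees.
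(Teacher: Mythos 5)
Your proposal is correct and follows essentially the same route as the paper, which likewise deduces the statement as a formal consequence of \myref{hqrSum} via \cite[17.3.3]{MP}, noting that $\sM_R^{q,h} = \sM_R^{q,r}$. The additional direct verification that $\bU$ preserves the mixed fibrations and acyclic fibrations is a sound (if optional) supplement.
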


Since every object is fibrant in all six of our model categories, we pass to homotopy categories by cofibrant 
approximation and passage to homotopy classes.  It is worth emphasizing the obvious: the relevant notion of 
homotopy is always that between maps of DG $A$-modules, which is the notion of homotopy used to define $\sW_h$. 
The $h$-fibrations are very natural, being the algebraic analogue of Hurewicz fibrations in topological situations, and they are the fibrations of the $h$-, $(r,h)$-, and $(q,h)$-model structures defined with respect to our three 
classes of weak equivalences.  The comparisons of cofibrations among our various model structures on $\sM_A$ are 
of interest, and we focus on cofibrant objects.  Since every object is $h$-cofibrant, the $(r,h)$- and $(q,h)$-cofibrant 
objects are just the $h$-homotopy types of $r$-cofibrant and $q$-cofibrant objects. 

Now focus further on the interrelationships among the cofibrant objects in the $r$-, $q$-, and $(q,r)$-model 
structures. The $r$-cofibrant objects are the retracts of $\sV_R$-enriched $\bF\cI_R$-cell complexes, the 
$q$-cofibrant objects are the retracts of ordinary $\bF\cI_R$-cell complexes, and the $(q,r)$-cofibrant
objects are the $r$-cofibrant objects that have the $r$-homotopy type (and thus the $q$-homotopy
type) of $q$-cofibrant objects. 

The derived category $\sD_A$ is our preferred homotopy category of interest, so we are most 
interested in the $q$-equivalences.  In the applications of [13], the relevant DG algebras 
$A$ are typically $R$-flat but not necessarily $R$-projective.  In such a situation, the most 
natural cofibrant approximations are given by bar constructions.  They are $r$-cofibrant, and 
we shall see in \S\ref{barsec} that they often behave homologically as if they are $(q,h)$-cofibrant, 
although they are generally not.   Bar constructions are large, of great theoretical importance, but of little 
calculational utility. On the other hand, there are calculationally accessible $q$-cofibrant approximations 
that can be compared to the bar construction, as we shall see by mimicry of classical homological algebra.

\section{Enriched and algebraic variants of the small object argument}\label{appendix} 

To construct functorial factorizations for the $q$-, $r$-, and $h$-model structures on $\sM_A$, we use three different versions of the small object argument (SOA), namely:  
\begin{itemize}
 \item  Garner's version of the classical SOA, used to construct  factorizations for the $q$-model structure (\S\ref{sec:SOA1});
\item the enriched SOA, used for  the $r$-model structure (\S\ref{sec:SOA2});
 \item the algebraic SOA, generalizing Garner's SOA to algebraically controlled classes and categories of generators, used for the $h$-model structure (\S\ref{ASOA}).
\end{itemize}
In more detail, the $q$-model structure is compactly generated in the classically understood sense (see \myref{compgen}), but the $r$- and $h$-model structures are not. As in the case of the $r$-model structure 
on $\sM_R$ described in \S\ref{Rmod}, the $r$-model structure on $\sM_A$ is compactly generated in an $R$-module enriched sense.  We present 
the necessary model theoretic machinery for the classical and enriched factorizations in parallel in \S\ref{sec:SOA1} and \S\ref{sec:SOA2}, 
respectively. 

In \S\ref{EMC}, we describe conditions under which the model structure created by a right adjoint from an existing enriched or monoidal model category is again enriched or monoidal. These general results are then used to prove that the $q$- and $r$-model structures on $\sM_A$ are $\sM_R$-model structures, monoidal if $A$ is commutative. For the $r$-model structure, our observations concerning enrichment are vital: 
the usual model structure lifting theorems take compact (or cofibrant) generation as a hypothesis, and the ($h=r$)-model 
structure on $\sM_R$  generally fails to satisfy that hypothesis in the traditional unenriched sense. 

Neither the classical nor the enriched SOA seem to be able to produce the desired factorization for the $h$-model structure on $\sM_A$, which also fails to be compactly generated. Instead, drawing inspiration from the Garner SOA, in \S\ref{ASOA} we describe an algebraic variant of the SOA, which allows the construction of weak factorization systems generated by \emph{classes} of maps that are algebraic in a sense made precise there. We conclude that section with a proof of the factorization axiom for the $h$-model structure on $\sM_A$, \myref{hfactor}.

Note that we restrict ourselves to the SOA based on $\omega$-indexed colimits, which is important for the applications in Part 2. The constructions and results in this section generalize effortlessly for any regular cardinal $\kappa$ in place of $\aleph_0$. 

\subsection{The classical small object argument}\label{sec:SOA1}

To provide context, we briefly recall  the classical SOA, used to produce the 
factorizations for the $q$-model structures on $\sM_R$ and $\sM_A$. Let $\cI$ be a set of maps in a cocomplete category. Under certain set-theoretical conditions, the SOA 
constructs a functorial factorization such that the right factor of any map has the RLP against $\cI$ 
and the left factor is a relative $\cI$-cell complex.  This construction demonstrates the existence of the weak factorization system compactly generated by $\cI$.

 The version of the SOA we present is a variant of 
Quillen's original construction,  due in its 
general form to Garner \cite{Garner} and in the special case used here to the Ph.D. thesis of Radulescu-Banu \cite{Radu}. 
The use of this version of the SOA to construct factorizations for the $q$-model structures is merely a matter of taste but very much in accordance with the philosophy of compact generation. 
Other expositions of our philosophy can be found in \cite[Chapter 15]{MP} 
and \cite[Chapter 12]{Riehl}.

\begin{defn}\mylabel{wfs}
A \emph{weak factorization system} (WFS) $(\cL,\cR)$ on a category consists of two classes of morphisms $\cL$ and $\cR$ such that
\begin{enumerate}[(i)]
\item Every morphism can be factored as $r l$ with $l \in \cL$ and $r \in \cR$.
\item $\cL$ is the class of maps with the LLP against $\cR$, and $\cR$ is the 
class of maps with the RLP against $\cL$.
\end{enumerate}
\end{defn}

\begin{defn}\mylabel{cellob}  Let $\sM$ be a cocomplete category. Let $\cI$ be a set of maps in $\sM$ and
let $X\in \sM$.  A  \emph{relative} $\cI$-\emph{cell complex} under $X$ is a map
$f\colon X\rtarr Y$, where $Y = \colim F_nY$ is the colimit of a sequence of maps
$F_nY\rtarr F_{n+1}Y$ such that $F_0Y = X$ and $F_{n+1}Y$ is obtained as 
the pushout in a diagram
\[  \xymatrix{
\coprod J_q \ar[d]_{\coprod{i_q}} \ar[r]^-{j} & F_{n}Y \ar[d] \\
\coprod K_q \ar[r]_-k & F_{n +1}Y,\\} \]
where each $i_q \in \cI$ and the coproducts are indexed by some set. 
The components of $j$ are called attaching maps, and the components
of $k$ are called cells. An object $C$ of $\sM$ is \emph{compact} with 
respect to $\cI$ if for every relative $\cI$-cell complex 
$f\colon X\rtarr Y$, the canonical map
\[ \colim_{n}\sM(C, F_nY) \rtarr \sM(C,Y) \]
is a bijection.  The set $\cI$ is \emph{compact} if every domain object  of a map 
in $\cI$ is compact with respect to $\cI$.
\end{defn}

For a class $\cI$ of maps in a category, let $\cI^{{\boxslash}}$ denote the class of maps with the RLP with respect to $\cI$; 
similarly, let ${}^\boxslash(\cI^\boxslash)$ denote the class of maps with the LLP with respect to $\cI^\boxslash$. The SOA provides a constructive proof of the following theorem.

\begin{thm}\mylabel{SOA}
Any compact set of arrows $\cI$ in a cocomplete category  generates a weak factorization system whose right class is $\cI^\boxslash$.  Moreover, the left class ${}^{\boxslash}(\cI^{{\boxslash}})$ is precisely the class of retracts of relative $\cI$-cell complexes.
\end{thm}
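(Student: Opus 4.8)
The plan is the classical small object argument in its $\omega$-indexed (sequential) form, followed by the standard retract argument; the one substantive input is the compactness of $\cI$. I would begin by fixing the two candidate classes: set $\cR = \cI^\boxslash$ and $\cL = {}^\boxslash(\cI^\boxslash) = {}^\boxslash\cR$. Condition (ii) of \myref{wfs} then holds almost by definition. Indeed $\cI\subseteq\cL$, since a single map $i\in\cI$ is the relative $\cI$-cell complex with $F_0Y=\dom(i)$ and $F_1Y=\cod(i)$; hence $\cL^\boxslash\subseteq\cI^\boxslash=\cR$, while $\cR\subseteq\cL^\boxslash$ is immediate from the definition of ${}^\boxslash$, so $\cR=\cL^\boxslash$; and $\cL={}^\boxslash\cR$ by construction. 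It therefore remains only to produce, for every $f\colon X\rtarr Y$, a functorial factorization $X\rtarr Z\rtarr Y$ with $X\rtarr Z\in\cL$ and $Z\rtarr Y\in\cR$.

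Next I would run the construction of \myref{cellob}: put $F_0Y=X$ with its given map to $Y$, and, having built $F_nY\rtarr Y$, let $F_{n+1}Y$ be the pushout of a coproduct of maps of $\cI$ along the attaching map whose components range over all commutative squares from a map of $\cI$ to $F_nY\rtarr Y$. The maps $F_nY\rtarr Y$ are compatible and induce $Z=\colim_n F_nY\rtarr Y$; the whole procedure is functorial in $f$ because the indexing sets of squares, the pushouts, and the colimit are. By construction $X\rtarr Z$ is a relative $\cI$-cell complex, and every such map lies in $\cL$: a class defined by a left lifting property is closed under coproducts, pushouts, sequential composition, and retracts, and $\cL$ contains $\cI$. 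This already yields one inclusion in the final statement, namely that every retract of a relative $\cI$-cell complex lies in ${}^\boxslash(\cI^\boxslash)$.

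The crux is to check that $Z\rtarr Y$ has the right lifting property against $\cI$. Given a lifting problem with some $i\in\cI$ on the left and $Z\rtarr Y$ on the right, compactness of $\dom(i)$ with respect to $\cI$ forces the map $\dom(i)\rtarr Z$ to factor through some $F_nY$; the resulting square from $i$ to $F_nY\rtarr Y$ is one of the squares along which $F_{n+1}Y$ was formed, so the attached cell $\cod(i)\rtarr F_{n+1}Y\rtarr Z$ solves the original problem. This step is the only place the hypothesis is genuinely used, and verifying it — together with the bookkeeping that makes the factorization functorial — is the main obstacle; the rest is formal diagram chasing. With it in hand, the factorization of the previous paragraph has $Z\rtarr Y\in\cR$, which completes the proof that $(\cL,\cR)$ is a weak factorization system with right class $\cI^\boxslash$.

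Finally I would finish the identification of $\cL$ with the retracts of relative $\cI$-cell complexes. The inclusion $\supseteq$ was noted above; for $\subseteq$, let $g\in\cL$ and factor $g=r\ell$ with $\ell$ a relative $\cI$-cell complex and $r\in\cR$, using the factorization just constructed. Since $g$ has the LLP against $r$, the square with $\ell$ across the top, $g$ down the left, $r$ down the right, and the identity along the bottom admits a lift $s$ with $sg=\ell$ and $rs=\id$, and the usual retract argument (the rows $\id_{\mathrm{dom}\,g}$ over $(s,r)$) exhibits $g$ as a retract of $\ell$. Together with the earlier steps this establishes both assertions of the theorem.
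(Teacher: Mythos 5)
Your argument is correct: it is the classical Quillen small object argument in its $\omega$-indexed form (attach a cell for every commutative square from a map of $\cI$ at each stage, pass to the colimit, use compactness of the domains to factor any test map $\dom(i)\rtarr Z$ through a finite stage), followed by the standard retract argument to identify ${}^{\boxslash}(\cI^{\boxslash})$ with the retracts of relative $\cI$-cell complexes. All the steps you flag as the crux are exactly the ones that need checking, and your verifications go through; note also that \myref{wfs} only asks for factorizations, not functorial ones, so your functoriality remark is a bonus rather than an obligation.

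The one respect in which your route differs from the paper's is that the paper does not take Quillen's construction but Garner's variant of it (due in this special case to Radulescu-Banu), in which ``cells are attached only once'': squares whose attaching map factors through an earlier stage are omitted, and compactness is used to show the process genuinely converges at $\omega$ rather than merely being stopped there. The paper states explicitly that for proving \myref{SOA} itself the choice is a matter of taste, so nothing is lost by your version; what Garner's form buys, and why the paper insists on it, is the extra structure exploited later: the factorizations are more economical (see the one-step factorization for $\cJ_R$), they underlie an algebraic weak factorization system, and they are the template for the enriched and algebraic generalizations of the SOA in \S\ref{sec:SOA2} and \S\ref{ASOA}, which is where the genuinely new model structures of the paper come from. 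Your proof establishes the stated theorem; just be aware that it would not by itself support those later refinements.
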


When the relative $\cI$-cell complexes are monomorphisms, as is always true in the cases we consider, the difference between Quillen's SOA and Garner's 
SOA is simple to describe. Quillen constructs factorizations in which the left factor is a sequential colimit of pushouts of coproducts of generating maps; the coproducts are indexed over all commutative squares between the generating arrows and the right factor constructed at the previous stage. 

Garner's construction is similar, except that ``cells are attached only once,'' meaning that any commutative square whose attaching map factors through some previous stage of the sequential colimit is omitted from the indexing coproduct. When $\cI$ is compact, this process \emph{converges} at $\omega$: it is not possible to attach any new non-redundant cells. See \cite{Garner} or \cite[Chapter 12]{Riehl} for more details. 

To illustrate the difference between Quillen's and Garner's SOA, we include a simple relevant example.

\begin{ex}
We  use Garner's SOA to factor a map $f \colon X \rtarr Y$ in accordance with the WFS generated by 
$\cJ_R$ on $\sM_R$. Commutative squares between the generating arrow $0\rtarr D^n_R$ and $f$ are indexed by the underlying graded set of 
$Y_n$. We have a ``one-step'' factorization of $f$ defined via the left-hand pushout square in
\[ \xymatrix{ 0 \ar[r] \ar[d]  \ar@{}[dr] & X \ar@{=}[r] \ar[d]^{\lambda_f} & X \ar[d]^f \\ \oplus_n \oplus_{Y_n} D^n_R \ar[r] & X \oplus (\oplus_n \oplus_{Y_n} D^n_R) \ar[r]_-{\rho_f} & Y.}\] 
Quillen's SOA would proceed by composing $\lambda_f$ with the pushout of generating arrows indexed over commutative squares with codomain $\rho_f$. Garner's SOA performs no further attachments and terminates after the first step. Indeed, $\rho_f$ is a degreewise epimorphism that already has the RLP against $\cJ_R$.
\end{ex}

The main applications of the SOA are to the construction of model structures.
We turn to the context used to construct the $q$-model structures. 

\begin{defn}\mylabel{compgen} A model structure $(\sW,\sC,\sF)$ on a category $\sM$ is \emph{compactly 
generated} if there are compact sets $\cI$ and $\cJ$ of maps in $\sM$ such 
that $\sC$ is the subcategory of retracts of $\cI$-cell complexes
and $\sC\cap \sW$ is the subcategory of retracts of $\cJ$-cell complexes. In this case, $\sF = \cJ^\boxslash$ and $\sF \cap \sW = \cI^\boxslash$. The sets $\cI$ and $\cJ$ are called the generating cofibrations
and generating acyclic cofibrations.
\end{defn}

We use the next result to lift the $q$-model structure from $\sM_R$ to $\sM_A$.  In it, we have in mind our standard adjunction
$\bF\dashv \bU$ between $\sM_R$ and $\sM_A$, with enrichment in $\sU = \sM_R$.  We assume in the rest of this section that $\sU$ is a cosmos with a monoidal model structure; we refer to such a category as a monoidal model category.   Later, we must add in enrichment of $\sU$ in a second  category, which we will denote 
by $\sV$; in our DG context, $\sV$ will be $\sV_R$.

\begin{thm}\mylabel{cofcrit} Consider an adjunction $\xymatrix@1{\bF\colon \sM \ar@<.4ex>[r] & \ar@<.4ex>[l]\sN\colon \bU}$ between cocomplete categories such that $\sM$ has a model structure compactly generated by sets $\cI$ and $\cJ$.
Assume the following two conditions hold.
\begin{enumerate}[(i)] \item (Compactness condition) The sets $\bF\cI$ and $\bF\cJ$ are compact in $\sN$. 
\item (Acyclicity condition) The functor $\bU$ carries every relative $\bF\cJ$-cell complex to a weak equivalence.
\end{enumerate}
Then $\sN$ has a model structure whose fibrations and weak equivalences are created by the right adjoint $\bU$. It is compactly 
generated by $\bF\cI$ and $\bF\cJ$, and $\bF \dashv \bU$ is a Quillen adjunction. 

Moreover, if $\sM$ and $\sN$ are bicomplete $\sU$-categories, $\sM$ is a $\sU$-model category, and $\bF$ preserves tensors, 
then $\sN$ is again a $\sU$-model category. If $\sM$ is a monoidal model category, $\sN$ is a monoidal category, and $\bF$ preserves the monoidal product, then $\sN$ is a monoidal model category provided the unit condition is satisfied.
\end{thm}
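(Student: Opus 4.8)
The plan is to run the standard transfer (Kan lifting) argument, but using Garner's form of the small object argument (\myref{SOA}) in place of Quillen's, so that the lifted structure is compactly generated in the sense of \myref{compgen}. First I would declare the weak equivalences and fibrations of $\sN$ to be $\bU^{-1}\sW$ and $\bU^{-1}\sF$, and the cofibrations to be the maps having the left lifting property against $\sF_\sN\cap\sW_\sN$. Closure of $\sW_\sN$ under retracts and the two-out-of-three property are immediate from the corresponding facts in $\sM$ together with functoriality of $\bU$, while $\sF_\sN$ and $\sC_\sN$ are retract-closed for the usual formal reasons. Next, the adjunction isomorphism $\sN(\bF X,Z)\iso\sM(X,\bU Z)$ converts a lifting problem of $\bF i'$ (for a map $i'$ of $\sM$) against a map $g$ of $\sN$ into a lifting problem of $i'$ against $\bU g$; hence $(\bF\cI)^{\boxslash}=\bU^{-1}(\cI^{\boxslash})=\sF_\sN\cap\sW_\sN$ and $(\bF\cJ)^{\boxslash}=\bU^{-1}(\cJ^{\boxslash})=\sF_\sN$. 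By the compactness condition (i), \myref{SOA} applies to each of $\bF\cI$ and $\bF\cJ$ and yields two weak factorization systems on $\sN$, with right classes $\sF_\sN\cap\sW_\sN$ and $\sF_\sN$ and with left classes the retracts of relative $\bF\cI$- and $\bF\cJ$-cell complexes respectively; by construction the left class of the first is precisely $\sC_\sN$.

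The heart of the proof is then to identify the left class of the $\bF\cJ$-system with $\sC_\sN\cap\sW_\sN$, and this is the only place the acyclicity condition (ii) is used. For one inclusion, $\cI^{\boxslash}\subseteq\cJ^{\boxslash}$ in $\sM$ gives $(\bF\cI)^{\boxslash}\subseteq(\bF\cJ)^{\boxslash}$, so every retract of a relative $\bF\cJ$-cell complex lies in ${}^{\boxslash}(\sF_\sN\cap\sW_\sN)=\sC_\sN$, and by (ii) --- together with the facts that $\bU$ preserves retracts and $\sW$ is retract-closed --- every such map is also a weak equivalence. For the reverse inclusion I would take $f\in\sC_\sN\cap\sW_\sN$, factor $f=p\circ i$ via the $\bF\cJ$-system so that $i$ is in its left class and $p\in\sF_\sN$, observe that $i\in\sW_\sN$ by what was just shown, deduce $p\in\sF_\sN\cap\sW_\sN$ by two-out-of-three, and then use the lifting property of $f$ against $p$ and the usual retract argument to exhibit $f$ as a retract of $i$. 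This supplies the two nontrivial lifting axioms and both factorization axioms, so $\sN$ is a model category compactly generated by $\bF\cI$ and $\bF\cJ$ (completeness of $\sN$ being automatic in our applications, e.g.\ when $\bU$ is monadic). That $\bF\dashv\bU$ is a Quillen adjunction is then automatic: $\bU$ preserves fibrations and acyclic fibrations by construction, and dually, since $\bF$ is a left adjoint it carries relative $\cI$- and $\cJ$-cell complexes, and their retracts, to relative $\bF\cI$- and $\bF\cJ$-cell complexes and their retracts, hence preserves cofibrations and acyclic cofibrations.

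For the enriched and monoidal clauses I would invoke the standard reduction (as in \cite{MP} and \cite{Hovey}) that, for a compactly generated model category, the pushout--product axiom need only be verified on pairs of generating maps. So let $i=\bF i'$ with $i'$ a generating (acyclic) cofibration of $\sM$ and let $j$ be a generating (acyclic) cofibration of $\sU$; since $\bF$ preserves tensors and colimits, the pushout--product $i\mathbin{\hat{\otimes}}j$ formed in $\sN$ is isomorphic to $\bF$ applied to the pushout--product $i'\mathbin{\hat{\otimes}}j$ formed in $\sM$. The latter is a cofibration of $\sM$, acyclic if $i'$ or $j$ is, because $\sM$ is a $\sU$-model category, and $\bF$ preserves cofibrations and acyclic cofibrations by the Quillen adjunction just established, so $i\mathbin{\hat{\otimes}}j$ is a (trivial) cofibration of $\sN$; this gives the pushout--product axiom, and the unit axiom is inherited from $\sU$ along $\bF$ in the same way. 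The monoidal case is verbatim the same, with $\bF$ strong monoidal in place of ``$\bF$ preserves tensors'', with both $i$ and $j$ now drawn from $\bF\cI\cup\bF\cJ$, and with the stated unit condition supplying the remaining axiom.

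The step I expect to be the main obstacle is the first inclusion in the identification of $\sC_\sN\cap\sW_\sN$, namely that retracts of relative $\bF\cJ$-cell complexes are weak equivalences: this is exactly the acyclicity hypothesis (ii), the only genuinely non-formal input, and in any concrete instance (compare \myref{FJacyclicity}) essentially all of the work goes into verifying it. Everything else is bookkeeping with lifting properties, with the compactness hypothesis (i) used only to license the two applications of Garner's small object argument.
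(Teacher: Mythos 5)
Your proposal is correct and takes essentially the same route as the paper: for the first part the paper simply invokes the standard transfer argument you spell out (citing \cite[11.3.1-2]{Hirschhorn} and \cite[16.2.5]{MP}), and for the enriched/monoidal clauses it argues exactly as you do, reducing the pushout--product axiom to generating maps (this is the content of \myref{boxclosure}) and using that $\bF$ preserves pushout--products and is left Quillen. The one small adjustment: in the $\sU$-model clause $\sU$ is not assumed compactly generated, so you should take $j$ to be an arbitrary (acyclic) cofibration of $\sU$ and perform the reduction to generators only in the $\sN$-variable; with that change your argument goes through verbatim.
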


A proof of the first part can be found, for example, in \cite[11.3.1-2]{Hirschhorn} or \cite[16.2.5]{MP}. The second part should be equally standard, but we do not know a published reference; for a proof of the enriched version of this result, see \S\ref{EMC}.  

\begin{rmk}
The ``unit condition'' referred to in the second part of \myref{cofcrit} is
described in \cite[4.2.6, 4.2.18]{Hovey}.  It is needed to ensure that the monoidal unit of $\sN$ or $\sU$ gives 
rise to a unit for the monoidal structure or $\mathrm{Ho}\sU$-enrichment on the homotopy category of $\sN$. This 
condition is automatically satisfied when the monoidal unit is cofibrant, as is always the case for the model 
structures that we consider in this paper, so we will not discuss it further.
\end{rmk}

\subsection{Enriched weak factorization systems and relative cell complexes}\mylabel{sec:SOA2}

We now describe the definition and construction of enriched WFSs in analogy to the unenriched setting of the last section. 
A more thorough account of this theory is given in \cite[Chapter 13]{Riehl}.  We assume that $\sV$ is a cosmos; we do not
assume that it has a given model structure.

\begin{defn}\mylabel{ewfs} Let $\sM$ be a bicomplete $\sV$-category. An \emph{enriched weak factorization system} consists of 
classes of maps $\cL$ and $\cR$ such that 
\begin{enumerate}[(i)]
\item Every morphism can be factored as $r  l$ with $l \in \cL$ and $r \in \cR$.
\item $\cL$ is the class of maps with the enriched LLP against $\cR$, and $\cR$ is the class of maps with the 
enriched RLP against $\cL$.
\end{enumerate}
\end{defn}

\begin{defn}\mylabel{cellob2}
Let $\sM$ be a bicomplete $\sV$-category. Let $\cI$ be a set of maps in $\sM$ and
let $X\in \sM$.  An \emph{enriched relative} $\cI$-\emph{cell complex} under $X$ is a map
$f\colon X\rtarr Y$, where $Y = \colim F_nY$ is the colimit of a sequence of maps
$F_nY\rtarr F_{n+1}Y$ such that $F_0Y = X$ and $F_{n+1}Y$ is obtained as 
the pushout in a diagram
\[  \xymatrix{
\coprod J_q\otimes V_q \ar[d]_{\coprod{i_q}\otimes \id} \ar[r]^-{j} & F_{n}Y \ar[d] \\
\coprod K_q\otimes V_q \ar[r]_-k & F_{n +1}Y,\\} \]
where each $i_q \in \cI$, each $V_q \in \sV$, and the coproducts are indexed by some set. 
The components of $j$ are called attaching maps and the components
of $k$ are called cells. An object $C$ of $\sM$ is \emph{compact} with 
respect to $\cI$ if for every enriched relative $\cI$-cell complex 
$f\colon X\rtarr Y$, the canonical map
\[ \colim_{n}\sM(C, F_nY) \rtarr \sM(C,Y) \]
is a bijection.  The set $\cI$ is called \emph{compact} if every domain object of a map 
in $\cI$ is compact with respect to $\cI$.
\end{defn}

For a class $\cI$ of maps in $\sM$, let $\cI^{\ul{\boxslash}}$ denote the class of maps that have the enriched RLP against $\cI$;
similarly, let ${}^{\ul{\boxslash}}(\cI^{\ul{\boxslash}})$ denote the class of maps with the enriched LLP against $\cI^{\ul{\boxslash}}$. The enriched SOA \cite[\S 13.2]{Riehl} provides a constructive proof of the following theorem.

\begin{thm}\mylabel{enrichedboxslashclosure} 
Any compact set of arrows
$\cI$ in a bicomplete $\sV$-category generates an enriched weak factorization system whose right class is $\cI^{\ul{\boxslash}}$.  Moreover, 
its left class ${}^{\ul{\boxslash}}(\cI^{\ul{\boxslash}})$ contains all retracts of enriched relative $\cI$-cell complexes and consists 
precisely of such retracts when all enriched relative $\cI$-cell complexes are monomorphisms.
\end{thm}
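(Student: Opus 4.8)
I would run the enriched analogue of Garner's small object argument, following the proof of the unenriched \myref{SOA} and the treatment in \cite[\S13.2]{Riehl}; the only structural change is that cells are now attached by pushing out tensored maps $\coprod_q i_q\otimes\id_{V_q}$, as in \myref{cellob2}, with the indexing objects supplied by the hom-objects $\Sq(i,-)$ which record the \emph{enriched} lifting problem. The core is a functorial \emph{one-step factorization} $(\mathbf L,\mathbf R)$: given $f\colon X\rtarr Z$, let $\mathbf L f\colon X\rtarr Lf$ be the pushout of the span whose left leg is $\coprod_{i\in\cI} i\otimes\id_{\Sq(i,f)}$ and whose upper leg $\coprod_{i\in\cI} J_i\otimes\Sq(i,f)\rtarr X$ is adjoint to the canonical evaluation $\Sq(i,f)\rtarr\sM(J_i,X)$, and let $\mathbf R f\colon Lf\rtarr Z$ be induced by $f$ together with the maps $K_i\otimes\Sq(i,f)\rtarr Z$ adjoint to $\Sq(i,f)\rtarr\sM(K_i,Z)$. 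This is functorial in $f$, $\mathbf L f$ is one layer of an enriched relative $\cI$-cell complex, and by construction the adjunct $\Sq(i,f)\rtarr\sM(K_i,Lf)$ is a $\sV$-section of $\epz\colon\sM(K_i,Lf)\rtarr\Sq(i,\mathbf R f)$ after restriction along the structure map $\Sq(i,f)\rtarr\Sq(i,\mathbf R f)$.

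Iterating $(\mathbf L,\mathbf R)$, discarding at each stage those squares whose attaching data already factors through an earlier stage (Garner's refinement), and passing to the colimit over $\omega$ produces a functorial factorization $f=\widehat{\mathbf R}f\circ\widehat{\mathbf L}f$ with $\widehat{\mathbf L}f\colon X\rtarr\widehat X=\colim_n L_nf$ an enriched relative $\cI$-cell complex. The compactness hypothesis on $\cI$ is exactly what is needed here to see that no genuinely new cells can be attached past stage $\omega$, i.e.\ that the process converges at $\omega$.

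The substantive step is to show $\widehat{\mathbf R}f\in\cI^{\underline{\boxslash}}$. Fix $i\colon J_i\rtarr K_i$ in $\cI$. Since $J_i$ is compact, $\sM(J_i,-)$ preserves the colimit defining $\widehat X$; as the other two vertices $\sM(J_i,Z)$ and $\sM(K_i,Z)$ of the pullback defining $\Sq(i,-)$ are constant in $n$, and filtered colimits commute with finite limits in the cosmos $\sV$, one gets $\Sq(i,\widehat{\mathbf R}f)\iso\colim_n\Sq(i,\mathbf R_nf)$. The finite-stage sections $\Sq(i,\mathbf R_nf)\rtarr\sM(K_i,L_{n+1}f)\rtarr\sM(K_i,\widehat X)$ coming from the $(n{+}1)$-st cell attachment are compatible with the colimit structure maps, hence assemble into a single $\sV$-morphism $\et\colon\Sq(i,\widehat{\mathbf R}f)\rtarr\sM(K_i,\widehat X)$, and a diagram chase — using that each finite-stage section splits $\Sq(i,\mathbf R_nf)\rtarr\Sq(i,\mathbf R_{n+1}f)$ — yields $\epz\et=\id$. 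Thus $i\,\underline{\boxslash}\,\widehat{\mathbf R}f$ for all $i\in\cI$. I expect this to be the main obstacle: producing the lift against $\cI^{\underline{\boxslash}}$ as an honest $\sV$-morphism rather than a pointwise choice of lifts, and checking that domain-compactness of $\cI$ is precisely what makes the interchange $\Sq(i,\colim_n L_nf)\iso\colim_n\Sq(i,\mathbf R_nf)$ and the assembly of the sections go through.

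It remains to assemble the enriched WFS and identify the left class, just as in the unenriched case. Enriched lifting against an arbitrary class of maps is closed under coproducts, pushouts, $\omega$-indexed composites, tensoring with objects of $\sV$, and retracts — each immediate from the ``$\epz$ is a split epimorphism'' reformulation together with the evident (co)limit and adjunction manipulations — so every retract of an enriched relative $\cI$-cell complex has the enriched LLP against $\cI^{\underline{\boxslash}}$. Conversely, given $g\in{}^{\underline{\boxslash}}(\cI^{\underline{\boxslash}})$, factor $g=\widehat{\mathbf R}g\circ\widehat{\mathbf L}g$; since $\widehat{\mathbf R}g\in\cI^{\underline{\boxslash}}$, the map $g$ has the enriched — hence, by the Lemma following \myref{enrichedLP}, the ordinary — LLP against $\widehat{\mathbf R}g$, and the usual retract argument exhibits $g$ as a retract of $\widehat{\mathbf L}g$. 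Together with the factorization built above, this shows that $({}^{\underline{\boxslash}}(\cI^{\underline{\boxslash}}),\cI^{\underline{\boxslash}})$ is an enriched WFS with right class $\cI^{\underline{\boxslash}}$ and that its left class consists of retracts of enriched relative $\cI$-cell complexes. When all such cell complexes are monomorphisms, the refined construction $\widehat{\mathbf L}g$ is literally a relative $\cI$-cell complex, so the characterization is sharp; in general $\widehat{\mathbf L}g$ is only guaranteed to be a retract of one, which gives the stated containment.
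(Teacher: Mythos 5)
Your proposal is correct and takes essentially the same route as the paper, which does not prove this theorem in-house but simply invokes the enriched version of Garner's small object argument from \cite[\S 13.2]{Riehl}: your one-step factorization by pushing out $\coprod_i i\otimes \Sq(i,f)$, convergence at $\omega$ via compactness of the domains, extraction of a $\sV$-section $\et$ of $\epz$ for the right factor, and the standard retract argument for the left class is exactly that construction. The one delicate point you flag---that the stage-wise sections must be compatible so that they assemble on the colimit---is precisely what Garner's identification of redundant cells provides, so your sketch matches the cited argument.
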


\begin{rmk}
In contrast with \myref{SOA}, in \myref{enrichedboxslashclosure} we ask that the ambient category be $\sV$-bicomplete. Colimits and tensors are used to construct the factorizations produced by the enriched SOA. The presence of cotensors guarantees that this defines a $\sV$-enriched functorial factorization.
\end{rmk}

As in the unenriched situation, enriched WFSs can be created through an enriched adjunction \cite[13.5.1]{Riehl}. 

\begin{thm}\mylabel{ecofcrit}
Consider a $\sV$-adjunction $\xymatrix@1{\sM \ar@<.4ex>[r]^-\bF & \ar@<.4ex>[l]^-\bU} \sN$ between $\sV$-bicomplete categories such that $\sM$ has a model structure that is compactly generated in the $\sV$-enriched sense by the sets $\cI$ and $\cJ$. Assume the following two conditions hold.
\begin{enumerate}[(i)] 
\item (Compactness condition) The sets $\bF\cI$ and $\bF\cJ$ are compact in $\sN$. 
\item (Acyclicity condition) The functor $\bU$ carries every relative $\bF\cJ$-cell complex to a weak equivalence.
\end{enumerate}
Then $\sN$ has a model structure whose fibrations and weak equivalences are created by the right adjoint $\bU$. It is compactly generated in the $\sV$-enriched sense by $\bF\cI$ and $\bF\cJ$, and $\bF \dashv \bU$ is a Quillen adjunction.
\end{thm}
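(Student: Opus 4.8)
The plan is to run the enriched analogue of the proof of \myref{cofcrit}, using the enriched small object argument \myref{enrichedboxslashclosure} in place of the classical \myref{SOA} and the $\sV$-enriched lifting calculus in place of the ordinary one. First I would transport the model structure along $\bU$ in the standard way: declare a map $f$ in $\sN$ to be a weak equivalence, respectively a fibration, exactly when $\bU f$ is one in $\sM$, and declare $f$ to be a cofibration when it has the enriched LLP against the maps of $\sN$ that are simultaneously fibrations and weak equivalences. Since $\sM$ is a model category and $\bU$ creates $\sW_\sN$, the class $\sW_\sN$ automatically has the two-out-of-three property, and it is closed under retracts because $\bU$ preserves and reflects retracts.

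The heart of the argument is to identify the maps of $\sN$ with the enriched RLP against $\bF\cI$ and against $\bF\cJ$. Here I would invoke the hypothesis that $\bF \dashv \bU$ is a $\sV$-adjunction: by the standard transposition of enriched lifting problems (\cite[\S 13.3]{Riehl}), a map $p$ of $\sN$ lies in $(\bF\cK)^{\underline{\boxslash}}$ if and only if $\bU p$ lies in $\cK^{\underline{\boxslash}}$, for any set of maps $\cK$ of $\sM$. Combining this with the assumption that $\sM$ is compactly generated in the $\sV$-enriched sense by $\cI$ and $\cJ$ --- that is, $\cI^{\underline{\boxslash}} = \sF_\sM \cap \sW_\sM$ and $\cJ^{\underline{\boxslash}} = \sF_\sM$ --- shows that the maps of $\sN$ with enriched RLP against $\bF\cI$ are precisely its acyclic fibrations and those with enriched RLP against $\bF\cJ$ are precisely its fibrations; in particular $\sC_\sN = {}^{\underline{\boxslash}}((\bF\cI)^{\underline{\boxslash}})$. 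The compactness condition (i) then lets me apply \myref{enrichedboxslashclosure} to $\bF\cI$ and $\bF\cJ$ in the $\sV$-bicomplete category $\sN$, yielding functorial factorizations and two enriched weak factorization systems --- hence in particular two ordinary weak factorization systems --- namely $(\sC_\sN,\ \sF_\sN\cap\sW_\sN)$ and $({}^{\underline{\boxslash}}((\bF\cJ)^{\underline{\boxslash}}),\ \sF_\sN)$, in each of which the left class consists of retracts of the corresponding enriched relative cell complexes.

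It then remains to show that the left class of the second system is exactly $\sC_\sN\cap\sW_\sN$, which is where the acyclicity condition (ii) enters, just as in the unenriched proof. Since $\sF_\sN\cap\sW_\sN\subseteq\sF_\sN$, passing to enriched left lifting classes gives ${}^{\underline{\boxslash}}((\bF\cJ)^{\underline{\boxslash}})\subseteq\sC_\sN$; and by (ii) together with \myref{enrichedboxslashclosure} every map in ${}^{\underline{\boxslash}}((\bF\cJ)^{\underline{\boxslash}})$ is a retract of a relative $\bF\cJ$-cell complex, hence of a weak equivalence, hence a weak equivalence. Conversely, given a map $i$ in $\sC_\sN\cap\sW_\sN$, I would factor it as a map in ${}^{\underline{\boxslash}}((\bF\cJ)^{\underline{\boxslash}})$ followed by a map in $\sF_\sN$, observe via two-out-of-three that the second factor lies in $\sF_\sN\cap\sW_\sN$, and run the retract argument (using that an enriched LLP implies the ordinary LLP) to realize $i$ as a retract of the first factor. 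At this point $\sW_\sN$ has the two-out-of-three property and the pairs $(\sC_\sN\cap\sW_\sN,\sF_\sN)$ and $(\sC_\sN,\sF_\sN\cap\sW_\sN)$ are weak factorization systems, so the Joyal--Tierney form of the definition (\cite[7.8]{JoyalTierney}) gives that $(\sW_\sN,\sC_\sN,\sF_\sN)$ is a model structure; compact generation in the $\sV$-enriched sense by $\bF\cI$ and $\bF\cJ$ is built into the construction, and $\bF\dashv\bU$ is a Quillen adjunction because by definition $\bU$ preserves fibrations and acyclic fibrations.

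Everything after the identification of the enriched right lifting classes is the familiar formal shuffle, unchanged from \myref{cofcrit}; the step I expect to require the most care is precisely that identification $(\bF\cK)^{\underline{\boxslash}} = \bU^{-1}(\cK^{\underline{\boxslash}})$ of enriched lifting classes, together with the correct reading of what ``compactly generated in the $\sV$-enriched sense'' asks of $\sM$ --- namely that the enriched, and not merely the ordinary, right lifting classes of $\cI$ and $\cJ$ recover $\sF_\sM\cap\sW_\sM$ and $\sF_\sM$. The other genuinely new input over the unenriched case is the enriched SOA \myref{enrichedboxslashclosure} itself, whose hypotheses are met by the compactness condition (i) and the standing assumption that $\sN$ is $\sV$-bicomplete.
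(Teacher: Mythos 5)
Your proposal is correct and is essentially the argument the paper has in mind: the paper itself proves nothing here beyond citing \cite[13.5.1]{Riehl}, and that citation is exactly the standard transfer proof you give --- identify $(\bF\cK)^{\underline{\boxslash}}=\bU^{-1}(\cK^{\underline{\boxslash}})$ via the $\sV$-adjunction, build the two enriched WFSs with the enriched SOA (\myref{enrichedboxslashclosure}) using compactness, use the acyclicity condition plus the retract argument to identify the acyclic cofibrations, and conclude via the Joyal--Tierney formulation. The only point worth a word of care is that when you quote \myref{enrichedboxslashclosure} to say every map in ${}^{\underline{\boxslash}}((\bF\cJ)^{\underline{\boxslash}})$ is a retract of an enriched relative $\bF\cJ$-cell complex, the paper's statement includes a monomorphism caveat; but your own retract argument (factor the map and lift against its right factor, the left factor being an enriched cell complex by construction of the SOA) supplies this directly, so there is no gap.
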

As in \myref{cofcrit}, it is often possible to infer that the model structure created on $\sN$ by \myref{ecofcrit} is monoidal or enriched when the model structure on $\sM$ is so.

\begin{thm}\mylabel{ecofcrit-monoidal} Assume in addition to the hypotheses of \myref{ecofcrit} that $\sM$ is a monoidal model category, $\sN$ is a monoidal category, and $\bF$ preserves the monoidal product.  Then $\sN$, equipped with the model structure constructed in \myref{ecofcrit}, is a monoidal model category provided the unit condition is satisfied. 
\end{thm}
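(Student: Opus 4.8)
The plan is to verify the pushout-product axiom for the $\otimes_A$-monoidal structure on $\sN$ carrying the model structure of \myref{ecofcrit}; since the unit axiom is hypothesized, this suffices to conclude that $\sN$ is a monoidal model category in the sense of \cite[4.2.6]{Hovey}. Write $f \mathbin{\widehat{\otimes}} g$ for the \emph{pushout-product} (Leibniz tensor) of two arrows of $\sN$, built from the monoidal product and a pushout. The starting observation is that $\bF$, being a left adjoint, preserves all colimits and, preserving the monoidal product, is strong monoidal; hence $\bF$ commutes with the Leibniz construction, $\bF i \mathbin{\widehat{\otimes}} \bF j \iso \bF(i \mathbin{\widehat{\otimes}} j)$ for arrows $i, j$ of $\sM$.

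First I would reduce the axiom to the generating sets $\bF\cI$ and $\bF\cJ$. By \myref{ecofcrit} together with \myref{enrichedboxslashclosure}, the $\sN$-cofibrations are the retracts of enriched relative $\bF\cI$-cell complexes and the acyclic $\sN$-cofibrations are the retracts of enriched relative $\bF\cJ$-cell complexes; in particular each of these two classes is closed under retracts, coproducts, pushouts, sequential composites, and $\sV$-copowers of arrows. Fixing an arrow $g$ of $\sN$, the functor $- \mathbin{\widehat{\otimes}} g$ on arrow categories preserves colimits (being a left adjoint in the two-variable adjunction of the closed symmetric monoidal category $\sN$) and retracts, and there is a natural isomorphism $(f \otimes_\sV V) \mathbin{\widehat{\otimes}} g \iso (f \mathbin{\widehat{\otimes}} g) \otimes_\sV V$ for $V \in \sV$, because the $\sV$-copower is central for the monoidal product of $\sN$ --- in the applications this is the isomorphism $X \otimes_R V \otimes_A Y \iso (X \otimes_A Y) \otimes_R V$ --- and $- \otimes_\sV V$ preserves the pushout defining $\mathbin{\widehat{\otimes}}$. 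Consequently, for fixed $g$ the class of arrows $f$ with $f \mathbin{\widehat{\otimes}} g$ an $\sN$-cofibration is closed under all the operations generating retracts of enriched $\bF\cI$-cell complexes, and the same holds with ``acyclic $\sN$-cofibration'' throughout. Applying this in each variable in turn, the pushout-product axiom reduces to showing that $\bF\cI \mathbin{\widehat{\otimes}} \bF\cI$ consists of $\sN$-cofibrations and that $\bF\cI \mathbin{\widehat{\otimes}} \bF\cJ$ and $\bF\cJ \mathbin{\widehat{\otimes}} \bF\cI$ consist of acyclic $\sN$-cofibrations (the last being redundant when $\sN$ is symmetric monoidal, as it is here).

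These facts follow at once from $\bF i \mathbin{\widehat{\otimes}} \bF j \iso \bF(i \mathbin{\widehat{\otimes}} j)$. Since $\sM$ is a monoidal model category and the sets $\cI$, $\cJ$ consist of cofibrations, respectively acyclic cofibrations, $i \mathbin{\widehat{\otimes}} j$ is a cofibration of $\sM$, and is acyclic if $i$ or $j$ is. By \myref{ecofcrit} the adjunction $\bF \dashv \bU$ is a Quillen adjunction, so $\bF$ sends cofibrations to cofibrations and acyclic cofibrations to acyclic cofibrations; hence $\bF(i \mathbin{\widehat{\otimes}} j) = \bF i \mathbin{\widehat{\otimes}} \bF j$ is an $\sN$-cofibration, and an acyclic one whenever $i$ or $j$ is a generating acyclic cofibration. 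Together with the hypothesized unit condition this proves that $\sN$ is a monoidal model category.

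I expect the only genuine difficulty to be the bookkeeping around the two (or, in the applications, three) enrichments in play: one must keep separate the $\otimes_A$-monoidal structure, the $\sV$-copower, and the ambient $\sM_R$-enrichment of $\sN = \sM_A$, and in particular establish the centrality isomorphism $(f \otimes_\sV V) \mathbin{\widehat{\otimes}} g \iso (f \mathbin{\widehat{\otimes}} g) \otimes_\sV V$ and the resulting stability of the $\sN$-cofibrations and acyclic $\sN$-cofibrations under $\sV$-copowers. Once $\bF$ is recognized as a strong monoidal left Quillen functor, the reduction to generators and the verification on generators are entirely formal.
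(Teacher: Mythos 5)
Your endgame is the same as the paper's: reduce the pushout-product axiom to the generating sets and then use that $\bF$ is a strong monoidal left Quillen functor, so that $\bF i\mathbin{\widehat{\otimes}}\bF j\iso\bF(i\mathbin{\widehat{\otimes}} j)$ lands in the cofibrations, and in the acyclic cofibrations when $i$ or $j$ lies in $\cJ$. Where you diverge is the reduction step. The paper does this purely with enriched lifting properties: its \myref{boxclosure} uses the $\sV$-enriched two-variable adjunctions $(i\mathbin{\widehat{\otimes}} j)\mathbin{\underline{\boxslash}} f$ iff $i\mathbin{\underline{\boxslash}}\widehat{[j,f]}$ iff $j\mathbin{\underline{\boxslash}}\,\hathom(i,f)$ to show directly that if $\cI\mathbin{\widehat{\otimes}}\cJ\subset\cK$ then the same holds after replacing each set by the left class of the enriched WFS it generates. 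You instead argue by saturation: you identify the (acyclic) cofibrations with retracts of enriched relative cell complexes and check that, for fixed $g$, the class of $f$ with $f\mathbin{\widehat{\otimes}} g$ an (acyclic) cofibration is closed under coproducts, pushouts, sequential composites, retracts, and $\sV$-copowers, the last via the centrality isomorphism $(f\otimes_{\sV}V)\mathbin{\widehat{\otimes}} g\iso(f\mathbin{\widehat{\otimes}} g)\otimes_{\sV}V$. Both routes are legitimate, but note what the paper's route buys: (a) it never needs the cellular description, whereas \myref{enrichedboxslashclosure} identifies the left class with retracts of enriched cell complexes only when those cell complexes are monomorphisms (true in $\sM_A$, but not among the stated hypotheses of \myref{ecofcrit-monoidal}, so your first sentence of the reduction needs either that caveat or a retract-argument detour through the enriched SOA factorization); and (b) the compatibility between the $\sV$-copower and the monoidal product of $\sN$ that you must isolate as the centrality isomorphism is absorbed, in the paper, into the assertion that the $(\widehat{\otimes},\widehat{[\,,\,]},\hathom)$ adjunctions are $\sV$-enriched --- an interaction between the enrichments that the paper itself admits it states only loosely. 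What your route buys is concreteness: it makes visible exactly which colimit- and copower-stability properties of the lifted cofibrations are being used, and in the intended application $\sN=\sM_A$ all the needed isomorphisms are explicit. So the proposal is correct in substance, with the two caveats above to be addressed if you want it to cover the abstract statement rather than just the case at hand.
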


The proof of \myref{ecofcrit-monoidal} is no more difficult than in the unenriched case. In the next section, we prove it simultaneously with the analogous result for enriched model structures, but we must first clarify the relevant double enrichments of the ``enriched model categories'' used when both $\sU$ and $\sV$ are present. 

\subsection{The two kinds of enriched model categories}\label{EMC}

The main theorems in the previous two sections provide criteria to transfer a model structure on a category $\sM$ along an adjunction $\xymatrix@1{\bF\colon \sM \ar@<.4ex>[r] & \ar@<.4ex>[l] \sN \colon \bU}$. The weak equivalences and fibrations in the lifted model structure on $\sN$ are created by the right adjoint $\bU$. A key hypothesis is that the model structure on $\sM$ is compactly generated, in either the enriched or the unenriched sense. 

In this section we assume further that the model category $\sM$ has a compatible enriched or monoidal structure and establish conditions under which the transferred model structure on $\sN$ has analogous properties. Since this material has not appeared in the literature before, we work in slightly greater generality than is strictly necessary for our applications. 

We have two distinct notions of ``enriched model category'' appearing simultaneously here. The first is a model category whose constituent WFSs are enriched over a cosmos $\sV$, as explained in the previous section.  As in the case $\sV= \sV_R$, no model structure on $\sV$ is needed for that, although one may well be present and relevant.  We will call such categories \emph{$\sV$-enriched model categories}. Secondly, there is the more standard notion of a \emph{$\sU$-model category} generalizing Quillen's definition of a simplicial model category (e.g. \cite[4.2.18]{Hovey}). Here $\sU$ is a cosmos with a monoidal model structure. The $\sM_{\bZ}$-model categories of Theorems \ref{qmodelR}, \ref{hmodelR}, and \ref{mmodelR}, and the $\sM_R$-model categories of Theorems \ref{qmodelA}, \ref{hmodelA}, and \ref{rmodelA-monoidal} are of this type.

When all objects are cofibrant, the WFSs of a $\sU$-model category are $\sU$-enriched WFSs, but in general 
that is not the case! The comparison is discussed further in \cite[\S 13.5]{Riehl} but that is irrelevant to 
our applications here.  

The enriched version of \myref{ecofcrit-monoidal} has both sorts of enrichments occurring simultaneously. Suppose we have two cosmoi 
$\sV$ and $\sU$, where the latter is a monoidal model category, together with a \emph{strong monoidal adjunction} 
$\xymatrix@1{\sV \ar@<.4ex>[r] & \ar@<.4ex>[l] \sU}$; that means that the left adjoint commutes with the monoidal products. It follows that there is a ``change of base'' $2$-functor that translates any $\sU$-enriched category, functor, or adjunction into a $\sV$-enriched category, functor, or adjunction where the $\sV$-enrichment is defined by applying the right adjoint; see \cite[\S 3.7]{Riehl}. For instance, in our applications, $\sV = \sV_R$, $\sU = \sM_R$ equipped with the $q$- or $r$-model structure, and in the relevant adjunction 
$\xymatrix@1{\sV_R \ar@<.4ex>[r] & \ar@<.4ex>[l] \sM_R}$
the left adjoint includes an $R$-module in degree $0$ and the right adjoint takes cycles in degree $0$.

The context for the $\sU$-model structure transfer result is a $\sU$-adjunction between bicomplete $\sU$-categories $\sM$ and $\sN$ 
such that the induced $\sV$-enrichments satisfy the hypotheses of \myref{ecofcrit}.  The second half of \myref{cofcrit} is then contained 
in the following theorem, which remains true under weaker hypotheses (regarding the interactions between the enrichments) than we state here.

\begin{thm}\mylabel{ecofcrit-enriched}
Let $\sU$ be a monoidal model category, let $\sV$ be a cosmos, and suppose that we are given a strong monoidal adjunction 
$\xymatrix@1{\sV \ar@<.4ex>[r] & \ar@<.4ex>[l] \sU}$. Let $\sM$ be a compactly generated $\sU$-model category, $\sN$ be 
a bicomplete $\sU$-category, and $\xymatrix@1{\bF \colon \sM \ar@<.4ex>[r] & \ar@<.4ex>[l] \sN \colon \bU}$
be a $\sU$-adjunction such that the underlying $\sV$-adjunction satisfies the hypotheses of \myref{ecofcrit}. 
Then the $\sV$-enriched model structure on $\sN$ defined by \myref{ecofcrit} makes $\sN$ a $\sU$-model category.
\end{thm}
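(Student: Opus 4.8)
The plan is to verify the pushout--product axiom (SM7) for the $\sU$-tensoring on $\sN$; the other requirements of a $\sU$-model category are already in hand, since $\sN$ is a bicomplete $\sU$-category by hypothesis and acquires a bicomplete $\sV$-enriched model structure from \myref{ecofcrit}, while the unit axiom is automatic because the unit of $\sU$ is cofibrant in all of our applications (cf.\ the discussion of the unit condition following \myref{cofcrit}; in the general case it is imposed as a hypothesis exactly as there). Passing to underlying ordinary categories, the bicomplete $\sU$-category $\sN$ carries the usual two-variable adjunction formed by the tensor $\sU\times\sN\to\sN$, the cotensor $\sU^{\mathrm{op}}\times\sN\to\sN$, and the $\sU$-valued hom $\sN^{\mathrm{op}}\times\sN\to\sU$. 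Standard manipulation of lifting properties through this two-variable adjunction (see, e.g., \cite{Hovey}) shows that SM7 for $\sN$ is equivalent to its Leibniz-cotensor form: for every cofibration $i\colon K\to L$ in $\sU$ and every $\sN$-fibration $p\colon E\to B$, the canonical map $E^{L}\to E^{K}\times_{B^{K}}B^{L}$, which we denote $\hathom(i,p)$, is an $\sN$-fibration, and it is a weak equivalence as soon as $i$ or $p$ is. It therefore suffices to establish this Leibniz-cotensor statement.

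The key point is that, by \myref{ecofcrit}, the right adjoint $\bU\colon\sN\to\sM$ creates the $\sN$-fibrations and the $\sN$-weak equivalences, and that $\bU$, being the right adjoint of a $\sU$-adjunction, is a $\sU$-functor; hence it preserves $\sU$-cotensors, and as an ordinary right adjoint it preserves limits, so it preserves the Leibniz cotensor construction. Thus for a cofibration $i$ in $\sU$ and any map $p$ in $\sN$ there is a natural isomorphism $\bU(\hathom(i,p))\iso\hathom(i,\bU p)$, the right-hand side formed with the $\sU$-cotensor in $\sM$. Now take $p$ to be an $\sN$-fibration; then $\bU p$ is an $\sM$-fibration, and since $\sM$ is a $\sU$-model category, $\hathom(i,\bU p)$ is an $\sM$-fibration, whence $\bU(\hathom(i,p))$ is an $\sM$-fibration, whence $\hathom(i,p)$ is an $\sN$-fibration because $\bU$ creates fibrations. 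If in addition $i$ is an acyclic cofibration, or $p$ is an acyclic fibration, then $\bU p$ is correspondingly a fibration or an acyclic fibration in $\sM$ (using that $\bU$ creates weak equivalences), so $\hathom(i,\bU p)$ is an acyclic $\sM$-fibration; transporting back and again using that $\bU$ creates fibrations and weak equivalences, $\hathom(i,p)$ is an acyclic $\sN$-fibration. Feeding these three statements back through the two-variable adjunction yields SM7 for $\sN$, as desired. The monoidal companion \myref{ecofcrit-monoidal} is proved in parallel, working instead on the cofibration side and exploiting that the strong monoidal left adjoint $\bF$ is left Quillen, so that it sends pushout--products of (generating) cofibrations in $\sM$ to cofibrations in $\sN$.

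I expect the only real friction to be bookkeeping rather than substance. First, one must get the variances in the two-variable adjunction straight, so that the equivalence ``SM7 $\Longleftrightarrow$ Leibniz-cotensor form'' is stated correctly and, in particular, correctly encodes the acyclic-cofibration and acyclic-fibration cases. Second, one must check that a $\sU$-enriched right adjoint genuinely preserves the Leibniz cotensor of a cofibration in $\sU$ against an arbitrary map of $\sN$; this amounts to preservation of $\sU$-cotensors together with preservation of pullbacks, which is formal but deserves to be spelled out. It is worth emphasizing what is \emph{not} used: the $\sV$-enrichment, indispensable in \myref{ecofcrit} for producing the model structure on $\sN$ at all (since $\sM$ may fail to be compactly generated in the unenriched sense), plays no part in this verification, and neither does the compact generation of $\sM$. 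Once the model structure on $\sN$ is known to exist and to be created by $\bU$, the only inputs are the $\sU$-enrichments and the single observation that $\sM$ is already a $\sU$-model category.
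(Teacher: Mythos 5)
Your proof is correct, but it takes a genuinely different route from the paper's. The paper proves \myref{ecofcrit-monoidal} and \myref{ecofcrit-enriched} by a single argument on the cofibration side: the closure lemma \myref{boxclosure} (an enriched-lifting-property analogue of the Schwede--Shipley reduction) shows it suffices to check the pushout-product axiom on generators, and then, since $\bF$ preserves tensors (resp.\ is strong monoidal) and is left Quillen, pushout-products of generating (acyclic) cofibrations land in $\sC_{\sN}$ (resp.\ $\sW_{\sN}\cap\sC_{\sN}$). You instead verify the adjoint Leibniz-cotensor form of SM7 directly on the fibration side: $\bU$ is a $\sU$-enriched right adjoint, hence preserves cotensors and pullbacks and therefore the Leibniz cotensor $\hathom(i,p)$, and since $\bU$ creates fibrations and weak equivalences and $\sM$ is a $\sU$-model category, $\hathom(i,p)$ is an (acyclic) fibration in $\sN$ whenever required; transposing through the two-variable adjunction gives SM7. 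Your observation that this uses neither the generating sets, nor compact generation, nor the $\sV$-enrichment (beyond the bare existence of the transferred model structure) is accurate and is a genuine simplification for the enriched statement; what the paper's generator-based argument buys is that the identical computation also handles the monoidal case \myref{ecofcrit-monoidal}, where there is no right adjoint creating fibrations that interacts with the internal hom of $\sN$, and it isolates \myref{boxclosure}, which is useful elsewhere. Your handling of the unit condition (cofibrant unit, or else imposed as a hypothesis) matches the paper's convention, and your closing sentence on the monoidal companion is in effect the paper's own proof of it.
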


Theorems \ref{ecofcrit-monoidal} and \ref{ecofcrit-enriched} follow easily from the following lemma.  
Let $\sN$ be a bicomplete $\sU$-category, such as $\sU=\sN$.  Then the tensor, cotensor, hom adjunctions (as for example
in (\ref{2variable})) are enriched over $\sU$.  We have analogous adjunctions relating the tensor, cotensor, and hom
functors $(\widehat{\otimes}, \widehat{[\, ,\, ]}, \hathom)$ on the arrow 
categories of $\sN$ and $\sU$ defined using pushouts and pullbacks (see e.g.~\cite[\S 11.1]{Riehl}) and they too are enriched over $\sU$. The strong monoidal adjunction 
$\xymatrix@1{\sV \ar@<.4ex>[r] & \ar@<.4ex>[l] \sU}$ is used to enrich these adjunctions 
over $\sV$. In our applications, we will take $\sV=\sV_R$ and take $\xymatrix@1{\sV \ar@<.4ex>[r] & \ar@<.4ex>[l] \sU}$ to be
\[ \xymatrix@1{\sV_R \ar@<.4ex>[r] & \ar@<.4ex>[l] \sM_R} \ \ \text{or}\ \ 
\xymatrix@1{\sV_R \ar@<.4ex>[r] & \ar@<.4ex>[l] \sM_R \ar@<.4ex>[r] & \ar@<.4ex>[l] \sM_A\\ } \]
for the enriched and monoidal cases, respectively; in the latter enrichment, the arrow $\hathom(i,p)$ is the map  $\epz$ of \eqref{epsilon-defn}.  

\begin{lem}\mylabel{boxclosure}
Suppose $\cI, \cJ, \cK$ are sets of arrows in $\sN$ with the property that $(\cI \widehat{\otimes}  \cJ) \subset \cK$. Abbreviating $\overline{S}={}^{\underline{\boxslash}}(S^{\underline{\boxslash}})$, we have
\[ (\overline{\cI} \widehat{\otimes} \overline{\cJ}) \subset \overline{\cK}.\]
\end{lem}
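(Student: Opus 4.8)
The plan is to reduce the statement to the enriched Leibniz (pushout--product / pullback--hom) calculus relating the pushout--product $\widehat{\otimes}$ and the pullback--cotensor $\widehat{[\,,\,]}$ on the arrow category of $\sN$. The single external input I would invoke is that, in the $\sV$-enrichment obtained from the strong monoidal adjunction $\xymatrix@1{\sV \ar@<.4ex>[r] & \ar@<.4ex>[l] \sU}$ by change of base, the two-variable adjunction $(\widehat{\otimes},\widehat{[\,,\,]},\hathom)$ of \cite[\S 11.1]{Riehl} yields, for all maps $i,j,k$ in $\sN$, the lifting equivalences
\[ i \widehat{\otimes} j \ \underline{\boxslash}\ k \quad\Longleftrightarrow\quad j\ \underline{\boxslash}\ \widehat{[i,k]} \quad\Longleftrightarrow\quad i\ \underline{\boxslash}\ \widehat{[j,k]}, \]
where $\underline{\boxslash}$ is the enriched lifting relation of \myref{enrichedLP}. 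Granting this, no compactness hypotheses and no model-categorical input are needed: the argument is purely formal manipulation of the closure operator $\overline{(-)} = {}^{\underline{\boxslash}}(\,(-)^{\underline{\boxslash}})$.

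First I would fix an arbitrary $k \in \cK^{\underline{\boxslash}}$; since $\overline{\cK} = {}^{\underline{\boxslash}}(\cK^{\underline{\boxslash}})$, it suffices to prove $a \widehat{\otimes} b\ \underline{\boxslash}\ k$ for every $a \in \overline{\cI}$ and $b \in \overline{\cJ}$. I would establish this by a four-step ascent. (1) For $i \in \cI$ and $j \in \cJ$ the hypothesis $(\cI \widehat{\otimes} \cJ) \subset \cK$ gives $i \widehat{\otimes} j \in \cK$, hence $i \widehat{\otimes} j\ \underline{\boxslash}\ k$. (2) By the last Leibniz equivalence this says $i\ \underline{\boxslash}\ \widehat{[j,k]}$ for every $i \in \cI$; thus $\widehat{[j,k]} \in \cI^{\underline{\boxslash}}$, and therefore every $a \in \overline{\cI} = {}^{\underline{\boxslash}}(\cI^{\underline{\boxslash}})$ satisfies $a\ \underline{\boxslash}\ \widehat{[j,k]}$. (3) Applying the Leibniz equivalences again with $a$ in the first slot, $a\ \underline{\boxslash}\ \widehat{[j,k]}$ is equivalent to $j\ \underline{\boxslash}\ \widehat{[a,k]}$; since this holds for every $j \in \cJ$, we get $\widehat{[a,k]} \in \cJ^{\underline{\boxslash}}$, and hence every $b \in \overline{\cJ}$ satisfies $b\ \underline{\boxslash}\ \widehat{[a,k]}$. (4) One last application of the middle Leibniz equivalence turns $b\ \underline{\boxslash}\ \widehat{[a,k]}$ back into $a \widehat{\otimes} b\ \underline{\boxslash}\ k$. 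Since $k$ was an arbitrary element of $\cK^{\underline{\boxslash}}$, we conclude $a \widehat{\otimes} b \in {}^{\underline{\boxslash}}(\cK^{\underline{\boxslash}}) = \overline{\cK}$, as claimed.

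The only genuinely delicate point---and the step needing the most care---is the enriched Leibniz lifting equivalences themselves in this precise double-enrichment setting, rather than citing them off the shelf. In the relevant $\sV$-enrichment the $\sV$-map $\hathom(i,p)$ is the map $\epz$ of \eqref{epsilon-defn}, and $i\ \underline{\boxslash}\ p$ means exactly that $\epz$ is a split epimorphism of $\sV$-objects; so one must verify that the canonical comparisons $\hathom(i \widehat{\otimes} j,\,k) \cong \hathom(j,\,\widehat{[i,k]}) \cong \hathom(i,\,\widehat{[j,k]})$ are isomorphisms of arrows in $\sV$ and hence transport splitness. This is exactly the compatibility of the pushout and pullback constructions with the closed (and, where relevant, monoidal) structure worked out in \cite[\S 11.1]{Riehl}, transported along the change of base recorded just before the lemma; with that in hand the four-step ascent above is immediate, and the symmetry in $\cI$ and $\cJ$ shows that the order of the ascent is immaterial.
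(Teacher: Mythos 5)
Your proof is correct and takes essentially the same route as the paper's: both rest on the enriched Leibniz two-variable adjunction respecting the enriched lifting relation, and both apply the closure operator once in each variable (your four-step ascent is just the paper's "adjunct twice, then dualize in the other variable" written out elementwise). Your added remark that the adjunction isomorphisms must transport the splitness of $\epz$ is exactly the point the paper implicitly delegates to \cite[\S 11.1]{Riehl}, so nothing is missing.
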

\begin{proof}
The $\sV$-enriched $(\widehat{\otimes}, \widehat{[\, ,\, ]}, \hathom)$ adjunctions respect the $\sV$-enriched lifting properties in the expected sense: 
\[ (i \widehat{\otimes} j) \underline{\boxslash} f\qquad \mathrm{iff} \qquad i \underline{\boxslash} \widehat{[j,f]} \qquad \mathrm{iff} \qquad j \underline{\boxslash}\, \hathom(i,f).\] 
Since $(\cI \widehat{\otimes}  \cJ) \subset \cK \subset \overline{\cK}$, we have $(\cI \widehat{\otimes} \cJ) \underline{\boxslash} \cK^{\underline{\boxslash}}$. By adjunction, $\cI \subset {}^{\underline{\boxslash}}\,\hathom(\cJ, \cK^{\underline{\boxslash}})$ and thus $\overline{\cI} {\underline{\boxslash}}\, \hathom(\cJ, \cK^{\underline{\boxslash}})$. Again using adjunction, we see that $\overline{\cI} \widehat{\otimes} \cJ \subset \overline{\cK}$. Now apply the dual argument to $\cJ$ to arrive at the claim. 
\end{proof}

\begin{proof}[Proof of Theorems \ref{ecofcrit-monoidal} and \ref{ecofcrit-enriched}]
We give the proof of \myref{ecofcrit-monoidal}, as that of \myref{ecofcrit-enriched} is completely analogous. Denoting the generating cofibrations and generating trivial cofibrations in $\sM$ by $\cI$ and $\cJ$ respectively, \myref{boxclosure} shows that it is enough to check the pushout-product axiom on the generators on $\sN$; see \cite{schwedeshipley}. Since $\bF$ is monoidal and a left adjoint, $\bF$ preserves the pushout-product, that is, $\bF(f) \widehat{\otimes} \bF(g)  \cong \bF(f \widehat{\otimes}g)$. Therefore, since $\bF$ is left Quillen,
\[ \bF(\cI) \widehat{\otimes} \bF(\cI) \cong \bF(\cI \widehat{\otimes} \cI) \subset  \bF(\sC_{\sM}) \subset \sC_{\sN}, \]
where $\sC_{\sM}$ and $\sC_{\sN}$ are the cofibrations in $\sM$ or $\sN$. Similarly, we obtain 
\[ \bF(\cI) \widehat{\otimes} \bF(\cJ) \subset \sW_{\sN} \cap \sC_{\sN}.\] 
The result follows. 
\end{proof}

By specializing Theorems \ref{ecofcrit-monoidal} and \ref{ecofcrit-enriched} to $\sV  = \sV_R$, $\sU = \sM = \sM_R$, and $\sN = \sM_A$, 
we obtain the results we need to complete our work on the $q$- and $r$-model structures.

\begin{cor}
The $q$- and $r$-model structures on $\sM_A$ are $\sM_R$-model structures, where $\sM_R$ is equipped with the $q$- and $r$-model structures, respectively. If $A$ is commutative, then both model categories are monoidal.
\end{cor}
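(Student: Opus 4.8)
The plan is to obtain the corollary as a direct specialization of the change-of-base results \myref{ecofcrit-enriched} and \myref{ecofcrit-monoidal}. In every case I would take $\sU = \sM = \sM_R$, with $\sM_R$ carrying the $q$- or the $r$-model structure as appropriate; by \myref{qmodelR} and \myref{hmodelR} this is a monoidal model category, hence in particular a monoidal model category enriched over itself. I would take $\sN = \sM_A$, which is a bicomplete $\sM_R$-category (see the start of \S\ref{DGAch}), and $\bF \dashv \bU$ as the adjunction; that $\bF\dashv\bU$ is an $\sM_R$-enriched adjunction (in particular $\bF$ preserves tensors by $\sM_R$) was established in the proof of \myref{qmodelA}. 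For the $q$-model structure I would take $\sV = \Set$, so that ``enriched'' reduces to ``unenriched'' and the needed input is just the compact generation of $\sM_R$ with its $q$-structure by $\cI_R$ and $\cJ_R$ from \myref{qmodelR}; here the statement also coincides with the second half of \myref{cofcrit} already used in the proof of \myref{qmodelA}. For the $r$-model structure I would take $\sV = \sV_R$, together with the strong monoidal adjunction between $\sV_R$ and $\sM_R$ that includes an $R$-module in degree zero and takes degree-zero cycles; \myref{enriched-comparison} then supplies the compact generation of $\sM_R$ with its $r$-structure in the $\sV_R$-enriched sense by $\cI_R$ and $\cJ_R$.

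Next I would verify the two hypotheses of \myref{ecofcrit} for the underlying $\sV$-adjunction $\bF \dashv \bU$. The compactness condition holds because the domains of the maps in $\bF\cI_R$ and $\bF\cJ_R$ are free $A$-modules on zero or one generator, as in the proof of \myref{qmodelA}. The acyclicity condition is exactly \myref{FJacyclicity} in the $r$-case; in the $q$-case it follows from \myref{FJacyclicity} as well, since every relative $\bF\cJ_R$-cell complex is an enriched relative $\bF\cJ_R$-cell complex, hence an $r$-equivalence and so a $q$-equivalence. With these checked, \myref{ecofcrit-enriched} applies and shows that the $\sV$-enriched model structure it builds on $\sM_A$ — which for $\sV = \sV_R$ is the $r$-model structure of \myref{enrichedDGAcomparison}, and for $\sV = \Set$ is the $q$-model structure of \myref{qmodelA} — makes $\sM_A$ into an $\sM_R$-model category with $\sM_R$ carrying the matching model structure.

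For the monoidal statement, assume $A$ is commutative, so that $\sM_A$ is a cosmos under $\otimes_A$. I would check that $\bF$ is strong monoidal, via $(A \otimes X) \otimes_A (A \otimes Y) \iso A \otimes (X \otimes Y)$, and that the unit condition is automatic because the monoidal unit $A$ is cofibrant in both the $q$- and the $r$-model structures; then \myref{ecofcrit-monoidal} gives that both model structures on $\sM_A$ are monoidal. I do not expect a genuine obstacle in any of this: the argument is formal given \myref{boxclosure} and the statements of \myref{ecofcrit-enriched} and \myref{ecofcrit-monoidal}, and the only nontrivial ingredient, the acyclicity condition, has already been proved as \myref{FJacyclicity}. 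The one point that needs care is keeping the two coexisting enrichments — over $\sU = \sM_R$ and over $\sV = \sV_R$ — straight, which is precisely what the change-of-base $2$-functor and \myref{boxclosure} are set up to handle.
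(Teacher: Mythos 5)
Your proposal is correct and follows essentially the same route as the paper, which obtains the corollary by specializing \myref{ecofcrit-enriched} and \myref{ecofcrit-monoidal} to $\sV=\sV_R$, $\sU=\sM=\sM_R$, $\sN=\sM_A$, with the hypothesis checks (compactness of $\bF\cI_R$, $\bF\cJ_R$, acyclicity via \myref{FJacyclicity}, strong monoidality of $\bF$, and the unit condition from cofibrancy of $A$) already supplied in the proofs of \myref{qmodelA} and \myref{rmodelA-monoidal}. Your handling of the $q$-case via $\sV=\Set$ is just the unenriched second half of \myref{cofcrit}, which the paper likewise treats as a special case of the enriched argument.
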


\subsection{The algebraic small object argument}\label{ASOA}

Garner's version of the SOA is an ``algebraization'' of the Quillen SOA: the functorial factorization it produces defines an algebraic weak factorization system, which is a weak factorization system with additional structure that leads to better categorical properties. Garner's categorical description of this construction suggests the possibility of a generalization to classes and even (large!)~categories of generating arrows---provided that the right lifting property so-encoded can be controlled algebraically in a sense we will make precise.  

These ideas were first developed in a topological context in \cite{BR} to construct functorial factorizations for the $h$-model structures discussed there. In parallel with \myref{why-enriched}, such model structures had been previously asserted in the literature, but the proofs of the factorization axioms given in \cite{Cole2,MP} fail. Here we introduce a generalized form of what we'll call the \emph{algebraic SOA} not simply because we find these ideas compelling: again, we know of no other proof that $\sM_A$ has the factorizations necessary to define the $h$-model structure.  There are many other prospective applications of the work here and in \cite{BR}.

Let $\sM^{\mathbf{2}}$ denote the arrow category of $\sM$.  Roughly speaking, a WFS $(\cL, \cR)$ on $\sM$ is said to be \emph{algebraic} if there exists a comonad $\bL$ and a monad $\bR$ on $\sM^{\mathbf{2}}$ defining a functorial factorization $f = \bR f \circ \bL f$ such that $\cL$ and $\cR$ are the retract closures of $\Coalg_{\bL}$ and $\Alg_{\bR}$, respectively. Here $\Coalg_{\bL}$ and $\Alg_{\bR}$ denote the categories of coalgebras and algebras for the comonad and monad. In order for $\bL$ and $\bR$ to define a functorial factorization, we require the functor $\bR$ to commute with the codomain projection $\sM^{\mathbf{2}} \rtarr \sM$ and we require the codomain components of the unit and multiplication natural transformations to be identities.  The monad $\bL$ has dual requirements.  We refer the reader to \cite{BR, Riehl, Riehlalg} for the precise definitions and further discussion. 

The idea is that the extra algebraic structure present in an algebraic WFS ensures a very close relationship between the given factorizations and the lifting properties of the classes $\cL$ and $\cR$. We shall see that the algebraic description of the class $\cR$, as (retracts of) algebras for a monad or something similar, provides a useful replacement for the kind of characterization $\cR = \cI^\boxslash$ present in the cofibrantly generated case. More precisely, we will characterize the right class $\cR$ of an algebraic WFS as algebras for a pointed endofunctor $R$ of 
$\sM^{\mathbf{2}}$ over cod: i.e., an endofunctor $R$ admitting a natural transformation $\id \rtarr R$ whose codomain component is the identity. The notation $R$ for an arbitrary endofunctor should not be confused with the symbol $\bR$, which is reserved for monads, as for example the algebraically free monad on $R$. The notion of an algebra for a pointed endofunctor is analogous to the notion of an algebra for a monad, except that there is no associativity requirement.

The definition of $\cI^{\boxslash}$ can be extended in two directions: Instead of a class of morphisms $\cI$ we can take as input a subcategory $\cI \hookrightarrow \sM^{\mathbf{2}}$ of the arrow category, and instead of a class of morphisms with the right lifting property we can construct a category of such morphisms. To this end, let $\cI \hookrightarrow \sM^{\mathbf{2}}$ be a (typically non-full) subcategory.  Define a category $\cI^{\boxslash}$ in which an object is an arrow $f$ of $\sM$ equipped with a function $\phi_f$ specifying a solution to any lifting problem against a map in $\cI$, subject to the following condition: these chosen lifts are natural with respect to morphisms $j' \rtarr j \in \cI$ in the sense that the following diagram of lifts commutes 
\[ \xymatrix{ J' \ar[r] \ar[d]_{j'} & J \ar[d]_(.3)j \ar[r] & X \ar[d]^f \\ K' \ar[r] \ar@{-->}[urr] & K \ar@{-->}[ur] \ar[r] & Y.}\] 
Morphisms $(u,v) \co (f, \phi_f) \rtarr (f',\phi_{f'})$ in the category $\cI^\boxslash$ are commutative squares so that the triangle of lifts displayed below commutes 
\[ \xymatrix{ J \ar[d]_j \ar[r] & K \ar[d]^(.65)f \ar[r]^u & X' \ar[d]^{f'} \\ B \ar@{-->}[ur] \ar[r] \ar@{-->}[urr] & Y \ar[r]_v & Y'.}\] 
The left class ${}^\boxslash\cI$ can be categorified similarly.\footnote{ There are standard set-theoretic foundations that permit the definition of a function whose domain is a class, rather than a set (e.g., a pair of nested Grothendieck universes).}

In general, the category $\cI^\boxslash$ is too large to be of practical use. However, in the examples considered in \cite{BR} and also here, the lifting function $\phi_f$ associated to a morphism $f$ can be encoded in an  alternative way: the data of the lifting function $\phi_f$ is equivalent to the data of an $R$-algebra structure on the morphism $f$, where $R$ is a certain pointed endofunctor over cod, as described above. In the proof of \myref{hfactor}, we will show that $\cI^\boxslash$ is isomorphic to the category 
$\Alg_R$ of $R$-algebras for such an $R$. This is an ordinary locally small category with a class of objects. There are no higher universes needed.

In order for the algebraic version of the SOA to apply, the endofunctor $R$ must satisfy a smallness condition, the precise general statement of which requires just a little terminology.  An \emph{orthogonal factorization system} $(\cE,\cM)$ in a category $\sM$ is a WFS for which both the factorizations and the liftings are unique. It is called \emph{well-copowered} if every object has a mere set of $\cE$-quotients, up to isomorphism. When $\sM$ is cocomplete it follows that the maps in 
$\cE$ are epimorphisms \cite[1.3]{Kellyunified}. This general context gives a technically convenient class of maps $\cM$ that behave like monomorphisms, although they need not always be such.

Consider a well-copowered orthogonal factorization system $(\cE,\cM)$ on a given bicomplete 
category $\sM$. A colimit cocone in $\sM$ whose morphisms to the colimit object 
are in the right class $\cM$ is called an $\cM$-\emph{colimit}. This implies that the
morphisms in the colimit diagram also lie in $\cM$, by the right cancellation property, but the converse is not true in general. In what follows, we will implicitly identify a regular cardinal $\alpha$ with its initial ordinal, so that $\alpha$ indexes a (transfinite) sequence whose objects are $\beta < \alpha$. We consider the following smallness condition on an endofunctor $R$ on $\sM$. It was introduced in \cite{Kellyunified}.
\begin{itemize}
\item[($\dagger$)]  There is a well-copowered orthogonal factorization system $(\cE, \cM)$ on $\sM$ and a regular cardinal $\alpha$ such that $R$ sends $\alpha$-indexed $\cM$-colimits to colimits.
\end{itemize}

In any category, there is a notion of a strong epimorphism; it is discussed in detail in
\cite[\S4.3]{BorI}.  As assured by \cite[4.4.3]{BorI}, in all categories $\sM$ that one meets in 
practice there is a canonical orthogonal factorization system $(\cE,\cM)$ such that the 
morphisms in $\cE$ are the strong epimorphisms and the morphisms in $\cM$ are the monomorphisms. 
Then every morphism in $\sM$ factors uniquely as the composite of a strong 
epimorphism and a monomorphism.
In particular, we have this if $\sM$ is locally presentable, in which case this orthogonal factorization system is automatically well-copowered 
by a result of \cite[1.61]{AR}. 

Since all categories considered in this paper are locally presentable, we implicitly work with the canonical well-copowered orthogonal factorization system given by strong epimorphisms and monomorphisms. The extra flexibility added by allowing different choices is required for applications to other contexts, for example, to topological categories. We are ready to state an abstract version of the main argument of \cite{BR, Garner}. Here ($\dagger$) is applied to an endofunctor $R$ of the arrow category $\sM^2$.

\begin{thm}\mylabel{algebraicfactorization}
Let $\sM$ be a bicomplete and locally small category and $\cI \hookrightarrow \sM^2$ be a subcategory of the arrow category. Assume that there is an isomorphism of categories $\cI^{\bs} \cong \Alg_{R}$ over $\sM^2$ for some pointed endofunctor $R$ over $\cod$. If $R$ satisfies
the smallness condition $(\dagger)$, then there exists an algebraic weak factorization system $(\bL, \bR)$ on $\sM$ with underlying weak factorization system $({}^{\bs}(\cI^{\bs}),\cI^{\bs})$. In particular, every morphism $f: X \rtarr Y$ can be factored as
\[\xymatrix{X \ar[r]^{\bL f} &Z \ar[r]^{\bR f} & Y}\] 
where ${\bL}f \in {}^{\bs}(\cI^{\bs})$ and ${\bR}f \in \cI^{\bs}$.
\end{thm}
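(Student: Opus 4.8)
The plan is to follow the architecture of Garner's small object argument, as reorganized in \cite{Garner, Riehlalg, BR}, replacing the set of generators $\cI$ by the subcategory $\cI\hookrightarrow\sM^2$ and using the smallness hypothesis $(\dagger)$ in place of the usual compactness of $\cI$. First I would set up the \emph{step-one factorization}: for a map $f\colon X\rtarr Y$, form the "generalized pushout" of all lifting problems of maps in $\cI$ against $f$, indexed over the comma category $\cI\downarrow f$ (which is small because $\sM$ is locally small). This yields a functorial factorization $f = R^1 f\circ L^1 f$ with $L^1 f$ a pushout of a $\cI$-cell attachment and $R^1 f$ equipped with a canonical lift against every map in $\cI$ that is natural in $\cI$, i.e.\ an $\cI^\bs$-algebra structure; via the hypothesized isomorphism $\cI^\bs\cong\Alg_R$ this is exactly the pointed endofunctor $(R,\id\to R)$ over $\cod$. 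The key point here is that the step-one construction is a \emph{pointed} endofunctor but not yet a monad.

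Next I would carry out the \emph{free monad construction} on the pointed endofunctor $R$, following Kelly \cite{Kellyunified}. One iterates $R$ transfinitely along the "interleaving" colimit: at successor stages apply $R$ again and attach only the genuinely new cells (the Garner refinement, which makes the transition maps in the sequence lie in the monomorphism class $\cM$ of the canonical orthogonal factorization system), and at limit stages take the colimit. Because the transition maps are in $\cM$, these are $\cM$-colimits, and the smallness condition $(\dagger)$ guarantees that $R$ preserves $\alpha$-indexed $\cM$-colimits for some regular cardinal $\alpha$; hence the transfinite sequence stabilizes at stage $\alpha$ and produces the algebraically-free monad $\bR$ on $R$, together with its dual comonad $\bL$ (the codomain/domain conditions on the natural transformations are preserved throughout, so $(\bL,\bR)$ is a genuine functorial factorization). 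Garner's general machinery then gives that $(\bL,\bR)$ underlies an algebraic weak factorization system.

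It remains to identify the underlying (ordinary) weak factorization system with $({}^\bs(\cI^\bs),\cI^\bs)$. One direction is automatic: every $\bR$-algebra is an $R$-algebra, hence (via $\cI^\bs\cong\Alg_R$) has the required lifting function against $\cI$, so $\bR f\in\cI^\bs$; dually $\bL f\in{}^\bs(\cI^\bs)$ since it is built from $\cI$-cell attachments, which lie in ${}^\bs(\cI^\bs)$, and this class is closed under the colimits used in the construction. For the converse, the standard retract argument applies: if $g\in\cI^\bs$, factor $g=\bR g\circ\bL g$; since $g$ has chosen lifts against $\cI$ compatibly, $\bL g$ has the left lifting property against $g$, so $g$ is a retract of $\bR g$ and hence lies in the retract closure of $\Alg_{\bR}$; the dual statement handles ${}^\bs(\cI^\bs)$. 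Finally one checks the retract closures coincide with $\cI^\bs$ and ${}^\bs(\cI^\bs)$, using that these classes are already retract-closed by definition of the lifting properties.

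The main obstacle is the convergence of the transfinite construction: making precise that the Garner-style "attach cells only once" refinement produces transition maps in $\cM$, and then invoking $(\dagger)$ correctly to conclude stabilization at $\alpha$. This is where the well-copoweredness of $(\cE,\cM)$ is genuinely used — it bounds the number of cells attached at each stage — and where one must be careful that $R$, built from a colimit over the (large but, per object, essentially small) comma categories $\cI\downarrow f$, still satisfies the accessibility condition $(\dagger)$. Everything else is a routine transcription of \cite{Garner} and the topological argument of \cite{BR}; the real content, already isolated in the hypotheses, is the pair of conditions $\cI^\bs\cong\Alg_R$ and $(\dagger)$, which in the application to $\sM_A$ will be verified by hand for the specific generating data of the $h$-model structure.
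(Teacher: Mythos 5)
Your overall architecture (free monad construction on a pointed endofunctor, then a formal identification of the underlying weak factorization system) is the route the paper sketches, but your first step contains a genuine gap that defeats the purpose of the theorem. You construct a step-one factorization as a colimit of lifting problems indexed by the comma category $\cI\downarrow f$, asserting that this category is small because $\sM$ is locally small. Local smallness only bounds the hom-sets; the subcategory $\cI\hookrightarrow\sM^{\mathbf{2}}$ is allowed to have a proper class of objects, and in the intended application (the proof of \myref{hfactor}) it does: $\cJ_h$ has one object $i_0\colon W\rtarr W\otimes I$ for every $W\in\sM_A$. So the squares from $\cI$ to $f$ form a proper class, the indexing category is large, and the ``generalized pushout'' you want need not exist. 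This is exactly the situation the hypotheses are designed to circumvent: the pointed endofunctor $R$ with $\cI^{\bs}\cong\Alg_R$ is \emph{given} as data (in the application it is produced not as a coproduct over all squares but by hand, using that the functor of lifting problems against a fixed $f$ is representable by the mapping cocylinder $Nf$), and the smallness condition $(\dagger)$ is a hypothesis on that given $R$, not something to be re-derived from a cellular construction. Your closing worry about whether an $R$ ``built from a colimit over the comma categories'' still satisfies $(\dagger)$ reflects the same misreading; moreover, even when a step-one endofunctor exists, identifying it with the hypothesized $R$ on the grounds that both have algebra category $\cI^{\bs}$ is unjustified (only the isomorphism of algebra categories is needed, and only that is available).

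If you delete the first paragraph and start from the given $R$, the remainder is essentially the paper's (admittedly only sketched) argument: $(\dagger)$ feeds Kelly's construction of the algebraically free monad $\bR$ on $R$, giving $\Alg_{\bR}\cong\Alg_R\cong\cI^{\bs}$ over $\sM^{\mathbf{2}}$; since $R$ lives over $\cod$, the left factor carries the dual copointed/comonad structure, so $(\bL,\bR)$ is a functorial factorization and Garner's machinery shows it underlies an algebraic weak factorization system; and the identification of the underlying WFS with $({}^{\bs}(\cI^{\bs}),\cI^{\bs})$ then follows formally by the retract-and-lifting argument you outline, which is fine as stated.
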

\begin{proof}[Idea of proof]
On the one hand, smallness $(\dagger)$ allows the construction of the algebraically free monad on $R$, which is a monad $\bR$ together with a natural isomorphism $\Alg_{\bR} \cong \Alg_{R}$ over $\sM^2$. On the other hand, by assumption there is also an isomorphism of categories 
$\cI^{\bs} \cong \Alg_{R}$, which can be used to show that the category of algebras for $\bR$ encodes the structure of an algebraic weak factorization system. The desired lifting properties follow formally. 
\end{proof}

Just as in \cite{BR}, \myref{algebraicfactorization} can be used to construct factorizations for the $h$-model structure on $\sM_A$.

\begin{proof}[Proof of \myref{hfactor}] Define $\cJ_h$ to be the subcategory of the arrow category 
$\sM_A^{\mathbf{2}}$ whose objects 
are the maps $i_0\colon W \rtarr W \otimes I$ for $W \in \sM_A$ and whose morphisms are the maps of arrows induced 
in the evident way by maps $W'\rtarr W$ in $\sM_A$.  Then $\cJ_h^{\boxslash}$ is isomorphic over 
$\sM_A^{\mathrm{2}}$ to the category $\Alg_{R}$ of algebras for the pointed endofunctor ${R}: \sM_A^{\mathrm{2}} \rtarr \sM_A^{\mathrm{2}}$ over $\cod$ constructed as follows. First, note that, for a fixed $f: X \rtarr Y$, the functor $ \sM_A^{\mathrm{op}} \rtarr \Set$ that sends an object $W \in \sM_A$ to the collection of squares 
\begin{equation}\label{liftingproblem} \xymatrix{W \ar[r] \ar[d]_{i_0} & X \ar[d]^f \\
W \otimes I \ar[r] & Y}
\end{equation}
is represented by the mapping cocylinder $Nf$. Thus every such square factors as
\[\xymatrix{ W \ar[r]\ar[d]_{i_0} & Nf  \ar[d]_{i_0}  \ar[r] & X \ar[d]_{{\mL}f} \ar@{=}[r] & X \ar[d]^f \\ 
W \otimes I \ar[r]  & Nf \otimes I \ar[r] & Ef \ar[r]_{{R}f} \ar@{-->}[ur]^(.6)s & Y,}\]
where $Ef$ is the pushout in the central square.
This gives the definition of the endofunctor $R$, and the indicated lift $s$ provides an ${R}$-algebra structure on $f$. An easy check shows that such an algebra structure corresponds to lifts in all squares \eqref{liftingproblem}, satisfying the compatibility conditions present in $\cJ_h^{\boxslash}$. 
The details of the analogous proof in the topological setting can be found in \cite[5.10]{BR}, and the details here
are essentially the same.

We are left with the verification of the smallness hypothesis $(\dagger)$. Since limits and filtered colimits in $\sM_A$ are computed degreewise, sequential colimits commute with pullbacks. This and the fact that $\sM_A$ is locally presentable imply by \cite[5.20]{BR} that the smallness condition $(\dagger)$ is satisfied for the functor $R$ constructed above. Applying \myref{algebraicfactorization} to $\cJ_h^{\boxslash} \cong \Alg_{R}$ completes the proof.
\end{proof}

\begin{rmk}
Note that $\sM_A$ is a Grothendieck abelian category: it has the generator $A$, and filtered colimits are exact. Therefore it is locally presentable \cite[3.10]{Bek} and every object is small \cite[1.2]{HoveySheaves}. This gives another proof that the smallness condition 
$(\dagger)$ is satisfied.
\end{rmk}

\begin{rmk}
Garner's work on the small object argument can be interpreted as saying that the cofibrantly generated case fits into the above framework. To be precise, if $\cJ \subset \sM^{\mathbf{2}}$ is either a set or, more generally, a \emph{small} category, then there exists a pointed endofunctor $R$ such that $\cJ^{\bs} \cong \Alg_R$ over $\sM^2$ \cite[4.22]{Garner}. In this sense, Theorem \ref{algebraicfactorization} contains Garner's variant of Quillen's 
SOA as a special case. 
\end{rmk}

\begin{rmk}
The methods in this section can be generalized to take enrichments into account, thereby producing an \emph{enriched algebraic small object argument}; cf.~\myref{trivial}.
\end{rmk}


\part{Cofibrant approximations and homological resolutions}

\section{Introduction}
Having completed our model theoretic work, we turn to a more calculational point of view.
The theme is to give calculationally useful concrete constructions of cofibrant approximations,
starting from homological algebra and different types of homological resolutions.   The motivation is to understand differential homological algebra conceptually
and calculationally.  In fact, the pre model theoretical literature gives different definitions 
of differential $\Tor$ and $\Ext$ functors based on different kinds of resolutions, and our work 
gives the first proof that these definitions agree.  The early definitions are given in terms of 
what we now recognize to be different cofibrant approximations of the same DG $A$-modules, and 
these explicit cofibrant approximations give tools for calculation.

\subsection{The functors $\Tor$ and $\Ext$ on DG $A$-modules}
We begin with conceptual definitions of the differential $\Tor$ and $\Ext$ functors.  
Of course, we define $\Tor$ and $\Ext$ exactly as in (\ref{TorExt}) for graded $R$-algebras and their modules.  
These are then bigraded.  In bigrading $(p,q)$, $p$ is the homological degree, $q$ is the internal degree, 
and $p+q$ is the total degree.  The differential $\Tor$ and $\Ext$ are graded, not bigraded.

\begin{defn}\mylabel{DTordefn} Define the differential $\Tor$ and $\Ext$ functors by
\begin{equation}\label{TorExt4} 
\Tor_{*}^{A}(N,M) = H_{*}(N\otimes_A X)
\end{equation}
and 
\begin{equation}\label{TorExt5} 
\Ext^{*}_{A}(M,N) = H^{*}\Hom_{A}(X,N),
\end{equation}
where $X\rtarr M$ is a $q$-cofibrant approximation of the DG $A$-module $M$.  
\end{defn}

Conceptually, for $\Tor$, we are taking a derived 
tensor product $(-)\otimes_A M$ with respect to the $q$-model structure and then applying homology.
Similarly, for $\Ext$, we are taking a derived $\Hom$ functor $\Hom_A(M,-)$ and then applying homology.
We shall say very little more about $\Ext$ here, but the parallel should be clear.

Since any two $q$-cofibrant approximations of $M$ are $h$-equivalent over $M$, we can use any 
$q$-cofibrant approximation of $M$ in the definition.  Using \myref{qmodelA}, we see that the 
functor $(-)\otimes_A X$ preserves $q$-equivalences when $X$ is $q$-cofibrant.  This implies
that we can equally well derive the functor $N\otimes_A(-)$.  

\begin{lem}\mylabel{inv1} If $\be\colon Y\rtarr N$ is a $q$-cofibrant approximation of $N$ and 
$\al\colon X\rtarr M$ is a $q$-cofibrant approximation of $M$, then the maps 
\[ H(N\otimes_A X)  \ltarr H(Y\otimes_A X) \rtarr H(Y\otimes_A M) \]
induced by $\al$ and $\be$ are isomorphisms.
\end{lem}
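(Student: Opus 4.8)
The plan is to reduce the lemma to two applications of the fact, recorded in the paragraph preceding the statement, that tensoring over $A$ with a $q$-cofibrant module is homotopical, i.e.\ carries $q$-equivalences to $q$-equivalences. Since a $q$-equivalence is by definition a quasi-isomorphism, $H$ sends it to an isomorphism, so both assertions follow at once.

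First I would treat the map $H(Y\otimes_A X)\rtarr H(N\otimes_A X)$ induced by $\beta$. As $X$ is $q$-cofibrant as a left DG $A$-module, the functor $-\otimes_A X$ from right DG $A$-modules to $\sM_R$ preserves $q$-equivalences; this is exactly the homotopicality claim made just before the lemma, which rests on \myref{qmodelA} together with the fact that a $q$-cofibrant DG $A$-module is semi-flat (in particular degreewise projective over $A$), so that $-\otimes_A X$ is well-behaved on the whole module category rather than only between cofibrant objects. Applying this to the $q$-equivalence $\beta\colon Y\rtarr N$ shows that $\beta\otimes_A\id_X\colon Y\otimes_A X\rtarr N\otimes_A X$ is a $q$-equivalence, and passing to $H$ gives the desired isomorphism.

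Next, for the map $H(Y\otimes_A X)\rtarr H(Y\otimes_A M)$ induced by $\alpha$, I would invoke the symmetric statement, obtained by interchanging the roles of left and right DG $A$-modules (equivalently, by replacing $A$ with $A^{\mathrm{op}}$): since $Y$ is $q$-cofibrant as a right DG $A$-module, the functor $Y\otimes_A(-)\colon \sM_A\rtarr\sM_R$ preserves $q$-equivalences. Applying this to the $q$-equivalence $\alpha\colon X\rtarr M$ shows that $\id_Y\otimes_A\alpha\colon Y\otimes_A X\rtarr Y\otimes_A M$ is a $q$-equivalence, and passing to $H$ gives the second isomorphism. If desired, one can further note that the evident commutative square with corners $Y\otimes_A X$, $Y\otimes_A M$, $N\otimes_A X$, $N\otimes_A M$ renders these two isomorphisms compatible with the induced map $H(N\otimes_A X)\rtarr H(N\otimes_A M)$.

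The only ingredient that is not purely formal is the claim that tensoring with a $q$-cofibrant module preserves \emph{all} $q$-equivalences, and not merely those between $q$-cofibrant modules; but this has already been established in the discussion preceding the lemma, so once it is granted the proof is immediate. There is therefore no serious obstacle here: the real content lies entirely in the homotopicality statement that the surrounding text has already set up.
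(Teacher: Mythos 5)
Your argument is correct and coincides with the paper's (implicit) proof: the lemma is recorded there as an immediate consequence of the fact, stated in the preceding paragraph, that $(-)\otimes_A X$ preserves $q$-equivalences when $X$ is $q$-cofibrant, applied once to $\be$ and once in its mirror form over $A^{\mathrm{op}}$ (that is, $Y\otimes_A(-)$ for the $q$-cofibrant right DG $A$-module $Y$) to $\al$. One minor caveat: your parenthetical justification via semi-flatness is circular as stated (semi-flatness, defined only later in \S\ref{EMSS}, has the preservation of $q$-equivalences built into its definition, and the correct structural input is rather that a $q$-cofibrant module has projective underlying $A$-module), but since the preservation fact is granted from the surrounding text this does not affect the proof.
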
  

We have long exact sequences that are precisely analogous to the long exact sequences of
the classical $\Tor$ functors.  We defer the proof to \S\ref{diddle}.

\begin{prop}\mylabel{Torles}  Short exact sequences 
\[ 0\rtarr N'\rtarr N\rtarr N'' \rtarr 0 \]
of DG $A$-modules naturally induce long exact sequences
\[  \cdots \to \Tor_n^A(N',M) \rtarr \Tor_n^A(N,M) \rtarr \Tor_n^A(N'',M) \rtarr \Tor_{n-1}^A(N',M) \to \cdots. \]
\end{prop}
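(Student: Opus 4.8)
The plan is to deduce the long exact sequence from the long exact homology sequence attached to a short exact sequence of chain complexes. First I would fix a $q$-cofibrant approximation $\epz\colon X\rtarr M$, so that by \myref{DTordefn} we may compute $\Tor^A_n(L,M)=H_n(L\otimes_A X)$ for $L$ equal to $N'$, $N$, or $N''$. The crux is the observation that a $q$-cofibrant DG $A$-module is flat as a graded $A$-module. Indeed, by \myref{qcharcofibrant} such an $X$ is a retract of a relative $\bF\cI_R$-cell complex; since $D^n_R\iso S^{n-1}_R\oplus S^n_R$ as graded $R$-modules, each generating cell $\bF S^{n-1}_R\rtarr \bF D^n_R$ is a split monomorphism of graded $A$-modules whose cokernel is a shifted copy of $A$, so each stage of the cell complex is obtained from the previous one by adjoining a direct sum of shifted copies of $A$, and the cell complex is graded-free over $A$. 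A retract of a graded-free $A$-module is flat, so $-\otimes_A X$ is exact.

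Granting this, the rest is routine. A short exact sequence of DG $A$-modules is in particular degreewise exact, so applying the exact functor $-\otimes_A X$ to $0\rtarr N'\rtarr N\rtarr N''\rtarr 0$ yields a short exact sequence of DG $R$-modules
\[ 0\rtarr N'\otimes_A X\rtarr N\otimes_A X\rtarr N''\otimes_A X\rtarr 0. \]
Its long exact homology sequence, rewritten via the identifications above, is the asserted long exact sequence, with the connecting maps $\Tor^A_n(N'',M)\rtarr \Tor^A_{n-1}(N',M)$ being the connecting homomorphisms of the snake lemma. Naturality in the short exact sequence is then immediate: a morphism of short exact sequences of DG $A$-modules induces, on applying $-\otimes_A X$, a morphism of short exact sequences of chain complexes, and the connecting homomorphism is natural for such morphisms. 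Independence of the choice of $X$ follows because any two $q$-cofibrant approximations of $M$ are $h$-equivalent over $M$ by \myref{qmodelA}, and an $h$-equivalence $X\rtarr X'$ over $M$ induces, after applying $L\otimes_A(-)$, chain homotopy equivalences compatible with the displayed short exact sequences, hence an isomorphism between the two long exact sequences.

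I expect the only step requiring genuine argument to be the flatness of $q$-cofibrant DG $A$-modules; everything else is the standard homological algebra of short exact sequences of chain complexes. A variant that avoids appealing to flatness would use a horseshoe lemma: construct $q$-cofibrant approximations $Y'\rtarr N'$, $Y\rtarr N$, $Y''\rtarr N''$ fitting into a short exact sequence $0\rtarr Y'\rtarr Y\rtarr Y''\rtarr 0$ in which $Y''$, and hence each term, is degreewise $A$-projective, so that $-\otimes_A M$ preserves its exactness, and then invoke \myref{inv1} to identify $H_*(Y\otimes_A M)$ with $\Tor^A_*(N,M)$ before passing to the long exact sequence. I would present the first route, as it is shorter.
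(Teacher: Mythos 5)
Your proof is correct and takes essentially the same route as the paper: apply $(-)\otimes_A X$ for a $q$-cofibrant approximation $X$, observe that exactness is preserved because the underlying graded $A$-module of $X$ is projective (the paper streamlines this by taking $X$ to be an honest $q$-cell complex, so that $X\iso A\otimes \bar X$ with $\bar X$ degreewise $R$-free and $N\otimes_A X\iso N\otimes\bar X$), and then pass to the long exact homology sequence. The flatness step you verify by hand is also immediate from the paper's later characterization of $q$-cofibrant objects as $q$-semi-projective (\myref{qcharcofibrant}), whose underlying $A$-modules are projective.
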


The functors $\Tor$ and $\Ext$ might well be denoted $q\Tor$ and $q\Ext$.  There are relative 
analogues $r\Tor$ and $r\Ext$.

\begin{defn}\mylabel{DTordefntoo} Define the relative differential $\Tor$ and $\Ext$ functors by
\begin{equation}\label{TorExt4R} 
r\Tor_{*}^{A}(N,M) = H_{*}(N\otimes_A X)
\end{equation}
and 
\begin{equation}\label{TorExt5R} 
r\Ext^{*}_{A}(M,N) = H^{*}\Hom_{A}(X,N),
\end{equation}
where $X\rtarr M$ is an $r$-cofibrant approximation of the DG $A$-module $M$.  
\end{defn}

\myref{inv1} and \myref{Torles} apply equally well to $r\Tor$, with the same proofs.
Probably the most standard calculational tool in differential homological algebra is
the bar construction, and we shall see both that it is intrinsic to the relative
functor $r\Tor$ and that its properties imply that $q\Tor$ and $r\Tor$ agree 
unexpectedly often. 

\subsection{Outline and conventions}\label{diddle}

We summarize the content of Part 2 and fix some conventions that we will use throughout. 
In \S\ref{projqcof}, we construct $q$-cofibrant approximations in terms of differential
graded projective resolutions, reinterpreting the early work of Cartan, Eilenberg, and Moore 
\cite{CE, EM, Moore} model theoretically.

We characterize $q$-cofibrant and $r$-cofibrant DG $A$-modules in \S\ref{GMStuff},
where we also place them in the more general cellular context of split DG $A$-modules.
Shifting gears, in \S\ref{GMStuff1} we start from the Eilenberg-Moore spectral sequence 
and relate resolutions to cofibrant approximations. We also show how the bar construction 
and matric Massey products fit into the picture there.

Finally, in \S\ref{GMStuff2}, we show how to construct $q$-cofibrant approximations
from classical projective resolutions of homology modules $H_*(M)$ over homology algebras $H_*(A)$ and 
how to apply the construction to make explicit calculations.  

\begin{uconv}  Since we shall be making more and more reference to homology as we proceed, we agree 
henceforward to abbreviate notation consistently by writing $HA$ and $HM$ instead of $H_*(A)$ and $H_*(M)$, following \cite{GM}. We sometimes regard these as bigraded, and then $H_q$ is understood to have
bidegree $(0,q)$.  When focusing on a specific degree, we write $H_n(M)$ as usual.  
\end{uconv}

To mesh the model categorical filtrations of cell complexes with the standard gradings in homological 
algebra, we must slightly change the filtration conventions on cell objects from Definitions
\ref{cellob} and \ref{cellob2}. There the convention is the standard one in model category theory that, 
for a relative cell complex $W\rtarr Y$, $F_{-1}Y =0$ and $F_0Y = W$. Then a cell complex 
$X$, such as $Y/W$, has $F_0X = 0$. It is harmless mathematically to change the convention to 
$F_{-2}Y = 0$ and $F_{-1}Y = W$, leading to the following convention on cell complexes $X$.

\begin{uconv} We agree to refilter cell complexes $X$ so that $F_{-1}X = 0$ and the non-trivial terms 
start with a possibly non-zero $F_0 X$.
\end{uconv}

In terms of classical homological algebra, $F_0 X$  relates to the $0$th term of projective resolutions, 
as we shall see, and that motivates the shift. We adopt this change throughout the rest of the paper.

\begin{unotn} For brevity of notation, we call enriched $\bF\cI_R$-cell 
complexes {\em $r$-cell complexes} henceforward, and we call their specialization
to ordinary cell complexes {\em $q$-cell complexes}.  Their filtrations are understood to
conform with the conventions just introduced.
\end{unotn}

By our variants of the SOA, every DG $A$-module admits a cofibrant approximation by a $q$-equivalent $q$-cell 
complex and by an $r$-equivalent $r$-cell complex, not just by a retract thereof.   The following proof illustrates how convenient that is.

\begin{proof}[Proof of \myref{Torles}] Let $\al\colon X\rtarr M$ be $q$-cofibrant approximation, where $X$
is a $q$-cell complex.  Then $X$ is isomorphic as an $A$-module to $A\otimes \bar{X}$ for a degreewise free 
graded $R$-module $\bar{X}$, hence $N\otimes_A X$ is isomorphic to $N\otimes \bar{X}$.
Thus the sequence
\[ 0\rtarr N'\otimes_A X\rtarr N\otimes_A X \rtarr N''\otimes_A X \rtarr 0 \]
of DG $R$-modules is isomorphic to the sequence
\[ 0\rtarr N'\otimes \bar X\rtarr N\otimes \bar X \rtarr N''\otimes \bar X \rtarr 0, \]
which is exact since $\bar{X}$ is degreewise free.  The resulting long exact sequence of homology groups 
gives the conclusion.
\end{proof}

\section{Projective resolutions and $q$-cofibrant approximations}\label{projqcof}

There is both tension and synergy between model category theory and classical homological algebra.
We explore the relationship in this section.  We first show that the classical projective resolutions 
of chain complexes, which are due to Cartan and Eilenberg \cite[\S XVII.1]{CE} and which generalize
directly to DG $R$-modules, are $q$-cofibrant approximations.    

Building on \cite{CE}, Moore \cite{Moore} developed projective resolutions of DG $A$-modules.
This is more subtle, but Moore found definitions that make the generalization transparently 
simple, as we shall recall.  We will show that his projective resolutions are also $q$-cofibrant
approximations. 

In Moore's work and throughout the early literature, there are bounded below
hypotheses on the DG algebras and modules.  These are not satisfied in the most interesting 
examples, which are bounded above with our grading conventions.  We avoid this condition wherever
possible.

\subsection{Projective classes and relative homological algebra}\label{projclassR}  
As we have already noted, the following notion of a projective class is the starting point
of relative homological algebra, as developed by Eilenberg and Moore \cite{EM}. It gives 
a general context for Moore's projective resolutions.  Much later, the notion also served
as the starting point for a model theoretic development of relative homological algebra
in work of Christensen and Hovey \cite{CH}. The notion is usually restricted to abelian 
categories, but it applies more generally.   

\begin{defn}\mylabel{projclass}
Let $p\colon E\rtarr M$ be a map in a category $\sM$ and let $P$ be an object of $\sM$.  
Say that $p$ is $P$-surjective or that $P$ is $p$-projective if $p_*\colon \sM(P,E)\rtarr \sM(P,M)$ 
is a surjection. A {\em projective class} $(\sP,\sE)$ in $\sM$ is a class $\sP$ of
objects and a class $\sE$ of maps such that 
\begin{enumerate}[(i)]
\item $\sE$ is the class of all maps that are $P$-surjective for all $P\in \sP$;
\item $\sP$ is the class of all objects that are $p$-projective for all $p\in \sE$;
\item for each object $M$ in $\sM$, there is a map $p\colon P\rtarr M$ with $P\in \sP$ and $p\in \sE$.
\end{enumerate}
\end{defn}

The notion of a projective class is useful in categories with kernels but not in general. 
The presence of kernels allows the construction of projective resolutions.  

\begin{rmk} The original definition of \cite[p.~5]{CE} was a little different, but it is 
essentially equivalent to \myref{projclass} in the presence of an initial object and kernels.  
The point is that (iii) then allows one to construct a map $P\rtarr K$ in $\sE$, where $K$
is the kernel of an arbitrary map $f\colon X\rtarr Y$.  From here, it is straightforward 
to use $(\sP,\sE)$ to construct projective resolutions of objects of $\sM$.
\end{rmk}

Projective classes are analogous to what one sees in model categories if
one considers cofibrant objects but does not introduce cofibrations in general. If 
$(\sW,\sC,\sF)$ is a model structure on $\sM$, $\sQ$ is the class of cofibrant 
objects, and $\sA\sQ$ is the class of acyclic cofibrant objects (those $X$ such that 
$\emptyset \rtarr X$ is an acyclic cofibration), then $(\sQ,\sW\cap\sF)$ and $(\sA\sQ,\sF)$ 
are candidates for projective classes in $\sM$. Certainly (ii) and (iii) are satisfied,
but there might be too few maps in $\sF$ for (i) to be satisfied: the lifting condition
against cofibrations might be more restrictive than just the lifting condition against
cofibrant objects.  Projective classes that are not parts of model categories appear
naturally, and their associated projective resolutions can often be interpreted model
categorically as cofibrant approximations.  We are not interested here in the general theory, 
but the examples that Cartan, Eilenberg, Mac\,Lane, and Moore focused on in 
\cite{CE, EM, MacHom, Moore} are directly relevant to our work.  

For a DG $R$-module $M$, let $B_n(M)\subset Z_n(M)\subset M_n$ be
the boundaries and cycles of $M$. Identifying $M_n/Z_n(M)$ with $B_{n-1}(M)$, we have exact sequences
\begin{equation}\label{ein}   0\rtarr B_n(M) \rtarr Z_n(M) \rtarr H_n(M) \rtarr 0
\end{equation}
\begin{equation}\label{zwei}  0\rtarr Z_n(M) \rtarr  M_n \rtarr B_{n-1}(M) \rtarr 0.
\end{equation}

\begin{defn}\mylabel{sprojdefn}  A DG $R$-module $P$ is {\em $s$-projective}\footnote{The $s$ stands for strong or strongly,
as in \cite{EM}; the term ``proper'' is also used.} if the $R$-modules $B_n(P)$ and $H_n(P)$, and therefore also the $R$-modules $Z_n(P)$,
$P_n/B_n(P)$, and $P_n$, are projective. Let $\sP_s$ denote the class of $s$-projective DG $R$-modules.   
\end{defn}

\begin{lem}\mylabel{projstruc} A DG $R$-module $P$ is $s$-projective if and only if it is isomorphic to a direct sum
over $n\in \bZ$ of DG $R$-modules $S^n_R\otimes H_n$ and $D^n_R\otimes B_{n-1}$ for projective $R$-modules 
$H_n$ and $B_{n-1}$.  Therefore, $s$-projective DG $R$-modules are $q$-cofibrant.  
\end{lem}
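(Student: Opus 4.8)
The plan is to prove the biconditional by an explicit structural decomposition, and then deduce $q$-cofibrancy directly from \myref{qcofibrant}. For the ``if'' direction, suppose $P$ is a direct sum of DG $R$-modules of the form $S^n_R\otimes H_n$ and $D^n_R\otimes B_{n-1}$ with the $H_n$ and $B_{n-1}$ projective. Each $S^n_R\otimes H_n$ has zero differential and is concentrated in degree $n$, so it contributes $H_n$ to $H_n(P)$ and nothing to the boundary submodules; each $D^n_R\otimes B_{n-1}$ is acyclic with boundaries $B_{n-1}$ in degree $n-1$, so it contributes nothing to homology and $B_{n-1}$ to $B_{n-1}(P)$. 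Passing to the direct sum, $H_n(P)\iso H_n$ and $B_n(P)\iso B_n$, both projective, so $P$ is $s$-projective.

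For the ``only if'' direction, I would start from the exact sequences \eqref{ein} and \eqref{zwei}, recalling that the surjection $P_n\rtarr B_{n-1}(P)$ in \eqref{zwei} is the differential with restricted codomain. Since $H_n(P)$ is projective, \eqref{ein} splits; fix a section $h_n\co H_n(P)\rtarr Z_n(P)$. Since $B_{n-1}(P)$ is projective, \eqref{zwei} splits; fix a section $s_{n-1}\co B_{n-1}(P)\rtarr P_n$ of $d$. These yield, in each degree, an internal direct sum decomposition $P_n = B_n(P)\oplus h_n(H_n(P))\oplus s_{n-1}(B_{n-1}(P))$. Now set $Q = \bigoplus_n(S^n_R\otimes H_n(P)) \oplus \bigoplus_n(D^n_R\otimes B_{n-1}(P))$ and define a degreewise map $\phi\co Q\rtarr P$ by sending each summand $S^n_R\otimes H_n(P)$ into $P_n$ via $h_n$, sending the degree-$n$ (``top'') copy of $B_{n-1}(P)$ inside $D^n_R\otimes B_{n-1}(P)$ into $P_n$ via $s_{n-1}$, and sending the degree-$n$ (``bottom'') copy of $B_n(P)$ inside $D^{n+1}_R\otimes B_n(P)$ into $P_n$ via the inclusion $B_n(P)\hookrightarrow P_n$. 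Checking that $\phi$ is a chain map reduces, since the differential is the identity on each $D^n_R$ and zero on each $S^n_R$ (so no signs intervene), to the single relation $d_P s_{n-1} = \mathrm{id}_{B_{n-1}(P)}$; and $\phi_n$ is bijective exactly because its three summand-restrictions realize the direct sum decomposition of $P_n$ displayed above. Hence $\phi$ is an isomorphism of DG $R$-modules and $P$ has the asserted form.

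For the final assertion, each building block $S^n_R\otimes H_n$ and $D^n_R\otimes B_{n-1}$ is bounded below and degreewise projective (each graded piece being a projective $R$-module or zero), hence $q$-cofibrant by \myref{qcofibrant}; since the class of $q$-cofibrations is closed under coproducts, as the left class of a weak factorization system always is, a coproduct of $q$-cofibrant DG $R$-modules is $q$-cofibrant, and the decomposition just obtained completes the proof. The only real obstacle is bookkeeping: keeping straight which copy of a boundary module sits in which degree when assembling $\phi$, and confirming the compatibility $d_P\phi = \phi d_Q$ — but this is routine precisely because all the differentials in sight are identities or zero.
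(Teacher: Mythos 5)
Your proof is correct and follows essentially the same route as the paper: split \eqref{ein} and \eqref{zwei} and use the differential to identify the chosen complement of the cycles in $P_n$ with $B_{n-1}(P)$, yielding the stated sphere-and-disk decomposition. You merely make explicit the chain isomorphism and the final cofibrancy step (each summand is bounded below and degreewise projective, hence $q$-cofibrant by \myref{qcofibrant}, and cofibrant objects are closed under coproducts), which the paper leaves implicit.
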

\begin{proof}  Clearly DG $R$-modules of the specified form are $s$-projective.  For the converse,
a splitting of the sequence (\ref{ein}) identifies $Z_n(P)$ with $H_n(P)\oplus B_n(P)$. A splitting
$\si\colon B_{n-1}(P) \rtarr P_n$ of (\ref{zwei}) then identifies $P_n$ with $Z_n(P) \oplus \si B_{n-1}(P)$.
The differential identifies $\si B_{n-1}(P)\subset P_n$ with $B_{n-1}\subset P_{n-1}$.
\end{proof}

\begin{defn} A map $p\colon E\rtarr M$ of DG $R$-modules is an {\em $s$-epimorphism} if 
$p_n\colon E_n\rtarr M_n$ and $p_n\colon Z_n(E)\rtarr Z_n(M)$ are epimorphisms for all $n$;
then $p_n\colon H_n(E)\rtarr H_n(M)$ and $p_n\colon B_n(E)\rtarr B_n(M)$ are also
epimorphisms for all $n$. Let $\sE_s$ denote the class of $s$-epimorphisms.
\end{defn}

\begin{prop}\mylabel{enough} The pair $(\sP_s,\sE_s)$ is a projective class in the category $\sM_R$.
\end{prop}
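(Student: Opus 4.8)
The plan is to verify conditions (i)--(iii) of \myref{projclass} directly for the pair $(\sP_s,\sE_s)$, using the structure theorem \myref{projstruc} to reduce assertions about arbitrary $s$-projective DG $R$-modules to the two families of generators $S^n_R\otimes H$ and $D^n_R\otimes B$ with $H$ and $B$ projective $R$-modules. The computational input is the pair of natural isomorphisms
\[ \sM_R(S^n_R\otimes H,X)\iso \Hom(H,Z_nX)\qquad\text{and}\qquad \sM_R(D^n_R\otimes B,X)\iso \Hom(B,X_n): \]
the first holds because a DG $R$-map out of a complex concentrated in degree $n$ with zero differential is the same as an $R$-map into the $n$-cycles, and the second because a DG $R$-map out of $D^n_R\otimes B$ is freely determined by its degree-$n$ component.

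First I would establish (i). If $p\co E\rtarr M$ is an $s$-epimorphism, then $p_n\co Z_nE\rtarr Z_nM$ and $p_n\co E_n\rtarr M_n$ are surjections; composing with these and using projectivity of $H$ and $B$ shows, via the two isomorphisms above, that $p$ is $P$-surjective for $P=S^n_R\otimes H$ and $P=D^n_R\otimes B$. Since $\sM_R(-,X)$ carries coproducts to products, \myref{projstruc} gives that $p$ is $P$-surjective for every $P\in\sP_s$. Conversely, the modules $S^n_R$ and $D^n_R$ are themselves $s$-projective (take $H=R$, resp.\ $B=R$), and $\sM_R(S^n_R,X)\iso Z_nX$, $\sM_R(D^n_R,X)\iso X_n$; so any map that is $P$-surjective for all $P\in\sP_s$ is in particular surjective on all $Z_n$ and all $X_n$, hence an $s$-epimorphism. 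This proves (i), and along the way it proves the ``$\subseteq$'' half of (ii): every object of $\sP_s$ is $p$-projective for every $p\in\sE_s$.

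Next I would prove (iii) by the Cartan--Eilenberg construction. Given $M$, choose for each $n$ surjections $\tilde H_n\rtarr H_n(M)$ and $\tilde B_n\rtarr B_n(M)$ from projective $R$-modules and put $P=\bigoplus_{n}\bigl(S^n_R\otimes\tilde H_n\oplus D^n_R\otimes\tilde B_{n-1}\bigr)$, an object of $\sP_s$. Lifting the first surjection through $Z_n(M)\rtarr H_n(M)$ and the second through $d\co M_{n+1}\rtarr B_n(M)$ (possible by projectivity) produces $R$-maps that assemble into a chain map $p\co P\rtarr M$. A direct check on the summands shows that $p$ is surjective on $Z_n(M)$ (its image there contains $B_n(M)$ and covers the quotient $H_n(M)$) and on $M_n$ (its image contains $Z_n(M)$ and the degree-$n$ copy of $\tilde B_{n-1}$ covers the quotient $M_n/Z_n(M)\iso B_{n-1}(M)$), so $p\in\sE_s$; this is (iii).

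It remains to prove the ``$\supseteq$'' half of (ii): if $P$ is $p$-projective for every $p\in\sE_s$, then $P\in\sP_s$. By (iii) there is an $s$-epimorphism $\pi\co Q\rtarr P$ with $Q\in\sP_s$; since $\pi\in\sE_s$, the identity of $P$ lifts through $\pi$, so $P$ is a retract of $Q$ in $\sM_R$. As $B_n(-)$ and $H_n(-)$ are functorial on chain maps, $B_n(P)$ and $H_n(P)$ are retracts of the projective $R$-modules $B_n(Q)$ and $H_n(Q)$, hence projective; so $P\in\sP_s$. The only step requiring genuine care is the bookkeeping in (iii) --- checking that the chosen lifts really do fit together into a chain map and that the resulting $p$ is simultaneously surjective on each $M_n$ and each $Z_n(M)$ --- while everything else is a formal consequence of \myref{projstruc} and the two hom-isomorphisms.
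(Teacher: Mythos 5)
Your proof is correct and follows essentially the same route as the paper: the same verification of (i) by lifting against the sphere/disk building blocks supplied by \myref{projstruc}, the identical Cartan--Eilenberg-style construction of an $s$-epimorphism from an $s$-projective for (iii), and the same retract argument for (ii). The only difference is presentational, in that you phrase the lifting via the hom-isomorphisms for $S^n_R\otimes H$ and $D^n_R\otimes B$ rather than via the internal splitting of $P$, which amounts to the same thing.
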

\begin{proof}  We must verify (i)-(iii) of \myref{projclass}.  If $P$ is $s$-projective, $p\colon E\rtarr M$ 
is an $s$-epimorphism, and $f\colon P\rtarr M$ is a map of DG $R$-modules, then we can lift $f$ to a map $\tilde{f}\colon P\rtarr E$ by lifting each $f_n\colon Z_n(P)\rtarr Z_n(M)$ to $Z_n(E)$ and lifting the restriction of $f$ to
$\si B_{n-1}(P) \subset P_n$, using the epimorphism $p\colon E_n\rtarr M_n$.  Since $S^n_R$ and $D^n_R$ are $s$-projective, 
a map that is $P$-surjective for all $P\in \sP$ is in $\sE$, which verifies (i). 

We next prove (iii). Thus let $M$ be any DG $R$-module.  For each $n$, choose epimorphisms 
$\et_n\colon B_n\rtarr B_n(M)$ and $\ze_n\colon H_n \rtarr H_n(M)$, where $B_n$ and $H_n$ are projective.  Let  
$Z_n = B_n\oplus H_n$ and define $\epz_n\colon Z_n\rtarr Z_n(M)$ to be $\et_n$ on $B_n$ and a lift of $\ze_n$ 
to a map $H_n\rtarr Z_n(M)$ on $H_n$.  Then define $P_n = Z_n \oplus B_{n-1}$ and define $\epz\colon P_n \rtarr M_n$ 
to be $\epz_n$ on $Z_n$ and a lift of $\et_{n-1}$ to a map $B_{n-1}\rtarr M_n$ on $B_{n-1}$.  Then $\epz\colon Z_n\rtarr Z_n(M)$  and $\epz\colon P_n\rtarr M_n$ are epimorphisms.  Define $d\colon P_n\rtarr P_{n-1}$ to be zero on $Z_n$ and the identity from $B_{n-1}\subset P_n$ to $B_{n-1}\subset P_{n-1}$.  Then $\epz$ is a map of DG $R$-modules and $\epz \in \sP_s$. Finally,
for (ii), if $M$ is $s$-projective, then a lift of the identity map of $M$ along $\epz$ displays $M$ as a retract of the
$s$-projective DG $R$-module $P$, and it follows that $M$ is $s$-projective.
\end{proof}

\begin{cor}\mylabel{acycproj} The class $\sA_q\sQ_q$ of $q$-acyclic $q$-cofibrant objects in $\sM_R$ coincides with the 
class $\sA_q\sP_s$ of 
$s$-projective complexes $P$ such that $H_*(P)=0$. 
\end{cor}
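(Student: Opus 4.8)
The plan is to deduce the statement directly from \myref{projstruc} and the last assertion of \myref{qcofibrant}. One inclusion is a tautology: if $P$ is $s$-projective then $P$ is $q$-cofibrant by \myref{projstruc}, and if in addition $H_*(P)=0$ then $P$ is $q$-acyclic, so $\sA_q\sP_s\subseteq\sA_q\sQ_q$.

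For the reverse inclusion, let $X$ be $q$-acyclic and $q$-cofibrant. Since $X$ is $q$-cofibrant the map $0\rtarr X$ is a $q$-cofibration, and since $H_*(X)=0$ it is a $q$-equivalence; so by \myref{qcofibrant} the complex $X$ is a projective object of $\sM_R$. I would then observe that a projective object of $\sM_R$ is a contractible complex of projective $R$-modules: covering $X$ by a degreewise epimorphism out of a direct sum of disks $D^n_R$ and splitting it exhibits $X$ as a retract of a contractible, degreewise-free complex, and a retract of such a complex is again contractible and degreewise projective. Now fix a contracting homotopy $s$ with $ds+sd=\id$. Because $H_*(X)=0$ we have $Z_n(X)=B_n(X)$ and $H_n(X)=0$ is projective, so it remains only to check that $B_n(X)$ is projective. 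For $z\in Z_n(X)$ the relation $ds+sd=\id$ gives $ds(z)=z$, since $sd(z)=0$; hence $ds\colon X_n\rtarr X_n$ takes values in $B_n(X)$ and restricts to the identity on $B_n(X)$, so $B_n(X)$ is a direct summand of the projective module $X_n$ and is therefore projective. Thus $X$ satisfies the conditions of \myref{sprojdefn}, so $X\in\sP_s$, and since $H_*(X)=0$ we conclude $X\in\sA_q\sP_s$.

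The one step carrying genuine content is the passage from ``$q$-acyclic'' to ``contractible'': in the unbounded setting the exact sequences $0\rtarr B_n(X)\rtarr X_n\rtarr B_{n-1}(X)\rtarr 0$ cannot be split by induction on $n$, and it is precisely here that the identification of $q$-acyclic $q$-cofibrant objects with the projective objects of $\sM_R$ --- equivalently, the fact that every object of $\sM_R$ is $q$-fibrant, as exploited in \myref{projectives} --- is indispensable. Everything else (the retract-of-disks argument and the bookkeeping with cycles, boundaries, and the definition of $s$-projective) is routine.
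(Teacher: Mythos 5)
Your proof is correct, but you handle the substantive inclusion $\sA_q\sQ_q \subseteq \sA_q\sP_s$ by a genuinely different route than the paper. The paper argues formally through the projective class $(\sP_s,\sE_s)$ of \myref{enough}: an object $P\in\sA_q\sQ_q$ has the LLP of $0\rtarr P$ against every $q$-fibration, hence is $p$-projective for every $p\in\sE_s$ since $\sE_s\subset\sF_q$, and then condition (ii) of \myref{projclass}, as verified in \myref{enough}, puts $P$ in $\sP_s$ at once (behind the scenes that verification is itself a retract argument, lifting the identity along the $s$-epimorphism from the $s$-projective object supplied by condition (iii)). You instead invoke the last clause of \myref{qcofibrant} to see that $X$ is a projective object of $\sM_R$, realize $X$ as a retract of a contractible, degreewise free sum of disks $D^n_R$, and then verify \myref{sprojdefn} directly: a contracting homotopy $s$ yields the map $ds$, which lands in $B_n(X)$ and restricts to the identity there, so $B_n(X)$ is a summand of the projective module $X_n$, while $H_n(X)=0$. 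Both arguments ultimately rest on the same lifting property of the acyclic cofibration $0\rtarr X$ (and your splitting of the disk cover implicitly uses that degreewise surjections are epimorphisms in $\sM_R$, which is true); the paper's version is shorter given that \myref{enough} is already available, whereas yours bypasses \myref{enough} entirely, is more self-contained, and makes visible the extra structural fact that $q$-acyclic $q$-cofibrant objects are contractible with boundaries splitting off degreewise---in effect re-proving the relevant special case of \myref{enough}. Your closing remark correctly locates where cofibrancy is indispensable, consistent with the counterexample in \myref{projectives}.
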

\begin{proof} Clearly $P$ is in $\sA_q\sQ_q$ if and only if $P$ is $p$-projective for all $p\in \sF_q$.  
Since $\sE\subset \sF_q$, $P$ is then in $\sP_s$.  Conversely, if $P$ is in $\sP_s$ and $H_*(P) = 0$, then
$P$ is in $\sA_q\sQ_q$ by \myref{projstruc}.
\end{proof}

\subsection{Projective resolutions are $q$-cofibrant approximations in $\sM_R$}\label{versus}

Projective resolutions relate the projective class $(\sP_s,\sE_s)$ to the class $\sQ_q$ of 
$q$-cofibrant $R$-chain complexes.   With our grading conventions, \cite[XVII.1]{CE} defines a 
projective resolution 
$\epz\colon P\rtarr M$ to be a right-half-plane bicomplex $P$ augmented over $M$ such that the induced 
chain complexes $H_{*,q}(P)$ and $B_{*,q}(P)$ are projective resolutions of $H_q(M)$ and $B_q(M)$. It follows that the induced chain complexes $Z_{*,q}(P)$ and $P_{*,q}$ are projective resolutions of $Z_q(M)$ and $M_q$. 

We construct a projective resolution $\epz\colon P\rtarr M$ of a DG $R$-module $M$ in the
usual way. Via the proof of \myref{enough}, we first construct an $s$-projective DG $R$-module $P_{0,*}$ 
and an $s$-epimorphism $\epz\colon P_{0,*}\rtarr M$ with kernel $K_{0,*}$.  Inductively, we construct an 
$s$-projective DG $R$-module $P_{p,*}$ and an $s$-epimorphism $P_{p,*} \rtarr K_{p-1,*}$ with 
kernel $K_{p,*}$, and we have the differential 
$$d\colon P_{p,*} \rtarr K_{p-1,*}\subset P_{p-1,*}.$$
It is a map of DG $R$-modules.  Then $\{P_{p,q}\}$ and the maps $\epz\colon P_{0,*}\rtarr M$ specify
a bicomplex  over $M$ with
vertical differential $d^0\colon P_{p,q} \rtarr P_{p,q-1}$, given by the differentials on the $P_{p,*}$, and horizontal differential
$d^1\colon P_{p,q} \rtarr P_{p-1,q}$. 
To ensure that $d^0d^1 + d^1d^0 =0$ we set $d^1 = (-1)^q d$ on $P_{p,q}$.\footnote{Warning: bicomplexes are symmetric structures. For purposes
of comparison with $q$-cell complexes we have {\em reversed} the roles of $p$ and $q$ from those they play in \cite{CE, EM, Moore}.}

This construction gives a projective resolution in the sense of \cite[XVII.1]{CE}, as we see by 
inspection of the proof of \myref{enough}.  This proves the first statement of the 
following result; it is \cite[XVII.1.2]{CE}, which gives details of the rest of the proof.
 
\begin{prop}\mylabel{projres}  Every DG $R$-module $M$ admits a projective resolution $P$.   
If $P$ and $Q$ are projective resolutions of $M$ and $N$ and $f\colon M\rtarr N$ is a map of DG
$R$-modules, then there is a map $\tilde f\colon P\rtarr Q$ of projective resolutions over $f$.   
If $\tilde f$ and 
$\tilde g$ are maps over homotopic maps $f$ and $g$, then $\tilde{f}$ and $\tilde{g}$ are homotopic.
\end{prop}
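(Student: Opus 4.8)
The plan is to run the classical comparison (``fundamental lemma'') argument, but relative to the projective class $(\sP_s,\sE_s)$ of \myref{enough} rather than the absolute one. The one mechanism used throughout is the defining lifting property of a projective class: if $P\in\sP_s$ and $p\in\sE_s$, then $\sM_R(P,-)$ carries $p$ to a surjection, and since $(\sP_s,\sE_s)$ lives in $\sM_R$, the lifts so produced are automatically maps of DG $R$-modules. (The first paragraph of the proof of \myref{enough} records exactly this lifting in the relevant cases.)

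For existence, the construction preceding the statement already produces, for any $M$, DG $R$-modules $P_{p,*}\in\sP_s$ with $s$-epimorphisms $\epz\colon P_{0,*}\rtarr M$ and $P_{p,*}\rtarr K_{p-1,*}$ onto the successive kernels $K_{p,*}$, organized into a bicomplex by setting $d^1=(-1)^q d$. It remains to verify the resolution property, that $H_{*,q}(P)\rtarr H_q(M)$ and $B_{*,q}(P)\rtarr B_q(M)$ (and hence $Z_{*,q}(P)\rtarr Z_q(M)$ and $P_{*,q}\rtarr M_q$) are projective resolutions. The point is a lemma about a short exact sequence $0\rtarr K\rtarr P\rtarr N\rtarr 0$ of DG $R$-modules in which $P\rtarr N$ is an $s$-epimorphism: applying $Z_q$, $B_q$, and $H_q$ gives short exact sequences. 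Indeed $Z_q$ is always left exact and $Z_q P\rtarr Z_q N$ is onto by definition of an $s$-epimorphism; surjectivity of $P_{q+1}\rtarr N_{q+1}$ together with surjectivity of $Z_{q+1}P\rtarr Z_{q+1}N$ gives, after a short diagram chase, exactness of $0\rtarr B_q K\rtarr B_q P\rtarr B_q N\rtarr 0$; and the nine lemma applied to the three instances of \eqref{ein} then yields exactness of $0\rtarr H_q K\rtarr H_q P\rtarr H_q N\rtarr 0$. Splitting the exact sequence $\cdots\rtarr P_{1,*}\rtarr P_{0,*}\rtarr M\rtarr 0$ (each of whose maps is an $s$-epimorphism onto its image) into such short exact sequences, applying the lemma to each and splicing, and using that $H_q$, $B_q$, $Z_q$ of any object of $\sP_s$ is projective, shows that the columns $H_{*,q}(P)$, $B_{*,q}(P)$, $Z_{*,q}(P)$, $P_{*,q}$ are projective resolutions of $H_q(M)$, $B_q(M)$, $Z_q(M)$, $M_q$, as required.

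For the comparison map, given $f\colon M\rtarr N$ and resolutions $\epz_P\colon P\rtarr M$, $\epz_Q\colon Q\rtarr N$, I would construct $\tilde f$ column by column. Since $P_{0,*}\in\sP_s$ and $\epz_Q\in\sE_s$, the DG $R$-map $f\epz_P\colon P_{0,*}\rtarr N$ lifts to $\tilde f_0\colon P_{0,*}\rtarr Q_{0,*}$ with $\epz_Q\tilde f_0=f\epz_P$. Inductively, given $\tilde f_0,\dots,\tilde f_{p-1}$ commuting with the augmentations and with the (unsigned) horizontal differentials, the DG $R$-map $\tilde f_{p-1}d\colon P_{p,*}\rtarr Q_{p-1,*}$ takes values in the kernel $K^Q_{p-1,*}$ of $d\colon Q_{p-1,*}\rtarr Q_{p-2,*}$ (since $d\tilde f_{p-1}d=\tilde f_{p-2}dd=0$ for $p\geq 2$, and $\epz_Q\tilde f_0 d=f\epz_P d=0$ for $p=1$); as $Q_{p,*}\rtarr K^Q_{p-1,*}$ is an $s$-epimorphism and $P_{p,*}\in\sP_s$, it factors through a DG $R$-map $\tilde f_p\colon P_{p,*}\rtarr Q_{p,*}$ with $d\tilde f_p=\tilde f_{p-1}d$. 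The resulting $\tilde f$ commutes with $d^0$ automatically and with the horizontal differential by construction, hence with $d^1=(-1)^q d$ as well (the sign, being the same on both sides, plays no role), so $\tilde f$ is a map of bicomplexes over $f$.

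The homotopy statement is proved by the same inductive lifting scheme, applied one filtration level higher. Given lifts $\tilde f,\tilde g$ of homotopic maps $f,g$, fix a chain homotopy $s\colon f\simeq g$ and build a homotopy $\tilde s$ witnessing $\tilde f\simeq\tilde g$ (as maps of bicomplexes, equivalently after passage to total complexes), by induction over the homological degree $p$: at stage $p$ one lifts the accumulated defect, a suitable combination of $\tilde g_p-\tilde f_p$ with boundary terms coming from $\tilde s_{p-1}$ and corrected by $s$ at the bottom, which a short computation shows lies in $K^Q_{p,*}$, through the $s$-epimorphism $Q_{p+1,*}\rtarr K^Q_{p,*}$ using $s$-projectivity of $P_{p,*}$. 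I expect the only real work, and hence the main obstacle, to be purely organizational: keeping the augmentation/kernel conditions and the sign conventions consistent through the nested inductions, so that at each stage the map one wants to lift genuinely takes values in the kernel against which the relevant $s$-epimorphism is defined. Conceptually nothing is used beyond the lifting property of $(\sP_s,\sE_s)$; full details are given in \cite[XVII.1.2]{CE}.
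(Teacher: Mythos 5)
Your overall route is the same as the paper's: construct the resolution by iterating $s$-epimorphisms from $s$-projectives, i.e.\ work relative to the projective class $(\sP_s,\sE_s)$ of \myref{enough}, and then run the classical comparison argument of \cite[XVII.1.2]{CE} with respect to this class; the paper itself simply cites \cite[XVII.1.2]{CE} for the second and third statements, so for the existence verification (your short-exact-sequence lemma plus the nine lemma) and for the column-by-column construction of $\tilde f$ you have supplied more detail than the paper does, and those arguments are correct. One point you use silently in the comparison step: for an \emph{arbitrary} projective resolution $Q$ of $N$, the maps $Q_{p,*}\rtarr K^Q_{p-1,*}$ are $s$-epimorphisms. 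This is true, but it needs the observation that both $Q_{*,q}$ and $Z_{*,q}(Q)$ are resolutions of $N_q$ and $Z_q(N)$ (surjectivity on underlying modules and on $d^0$-cycles, respectively); it is worth saying.

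The one step that would fail as stated is the homotopy. You propose to build the homotopy from a single family of lifts $\tilde s_p\colon P_{p,*}\rtarr Q_{p+1,*}$ through the $s$-epimorphisms $Q_{p+1,*}\rtarr K^Q_{p,*}$, with the homotopy $s$ entering only ``at the bottom.'' A homotopy of that shape cannot exist when $f\neq g$: decomposing $d\tilde s+\tilde s d=\tilde g-\tilde f$ by bidegree, its $(0,0)$-component on the $0$th column is $d^1\tilde s_0=\tilde g_0-\tilde f_0$, and applying $\epz_Q$ (which annihilates $d^1$) gives $(g-f)\epz_P=0$, hence $f=g$ because $\epz_P$ is degreewise surjective. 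Nor can the $s$-correction be confined to the bottom column: the bidegree $(-1,1)$ part of the identity would force that correction to vanish on $\ker\epz_P$, i.e.\ to factor through $\epz_P$, which requires a graded-module lift of $s$ through $\epz_Q$ that need not exist. The homotopy of \cite[XVII.1.2]{CE} has two families of components: one of bidegree $(0,1)$ covering $s$ in \emph{every} column, constructed degreewise using only projectivity of the modules $P_{p,q}$ (it cannot be produced by the $(\sP_s,\sE_s)$-lifting mechanism, since $s$ is not a chain map), and one of bidegree $(1,0)$ absorbing the remaining defect, which is then a chain map into $K^Q_{p,*}$ and is lifted exactly as you propose. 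Since you, like the paper, defer the details to \cite[XVII.1.2]{CE}, this is a flaw of the sketch rather than of the strategy, but the induction as you describe it does not close up at stage $p\geq 1$.
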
 

The total complex of a bicomplex $\{P_{p,q}\}$ is the DG $R$-module $\TotP$ specified by $\TotP_n = \sum_{p+q=n} P_{p,q}$ with
differential $d^0+ d^1$.  If $\epz\colon P\rtarr M$ is a projective resolution, we continue to write 
$\epz\colon \TotP \rtarr M$ for the induced map of DG $R$-modules from the total complex of $P$ to $M$.

As a bicomplex, $P$ has two filtrations. We are more interested in the filtration by the homological degree $p$.  With it, $F_pP$ is the sum of the $P_{p-r,*}$ for $0\leq r\leq p$.  The filtration quotient $F_pP/F_{p-1}P$ is $P_{p,*}$, and its differential 
is rarely zero. However, we have the following key observation. 

\begin{lem}\mylabel{Indeed}  The total complex $\TotP$ of a projective resolution is $q$-cofibrant.
\end{lem}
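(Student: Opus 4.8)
The plan is to exhibit $\TotP$ as a $q$-cell complex, using the homological filtration $\{F_pP\}$ of the bicomplex $P$ as the cellular filtration (after the refiltering convention, with $F_{-1}\TotP = 0$ and $F_0\TotP = P_{0,*}$). By \myref{qcofibrant} and \myref{projstruc}, each $P_{p,*}$ is a $q$-cofibrant DG $R$-module, in fact isomorphic to a direct sum of spheres $S^n_R\otimes H$ and disks $D^n_R\otimes B$ on projective $R$-modules. First I would observe that $\TotP$ is the colimit of the sequence $F_0\TotP \to F_1\TotP \to \cdots$, so it suffices to show each inclusion $F_{p-1}\TotP \rtarr F_p\TotP$ is obtained by attaching $q$-cells, i.e.\ is a pushout of a coproduct of maps in $\cI_R$ (up to the extension-of-scalars issue, but here $A = R$, so we are literally in the setting of \myref{qRmod}).

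The key step is to analyze a single filtration jump. We have $F_p\TotP/F_{p-1}\TotP \iso P_{p,*}$ as a graded $R$-module, but the differential on $F_p\TotP$ does not respect this splitting: on $P_{p,*}\subset F_p\TotP$ it is $d^0 + d^1$, where $d^0$ is the internal differential of $P_{p,*}$ and $d^1\colon P_{p,*}\rtarr P_{p-1,*}\subset F_{p-1}\TotP$ lands in the previous filtration stage. So the relevant attaching datum is the following: decompose $P_{p,*}\iso \bigoplus (S^n_R\otimes H_n^{(p)}) \oplus \bigoplus(D^n_R\otimes B_{n-1}^{(p)})$ as in \myref{projstruc}; then $F_p\TotP$ is the pushout of $F_{p-1}\TotP \xleftarrow{\ \varphi\ } \bigoplus_n S^{n-1}_R\otimes(\text{projectives}) \rtarr \bigoplus_n D^n_R\otimes(\text{projectives})$, where the coproduct of maps $S^{n-1}_R\rtarr D^n_R$ is assembled from the free-module decomposition and $\varphi$ is built from the component $d^1$ of the total differential restricted to the boundary spheres of the cells of $P_{p,*}$. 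Concretely, a cell $D^n_R\otimes B$ contributes attaching map $S^{n-1}_R\otimes B \xrightarrow{d^1} P_{p-1,*}\subset F_{p-1}\TotP$, and a summand $S^n_R\otimes H$ (which is a cycle in $P_{p,*}$) contributes a cell $D^{n+1}_R\otimes H$ attached along $S^n_R\otimes H \xrightarrow{d^1} F_{p-1}\TotP$ (this uses that $d^1$ carries $Z(P_{p,*})$ into $Z(P_{p-1,*})$, so the composite is a chain map and the pushout over the disk inclusion glues in exactly $P_{p,*}$ with its twisted differential). One checks that this pushout computes $F_p\TotP$ with the correct total differential by comparing the two descriptions degreewise.

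The main obstacle — though it is more bookkeeping than genuine difficulty — is keeping the signs and the splittings consistent: the convention $d^1 = (-1)^q d$ on $P_{p,q}$, the identification of $F_p\TotP/F_{p-1}\TotP$ with $P_{p,*}$, and the decomposition of $P_{p,*}$ into sphere and disk summands must all be threaded together so that the pushout of $\cI_R$-maps reproduces not just the graded module $F_p\TotP$ but also its differential. I would handle this by working one projective summand at a time and writing down explicitly, for each basis cell of $P_{p,*}$, which $\cI_R$-map it contributes and what its attaching map is, then invoking that pushouts, coproducts, and sequential colimits of such attachments are exactly what \myref{cellob} asks for. Once $\TotP$ is displayed as a relative $\cI_R$-cell complex under $0$, \myref{SOA} (or directly \myref{qmodelR}) gives that it is $q$-cofibrant. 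An alternative, essentially equivalent route that avoids the explicit cell combinatorics: show directly that $0\rtarr F_p\TotP$ is a $q$-cofibration by induction on $p$, using that $F_p\TotP$ is a pushout of $0 \rtarr P_{p,*}$ (a $q$-cofibration by \myref{projstruc} and \myref{qcofibrant}) along a map out of a $q$-cofibrant object built from the boundary — but the cell-complex description is cleaner and is what is wanted for the comparison with model-categorical cell complexes in the surrounding sections.
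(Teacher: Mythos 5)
Your headline plan --- to display $\TotP$, with its homological filtration, as a literal $\cI_R$-cell complex --- runs into two problems, one reparable and one not. First, the single pushout you describe at each filtration level cannot reproduce $F_p\TotP$. A disk summand $D^n_R\otimes B$ of $P_{p,*}$ contributes \emph{two} new generator modules (in total degrees $p+n$ and $p+n-1$), and the total differential of the top one is $d^0+d^1$: its $d^0$-component lands in the new bottom generator, which lies in filtration $p$, not in $F_{p-1}\TotP$, so an attachment whose attaching maps go only into $F_{p-1}\TotP$ produces the wrong module with the wrong differential (your degree bookkeeping is also off: cells must be indexed by total degree, so a sphere summand $S^n_R\otimes H$ of $P_{p,*}$ should give a new generator in total degree $p+n$, not $n+1$). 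This could be fixed by attaching in two sub-stages per level (bottoms of disks first, then tops and the $H$-generators). Second, and more fundamentally, \myref{projstruc} only gives summands tensored with \emph{projective} $R$-modules, so your ``cells'' $(S^{m-1}_R\rtarr D^m_R)\otimes V$ are retracts of coproducts of maps in $\cI_R$, not coproducts of them; hence $\TotP$ is in general not an honest $q$-cell complex with this (or any obvious) filtration --- the paper stresses exactly this point (see the discussion preceding the lemma and \myref{neatojet}: $q$-cell complexes are distinguished, i.e.\ have free subquotients with $d^0=0$, which projective resolutions typically do not).

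The ``alternative route'' you dismiss at the end is in fact the paper's proof, and it needs no cell combinatorics at all --- but your justification of it is also not right as stated: $F_p\TotP$ is not a pushout of $0\rtarr P_{p,*}$, since the differential is twisted by $d^1$. The missing ingredient is \myref{qcofibration}: a map of DG $R$-modules is a $q$-cofibration if and only if it is a monomorphism with $q$-cofibrant cokernel. The inclusion $F_{p-1}P\rtarr F_pP$ is a monomorphism whose quotient is the $s$-projective complex $P_{p,*}$, which is $q$-cofibrant by \myref{projstruc}; hence each inclusion is a $q$-cofibration, by induction each $F_pP$ is $q$-cofibrant, and so is the sequential colimit $\TotP$. (Note also that \myref{qcofibrant}(iii)$\Rightarrow$(i) is unavailable here since $P$ need not be bounded below, which is why the argument must go through the filtration quotients rather than degreewise projectivity.)
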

\begin{proof}
We cannot apply \myref{qcofibrant} since we are not assuming that $P$ is bounded below.
However, the filtration quotients are $q$-cofibrant by \myref{projstruc}, hence 
the inclusions  $F_{p-1}P \rtarr F_p P$ are $q$-cofibrations 
by \myref{qcofibration}.  By induction, each $F_pP$ is $q$-cofibrant,
hence so is their colimit $P$.
\end{proof}

This is more surprising than it may look. The cellular filtration quotients 
$F_pP/F_{p-1}P$ of a $q$-cell complex $P$ are direct sums of sphere complexes $S^{q+1}_R$ 
and have differential zero.   Moreover, the attaching maps $S^q_R\rtarr F_{p-1}X$ 
can have components in filtration $F_{p-r}P$ where $r>1$, hence the differential on 
$F_pP$ can have components in $F_{p-r}P$ for $1 < r\leq p$.  In retrospect, the theory 
of \cite{GM, M, MN} that first motivated this paper starts from that insight.  \myref{Indeed} 
implies that the total complexes $\TotP$ of projective resolutions can be equipped both with a 
structure of bicomplex and with an entirely different filtration as a retract of a $q$-cell complex.  

\begin{thm}\mylabel{projcofR} A projective resolution $\epz\colon \TotP\rtarr M$ is a $q$-cofibrant 
approximation.\end{thm}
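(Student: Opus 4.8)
The statement asserts that for a projective resolution $\epz\colon \TotP\rtarr M$, the map is a $q$-cofibrant approximation of $M$. By definition this requires two things: that $\TotP$ be $q$-cofibrant, and that $\epz$ be a $q$-equivalence (quasi-isomorphism). The first requirement is already done: it is precisely the content of \myref{Indeed}, which shows $\TotP$ is $q$-cofibrant using the filtration by homological degree together with \myref{projstruc} and \myref{qcofibration}. So the real work is to prove that $\epz\colon \TotP\rtarr M$ is a quasi-isomorphism.

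The plan is to compute $H_*(\TotP)$ via the spectral sequence of the bicomplex $P$, filtered so that the associated graded in filtration degree $p$ is the column $P_{p,*}$. First I would take the filtration of $\TotP$ by $F_p\TotP = \bigoplus_{p'\le p} P_{p',*}$ (shifted appropriately), which is exhaustive and, since $P$ is a right-half-plane bicomplex, bounded below in each total degree, so the associated spectral sequence converges to $H_*(\TotP)$. On the $E^0$-page the differential is $d^0$, the vertical differential on each column $P_{p,*}$; but by the defining property of a projective resolution (recalled just before \myref{projres}), the column $P_{p,*}$ has homology $H_{*,q}(P)$ forming a projective resolution of $H_q(M)$, and in particular $E^1_{p,q} = H_q(P_{p,*})$ computed with the vertical differential. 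Wait --- more carefully, I would use the other filtration or rather organize the computation so that after taking vertical homology, the $E^1$-page in homological row $p$ and internal degree $q$ is the $p$-th term of a projective resolution of $H_q(M)$. Then the $d^1$-differential is the induced horizontal differential, and since $H_{*,q}(P)\rtarr H_q(M)$ is a resolution, the $E^2$-page is concentrated in homological degree $0$ with $E^2_{0,q} \iso H_q(M)$. The spectral sequence therefore collapses at $E^2$, giving $H_n(\TotP)\iso H_n(M)$, and one checks this isomorphism is induced by $\epz$.

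Concretely, rather than invoking the full machinery I would argue more directly using the exact-couple / filtration-quotient structure that parallels the proof of \myref{Indeed}. The filtration quotient $F_p\TotP/F_{p-1}\TotP$ is the column complex $P_{p,*}$ with its vertical differential $d^0$. The augmentation $\epz$ together with the horizontal differentials $d^1\colon P_{p,*}\rtarr P_{p-1,*}$ assembles, for each fixed internal degree $q$, into the augmented chain complex
\[ \cdots \rtarr P_{2,q} \rtarr P_{1,q} \rtarr P_{0,q} \rtarr M_q \rtarr 0, \]
which is a projective resolution of $M_q$ and hence exact (again by the defining property of projective resolutions, since $P_{*,q}$ resolves $M_q$). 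A standard double-complex / acyclic-assembly argument (as in \cite[XVII.1.2]{CE}) then shows that the total complex of the augmented double complex $P\rtarr M$ is acyclic, which is exactly the statement that $\epz\colon \TotP\rtarr M$ is a quasi-isomorphism. I would cite \cite[XVII.1.2]{CE} for the details of this assembly argument, since \myref{projres} already attributes the construction and its basic properties to that source; the excerpt explicitly says \cite[XVII.1.2]{CE} "gives details of the rest of the proof."

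The main obstacle --- modest in this case --- is the absence of a bounded-below hypothesis on $M$, which the paper flags as a recurring concern. Since $P$ is a right-half-plane bicomplex ($P_{p,*}=0$ for $p<0$), the filtration by homological degree $p$ is bounded below, so convergence of the spectral sequence (equivalently, exactness of the assembled total complex) is not in jeopardy even when $M$ is unbounded in the internal direction: for each total degree $n$ only finitely many columns contribute, and within each column the exactness is the exactness of an ordinary projective resolution of the $R$-module $M_q$. Thus no boundedness hypothesis on $M$ is needed. I would therefore phrase the proof as: $q$-cofibrancy of $\TotP$ is \myref{Indeed}; that $\epz$ is a $q$-equivalence follows from the acyclicity of the total complex of the augmented bicomplex $P\rtarr M$, which is \cite[XVII.1.2]{CE} and uses only that $P$ is a right-half-plane bicomplex resolving $M$ columnwise. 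Hence $\epz\colon\TotP\rtarr M$ is a $q$-cofibrant approximation.
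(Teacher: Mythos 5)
Your reduction matches the paper's first step: cofibrancy is \myref{Indeed}, and everything hinges on showing $\epz$ is a quasi-isomorphism \emph{without} boundedness hypotheses. But the argument you say you would actually write down has a genuine gap at exactly that point. The claim that ``for each total degree $n$ only finitely many columns contribute'' is false when $M$ is unbounded below: $\TotP_n=\bigoplus_{p\geq 0}P_{p,n-p}$ and $P_{p,n-p}$ can be nonzero for every $p$. You have also transposed rows and columns (``within each column the exactness is the exactness of an ordinary projective resolution of $M_q$''): the rows $P_{*,q}\rtarr M_q$ are the resolutions, while the columns $P_{p,*}$ are not exact. Most seriously, the acyclic-assembly shortcut you commit to in the end --- exact rows of the augmented right-half-plane bicomplex imply acyclicity of its direct-sum total complex --- is false in the unbounded setting; that implication is only valid for the product total complex. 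The paper's own data from \myref{projectives} and \myref{Notsemi} gives a counterexample: over $R=\bZ/4$ let $X$ be the doubly unbounded complex with differential $2$, let $N=\bZ/2\otimes X$ (zero differential, $\bZ/2$ in every degree), let $C_{p,q}=P_p\otimes X_q=\bZ/4$ for $p\geq 0$ with horizontal differential $2$, augmented by reduction $C_{0,q}\rtarr N_q$. Every row is a projective resolution of $N_q$, yet $H(\mathrm{Tot}^{\oplus}C)=H(P\otimes X)=0$ while $HN\neq 0$ (\myref{Notsemi}), so $\mathrm{Tot}^{\oplus}C\rtarr N$ is not a quasi-isomorphism and the assembled augmented total complex is not acyclic. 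So ``rows resolve $M$ degreewise'' is not enough; this is precisely the convergence failure the paper flags before \myref{cutesy}, and it is why the authors abandon the spectral sequence in the unbounded case and instead deduce \myref{projcofR} from the explicit lifting argument proving \myref{projcofA}. (Also, the paper cites \cite[XVII.1.2]{CE} only for \myref{projres}, not for this acyclicity.)

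The good news is that your first, spectral-sequence formulation can be completed and is correct, and it is a genuinely different route from the paper's model-theoretic one. Filter $\TotP$ by homological degree $p$: this filtration is exhaustive and bounded below, so its spectral sequence has exiting differentials and converges to $H_*(\TotP)$ with no hypotheses on $M$, exactly as the paper notes for the analogous filtration before \myref{KunnSS}. Here $E^1_{p,q}=H_q(P_{p,*})$, and the essential input is the \emph{other} half of the Cartan--Eilenberg definition, which your exact-rows argument never uses: the vertical-homology complexes $H_{*,q}(P)$ are resolutions of $H_q(M)$, so it is $E^2$ (not $E^1$) that is concentrated in the column $p=0$, where it equals $H_q(M)$. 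Comparing with $M$ filtered by $F_{-1}M=0$ and $F_pM=M$ for $p\geq 0$, the map of convergent spectral sequences is an isomorphism from $E^2$ onward, whence $H(\epz)$ is an isomorphism. Rewritten this way --- dropping the finiteness claim and the acyclic-assembly step --- your proof is a valid purely homological alternative to the paper's lifting argument; note that my counterexample above fails the vertical-homology condition, which is exactly why it does not contradict this version.
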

\begin{proof}
By construction $\epz\colon \TotP\rtarr M$ is a 
degreewise epimorphism and thus a $q$-fibration. By \myref{Indeed}, it suffices to show that $\epz\colon \TotP\rtarr M$ is $q$-acyclic.  There is an easy spectral sequence argument when $M$ and $P$ are bounded below. We will prove a generalization without any such hypothesis in \myref{projcofA} below, using a model theoretic approach. 
\end{proof}

For a DG $R$-module $N$ of right $R$-modules, 
we give $N\otimes P$ the bigrading  
\[ (N\otimes P)_{p,q} = \sum_{i+j = q}  N_i\otimes P_{p,j}. \]
If we filter by the internal degree $q$, we obtain a spectral sequence $E^r_{p,q}$ with differentials
$d^r\colon E^r_{p,q} \rtarr E^r_{p+r-1,q-r}$.  With our (perhaps peculiar) notations, the differential
$d^0$ is induced by the bicomplex differential $d^1$ on $P$, which gives a projective resolution $P_{*,j}$ 
of $M_j$ for each fixed $j$.  Therefore
\[ E^1_{p,q} = \Tor^R_{p,q}(N,M) = \sum_{i+j = q} \Tor_{p}^R(N_i,M_j)\] 
with differential $d^1$ induced by the bicomplex differential $d^0$.  Assuming that either $N$ or $M$ is a 
complex of flat $R$-modules, $E^2_{p,q}(N,M) = 0$ for $p>0$ and 
\begin{equation}\label{E2first}
 E^{2}_{0,q} = E^{\infty}_{0,q} = H_q(N\otimes M). 
\end{equation}

In Boardman's language \cite{Board}, this spectral sequence has entering differentials and it
need not converge.  In fact, \myref{Notsemi} gives a striking example where convergence fails.
In that example, $E^2_{0,q} = \bZ/2$ for all integers $q$ and yet the desired target is zero.
This is where boundedness hypotheses enter classically.
 
\begin{lem}\mylabel{cutesy} If $N$ or $M$ is degreewise $R$-flat and both are
bounded below, then
\[ (\id\otimes \epz)_*\colon H(N\otimes P)\rtarr H(N\otimes M) \]
is an isomorphism.  In particular, taking $N=R$, $\epz\colon P\rtarr M$ is a $q$-equivalence.
\end{lem}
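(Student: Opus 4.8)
The plan is to exploit the spectral sequence $E^r_{p,q}(N,M)$ constructed just before the statement, obtained by filtering $N\otimes P$ by the internal degree $q$. We have already recorded that, when $N$ or $M$ is degreewise $R$-flat, $E^2_{p,q}(N,M)=0$ for $p>0$ and $E^2_{0,q}=E^\infty_{0,q}=H_q(N\otimes M)$, with the edge homomorphism being exactly $(\id\otimes\epz)_*$. So the entire content of the lemma is a \emph{convergence} statement: the spectral sequence, which in general may fail to converge (as the example in \myref{Notsemi} shows), does converge to $H(N\otimes P)$ when both $N$ and $M$ are bounded below. Once convergence is established, the collapse at $E^2$ onto the single column $p=0$ forces $(\id\otimes\epz)_*\colon H(N\otimes P)\rtarr H(N\otimes M)$ to be an isomorphism, and taking $N=R$ (which is trivially degreewise flat) gives the final assertion that $\epz\colon P\rtarr M$ is a $q$-equivalence.

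First I would pin down the filtration on $N\otimes P$ precisely. Recall $(N\otimes P)_{p,q}=\sum_{i+j=q}N_i\otimes P_{p,j}$, and the total degree of the chain complex $N\otimes\TotP$ in degree $n$ is $\bigoplus_{p+q=n}(N\otimes P)_{p,q}$. Filtering by the internal degree $q$ means $F_q(N\otimes\TotP)_n=\bigoplus_{q'\le q}(N\otimes P)_{n-q',q'}$. Assume $N_i=0$ for $i<a$ and $M_j=0$, hence $P_{p,j}=0$, for $j<b$; since $P$ is a right-half-plane bicomplex, also $P_{p,j}=0$ for $p<0$. Then for fixed total degree $n$, the summand $(N\otimes P)_{p,q}$ with $p+q=n$ is nonzero only when $q\ge a+b$ and $q\le n$, so the filtration of $(N\otimes\TotP)_n$ is \emph{finite}: it is bounded below by $F_{a+b-1}=0$ and exhausts at $F_n$. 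A spectral sequence arising from a filtration that is finite in each total degree converges strongly to the homology of the filtered complex (e.g.\ by Boardman's criterion in \cite{Board}, or classically since there are no lim${}^1$ or entering-differential obstructions in this bounded situation). Hence $E^\infty_{p,q}(N,M)$ is the associated graded of $H_{p+q}(N\otimes\TotP)$ with respect to this filtration.

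Combining convergence with the $E^2$-computation: since $E^2_{p,q}=0$ for $p\ne 0$, the spectral sequence collapses at $E^2=E^\infty$, and the only nonzero column gives $H_n(N\otimes\TotP)\iso E^\infty_{0,n}=H_n(N\otimes M)$. Tracing through the identification, this isomorphism is induced by the augmentation, i.e.\ it is $(\id\otimes\epz)_*$; alternatively one observes directly that the composite $N\otimes\TotP\to F_0(N\otimes\TotP)$-summand $\to E^1_{0,*}=N\otimes M$ agrees with $\id\otimes\epz$ up to the quasi-isomorphism furnishing $d^1$-homology, which is enough. This proves the first claim. Setting $N=R$ and noting $R$ is degreewise flat over itself, $\epz\colon P\rtarr M$, equivalently $\epz\colon\TotP\rtarr M$, is a $q$-equivalence, which is precisely what is needed (together with \myref{Indeed} and the fact that $\epz$ is a degreewise epimorphism) to conclude in \myref{projcofR}.

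\textbf{Main obstacle.} The one genuinely non-formal point is the convergence bookkeeping: one must check that the bounded-below hypotheses on \emph{both} $N$ and $M$, together with $P$ being a right-half-plane bicomplex, really do make the internal-degree filtration of $N\otimes\TotP$ finite in each total degree — and, crucially, in the correct direction, so that the spectral sequence converges rather than merely being ``conditionally convergent.'' The flatness hypothesis plays no role in convergence; it is used only to kill the higher $\Tor$ columns at $E^2$. Keeping these two inputs cleanly separated is where care is required, and it is exactly the separation that the general (possibly divergent) situation of \myref{Notsemi} is designed to illustrate.
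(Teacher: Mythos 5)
Your argument is exactly the one the paper intends: Lemma \ref{cutesy} is stated as the conclusion of the preceding discussion (filter $N\otimes P$ by internal degree, kill the higher columns at $E^2$ using flatness, and use boundedness for convergence), and the proof of \myref{projcofR} refers to precisely this as the ``easy spectral sequence argument when $M$ and $P$ are bounded below,'' so your finite-filtration-in-each-total-degree bookkeeping supplies the convergence step correctly. The only caveat is that your parenthetical ``$M_j=0$, hence $P_{p,j}=0$, for $j<b$'' is a property of the resolution as constructed in \S\ref{versus} (one chooses the projectives over $B_q(M)=H_q(M)=0$ to be zero) rather than a formal consequence of the Cartan--Eilenberg definition, which is why the paper's phrasing there makes the bounded-below hypothesis on $P$ as well as $M$ explicit.
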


Now consider the filtration on $N\otimes_R P$ induced by the homological degree $p$.  Here we have 
$$d^r\colon E^r_{p,q}\rtarr E^r_{p-r,q+r-1}$$ 
in the resulting spectral sequence, with $d^0$ induced by the differential on $N$ and the 
bicomplex differential $d^0$.  
Since the $P_{p,j}$ are all projective, we have 
\[ E^1_{p,q} = \sum_{i+j = q} H_i(N)\otimes H_j(P_{p,*}) \]
with differential induced by the bicomplex differential $d^1$.   
Since $H_*(P_{*,j})$ is a projective resolution of $H_j(M)$,
\begin{equation}\label{E2second} 
E^2_{p,q} = \Tor^R_{p,q}(HN,HM) = \sum_{i+j = q} \Tor^R_{p}(H_i(N),H_j(M)). 
\end{equation}
In Boardman's language \cite{Board}, this spectral sequence has exiting differentials and 
converges to $H_*(N\otimes P)$, without bounded below hypotheses.  In view of \myref{cutesy},
this gives the following version of the K\"unneth spectral sequence. 

\begin{thm}\mylabel{KunnSS}  If $N$ or $M$ is degreewise $R$-flat and both are bounded below,
the spectral sequence $\{E^r\}$ converges from 
$E^2_{*,*} = \Tor^R_{*,*}(HN,HM)$ to $H(N\otimes P)$.
\end{thm}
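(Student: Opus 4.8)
The plan is to identify the spectral sequence as an instance of the standard Künneth-type spectral sequence built from the filtration by homological degree and to check the two things that need checking: that the $E^2$-page is as claimed, which has essentially been done in the discussion preceding the statement, and that it converges. The $E^2$-identification is already recorded: filtering $N\otimes P$ by the homological degree $p$ of $P$ gives a spectral sequence whose $d^0$ is induced by the differential on $N$ together with the internal (bicomplex $d^0$) differential on $P$, and since each $P_{p,*}$ is degreewise projective one gets $E^1_{p,q} = \sum_{i+j=q} H_i(N)\otimes H_j(P_{p,*})$; then, since $H_*(P_{*,j})$ is a projective resolution of $H_j(M)$, the $d^1$-homology computes $\Tor^R_{p,q}(HN,HM)$, which is \eqref{E2second}. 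So the only real content of the theorem beyond \myref{cutesy} is the convergence assertion together with the identification of the abutment with $H(N\otimes M)$.

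For convergence, I would first argue that the filtration by homological degree is exhaustive (the colimit of the $F_p(N\otimes P)$ is $N\otimes P$, since $P = \colim F_pP$ and tensoring commutes with filtered colimits) and, crucially, that the associated spectral sequence has \emph{exiting differentials} in Boardman's sense \cite{Board}: the differential $d^r\colon E^r_{p,q}\rtarr E^r_{p-r,q+r-1}$ lowers the homological degree, and since $P_{p,*}=0$ for $p<0$ (the bicomplex $P$ is a right-half-plane bicomplex), for each fixed $(p,q)$ the groups $E^r_{p,q}$ stabilize after finitely many steps: $d^r$ out of $E^r_{p,q}$ vanishes once $r>p+1$ or so, and $d^r$ into $E^r_{p,q}$ vanishes once $r$ is large because its source has negative homological degree. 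This is exactly the situation in which Boardman's convergence theorem applies with no boundedness hypothesis on $N$ or $M$ individually, giving conditional and in fact strong convergence of $\{E^r\}$ to the homology of the filtered complex $N\otimes P$. The boundedness and flatness hypotheses in the statement are used only through \myref{cutesy}, not for the convergence of this second spectral sequence.

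Finally I would invoke \myref{cutesy}: under the hypothesis that $N$ or $M$ is degreewise $R$-flat and both are bounded below, $(\id\otimes\epz)_*\colon H(N\otimes P)\rtarr H(N\otimes M)$ is an isomorphism. Combining this identification of the abutment with the $E^2$-computation \eqref{E2second} and the exiting-differentials convergence just established yields the claimed convergence of $\{E^r\}$ from $E^2_{*,*}=\Tor^R_{*,*}(HN,HM)$ to $H(N\otimes M)$, which is \myref{KunnSS}.

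The step I expect to be the main obstacle is nailing down the convergence statement cleanly: one has to be careful that although the filtration by homological degree on $P$ is a filtration by subcomplexes and is exhaustive, the individual $F_p P$ are unbounded in the internal direction when $M$ is not bounded below, so the naive "finite filtration in each total degree" argument is unavailable. The right move is to argue purely via the exiting-differentials criterion of \cite{Board}: in total degree $n$, a class in $E^r_{p,n-p}$ can only be hit by differentials from $E^r_{p+r,n-p-r+1}$ and only supports differentials landing in $E^r_{p-r,n-p+r-1}$, and because $p\geq 0$ there are only finitely many $r$ for which either is nonzero, so $E^\infty_{p,q}=E^r_{p,q}$ for $r\gg 0$ and the associated graded of $H(N\otimes P)$ is read off without lim$^1$ obstructions. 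Everything else — the $E^2$-identification and the identification of the target — is already in hand from the preceding discussion and \myref{cutesy}.
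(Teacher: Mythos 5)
Your proposal is correct and follows essentially the same route as the paper: the paper treats \myref{KunnSS} as immediate from the preceding discussion, namely the $E^1$- and $E^2$-identifications, Boardman's exiting-differentials criterion giving convergence to $H(N\otimes P)$ with no boundedness hypotheses, and \myref{cutesy} to interpret the abutment, which is exactly the role you assign to the flatness and bounded-below hypotheses. Your expanded justification of the Boardman convergence (exhaustive filtration with $F_{-1}=0$, differentials eventually leaving the right half-plane, no $\lim^1$ issues) is just a fleshed-out version of the one-line appeal to \cite{Board} that the paper makes.
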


\subsection{The projective class $(\sP_s,\sE_s)$ in $\sM_A$}\label{projclassA}

This section is parallel to \S\ref{projclassR}.  It will lead us to $q$-cofibrant approximations in the next.
Recall the projective class $(\sP_s,\sE_s)$ of DG $R$-modules from \S\ref{projclassR}.

\begin{defn}\mylabel{projtoo}  A DG $A$-module $P$ is {\em $s$-projective} if it is a retract of $A\otimes Q$ for some
$s$-projective DG $R$-module $Q$. Let $\sP_s$ denote the class of $s$-projective DG $A$-modules. 
\end{defn}

Non-trivial retracts can appear, for example, if $A$ itself is the direct sum of sub DG $A$-modules.
\myref{projstruc} directly implies the following analogue.

\begin{lem}\mylabel{projstructoo} A DG $A$-module $P$ is $s$-projective if and only if it is isomorphic to a 
retract of a direct sum over $n\in \bZ$ of DG $A$-modules $A \otimes S^n_R\otimes H_n$ and $A \otimes D^n_R\otimes B_{n-1}$ for 
projective $R$-modules $H_n$ and $B_{n-1}$.  Therefore $s$-projective DG $A$-modules are $q$-cofibrant.  
\end{lem}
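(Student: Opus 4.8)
The plan is to transport the structure theorem \myref{projstruc} for $s$-projective DG $R$-modules across the extension-of-scalars functor $\bF = A\otimes(-)\colon \sM_R\rtarr \sM_A$. I would begin by recalling from \myref{projtoo} that a DG $A$-module $P$ is $s$-projective precisely when it is a retract, in $\sM_A$, of $\bF Q$ for some $s$-projective DG $R$-module $Q$. The only structural facts needed about $\bF$ are that it commutes with direct sums --- immediate since $\bF$ is a left adjoint, or simply because $A\otimes_R(-)$ distributes over coproducts --- and that $A\otimes_R(S^n_R\otimes H_n)$ may be rewritten as $A\otimes S^n_R\otimes H_n$ by associativity of the tensor product, and likewise with $D^n_R$ in place of $S^n_R$.

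For the forward implication I would take $P$ a retract of $\bF Q$ with $Q$ $s$-projective, apply \myref{projstruc} to obtain an isomorphism $Q\iso \bigoplus_n (S^n_R\otimes H_n)\oplus (D^n_R\otimes B_{n-1})$ with each $H_n$ and $B_{n-1}$ a projective $R$-module, and then apply $\bF$ to get
\[ \bF Q \iso \bigoplus_n (A\otimes S^n_R\otimes H_n)\oplus (A\otimes D^n_R\otimes B_{n-1}); \]
thus $P$ is a retract of a direct sum of DG $A$-modules of the asserted form. Conversely, any such direct sum is $\bF Q$ for the $s$-projective DG $R$-module $Q = \bigoplus_n (S^n_R\otimes H_n)\oplus (D^n_R\otimes B_{n-1})$ supplied by \myref{projstruc}, so a retract of it is $s$-projective by \myref{projtoo}. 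This establishes the ``if and only if''.

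For the final clause I would argue as follows: \myref{projstruc} already records that $Q$ is $q$-cofibrant in $\sM_R$, and by \myref{qmodelA} the functor $\bF$ is left Quillen for the $q$-model structures, hence carries $q$-cofibrant objects to $q$-cofibrant objects; so $\bF Q$ is $q$-cofibrant in $\sM_A$, and then $P$ is $q$-cofibrant because the cofibrant objects of any model category are closed under retracts. I do not expect a genuine obstacle here: the proof is essentially formal, the real content being \myref{projstruc} together with the fact that $\bF$ is left Quillen; the only thing requiring a moment's care is the routine bookkeeping that $\bF$ interacts correctly with the direct sums and with the tensor factors $H_n$ and $B_{n-1}$.
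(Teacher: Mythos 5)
Your proof is correct and is exactly the argument the paper intends: the paper simply asserts that \myref{projstruc} ``directly implies'' the lemma, and your transport of that decomposition across $\bF=A\otimes(-)$ (using \myref{projtoo}, commutation of $\bF$ with sums, and the left Quillen property from \myref{qmodelA} plus closure of cofibrant objects under retracts) is the routine verification being left implicit.
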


\begin{defn}\mylabel{epistoo}  A map $p\colon E\rtarr B$ of DG $A$-modules is an $s$-epimorphism if 
$\bU p$ is an $s$-epimorphism of DG $R$-modules.  Let $\sE_s$ denote the class of
$s$-epimorphisms of DG $A$-modules.  
\end{defn}

The action map $f \colon A\otimes M\rtarr M$
of any DG $A$-module $M$ is an $s$-epimorphism since the unit of the adjunction gives that
$\bU M$ is a retract of $\bU(A\otimes M)$.  Similarly, if $p\colon E\rtarr \bU M$ is an $s$-epimorphism
of DG $R$-modules, its adjoint $\tilde{p}\colon A\otimes E \rtarr M$ is an $s$-epimorphism
of DG $A$-modules.

\begin{prop}\mylabel{enoughtoo} The pair $(\sP_s,\sE_s)$ is a projective class in the category $\sM_A$.
\end{prop}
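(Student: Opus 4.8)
The plan is to verify conditions (i)--(iii) of \myref{projclass} for the pair $(\sP_s,\sE_s)$ in $\sM_A$, in close analogy with the proof of \myref{enough}, using the adjunction $\bF\dashv\bU$ to transport the corresponding facts for DG $R$-modules established in \S\ref{projclassR}. The two key inputs are: first, the observations recorded just before the statement --- that the action map $f\colon A\otimes M\rtarr M$ is an $s$-epimorphism of DG $A$-modules and, more generally, that if $p\colon E\rtarr\bU M$ is an $s$-epimorphism of DG $R$-modules then its adjoint $\tilde p\colon A\otimes E\rtarr M$ is an $s$-epimorphism of DG $A$-modules; second, the adjunction isomorphism $\sM_A(A\otimes Q,M)\iso\sM_R(Q,\bU M)$.

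First I would prove the lifting property underlying (i): given an $s$-projective DG $A$-module $P$, an $s$-epimorphism $p\colon E\rtarr M$ in $\sM_A$, and a map $f\colon P\rtarr M$, I must produce a lift $\tilde f\colon P\rtarr E$ with $p\tilde f=f$. Since $P$ is a retract of $A\otimes Q$ for some $s$-projective DG $R$-module $Q$, it suffices to solve the lifting problem for $A\otimes Q$ and then precompose/postcompose with the retraction data. By adjunction, a map $A\otimes Q\rtarr E$ of DG $A$-modules corresponds to a map $Q\rtarr\bU E$ of DG $R$-modules, and the composite $Q\rtarr\bU(A\otimes Q)\rtarr\bU P\xrightarrow{\bU f}\bU M$ is a map of DG $R$-modules out of the $s$-projective $Q$; since $\bU p\colon\bU E\rtarr\bU M$ is an $s$-epimorphism, \myref{enough} (condition (i) for DG $R$-modules) gives a lift $Q\rtarr\bU E$, whose adjoint is the desired $\tilde f$. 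To finish (i) I would note that $A\otimes S^n_R$ and $A\otimes D^n_R$ are $s$-projective DG $A$-modules (being $\bF$ applied to the $s$-projective generators $S^n_R$, $D^n_R$ of \myref{projstruc}), so a map that is $P$-surjective for all $P\in\sP_s$ is in particular an $s$-epimorphism, whence it lies in $\sE_s$.

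Next I would dispatch (iii): for any DG $A$-module $M$, the action map $f\colon A\otimes M\rtarr M$ is already an $s$-epimorphism by the observation above, but $A\otimes M$ need not be $s$-projective since $\bU M$ need not be $s$-projective as a DG $R$-module. To repair this, apply \myref{enough}(iii) to $\bU M$ to get an $s$-epimorphism $q\colon Q\rtarr\bU M$ with $Q\in\sP_s$ in $\sM_R$; then its adjoint $\tilde q\colon A\otimes Q\rtarr M$ is an $s$-epimorphism of DG $A$-modules, and $A\otimes Q=\bF Q$ is $s$-projective by \myref{projtoo}. This is the map required by (iii). Finally, for (ii), if $P$ is a DG $A$-module that is $p$-projective for all $p\in\sE_s$, then in particular it lifts against the $s$-epimorphism $A\otimes Q\rtarr P$ just constructed (with $M=P$), and a lift of $\id_P$ exhibits $P$ as a retract in $\sM_A$ of the $s$-projective $A\otimes Q$; hence $P\in\sP_s$.

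The only place that requires genuine care rather than routine diagram-chasing is keeping the retracts honest in the proof of (i): unlike the $\sM_R$ case, an $s$-projective DG $A$-module is only a \emph{retract} of a relatively free module $A\otimes Q$, so the lift must be constructed for $A\otimes Q$ and then transferred along the split idempotent, checking that the resulting map is still a section of $p$ over $f$. This is straightforward but is the step I would write out most explicitly. Everything else is a direct translation of \myref{enough} through the adjunction $\bF\dashv\bU$, using that $\bF$ preserves $s$-projectives (\myref{projtoo}) and sends the generating $s$-projectives of $\sM_R$ to $s$-projectives of $\sM_A$, and that $\bU$ both creates $s$-epimorphisms (\myref{epistoo}) and detects them on adjoints via the retraction $\bU M\rtarr\bU(A\otimes M)\rtarr\bU M$ supplied by the unit.
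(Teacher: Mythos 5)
Your proposal is correct and follows essentially the same route as the paper's proof: reduce the lifting property to the relatively free case $P = A\otimes Q$ via the retract, transport everything through the adjunction $\bF\dashv\bU$ to the projective class $(\sP_s,\sE_s)$ in $\sM_R$, and verify (i)--(iii) exactly as you do (the paper phrases the second half of (i) via adjunction from general $A\otimes Q$ rather than just the generators $A\otimes S^n_R$, $A\otimes D^n_R$, but this is an immaterial difference). No gaps.
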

\begin{proof} We must verify (i)-(iii) of \myref{projclass}.  We first show that if $P$ is $s$-projective, $p\colon E\rtarr M$ 
is an $s$-epimorphism, and $f\colon P\rtarr M$ is a map of DG $A$-modules, then $f$ lifts to a map 
$\tilde{f}\colon P\rtarr E$.  Since this property is inherited by retracts, we may assume that $P = A\otimes Q$,
where $Q$ is an $s$-projective $R$-module.  Then the conclusion is immediate by adjunction from the analogue
for DG $R$-modules.  If a map $p\colon E\rtarr M$ is $P$-surjective for all $P\in \sP_s$, then, again by adjunction
from the examples $P = A\otimes Q$, $\bU p$ is an $s$-epimorphism. This verifies (i).  If a DG $A$-module $P$ is
$p$-projective for all $p\in \sE_s$ and $Q\rtarr \bU P$ is an $s$-epimorphism of DG $R$-modules, where $Q$ is
$s$-projective, then $P$ is a retract of $A\otimes Q$ and is thus $s$-projective. This verifies (ii). 
If $M$ is a DG $A$-module and $Q\rtarr \bU M$ is an $s$-epimorphism of DG $R$-modules where $Q$ is $s$-projective,
then its adjoint $A\otimes Q\rtarr M$ is an $s$-epimorphism of DG $A$-modules, verifying (iii).
\end{proof}

Exactly as in \myref{acycproj}, this has the following consequence.

\begin{cor}\mylabel{acycprojtoo} The class $\sA_q\sQ_q$ of $q$-acyclic $q$-cofibrant objects in $\sM_A$ coincides with 
the class $\sA_q\sP_s$ of $q$-acyclic $s$-projective DG $A$-modules $P$. 
\end{cor}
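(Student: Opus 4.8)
The plan is to imitate the proof of \myref{acycproj} essentially verbatim, replacing the projective class $(\sP_s,\sE_s)$ on $\sM_R$ by the projective class on $\sM_A$ produced in \myref{enoughtoo}, and replacing \myref{projstruc} by its $A$-module analogue \myref{projstructoo}. Since both of those inputs are already in hand, the argument will be short; all that is really needed is to line up the model-categorical notion of a $q$-acyclic $q$-cofibrant object with the projective-class notion of a $p$-projective object.

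First I would unwind the model-categorical side. An object $P$ of $\sM_A$ lies in $\sA_q\sQ_q$ precisely when $0\rtarr P$ is a $q$-acyclic $q$-cofibration, and by the usual lifting characterization of acyclic cofibrations this holds precisely when $0\rtarr P$ has the left lifting property against every $q$-fibration; that is, precisely when $P$ is $p$-projective in the sense of \myref{projclass} for every $p\in\sF_q$. By the definition of the $q$-model structure on $\sM_A$ (\myref{qmodelA}), a map is a $q$-fibration exactly when it is a degreewise epimorphism, and since every $s$-epimorphism of DG $A$-modules is in particular a degreewise epimorphism we obtain the inclusion $\sE_s\subset\sF_q$. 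Hence a $P$ in $\sA_q\sQ_q$ is $p$-projective for all $p\in\sE_s$, so condition (ii) in the definition of the projective class $(\sP_s,\sE_s)$ from \myref{enoughtoo} forces $P\in\sP_s$; and $q$-acyclicity of $P$ is by definition the assertion $H_*(P)=0$. Thus $P\in\sA_q\sP_s$.

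For the converse, suppose $P\in\sA_q\sP_s$. Then $P$ is $s$-projective, hence $q$-cofibrant by \myref{projstructoo}, so $0\rtarr P$ is a $q$-cofibration; and $H_*(P)=0$ means that $0\rtarr P$ induces an isomorphism on homology, so it is a $q$-equivalence. Therefore $0\rtarr P$ is a $q$-acyclic $q$-cofibration, i.e. $P\in\sA_q\sQ_q$, which completes the argument.

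I do not anticipate a genuine obstacle: everything nontrivial has already been pushed into \myref{enoughtoo} and \myref{projstructoo}. The only points requiring (entirely routine) care are the reformulation of ``$q$-acyclic and $q$-cofibrant'' as the left lifting property against all $q$-fibrations and the inclusion $\sE_s\subset\sF_q$, both immediate from the definitions together with \myref{fibepi}.
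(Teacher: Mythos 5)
Your proof is correct and is essentially the paper's own argument: the paper proves this corollary by the phrase ``exactly as in \myref{acycproj}'', i.e.\ by the same two steps you give --- identifying $\sA_q\sQ_q$ with the objects $p$-projective for all $p\in\sF_q$ and using $\sE_s\subset\sF_q$ together with condition (ii) of \myref{enoughtoo} for one direction, and \myref{projstructoo} plus $q$-acyclicity for the other.
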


\begin{rmk}
Note that it is unreasonable to take $\sE_s$ to be the class of fibrations in a model structure on $\sM_A$ since 
$0 \to \bF S^n_R$ would then be an acyclic cofibration. 
\end{rmk}

The following result was used without proof when $A=R$, where it is elementary, but we make it explicit here.  
It is \cite[2.1]{Moore}.

\begin{lem}\mylabel{homology} If $N$ is a right DG $A$-module and $P$ is an $s$-projective left DG $A$-module, then
\[ H(N\otimes_A P) \iso HN\otimes_{HA} HP. \]
\end{lem}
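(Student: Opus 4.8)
The plan is to exhibit a natural comparison map and then show it is an isomorphism by a dévissage over the class of $s$-projective DG $A$-modules, using the explicit descriptions of such modules in Lemmas \myref{projstruc} and \myref{projstructoo}.

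First I would construct, for a right DG $A$-module $N$ and an \emph{arbitrary} left DG $A$-module $P$, a natural map
\[ \mu_P \co HN \otimes_{HA} HP \rtarr H(N\otimes_A P) \]
of graded abelian groups. On representatives it should send $[n]\otimes [p]$ to $[n\otimes p]$: this is well defined because the product of two cycles is a cycle (Leibniz rule) and the product of a cycle with a boundary is a boundary (again Leibniz, since the other factor is a cycle), and it factors through $\otimes_{HA}$ because the relation $na\otimes p = n\otimes ap$ already holds in $N\otimes_A P$. Naturality of $\mu$ in $P$ (and in $N$) is immediate from the construction.

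Next I would observe that both functors $P\mapsto HN\otimes_{HA}HP$ and $P\mapsto H(N\otimes_A P)$ commute with arbitrary direct sums and with retracts — the former because $H(-)$ and $(-)\otimes_{HA}(-)$ do, the latter because $N\otimes_A(-)$ and $H(-)$ do — and $\mu$ is a natural transformation between them. Hence the class of $P$ for which $\mu_P$ is an isomorphism is closed under direct sums and retracts. By \myref{projstructoo} together with \myref{projstruc}, every $s$-projective DG $A$-module is a retract of a direct sum of DG $A$-modules of the form $A\otimes S^n_R\otimes V$ and $A\otimes D^n_R\otimes V$ with $V$ a projective $R$-module, and, since a projective module is a retract of a free one, each such module is itself a retract of a direct sum of copies of $A\otimes S^n_R$ or $A\otimes D^n_R$. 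So it suffices to check that $\mu_P$ is an isomorphism when $P=A\otimes S^n_R$ and when $P=A\otimes D^n_R$, for $n\in\bZ$. For $P = A\otimes D^n_R$ the contracting homotopy on $D^n_R$ (as used in the proof of \myref{FJacyclicity}) makes $A\otimes D^n_R$ contractible, so $HP=0$ and the right-hand side vanishes, while $N\otimes_A(A\otimes D^n_R)\iso N\otimes D^n_R$ is likewise contractible, so the left-hand side vanishes too; thus $\mu_P$ is trivially an isomorphism. For $P = A\otimes S^n_R\iso \SI^n A$ we have $N\otimes_A(A\otimes S^n_R)\iso \SI^n N$ and $HP\iso \SI^n HA$, so both sides are identified with $\SI^n HN$, and one checks directly on representatives that $\mu_P$ realizes this identification.

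There is no genuine obstacle here: the only work is the bookkeeping in the first two paragraphs — verifying that $\mu$ is well defined, $HA$-balanced, and natural, and that both functors are additive in the appropriate sense — after which the evaluation on the generators $A\otimes S^n_R$ and $A\otimes D^n_R$ is essentially immediate. If one preferred to avoid the natural transformation altogether, one could instead compute both sides directly after writing $P$ as a retract of $A\otimes Q$ with $Q$ $s$-projective over $R$ and using $N\otimes_A(A\otimes Q)\iso N\otimes Q$; but propagating the resulting isomorphism through the retract still essentially requires the naturality of $\mu$, so the argument above seems cleanest.
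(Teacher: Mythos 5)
Your proof is correct, but it is organized differently from the paper's. The paper reduces at once to the relatively free case $P=A\otimes Q$ with $Q$ an $s$-projective DG $R$-module (``we may assume''), notes $N\otimes_A P\iso N\otimes Q$, and then quotes the elementary $A=R$ K\"unneth-type isomorphisms $H(N\otimes Q)\iso HN\otimes HQ$ and $H(A\otimes Q)\iso HA\otimes HQ$ (both resting on the splitting of \myref{projstruc}), so that $HN\otimes_{HA}HP\iso HN\otimes_{HA}(HA\otimes HQ)\iso HN\otimes HQ\iso H(N\otimes_A P)$. You instead build the natural comparison map $\mu$ explicitly and run a d\'evissage all the way down to the single cells $A\otimes S^n_R$ and $A\otimes D^n_R$, using closure of the isomorphism class under direct sums and retracts. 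The two arguments use the same essential facts --- disks are contractible, spheres give suspensions, and the projectivity (hence flatness) of the $R$-modules appearing in \myref{projstruc} lets homology pass through tensor products --- but your version makes explicit the naturality and retract bookkeeping that the paper's ``we may assume $P=A\otimes Q$'' leaves implicit (indeed, handling the retract in \myref{projtoo} really does require a natural map or compatible choice of isomorphisms, so your care there is a genuine plus), while the paper's route is shorter and avoids the generator-by-generator check by keeping both sides identified with $HN\otimes HQ$ in one step.
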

\begin{proof} We may assume that $P = A\otimes Q$ where $Q$ is an $s$-projective DG $R$-module.
Then $N\otimes_A P\iso N\otimes Q$, hence $H(N\otimes_A P)\iso HN\otimes HQ$. On the other hand,
$H(A\otimes Q)\iso HA\otimes HQ$, hence $HN\otimes_{HA}HP \iso HN\otimes HQ$.
\end{proof}

\subsection{Projective resolutions are $q$-cofibrant approximations in $\sM_A$}\label{versustoo}

We mim\-ic \S\ref{versus}.  We ignore the retracts in \myref{projtoo} and use only $s$-projective DG $A$-modules
of the form $P=A\otimes Q$ for an $s$-projective $R$-module $Q$. Let us say that a sequence 
$\xymatrix@1{ L \ar[r]^{f} & M \ar[r]^{g} & N\\}$ of DG $A$-modules is $s$-exact if $f$ is the 
composite of an $s$-epimorphism $L\rtarr K$ and the inclusion $K\rtarr M$ of a kernel of $g$.
Then we can define a projective resolution of $M$ to be an $s$-exact sequence
\[ \cdots \rtarr P_{p,*} \rtarr P_{p-1,*} \rtarr \cdots \rtarr P_{1,*} \rtarr P_{0,*} \rtarr M \rtarr 0 \]
such that each $P_{p,*}$ is $s$-projective.  

We construct projective resolutions $P$ of
DG $A$-modules $M$ as in \S\ref{versus}.  Their terms are of the form $P_{p,*} = A\otimes Q_{p,*}$.  Here
$P_{p,q} = \sum_{i+j=q} A_i\otimes Q_{p,j}$. We first construct 
an $s$-projective DG $R$-module $Q_{0,*}$ and an $s$-epimorphism $Q_{0,*}\rtarr \bU M$. We take its 
adjoint to be
$\epz\colon P_{0,*}\rtarr M$, with kernel $K_{0,*}$.  Inductively, we construct an 
$s$-projective chain complex $P_{p,*}$ and an $s$-epimorphism $P_{p,*} \rtarr K_{p-1,*}$ with kernel $K_{p,*}$
in the same way, and we have the differential 
$$d\colon P_{p,*} \rtarr K_{p-1,*}\subset P_{p-1,*}.$$
It is a map of DG $A$-modules.  As in \S\ref{versus}, $\{P_{p,q}\}$ and the maps $\epz\colon P_{0,*}\rtarr M$ specify
a bicomplex over $M$ with vertical differential $d^0\colon P_{p,q} \rtarr P_{p,q-1}$ and horizontal differential
$d^1\colon P_{p,q} \rtarr P_{p-1,q}$. The differentials on the $P_{p,*}$ specify $d^0$.
To ensure that $d^0d^1 + d^1d^0 =0$ we set $d^1 = (-1)^q d$ on $P_{p,q}$.

 This proves the first statement of the following result, which is stated as 
\cite[2.1]{Moore}.  Moore leaves the rest of the proof to the reader, and so shall we.
 
\begin{prop}\mylabel{projrestoo}  Every DG $A$-module $M$ admits a projective resolution $P$.   
If $P$ and $Q$ are projective resolutions of $M$ and $N$ and $f\colon M\rtarr N$ is a map of DG $A$-modules, 
then there is a map $\tilde f\colon P\rtarr Q$ of projective resolutions over $f$.   If $\tilde f$ and 
$\tilde g$ are maps over homotopic maps $f$ and $g$, then $\tilde{f}$ and $\tilde{g}$ are homotopic.
\end{prop}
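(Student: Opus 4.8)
The existence of a projective resolution $P$ is already supplied by the construction preceding \myref{projrestoo}, so what remains is the comparison statement and the homotopy statement, and these are the relative analogues, for the projective class $(\sP_s,\sE_s)$ of \myref{enoughtoo}, of the classical comparison theorem of homological algebra. The only tool required is the lifting property that is part of the very notion of a projective class (\myref{projclass}): since each term $P_{p,*}$ of a projective resolution is $s$-projective and, by construction, the augmentation $\epz\colon Q_{0,*}\rtarr N$ and each of the maps $Q_{p,*}\rtarr K^Q_{p-1,*}$ onto the successive kernels $K^Q_{p-1,*}=\ker(Q_{p-1,*}\rtarr Q_{p-2,*})$ are $s$-epimorphisms, every map of DG $A$-modules from $P_{p,*}$ into $N$ (respectively into $K^Q_{p-1,*}$) lifts through these $s$-epimorphisms. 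I would first record this observation and then run two inductions on the homological degree $p$.

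For the comparison statement I would build $\tilde f\colon P\rtarr Q$ as a family of DG $A$-module maps $\tilde f_p\colon P_{p,*}\rtarr Q_{p,*}$ satisfying $d\tilde f_p=\tilde f_{p-1}d$, where $d$ denotes the horizontal differential of the resolutions. The base step lifts $f\epz_P\colon P_{0,*}\rtarr N$ through $\epz_Q\colon Q_{0,*}\rtarr N$ to obtain $\tilde f_0$. For the inductive step, the composite $\tilde f_{p-1}d\colon P_{p,*}\rtarr Q_{p-1,*}$ has image in the kernel $K^Q_{p-1,*}$ --- because $\epz_Q\tilde f_0 d=f\epz_P d=0$ when $p=1$, and $d\tilde f_{p-1}d=\tilde f_{p-2}dd=0$ when $p\ge 2$ --- so it lifts through the $s$-epimorphism $Q_{p,*}\rtarr K^Q_{p-1,*}$ to the desired map $\tilde f_p$, automatically a chain map of DG $A$-modules. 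Since each $\tilde f_p$ is a chain map it commutes with the vertical differential $d^0$, and with the sign convention $d^1=(-1)^q d$ imposed on both $P$ and $Q$ it commutes with $d^1$ as well; hence $\{\tilde f_p\}$ is a map of bicomplexes and induces a map $\tilde f\colon \TotP\rtarr TQ$ over $f$. Naturality up to homotopy of this construction in $f$ follows from the homotopy statement.

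For the homotopy statement, given $\tilde f$ and $\tilde g$ over DG $A$-module maps $f\htp g\colon M\rtarr N$ witnessed by an $A$-linear internal homotopy $s\colon M\rtarr N$, one produces a chain homotopy $\tilde f\htp \tilde g$ of total complexes by applying the same scheme to $\tilde f-\tilde g$ over the null-homotopic map $f-g$. This is where I expect the genuine work to lie: a homotopy built only from ``horizontal'' components $P_{p,*}\rtarr Q_{p+1,*}$ cannot exist, since its leading term would force $\epz_Q(\tilde f_0-\tilde g_0)=0$, hence $f=g$; one must also carry a ``vertical'' component supplied by $s$ and solve the two resulting families of equations simultaneously, lifting at each stage through the $s$-epimorphisms of $Q$ using $s$-projectivity of the $P_{p,*}$. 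The only real difficulty is the bookkeeping of the two differentials and their signs; no idea beyond the projective-class lifting property enters, which is why --- following Moore \cite{Moore} --- the detailed verification can safely be left to the reader.
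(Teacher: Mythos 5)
Your proposal is correct and matches the paper's approach: the paper's own "proof" consists precisely of the construction preceding the statement (which gives existence) together with an explicit deferral of the comparison and homotopy statements to the reader, following Moore, and the induction you supply for the comparison map — lifting through the $s$-epimorphisms onto successive kernels using $s$-projectivity of the $P_{p,*}$ — is exactly the standard projective-class argument being alluded to. Your structural remark that a homotopy over $f\simeq g$ cannot be purely horizontal and must carry a vertical component coming from the internal homotopy $s$ is likewise the correct shape of the omitted verification, so the two treatments agree (with your sketch in fact giving more detail than the paper does).
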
 

We apply the general discussion of bicomplexes in \S\ref{versus}.  As before, we write $\TotP$ for the total complex of a projective resolution $\epz\colon P\rtarr M$. As a bicomplex, $P$ has two filtrations. We are again more interested in the filtration by the homological degree $p$.  With it, $F_pP$ is the sum of 
the $P_{p-r,*}$ for $0\leq r\leq p$.  The filtration quotient $F_pP/F_{p-1}P$ is $P_{p,*}$. Using 
\myref{projstructoo} and \myref{qcharcofibration}, the proof of \myref{Indeed} applies to give the 
following analogue.

\begin{lem}\mylabel{Indeedtoo}  The total complex $\TotP$ of a projective resolution is $q$-cofibrant.
\end{lem}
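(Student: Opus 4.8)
The plan is to follow the proof of \myref{Indeed} line for line, replacing its inputs by their DG $A$-module analogues. Write $P = \{P_{p,q}\}$ for the bicomplex underlying a projective resolution $\epz \colon P \rtarr M$ as constructed above, with $P_{p,*} = A \otimes Q_{p,*}$, and filter $\TotP$ by homological degree, so that $F_pP \subseteq \TotP$ is the subcomplex spanned by the $P_{r,*}$ with $r \leq p$. By the refiltering convention of \S\ref{diddle} we have $F_{-1}P = 0$; since $F_pP_n = \bigoplus_{0 \leq r \leq p} P_{r,n-r}$ and colimits are computed degreewise, $\TotP = \colim_p F_pP$. Moreover $F_pP_n = F_{p-1}P_n \oplus P_{p,n-p}$, and the $A$-action preserves the homological index, so the inclusion $F_{p-1}P \rtarr F_pP$ is a monomorphism that is split as a map of underlying graded $A$-modules, with cokernel the $p$-fold suspension $\SI^p P_{p,*}$.

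First I would observe that each $P_{p,*} = A \otimes Q_{p,*}$ is $s$-projective by construction, hence $q$-cofibrant by \myref{projstructoo}; since $\SI^p$ is an autoequivalence of $\sM_A$, the cokernel $\SI^p P_{p,*}$ of $F_{p-1}P \rtarr F_pP$ is $q$-cofibrant too. As $F_{p-1}P \rtarr F_pP$ is a monomorphism, split over the underlying graded $A$-module, with $q$-cofibrant cokernel, \myref{qcharcofibration} shows it is a $q$-cofibration. An induction over $p$ then finishes the argument: $F_{-1}P = 0$ is $q$-cofibrant, and if $F_{p-1}P$ is $q$-cofibrant then $0 \rtarr F_pP$ is the composite of the $q$-cofibrations $0 \rtarr F_{p-1}P$ and $F_{p-1}P \rtarr F_pP$, so $F_pP$ is $q$-cofibrant; finally $0 \rtarr \TotP = \colim_p F_pP$ is the sequential composite of the $q$-cofibrations $F_{p-1}P \rtarr F_pP$, hence itself a $q$-cofibration, and therefore $\TotP$ is $q$-cofibrant.

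Since this lemma is advertised as a formal analogue of \myref{Indeed}, there is no serious obstacle; the only step that is not literally identical to the $\sM_R$ argument is the appeal to \myref{qcharcofibration}. In $\sM_R$, \myref{qcofibration} asserts that \emph{every} monomorphism with $q$-cofibrant cokernel is a $q$-cofibration, but for DG $A$-modules this can fail, so one genuinely needs the sharper characterization of \S\ref{qchar}, and it is precisely there that the degreewise splitting of $F_{p-1}P \rtarr F_pP$ over the graded $A$-module is used. As in the $\sM_R$ case, one should stress that \myref{qcofibrant}/\myref{qcharcofibrant} is unavailable here because $A$ and $M$ are not assumed bounded below, so the cellular filtration argument is essential; the complementary fact that $\epz \colon \TotP \rtarr M$ is a $q$-equivalence, whence $\TotP$ is a $q$-cofibrant approximation of $M$, is proved separately in \myref{projcofA}.
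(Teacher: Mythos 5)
Your proof is correct and is essentially the paper's own argument: the paper simply remarks that, using \myref{projstructoo} and \myref{qcharcofibration}, the proof of \myref{Indeed} applies verbatim --- the filtration quotients $F_pP/F_{p-1}P \iso \SI^p P_{p,*}$ are $s$-projective hence $q$-cofibrant, the inclusions $F_{p-1}P\rtarr F_pP$ are therefore $q$-cofibrations, and one concludes by induction and passage to the colimit, exactly as you do. One small correction to your closing remark: \myref{qcharcofibration} asserts that in $\sM_A$ a monomorphism with $q$-cofibrant cokernel \emph{is} a $q$-cofibration, with no splitting hypothesis (the $A$-splitting is automatic since the cokernel's underlying $A$-module is projective), so that implication does not fail for DG $A$-modules; the graded splitting you exhibit is harmless but is not needed as input.
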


We conclude that the total complexes of $P$ of projective resolutions can be equipped 
both with a structure of bicomplex and with an entirely different filtration as a retract 
of a $q$-cell complex.  We shall now prove the following theorem, generalizing \myref{projcofR}.

\begin{thm}\mylabel{projcofA} A projective resolution $\epz\colon \TotP\rtarr M$ is a $q$-cofibrant 
approximation.
\end{thm}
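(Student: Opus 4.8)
The statement asserts two things: that $\TotP$ is $q$-cofibrant, and that $\epz$ is a $q$-equivalence. The first is \myref{Indeedtoo}. Moreover $\epz$ is a degreewise epimorphism --- on the column-zero summand $P_{0,n}$ of $(\TotP)_n$ it is the $s$-epimorphism $P_{0,n}\rtarr M_n$, and it is zero on the other columns --- hence a $q$-fibration, since the $q$-fibrations of DG $A$-modules are the degreewise epimorphisms; so $\epz$ will in fact be a $q$-acyclic $q$-fibration. The plan is thus to prove that $\epz\colon\TotP\rtarr M$ is a quasi-isomorphism. The whole point of arguing model-theoretically rather than via the classical spectral sequence of the bicomplex is to dispense with any bounded-below hypothesis on $M$ and $P$.

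The first step extracts the homological content of $s$-exactness. Write $K_{-1,*}=M$, and for $p\geq 0$ let $K_{p,*}$ be the kernel of the $s$-epimorphism $\pi_p\colon P_{p,*}\rtarr K_{p-1,*}$ of the resolution constructed in \S\ref{versustoo}, giving short exact sequences of DG $A$-modules $0\rtarr K_{p,*}\rtarr P_{p,*}\xrightarrow{\pi_p}K_{p-1,*}\rtarr 0$. Since $\pi_p$ is an $s$-epimorphism it carries $Z_n(P_{p,*})$ onto $Z_n(K_{p-1,*})$, and I claim this forces the connecting homomorphism $H_n(K_{p-1,*})\rtarr H_{n-1}(K_{p,*})$ to vanish: a homology class of $K_{p-1,*}$ is represented by a cycle $w$, which lifts along $\pi_p$ to a \emph{cycle} $\tilde w$ of $P_{p,*}$, and the connecting homomorphism is computed by applying the differential to such a lift, so it sends the class to $[d\tilde w]=0$. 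Hence each of these long exact sequences breaks into short exact sequences $0\rtarr H_n(K_{p,*})\rtarr H_n(P_{p,*})\xrightarrow{(\pi_p)_*}H_n(K_{p-1,*})\rtarr 0$ for all $n$ and all $p\geq 0$; in particular each $(\pi_p)_*$ is surjective.

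The second step filters the mapping cone. Let $C=\mathrm{Cone}(\epz)$, so $C_n=(\TotP)_{n-1}\oplus M_n$ and $\epz$ is a quasi-isomorphism if and only if $C$ is acyclic. Filter $C$ by letting $G_jC$ be the subcomplex spanned by the $M$-summand together with the columns $P_{p,*}$ of $\TotP$ with $p<j$; one checks readily that $G_jC$ is a subcomplex, that $G_0C=M$, that $G_{j+1}C/G_jC\iso\SI^{j+1}P_{j,*}$, and that $C=\colim_j G_jC$. The connecting homomorphism of the long exact sequence of $0\rtarr G_jC\rtarr G_{j+1}C\rtarr\SI^{j+1}P_{j,*}\rtarr 0$ is, under an inductive identification, the surjection $\pm(\pi_j)_*\colon H_*(P_{j,*})\rtarr H_*(K_{j-1,*})$ produced in the first step. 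An induction on $j$ then shows simultaneously that $H_n(G_jC)\iso H_{n-j}(K_{j-1,*})$ --- using that $\ker((\pi_j)_*)\iso H_*(K_{j,*})$ by the short exact sequence of the first step --- and that every transition map $H_n(G_jC)\rtarr H_n(G_{j+1}C)$ is zero, because in the long exact sequence that map is preceded by the surjection $(\pi_j)_*$. Since homology commutes with the sequential colimit $C=\colim_j G_jC$, and a sequential colimit of abelian groups with vanishing transition maps and no terminal index is zero, we conclude $H_*(C)=0$; that is, $\epz$ is a quasi-isomorphism, which together with \myref{Indeedtoo} completes the proof.

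The main obstacle is the lack of boundedness: $(\TotP)_n$ is an infinite direct sum, no individual homology group of $\TotP$ or of $C$ depends on only finitely many columns, and, as \myref{Notsemi} shows, filtering a bicomplex the wrong way can produce a spectral sequence that fails to converge. What makes the column filtration of $C$ work is precisely the strength of the $s$-epimorphism hypothesis: surjectivity on cycles, not merely on elements, is exactly what kills both the connecting homomorphisms of the first step and the transition maps in the colimit system of the second, after which the colimit argument needs no convergence input whatsoever.
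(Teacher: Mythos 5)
Your proof is correct, but it takes a genuinely different route from the paper's. The paper argues model-theoretically: using the adjunction $\bF\dashv\bU$ it reduces to showing that $\epz$ has the RLP against $S^n_R\rtarr D^{n+1}_R$ in $\sM_R$, and it constructs the lift explicitly, component by component $\ell_{i,j}$, using the exactness of $P_{*,j}\rtarr M_j$ (and of $Z_j(P_{*,j})\rtarr Z_jM$, i.e.\ the same surjectivity-on-cycles that you exploit), with a final refinement --- choosing the top nonzero component to be a vertical cycle --- to guarantee the lift is a finite sum, as required since $\TotP$ is a direct sum. That argument exhibits $\epz$ directly as a $q$-acyclic $q$-fibration. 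You instead prove acyclicity of the mapping cone by filtering it by columns: the $s$-exactness kills the connecting homomorphisms of $0\rtarr K_{p,*}\rtarr P_{p,*}\rtarr K_{p-1,*}\rtarr 0$ (precisely because cycles lift to cycles), the resulting surjections force all transition maps $H_*(G_jC)\rtarr H_*(G_{j+1}C)$ to vanish, and exactness of filtered colimits finishes the job. Your argument is essentially an unrolled, elementary form of the convergent ``exiting differentials'' spectral sequence of the homological filtration (which the paper notes converges without boundedness, in contrast to the internal-degree filtration), and it dispenses with the element-chasing and the finiteness fuss --- though note that your exhaustion $C=\colim_j G_jC$ plays exactly the role of the paper's finiteness refinement, and both would fail if $\TotP$ were formed with products rather than sums. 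The paper's lifting argument, for its part, stays closer to the model-categorical machinery used throughout and yields the acyclic-fibration property in one stroke. Your identification of the connecting map of $0\rtarr G_jC\rtarr G_{j+1}C\rtarr \SI^{j+1}P_{j,*}\rtarr 0$ with $\pm(\pi_j)_*$ does check out (the lift of a $d^0$-cycle has cone differential $\pm d^1$, which lands in $K_{j-1,*}\subset P_{j-1,*}$, and the inductive isomorphism is realized by projection to the top column), so the induction is coherent; it would be worth writing out that identification carefully in a final version, but there is no gap.
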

\begin{proof}
By construction $\epz\colon \TotP\rtarr M$ is a 
degreewise epimorphism and thus a $q$-fibration. By \myref{Indeedtoo}, it suffices to show that $\epz\colon \TotP\rtarr M$ is 
a $q$-equivalence.  We might like to use the spectral sequence obtained by filtering by internal degree, but we have made no
boundedness assumption, hence that spectral sequence need not converge.   Instead, we construct a solution to any lifting problem
\[ \xymatrix{ \bF S^n_R \ar[r]^-p \ar[d] & \TotP \ar[d]^\epz \\ \bF D^{n+1}_R \ar[r]_-m \ar@{-->}[ur]^-\ell &  M.}\] 
By the adjunction $\xymatrix@1{\bF \colon \sM_R \ar@<.4ex>[r] & \ar@<.4ex>[l] \sM_A \colon \bU}$,
this is equivalent to solving the adjunct lifting problem for the underlying map of DG $R$-modules $\epz\colon\TotP\rtarr M$: \begin{equation}\label{eq:proj-resol-lifting-problem} \xymatrix{ S^n_R \ar[r]^-p \ar[d] & \TotP \ar[d]^\epz \\ D^{n+1}_R \ar[r]_-m \ar@{-->}[ur]^-\ell & M.}\end{equation} Thus we are free to work in the underlying context of DG $R$-modules. 

A commutative square of the form \eqref{eq:proj-resol-lifting-problem} corresponds to a pair of elements $m \in M_{n+1}$ and $p \in Z_n\TotP$ such that $d(m)= \epz(p)$. We may write $p = \sum_{i+j=n} p_{i,j}$, with $p_{i,j} \in P_{i,j}$. With this notation $\epz(p) = \epz(p_{0,n}) = d(m)$ and the condition $p \in Z_n\TotP$ holds if and only if
$$d^0(p_{i,j}) = (-1)^j d(p_{i+1,j-1}).$$ 
By the definition of a direct sum,  we must have $p_{i,n-i} =0$ for $i \gg 0$.

A solution to the lifting problem \eqref{eq:proj-resol-lifting-problem} is given by an element $\ell \in \TotP_{n+1}$ such that 
$\epz(\ell)=m$ and $d(\ell)=p$. Writing $\ell = \sum_{i+j=n+1} \ell_{i,j}$, the first condition is that $\epz(\ell_{0,n+1})=m$ and the second condition is that \[  d^0(\ell_{i,j+1}) + (-1)^jd(\ell_{i+1,j}) = p_{i,j} \qquad \forall\  i + j =n.\]  We must also ensure that we can choose $\ell_{i,j+1}=0$ for $i\gg 0$.

Since $\epz \colon P_{0,n+1} \to M_{n+1}$ is surjective, we may choose $\ell_{0,n+1} \in P_{0,n+1}$ 
such that $\epz(\ell_{0,n+1}) = m$. The next step is to find $\ell_{1,n} \in P_{1,n}$ such that 
\[(-1)^nd(\ell_{1,n}) = p_{0,n} - d^0(\ell_{0,n+1}).\] By the exactness of the resolution 
$P_{*,n} \to M_n$ at $P_{0,n}$, the calculation 
\[\epz (p_{0,n}) - \epz d^0(\ell_{0,n+1}) = d(m) - d\epz(\ell_{0,n+1}) = 0\] 
implies that this can be done.

Continuing inductively, we use the exactness of $P_{*,j} \to M_j$ at $P_{i,j}$ to find $\ell_{i+1,j} \in P_{i+1,j}$ such that \begin{equation}\label{eq:necessary-condition} (-1)^j d(\ell_{i+1,j}) = p_{i,j} - d^0(\ell_{i,j+1}).\end{equation} 
The calculation 
\[ d(p_{i,j}) - d d^0(\ell_{i,j+1}) = (-1)^{j+1}d^0(p_{i-1,j+1}) - d^0 d(\ell_{i,j+1})= \pm d^0d^0(\ell_{i-1,j+2}) =0\] 
implies that this can be done.

To show that the sum $\ell = \sum_{p+q=n+1} \ell_{p,q}$ is finite, we refine our construction for $p \gg 0$. Let $i$ be maximal such that $p_{i,j} \neq 0$. In this case, the right-hand side of \eqref{eq:necessary-condition} is in $Z_j (P_{i,*}) \subset P_{i,j}$ because $d^0(p_{i,j}) = (-1)^j d(p_{i+1,j-1}) = 0$. By the exactness of $Z_j(P_{*,j}) \to Z_jM$, we can choose $\ell_{i+1,j}$ to be a vertical cycle, so that $d^0(\ell_{i+1,j})=0$.  This implies that we may take $\ell_{p,q}=0$ for all $p> i+1$.
\end{proof}

Now consider $N\otimes_A P$ for a right DG $A$-module $N$.  We again have two spectral sequences. First consider the 
spectral sequence obtained by filtering by internal degree. When $N = A$, the $E^0$-term is a projective resolution 
of the $A$-module $M$.  Therefore, for general $N$, 
\[  E^1_{p,q} = \Tor^A_{p,q}(N,M), \]
where the ordinary $\Tor$ functor defined without use of the differentials on $A$, $N$, and $M$ is understood.
If the underlying $A$-module of either $N$ or $M$ is flat, then $E^1_{p,q}(N,M) = 0$ for $p>0$ and 
\begin{equation}\label{E2firsttoo}
 E^{2}_{0,q} = E^{\infty}_{0,q} = H_q(N\otimes_A M).
\end{equation}

Under boundedness assumptions, this gives a more familiar second proof and a
generalization of \myref{projcofA}. 

\begin{thm}\mylabel{cutesytoo} If $N$ or $M$ is $A$-flat and $A$, $N$, and $M$ are bounded below, then
\[ (\id\otimes \epz)_*\colon H(N\otimes_A \TotP)\rtarr H(N\otimes_A M) \]
is an isomorphism.  In particular, taking $N=A$, $\epz\colon \TotP\rtarr M$ is a $q$-equivalence.
\end{thm}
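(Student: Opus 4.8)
The plan is to run the spectral sequence of the bicomplex $N\otimes_A P$ obtained by filtering by internal degree, whose $E^1$-term was just identified as $E^1_{p,q}=\Tor^A_{p,q}(N,M)$, the ordinary bigraded $\Tor$ of the underlying graded $A$-modules. The only input still missing is convergence: once that is in hand, $A$-flatness of $N$ or $M$ forces $\Tor^A_p(N,M)=0$ for $p>0$, so $E^1_{p,q}=0$ for $p>0$, and by \eqref{E2firsttoo} the spectral sequence collapses onto the single column $p=0$ with $E^\infty_{0,q}=E^2_{0,q}=H_q(N\otimes_A M)$. Then the filtration on $H_n(N\otimes_A\TotP)$ has a single nontrivial subquotient, the column-$0$ edge homomorphism $H_n(N\otimes_A\TotP)\rtarr E^\infty_{0,n}=H_n(N\otimes_A M)$ is an isomorphism, and a routine check shows this edge map is the one induced by the augmentation, i.e.\ $(\id\otimes_A\epz)_*$ (on $E^1$ in the column $p=0$ it is the identification of $\mathrm{coker}(N\otimes_A P_{1,*}\rtarr N\otimes_A P_{0,*})$ with $N\otimes_A M$ given by $\id\otimes_A\epz$). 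Specializing to $N=A$ gives the final assertion, since $A\otimes_A\TotP=\TotP$ and $A\otimes_A M=M$.

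For convergence I would argue as follows. Because $M$, hence $\bU M$, is bounded below, the resolution of \S\ref{versustoo} can be built so that every $P_{p,*}$ is bounded below; using that $A$ and $N$ are bounded below as well, there is an integer $B$ with $(N\otimes_A P)_{p,q}=0$ whenever $q<B$. Since also $p\ge 0$, for each fixed total degree $n=p+q$ only finitely many filtration stages are nonzero, so the internal-degree filtration of $N\otimes_A\TotP$ is finite in each degree. A filtered complex whose filtration is finite in each degree has a strongly convergent associated spectral sequence, so $\{E^r\}$ converges to $H(N\otimes_A\TotP)$. This bounded-below input is exactly what was unavailable for \myref{projcofA}, whose proof instead solved lifting problems by hand; the present argument is the ``more familiar'' spectral-sequence proof and simultaneously generalizes \myref{projcofA}.

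The main obstacle, as usual with these spectral sequences with entering differentials (in Boardman's sense \cite{Board}), is pinning down convergence: one must verify that the bounds on $A$, $N$, $M$ genuinely propagate through the inductive construction of $P$ to give degreewise finiteness of the filtration, so that no inverse-limit error term or failure of Hausdorffness can intervene. A clean alternative that avoids spectral-sequence bookkeeping is to observe that $A$-flatness makes $N\otimes_A(-)$ exact on the resolution $\cdots\rtarr P_{1,*}\rtarr P_{0,*}\rtarr M\rtarr 0$ of graded $A$-modules, so the augmented bicomplex $\cdots\rtarr N\otimes_A P_{1,*}\rtarr N\otimes_A P_{0,*}\rtarr N\otimes_A M\rtarr 0$ has exact rows; the bounded-below hypotheses make its anti-diagonals finite, whence its total complex is acyclic and $\id\otimes_A\epz$ is a quasi-isomorphism. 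Either way the essential work is the same boundedness estimate, and everything else is formal.
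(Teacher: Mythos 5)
Your proposal is correct and is essentially the paper's own argument: the paper establishes \myref{cutesytoo} by exactly this spectral sequence of the internal-degree filtration, using $E^1_{p,q}=\Tor^A_{p,q}(N,M)$ for the underlying graded modules, $A$-flatness to collapse everything onto the $p=0$ column as in \eqref{E2firsttoo}, and the bounded-below hypotheses to guarantee convergence. Your extra details (the identification of the edge map with $(\id\otimes\epz)_*$ and the degreewise finiteness of the filtration, which is the one point requiring the bounds on $A$, $N$, $M$ to propagate through the construction of $P$) simply flesh out what the paper leaves implicit.
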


It follows that $H(N\otimes_A \TotP) = \Tor^A_*(M,N)$, hence that \myref{cutesytoo} has the
following reinterpretation. It generalizes \cite[p.~7-09]{Moore}.  We do not know how to prove it
using $q$-cofibrant approximations constructed by either the SOA or the methods of \cite{GM}, and
we will use it in our discussion of semi-flat DG $A$-modules and the bar construction.

\begin{cor}\mylabel{myfav}  If $M$ or $N$ is $A$-flat and $A$, $N$, and $M$ are bounded below, then
\[ \Tor^A_*(N,M) \iso H(N\otimes_A M).  \]
\end{cor}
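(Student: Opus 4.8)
The plan is to deduce \myref{myfav} directly from \myref{cutesytoo} together with \myref{projcofA} and the basic invariance properties of the differential $\Tor$ functor established in \myref{inv1}. The key point is simply to identify the two sides of \myref{cutesytoo} with the entries of \eqref{TorExt4}.

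First I would observe that, by \myref{projcofA}, the map $\epz\colon \TotP\rtarr M$ is a $q$-cofibrant approximation of the DG $A$-module $M$, where $P$ is any projective resolution of $M$ in the sense of \S\ref{versustoo}. Hence, by \myref{DTordefn} (equation \eqref{TorExt4}), we have by definition
\[ \Tor^A_*(N,M) = H_*(N\otimes_A \TotP). \]
Here I am using that $\Tor^A_*(N,M)$, although defined in \eqref{TorExt4} using the tensor product on the other side, is independent of the choice of $q$-cofibrant approximation (since any two are $h$-equivalent over $M$, as noted after \myref{DTordefn}); and by \myref{inv1}, applied with a $q$-cofibrant approximation of $N$ as well, the two a priori different recipes $H(N\otimes_A X)$ and $H(Y\otimes_A M)$ agree, so that we may legitimately compute $\Tor^A_*(N,M)$ as $H_*(N\otimes_A \TotP)$ whichever variable we resolve.

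Next I would invoke \myref{cutesytoo}: under the hypothesis that $N$ or $M$ is $A$-flat and that $A$, $N$, and $M$ are all bounded below, the map
\[ (\id\otimes \epz)_*\colon H(N\otimes_A \TotP)\rtarr H(N\otimes_A M) \]
is an isomorphism. Combining this with the identification of the source as $\Tor^A_*(N,M)$ from the previous paragraph yields precisely
\[ \Tor^A_*(N,M) \iso H(N\otimes_A M), \]
which is the assertion of \myref{myfav}.

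I do not expect any real obstacle here: the corollary is essentially a repackaging of \myref{cutesytoo} once one recognizes $\TotP$ as a $q$-cofibrant approximation and recalls that $\Tor$ is computed by such approximations. The only point requiring a modicum of care is the left--right asymmetry in \eqref{TorExt4} versus \eqref{TorExt4R} and the spectral sequence computation preceding \myref{cutesytoo}, but this is exactly what \myref{inv1} is for, so citing it suffices. One should also remember to state explicitly that the flatness hypothesis is what licenses the vanishing $E^1_{p,q}=0$ for $p>0$ in \eqref{E2firsttoo}, which is the engine behind \myref{cutesytoo}; but since \myref{cutesytoo} is already proven in the excerpt, the present proof is a two-line deduction.
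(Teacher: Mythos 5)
Your proposal is correct and matches the paper's own (implicit) argument: the corollary is obtained exactly by recognizing $\epz\colon \TotP\rtarr M$ as a $q$-cofibrant approximation via \myref{projcofA}, so that $\Tor^A_*(N,M)=H(N\otimes_A\TotP)$ by \eqref{TorExt4}, and then applying \myref{cutesytoo}. One tiny quibble: \eqref{TorExt4} already resolves the variable $M$, so the appeal to \myref{inv1} about left--right asymmetry is harmless but unnecessary here.
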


Now consider the induced homological filtration on $N\otimes_A P$. Since the $P_{p,*}$ are all $s$-projective, 
\myref{homology} applies, and we see that $HP$ is a projective resolution of the $HA$-module $HM$. 
Therefore
\begin{equation}\label{E2secondtoo}
 E^2_{p,q} = \Tor^{HA}_{p,q}(HN,HM).
\end{equation}

We can think of this as a generalized K\"unneth spectral sequence since we now have the following
analogue of \myref{KunnSS}.  
\begin{thm}\mylabel{KunnSSA}  If $N$ or $M$ is $A$-flat and $A$, $N$, and $M$ are bounded below, the 
spectral sequence $\{E^r\}$ converges from 
$E^2_{*,*} = \Tor^{HA}_{*,*}(HN,HM)$ to $H(N\otimes_A M)$.
\end{thm}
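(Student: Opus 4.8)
The plan is to run exactly the argument that produced \myref{KunnSS} in the module case, now feeding in \myref{cutesytoo} for the comparison of abutments. So fix a projective resolution $\epz\colon P\rtarr M$ as constructed in \S\ref{versustoo}, form $N\otimes_A P$ with its bigrading, and filter by homological degree $p$. The $E^2$-page has already been identified: since each $P_{p,*}$ is $s$-projective, \myref{homology} computes the vertical homology and shows that $HP$ is a projective resolution of the $HA$-module $HM$, whence $E^2_{p,q}=\Tor^{HA}_{p,q}(HN,HM)$ as recorded in \eqref{E2secondtoo}. Nothing in this step requires flatness or boundedness.

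First I would dispose of convergence. Since $P$ is a resolution, the homological degree is non-negative, so this is a spectral sequence with exiting differentials in Boardman's sense \cite{Board}: the differential $d^r\colon E^r_{p,q}\rtarr E^r_{p-r,q+r-1}$ has zero target once $r>p$. The filtration on $N\otimes_A P$ is exhaustive and, in each total degree, bounded below (by $0$); exactly as for the parallel statement \myref{KunnSS}, it follows that $\{E^r\}$ converges to $H_*(N\otimes_A\TotP)$, with no hypotheses on $A$, $N$, or $M$ needed at this point.

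The remaining step, and the only place the hypotheses of the theorem are used, is to replace the abutment $H_*(N\otimes_A\TotP)$ by $H_*(N\otimes_A M)$. This is precisely \myref{cutesytoo}: since $N$ or $M$ is $A$-flat and $A$, $N$, and $M$ are bounded below, $(\id\otimes\epz)_*\colon H_*(N\otimes_A\TotP)\rtarr H_*(N\otimes_A M)$ is an isomorphism. Combining this with the previous paragraph gives the asserted convergence from $E^2_{*,*}=\Tor^{HA}_{*,*}(HN,HM)$ to $H_*(N\otimes_A M)$.

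I do not expect a genuine obstacle here; the substantive content lives in \myref{homology}, \eqref{E2secondtoo}, the exiting-differentials convergence machinery, and \myref{cutesytoo}. The one point requiring care is to keep the two filtrations of $N\otimes_A P$ distinct — the filtration by internal degree underlies \myref{cutesytoo} (and there the spectral sequence has entering differentials and need not converge, which is why that lemma is proved differently), while the filtration by homological degree is the source of the spectral sequence in this theorem — and to note that it is the non-negativity of the homological degree, not any hypothesis on the modules, that forces convergence, the flatness and bounded-below hypotheses entering solely through the identification $H_*(N\otimes_A\TotP)\iso H_*(N\otimes_A M)$.
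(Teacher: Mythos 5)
Your proposal is correct and is essentially the paper's own (largely implicit) argument: identify $E^2$ via \myref{homology} and \eqref{E2secondtoo}, invoke Boardman's exiting-differentials convergence for the homological filtration to get the abutment $H(N\otimes_A \TotP)$ unconditionally, and then use \myref{cutesytoo} (where the flatness and bounded-below hypotheses enter) to identify that abutment with $H(N\otimes_A M)$. Your only imprecision is the parenthetical about why ``that lemma is proved differently'': it is \myref{projcofA}, stated without boundedness, that requires the non-spectral-sequence lifting argument, while \myref{cutesytoo} is exactly the internal-degree spectral sequence argument made valid by the boundedness hypotheses.
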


In general, without the flatness or bounded below hypotheses, the spectral sequence converges to 
$H(N\otimes_A P)$.  Since $H(N\otimes_A P)= \Tor^A_*(N,M)$, this gives  a version of the 
Eilenberg-Moore spectral sequence. 

\section{Cell complexes and cofibrant approximations}\label{GMStuff} 

In this section, we describe ways of recognizing cofibrant DG $\sM_A$-modules when we see them.  
To do this, we first give characterizations of $q$- and $r$-cofibrant objects and cofibrations
and then develop a general cellular framework, starting from what we call split DG $A$-modules.
We focus on those model categorical cell complexes whose filtrations relate to the degrees of
flat or projective resolutions.  This should be viewed as analogous to singling out the CW complexes among
the cell complexes seen in the standard $q$-model structure on topological spaces.  However, it is 
considerably more subtle in that the relevant filtrations need {\em not} be the filtrations of model 
theoretic cell complexes.

\subsection{Characterization of $q$-cofibrant objects and $q$-cofibrations}\label{qchar}

The goal of this section is to give more explicit descriptions of the $q$-cofibrant objects and the $q$-cofibrations in $\sM_A$. 
By \myref{qmodelA}, we know that $q$-cofibrations are retracts of relative $q$-cell complexes, but we want a more tractable characterization analogous to Propositions \ref{qcofibrant} and \ref{qcofibration}.  
The results here will serve as models for analogous results about the $r$-model structure. 

\begin{defn}\mylabel{qsemifree} A DG $A$-module $X$
is \emph{$q$-semi-projective} if its underlying $A$-module is projective 
and if the DG $R$-module $\Hom_A(X,Z)$ is $q$-acyclic for all $q$-acyclic 
DG $A$-modules $Z$.  
\end{defn}

\begin{defn} 
A monomorphism $i\colon W\rtarr X$ of DG $A$-modules is a \emph{$q$-semi-projective extension}
if $X/W$ is $q$-semi-projective. Note that the extension is then $A$-split.  
\end{defn}

The following observations are immediate from the definitions.

\begin{lem}\mylabel{semifreeretract} A retract of a $q$-semi-projective $A$-module is $q$-semi-projective.
A retract of a $q$-semi-projective extension is a $q$-semi-projective extension.
\end{lem}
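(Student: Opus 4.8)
The plan is to observe that $q$-semi-projectivity is defined by two conditions, each of the form ``a functor applied to $X$ has a retract-closed property,'' so that closure under retracts is essentially a formal consequence of functoriality. I would therefore organize the argument as two short applications of this principle.

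First I would treat the retract of a $q$-semi-projective object. Suppose $Y$ is a retract of a $q$-semi-projective DG $A$-module $X$, with maps $i\colon Y\rtarr X$ and $r\colon X\rtarr Y$ satisfying $ri=\id_Y$. The underlying graded $A$-module of $Y$ is then a retract of that of $X$, which is projective, and a retract of a projective graded $A$-module is projective. For the second condition, fix a $q$-acyclic DG $A$-module $Z$ and apply the functor $\Hom_A(-,Z)\colon \sM_A^{\mathrm{op}}\rtarr \sM_R$ to the retract data; this exhibits $\Hom_A(Y,Z)$ as a retract of $\Hom_A(X,Z)$ in $\sM_R$. Since $H_*$ is a functor, $H_*(\Hom_A(Y,Z))$ is a retract of $H_*(\Hom_A(X,Z))=0$, hence is $0$, so $\Hom_A(Y,Z)$ is $q$-acyclic. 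Thus $Y$ is $q$-semi-projective.

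Next I would treat the retract of a $q$-semi-projective extension. Such a retract is given in the arrow category $\sM_A^{\mathbf{2}}$: maps of pairs $(W',X')\rtarr (W,X)\rtarr (W',X')$ compatible with the inclusions and composing to the identity, where $X/W$ is $q$-semi-projective. A retract of a monomorphism is a monomorphism, so $i'\colon W'\rtarr X'$ is mono. Applying the (functorial) cokernel functor $\sM_A^{\mathbf{2}}\rtarr \sM_A$ to the retract data exhibits $X'/W'$ as a retract of $X/W$, which is $q$-semi-projective by the first part of the lemma. Hence $i'$ is a $q$-semi-projective extension. Finally, since $X'/W'$ is projective as an $A$-module, the short exact sequence $0\rtarr W'\rtarr X'\rtarr X'/W'\rtarr 0$ of underlying $A$-modules splits, giving the asserted $A$-splitness.

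There is no genuine obstacle here; the only point that requires a moment's care is the bookkeeping that a ``retract in the arrow category'' is precisely the kind of datum preserved by the cokernel functor, together with the elementary facts that projectivity, $q$-acyclicity, and the property of being a monomorphism are each retract-closed. I would phrase the write-up so that it makes transparent that the lemma is purely a matter of functoriality.
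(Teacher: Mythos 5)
Your proof is correct: the paper states this lemma as ``immediate from the definitions'' with no written argument, and your verification via functoriality of $\Hom_A(-,Z)$, $H_*$, and the cokernel functor, together with retract-closure of projectivity, acyclicity, and monomorphisms, is exactly the intended omitted check. Nothing further is needed.
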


\begin{prop}\mylabel{qcharcofibration0}  If a map $i\colon W\rtarr Y$ of DG $A$-modules 
is a $q$-semi-projective extension, then it is a $q$-cofibration. In particular, a 
$q$-semi-projective $A$-module $X$ is $q$-cofibrant.
\end{prop}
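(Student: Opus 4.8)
The plan is to verify the left lifting property of $i\colon W\rtarr Y$ against an arbitrary $q$-acyclic $q$-fibration $p\colon E\rtarr B$ directly, exploiting the splitting of the extension and the algebraic structure of $q$-acyclic $q$-fibrations. First I would invoke \myref{DefRetCor2}: a $q$-acyclic $q$-fibration (equivalently, $h$-acyclic $h$-fibration, since $\sW_q\cap\sF_q = \sW_h\cap\sF_h$ in the sense relevant here — actually one must be a little careful, so I would instead use that a $q$-acyclic $q$-fibration is a surjection with $q$-acyclic kernel and is $A$-split, hence by \myref{DefRetCor2} isomorphic over $B$ to the projection $B\oplus C\rtarr B$ with $C = \ker p$ contractible). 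Given a lifting square with top map $g\colon W\rtarr E$ and bottom map $f\colon Y\rtarr B$, I would use the $A$-module splitting $Y\iso W\oplus (Y/W)$ (available since the extension is $A$-split) to reduce the problem: a lift is determined by its restriction to $W$ (which we take to be $g$, at least on underlying graded modules) together with a compatible map out of $Y/W$.

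Second, the heart of the matter is to produce, from the data of $f$ restricted to $Y/W$ and the contractibility of $C$, a \emph{chain} map $Y/W\rtarr E$ lifting it. Here is where $q$-semi-projectivity of $X := Y/W$ enters. Since $X$ is projective as an $A$-module, one can choose an underlying graded $A$-module map $h\colon X\rtarr E$ lifting $\bar f\colon X\rtarr B$ (using surjectivity of $p$ and projectivity of $X$). The obstruction to $h$ being a chain map is the map $dh - hd\colon X\rtarr E$, which lands in $\ker p = C$ because $p(dh-hd) = d\bar f - \bar f d = 0$ (as $\bar f$ is a chain map); moreover it is a chain map $X\rtarr C$ of degree $-1$, i.e.\ a $1$-cycle in $\Hom_A(X,C)$. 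Since $C$ is $q$-acyclic and $X$ is $q$-semi-projective, $\Hom_A(X,C)$ is $q$-acyclic, so this cycle is a boundary: there is $s\in\Hom_A(X,C)_0$ with $ds = dh - hd$ (up to signs). Then $h - s$ is the desired chain map lifting $\bar f$. Finally I would assemble the lift $\lambda\colon Y\iso W\oplus X \rtarr E$ as $(g, h-s)$ on the summands — checking that this is a map of DG $A$-modules and that it makes both triangles commute; the bottom triangle commutes because both $g$ and $h - s$ lift the appropriate restrictions of $f$, and the top triangle commutes by construction on $W$.

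There is one subtlety to handle with care: the splitting $Y\iso W\oplus X$ is only a splitting of underlying \emph{graded} $A$-modules, not of DG $A$-modules, so the differential on $Y$ has an off-diagonal component $X\rtarr W$ (of degree $-1$). This means $\lambda = (g, h-s)$ as a graded map must be corrected so that $d\lambda = \lambda d$; equivalently, one must check that the naive formula respects the genuine differential of $Y$. I expect this bookkeeping — reconciling the graded splitting with the actual differential, and tracking the signs through the identification $\Hom_A(X,C)$ and the use of its acyclicity — to be the main obstacle, though it is entirely routine in spirit. The cleanest route may be to avoid splitting $Y$ explicitly and instead argue inductively/directly: extend $g$ to a graded $A$-module section over all of $Y$ using projectivity of $X = Y/W$ together with the $A$-split monomorphism $W\rtarr Y$, then correct the failure to be a chain map by a homotopy valued in $C$, exactly as above but now working with $\Hom_A$ of the relevant maps. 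The in-particular clause (that a $q$-semi-projective $A$-module $X$ is $q$-cofibrant) is then immediate by applying the statement to $0\rtarr X$, since $X/0 = X$.
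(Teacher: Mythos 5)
Your strategy is essentially the paper's: split $Y\iso W\oplus X$ as graded $A$-modules, use projectivity of $X$ over $A$ to choose a graded $A$-module lift of the component of $f$ on $X$, and kill the obstruction to its being a chain map using $q$-acyclicity of $\Hom_A(X,\ker p)$, which is exactly what $q$-semi-projectivity supplies. However, your opening appeal to \myref{DefRetCor2} is not legitimate: that result describes $h$-acyclic $h$-fibrations, and a $q$-acyclic $q$-fibration $p\colon E\rtarr B$ need not be an $h$-fibration, need not be split over $B$, and its kernel need not be contractible (for $R=\bZ/4$ the complex of \myref{projectives} maps to $0$ by a $q$-acyclic $q$-fibration with non-contractible kernel). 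Fortunately you never actually use contractibility or the splitting of $E$: all your argument needs is that $Z=\ker p$ is $q$-acyclic, which follows from the long exact homology sequence because $p$ is a surjective quasi-isomorphism, and then $\Hom_A(X,Z)$ is $q$-acyclic by the definition of $q$-semi-projective. So this step should simply be deleted and replaced by that observation.

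The second issue is the obstruction itself. Since the splitting of $Y$ is only one of graded $A$-modules, the differential of $Y$ has the form $d(w,x)=(d(w)+t(x),d(x))$ for a degree $-1$ map of $A$-modules $t\colon X\rtarr W$ with $dt+td=0$, and consequently $f_2:=f\vert_X$ satisfies $df_2=f_1t+f_2d$ and is \emph{not} a chain map. Hence your claim that $p(dh-hd)=d\bar f-\bar f d=0$ fails: in fact $p(dh-hd)=f_1t$, which is generally nonzero, so $dh-hd$ need not land in $Z$. The correct obstruction is $k=dh-hd-gt$, which one checks satisfies $pk=0$ (using $df_2=f_1t+f_2d$ and $pg=f_1$ on $W$) and $dk+kd=0$, so that $k$ is a degree $-1$ cycle in $\Hom_A(X,Z)$; writing $k=d\ell-\ell d$, the lift is $\lambda=(g,h-\ell)$, and the equation one must verify is $d\lambda_2=gt+\lambda_2 d$, not $d\lambda_2=\lambda_2 d$. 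You did flag this sign-and-splitting bookkeeping as the main obstacle and guessed correctly that it is routine --- the correction term $gt$ is precisely how the paper's proof goes --- but as written the identification of the obstruction cocycle is wrong and must be repaired before the acyclicity of $\Hom_A(X,Z)$ can be applied. The in-particular clause via $0\rtarr X$ is fine.
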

\begin{proof}  Let $X=Y/W$ and assume that $X$ is $q$-semi-projective.  Let $p\colon E\rtarr B$ be a
$q$-acyclic $q$-fibration.  We must find a lift $\la$ in any lifting problem
\[ \xymatrix{
W\ar[r]^-g \ar[d]_i & E \ar[d]^p\\
Y \ar@{-->}[ur]^{\la} \ar[r]_-f & B.\\} \]
Since $X$ is projective, we can write $Y=W\oplus X$ as $A$-modules, and  
we can then write the differential on $Y$ in the form
\[ d(w,x)= (d(w)+t(x),d(x)) \]
where $t$ is a degree $-1$ map of $A$-modules, so that $t(ax) = (-1)^{deg\, a}t(x)$,
such that $dt+td = 0$.  The first formula is forced by the assumption that $d$ on
$Y$ is a degree $-1$ map of $A$-modules, and the second is forced by $d^2=0$. 
We write $f= f_1+f_2$, where $f_1\colon W\rtarr B$ and $f_2\colon X\rtarr B$,
and we write $\la = \la_1 + \la_2$ similarly.  We can and must define $\la_1 = g$
to ensure that $g = \la i$.  We want $p\la_2(x) = f_2(x)$ and 
\[ d\la_2(x) = \la d(0,x) = \la (t(x),d(x)) = gt(x) +\la_2 d(x). \]
Since $X$ is a projective $A$-module and $p$ is an epimorphism of $A$-modules, there 
is a map $\tilde{f_2}\colon X\rtarr E$ of  $A$-modules, but not necessarily DG $A$-modules, 
such that $p\tilde{f}_2 = f_2$.  The map $\tilde{f}_2$ is a first approximation to the required map $\la_2$.

Let $Z=\ker(p)$. Since $p$ is a $q$-equivalence, $Z$ is $q$-acyclic. 
Since $X$ is $q$-semi-projective, $\Hom_A(X,Z)$ is $q$-acyclic. Define $k\colon X\rtarr E$ by
\[ k = d \tilde{f}_2 -\tilde{f_2} d - gt. \]
We claim that $pk =0$, so that $k$ may be viewed as a map $X\rtarr Z$ of degree $-1$. 
To see this, note that $df = fd$ implies $df_2 = f_1t + f_2d$. 
Since $pd=dp$, 
\[ pk = dp\tilde{f}_2 -p\tilde{f_2}d -pg t = df_2-f_2d - f_1t = 0. \]
Moreover, 
\[ dk+kd = -d\tilde{f_2}d -dgt + d\tilde{f}_2d -gtd = 0, \]
so that $k$ is a cycle of degree $-1$ in $\Hom_A(X,Z)$. Therefore $k$ is a boundary.
Thus there is a degree $0$ map of $A$-modules $\ell\colon X\rtarr Z\subset E$ such that 
$d\ell - \ell d = k$. The map $\la_2 = \tilde{f}_2 - \ell$ is as required. 
\end{proof}

To obtain a converse to the theorem, we use a definition that encodes a reformulation
and generalization of the notion of a $q$-cell complex.

\begin{defn}\mylabel{qsplit} A \emph{$q$-split filtration} of a DG $A$-module $X$ is an 
increasing sequence $\{F_p X\}$ of DG $A$-submodules such that $F_{-1} X = 0$,  
$X = \cup_p  F_p X$, and each  $F_pX/F_{p-1}X$ is isomorphic as a DG $A$-module 
to $A\otimes K_p$ for some degreewise free DG $R$-module $K_p$.  Then 
the inclusions $F_{p-1} X\rtarr F_{p}X$ are $A$-split (but not DG $A$-split).  
The filtration is \emph{cellularly $q$-split} if the differential on each $K_p$ is zero.  
\end{defn}  

\begin{lem}\mylabel{cellqsplit} The cellular filtration of a $q$-cell complex is cellularly $q$-split.
\end{lem}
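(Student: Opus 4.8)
The plan is to unwind both definitions and check that they say the same thing. Recall from \myref{cellob} (as refiltered per our conventions) that a $q$-cell complex $X$ is built as a colimit $X=\colim F_pX$ where $F_{-1}X=0$ and each $F_pX$ is obtained from $F_{p-1}X$ as a pushout
\[\xymatrix{ \coprod \bF S^{n_q-1}_R \ar[d] \ar[r] & F_{p-1}X \ar[d] \\ \coprod \bF D^{n_q}_R \ar[r] & F_pX,}\]
the left vertical map being a coproduct of maps in $\bF\cI_R=\{\bF S^{n-1}_R\rtarr \bF D^n_R\}$. First I would identify the filtration quotient $F_pX/F_{p-1}X$. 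Pushouts and quotients commute, and $\bF$ is a left adjoint hence preserves pushouts and cokernels, so $F_pX/F_{p-1}X$ is the cokernel of $\coprod \bF S^{n_q-1}_R \rtarr \coprod \bF D^{n_q}_R$, which is $\coprod \bF(D^{n_q}_R/S^{n_q-1}_R)$. Since $D^{n}_R/S^{n-1}_R \iso S^{n}_R$ as DG $R$-modules (the disk mod its boundary is the top sphere, with zero differential), we get $F_pX/F_{p-1}X \iso \bF(\coprod_q S^{n_q}_R) = A\otimes K_p$ where $K_p=\coprod_q S^{n_q}_R$ is a direct sum of sphere complexes, hence a degreewise free DG $R$-module with \emph{zero differential}.

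Next I would check the remaining clauses of \myref{qsplit}. The condition $F_{-1}X=0$ is immediate, and $X=\cup_p F_pX=\colim F_pX$ holds by construction of a cell complex. That the inclusions $F_{p-1}X\rtarr F_pX$ are $A$-split follows since $F_pX/F_{p-1}X=A\otimes K_p$ is a free, hence projective, $A$-module, so the short exact sequence of underlying graded $A$-modules $0\rtarr F_{p-1}X\rtarr F_pX\rtarr A\otimes K_p\rtarr 0$ splits; this is exactly the assertion in \myref{qsplit} (and of course the splitting need not respect differentials). Having produced, for each $p$, an isomorphism of DG $A$-modules $F_pX/F_{p-1}X\iso A\otimes K_p$ with $K_p$ degreewise free and $d=0$ on $K_p$, we have verified that $\{F_pX\}$ is a $q$-split filtration, and the vanishing of the differential on each $K_p$ is precisely the extra condition making it \emph{cellularly} $q$-split. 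This completes the proof.

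There is essentially no obstacle here: the only point requiring the slightest care is the identification $D^n_R/S^{n-1}_R\iso S^n_R$ as DG $R$-modules together with the observation that $\bF$ preserves the relevant colimits (pushouts and cokernels), both of which are routine. The content of the lemma is really just a bookkeeping translation between the model-categorical description of a cell complex and the combinatorial notion of a (cellularly) $q$-split filtration, setting up the generalization in \myref{qsplit} and the characterization results that follow.
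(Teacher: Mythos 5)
Your proof is correct and takes essentially the same route as the paper, which simply observes that $F_pX/F_{p-1}X$ is a direct sum of sphere modules $A\otimes S^n_R$ and is therefore of the form $A\otimes V_p$ with $V_p$ free and the differential zero. Your additional verifications (the pushout/cokernel identification and the $A$-splitting from freeness of the quotient) are fine but are just the details the paper leaves implicit.
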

\begin{proof} 
This holds since $F_pX/F_{p-1}X$ is a direct sum 
of sphere DG $A$-modules $A\otimes S^n_R$ and is thus of the form 
$A\otimes V_p$ for a free $R$-module $V_p$ with zero differential. 
\end{proof}

\begin{rmk} Even if we weaken the requirement on the quotients  $F_pX/F_{p-1}X$
by allowing them to be retracts of DG $A$-modules $A\otimes K_p$ such that the $K_p$ are
degreewise projective DG $R$-modules, it is not true that the induced filtration $W\cap F_pX$ 
on a retract $W$ of $X$ is $q$-split.
\end{rmk} 
 
\begin{rmk}  The term ``semi-free'' is sometimes used in the literature for a DG $A$-module
with a cellularly $q$-split filtration.  As we shall see, these are essentially the same
as the $q$-cell complexes. 
\end{rmk}

\begin{prop}\mylabel{qcharcofibrant0} If a DG $A$-module $X$ admits a cellularly $q$-split 
filtration or if $X$ is bounded below and admits a $q$-split filtration, then $X$ is $q$-semi-projective.
\end{prop}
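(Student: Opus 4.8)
The plan is to verify the two defining properties of $q$-semi-projectivity from \myref{qsemifree}: that the underlying $A$-module of $X$ is projective, and that $\Hom_A(X,Z)$ is $q$-acyclic for every $q$-acyclic DG $A$-module $Z$. The projectivity claim is immediate in either case: if $X$ has a $q$-split filtration, then the $A$-split exact sequences $0 \rtarr F_{p-1}X \rtarr F_pX \rtarr A\otimes K_p \rtarr 0$ show by induction that each $F_pX$ is a direct summand (as a graded $A$-module) of a direct sum of free $A$-modules $A\otimes K_p$, and passing to the colimit exhibits $X$ as a projective $A$-module; a fortiori this holds in the cellularly $q$-split case. So the work is entirely in the acyclicity of $\Hom_A(X,Z)$.

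For the acyclicity, I would first treat the \emph{cellularly $q$-split} case, where the argument is cleanest. Fix a $q$-acyclic DG $A$-module $Z$. Since each $F_pX/F_{p-1}X \iso A\otimes K_p$ with $K_p$ degreewise free and \emph{zero differential}, the adjunction \eqref{2variable} gives $\Hom_A(A\otimes K_p, Z) \iso \Hom(K_p,Z)$, and because $K_p$ is a direct sum of sphere complexes $S^n_R$, this is a product of shifted copies of $Z$, hence $q$-acyclic. The $A$-split short exact sequences of graded modules $0 \rtarr F_{p-1}X \rtarr F_pX \rtarr A\otimes K_p \rtarr 0$ are sent by the exact functor $\Hom_A(-,Z)$ to short exact sequences of DG $R$-modules
\[ 0 \rtarr \Hom_A(A\otimes K_p, Z) \rtarr \Hom_A(F_pX,Z) \rtarr \Hom_A(F_{p-1}X,Z) \rtarr 0, \]
and the associated long exact homology sequence, together with the fact that the outer term $\Hom_A(A\otimes K_p,Z)$ is $q$-acyclic, shows by induction that $\Hom_A(F_pX,Z)$ is $q$-acyclic whenever $\Hom_A(F_{p-1}X,Z)$ is. Finally $\Hom_A(X,Z) = \lim_p \Hom_A(F_pX,Z)$ is a limit of a tower of degreewise split epimorphisms of $q$-acyclic complexes; since the tower is Mittag-Leffler (in fact degreewise split surjective) there is no $\lim^1$ obstruction, so $\Hom_A(X,Z)$ is $q$-acyclic.

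For the \emph{bounded below $q$-split} case, the same skeletal filtration argument works once we handle the single new subtlety: the subquotients $A\otimes K_p$ now carry a nonzero differential coming from $K_p$. But $K_p$ is a bounded below degreewise free DG $R$-module, hence $q$-semi-projective by \myref{qcofibrant} (the bounded below implication (iii)$\Rightarrow$(i)), so $\Hom(K_p, \bU Z)$ is $q$-acyclic for $q$-acyclic $Z$; identifying $\Hom_A(A\otimes K_p,Z) \iso \Hom(K_p,\bU Z)$ via \eqref{2variable}, the subquotient terms are again $q$-acyclic, and the induction and $\lim/\lim^1$ step proceed verbatim. The main obstacle to watch is precisely the passage to the colimit $X = \cup_p F_pX$: one must be sure the comparison $\Hom_A(X,Z) \to \lim_p \Hom_A(F_pX,Z)$ is an isomorphism of DG $R$-modules (it is, since $\Hom_A(-,Z)$ turns the filtered colimit into the cofiltered limit) and that this limit of $q$-acyclic complexes along degreewise-split surjections is still $q$-acyclic. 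Everything else is a routine diagram chase with the long exact sequences.
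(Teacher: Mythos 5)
Your proof is correct, and it takes a genuinely different route from the paper at the limiting step, which is where all the subtlety lives. You and the paper argue identically that the underlying $A$-module is free, and that each subquotient term $\Hom_A(F_pX/F_{p-1}X,Z)\iso \Hom(K_p,\bU Z)$ is $q$-acyclic (zero differential plus degreewise freeness in the cellular case, \myref{qcofibrant} for the bounded below $K_p$ in the other). The difference is in how acyclicity of $\Hom_A(X,Z)$ is extracted: you induct up the sub-tower $\{F_pX\}$, using the $A$-split short exact sequences $0\rtarr F_{p-1}X\rtarr F_pX\rtarr A\otimes K_p\rtarr 0$ to show each $\Hom_A(F_pX,Z)$ is $q$-acyclic, and then identify $\Hom_A(X,Z)\iso \lim_p\Hom_A(F_pX,Z)$ and apply the Milnor $\lim$--${\lim}^1$ sequence, the ${\lim}^1$ term vanishing because the restriction maps are degreewise split surjective (again by the $A$-splitness of the filtration). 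The paper instead works with the quotient tower $\{X/F_pX\}$: its short exact sequences show that each map $H_*\Hom_A(X/F_pX,Z)\rtarr H_*\Hom_A(X/F_{p-1}X,Z)$ is an isomorphism, and one must then argue that the initial term $H_*\Hom_A(X,Z)$ vanishes, which the paper does by forming the exact couple of the decreasing filtration of $\Hom_A(X,Z)$ by the subcomplexes $\Hom_A(X/F_pX,Z)$ and invoking Boardman's conditional convergence theorem \cite[7.2]{Board} for the resulting right half-plane spectral sequence with entering differentials and $E^2=0$. Your version trades that machinery for an elementary Mittag-Leffler argument, using exactly the same structural input (the graded $A$-splittings), and each finite stage is honestly acyclic rather than merely mapping isomorphically to the next; the paper's version keeps the argument in the spectral-sequence and conditional-convergence language it uses repeatedly elsewhere in Part 2. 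Both are valid; yours is arguably the more self-contained, provided you keep explicit (as you essentially do) that $\Hom_A(-,Z)$ sends the $A$-split sequences to degreewise split short exact sequences of DG $R$-modules, which is what gives both the surjectivity of the tower maps and the long exact sequences driving the induction.
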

\begin{proof}  Assume that $X$ has a $q$-split filtration.
Successive splittings of filtration subquotients imply
that $X$ is isomorphic as an $A$-module (but not as a DG $A$-module) to $\oplus F_{p}X/F_{p-1}X$.  
Therefore $X$ is $A$-free.  More generally, each $X/F_pX$ splits correspondingly and we have $A$-split 
short exact sequences of DG $A$-modules
\[ 0\rtarr F_{p}X/F_{p-1}X \rtarr X/F_{p-1}X \rtarr X/F_{p}X \rtarr 0. \]
These give rise to short exact sequences of chain complexes 
\[  \Hom_A(X/F_pX,Z) \rtarr \Hom_A(X/F_{p-1}X,Z) 
\rtarr \Hom_A(F_{p}X/F_{p-1}X,Z). \] 
Observe that $\Hom_A(A\otimes K,Z)\iso  \Hom(K,\bU Z)$ for a DG $R$-module $K$ and a DG $A$-module $Z$.
Now let $Z$ be $q$-acyclic. We claim that each $\Hom_A(F_pX/F_{p-1}X, Z)$ is $q$-acyclic under either
of our hypotheses.  If $K$ is degreewise projective with zero differential, then 
$\Hom(K, \bU Z)$ is $q$-acyclic since the functor $\Hom(-,\bU Z)$ 
converts direct sums to cartesian products and since $\Hom(R,\bU Z)\iso Z$.  
 This implies the claim when the filtration on $X$ is cellularly
$q$-split. If $X$ is bounded below, then each $K_p$ is bounded below. By \myref{qcofibrant}, 
each $K_p$ is therefore $q$-cofibrant or equivalently $q$-semi-projective in $\sM_R$.  In 
particular, $\Hom(K_p,\bU Z)$ is $q$-acyclic and thus again each $\Hom_A(F_pX/F_{p-1}X, Z)$ is $q$-acyclic.

By the long exact homology sequences of our short exact sequences of chain complexes, each map
\[ H_*(\Hom_A(X/F_pX,Z)) \rtarr H_*(\Hom_A(X/F_{p-1}X,Z)) \] 
is an isomorphism.  It is not obvious that this implies that $H_*(\Hom_A(X,Z))=0$, but it does, by an application of Boardman's \cite[7.2]{Board}.  In detail, with 
\[ D^1_{p,q} = H_{p+q}(\Hom_A(X/F_{p-1}X,Z)) \ \ \ \text{and}\ \ \ 
   E^1_{p,q} =H_{p+q}(\Hom_A (F_pX/F_{p-1}X,Z)),\]   
our long exact sequences give an exact couple, and it gives rise to a right half-plane spectral sequence $E^r_{p,q}$ with differentials $d^r\colon E^r_{p,q}\rtarr E^r_{p+r,q-r-1}$ and with $E^2 = 0$. In Boardman's language, since we clearly have that $\lim_p D^1_{p,*-p} =0$ and $RE_{\infty}=0$ (see \cite[pp.~65, 67]{Board}), the spectral sequence converges conditionally to $\colim H_* D^1_{p,*-p}$, which is realized at $p=1$ by $H_*(\Hom_A(X,Z))=0$.  Applying \cite[7.2]{Board} to compare our spectral sequence to the spectral sequence of the identically zero exact couple, we see that $H_*(\Hom_A(X,Z))=0$.
\end{proof}

Note that requiring $X$ to be bounded below implicitly requires $A$ to be bounded below.
We put things together to prove the following results.

\begin{thm} \mylabel{qcharcofibrant}  Consider the following conditions on a DG $A$-module $X$.
\begin{enumerate}[(i)] 
\item $X$ is $q$-semi-projective.
\item $X$ is $q$-cofibrant.
\item $X$ is a retract of a DG $A$-module that admits a cellularly $q$-split filtration.
\item $X$ is a retract of a DG $A$-module that admits a $q$-split filtration.
\end{enumerate}
Conditions (i), (ii), and (iii) are equivalent and imply (iv). Moreover, if $X$ is bounded below, 
then (iv) implies (i). 
\end{thm}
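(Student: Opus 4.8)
The plan is to prove the cycle of implications (i) $\Rightarrow$ (ii) $\Rightarrow$ (iii) $\Rightarrow$ (i), then note (iii) $\Rightarrow$ (iv) trivially, and finally handle the converse (iv) $\Rightarrow$ (i) under the bounded below hypothesis. Several pieces are already in hand: (i) $\Rightarrow$ (ii) is \myref{qcharcofibration0} applied to $0 \rtarr X$, and the implications (iii) $\Rightarrow$ (i) and ``(iv) $\Rightarrow$ (i) when $X$ is bounded below'' are exactly \myref{qcharcofibrant0} combined with \myref{semifreeretract} (a retract of a $q$-semi-projective object is $q$-semi-projective). The implication (iii) $\Rightarrow$ (iv) is immediate since a cellularly $q$-split filtration is in particular a $q$-split filtration. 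So the only implication requiring real work is (ii) $\Rightarrow$ (iii): every $q$-cofibrant DG $A$-module is a retract of one admitting a cellularly $q$-split filtration.

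For (ii) $\Rightarrow$ (iii), I would argue as follows. By \myref{qmodelA}, a $q$-cofibrant DG $A$-module $X$ is a retract of a relative $\bF\cI_R$-cell complex with domain $0$, i.e.\ of an (ordinary) $q$-cell complex $Y$. By \myref{cellqsplit}, the cellular filtration $\{F_pY\}$ of a $q$-cell complex is cellularly $q$-split. Hence $X$ is a retract of a DG $A$-module $Y$ admitting a cellularly $q$-split filtration, which is precisely condition (iii). This step is genuinely short once one unwinds that ``$q$-cell complex'' in the sense used in this paper (see the \texttt{unotn} fixing ``$q$-cell complex'' to mean an ordinary $\bF\cI_R$-cell complex) has cellular subquotients that are direct sums of sphere modules $A \otimes S^n_R$, which have zero differential.

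Assembling the pieces: (i) $\Rightarrow$ (ii) is \myref{qcharcofibration0}; (ii) $\Rightarrow$ (iii) is the argument just given; (iii) $\Rightarrow$ (i) is \myref{qcharcofibrant0} together with \myref{semifreeretract}; this closes the equivalence of (i), (ii), (iii). The implication (iii) $\Rightarrow$ (iv) holds because any cellularly $q$-split filtration is a $q$-split filtration, so a retract of a module with the former is a retract of a module with the latter. Finally, if $X$ is bounded below and satisfies (iv), write $X$ as a retract of $Y$ where $Y$ admits a $q$-split filtration; since $X$ is bounded below we may, after discarding the part of the filtration in low degrees or simply observing that the relevant colimit computation is unaffected, apply \myref{qcharcofibrant0} to conclude $Y$ (hence, by \myref{semifreeretract}, $X$) is $q$-semi-projective. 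The one subtlety to flag: \myref{qcharcofibrant0} is stated for $X$ bounded below admitting a $q$-split filtration, and here it is the ambient $Y$, not $X$, that carries the filtration; since $X$ is a retract of $Y$ and bounded below, and retracts of $q$-semi-projective modules are $q$-semi-projective, it suffices to know $Y$ may be taken bounded below as well, which follows because the filtration subquotients $A \otimes K_p$ contributing to $X$ in a given degree are controlled by the bounded-below hypothesis on $X$ and $A$. This bookkeeping is the main (mild) obstacle; everything else is a direct appeal to results already proven.
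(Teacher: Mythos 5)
Your proof follows the paper's argument exactly: (i)$\Rightarrow$(ii) via \myref{qcharcofibration0}, (ii)$\Rightarrow$(iii) by writing a $q$-cofibrant object as a retract of a $q$-cell complex and invoking \myref{cellqsplit}, (iii)$\Rightarrow$(iv) trivially, and (iii), respectively (iv) with $X$ bounded below, $\Rightarrow$(i) via \myref{semifreeretract} together with \myref{qcharcofibrant0}. The bookkeeping point you flag in the last step (the $q$-split filtration lives on the ambient module $Y$ rather than on $X$, while the boundedness hypothesis is on $X$) is genuine but is passed over silently in the paper's own one-line proof as well, so your treatment matches, and in that respect slightly exceeds, the paper's level of detail.
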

\begin{proof}
\myref{qcharcofibration0} shows that (i) implies (ii), \myref{cellqsplit} implies
that (ii) implies (iii), and (iii) trivially implies (iv).  By \myref{semifreeretract}
and \myref{qcharcofibrant0}, (iii) and if $X$ is bounded below (iv) imply (i). 
\end{proof}

\begin{rmk}\mylabel{qcharcofibrationrmk}
In view of \myref{projectives}, the equivalent conditions (i), (ii), and (iii) are strictly stronger than (iv). This should be contrasted with the analogous result for the $r$-model structure, \myref{rcharcofibrant} below.
\end{rmk}

\begin{thm} \mylabel{qcharcofibration} A map $W\rtarr Y$ of DG $A$-modules is a $q$-cofibration
if and only if it is a monomorphism with $q$-cofibrant cokernel.
\end{thm}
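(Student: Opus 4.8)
The plan is to deduce this characterization of $q$-cofibrations from \myref{qcharcofibrant} together with the standard closure properties of cofibrations in a cofibrantly generated model category. For the ``only if'' direction, suppose $i\colon W\rtarr Y$ is a $q$-cofibration. First I would observe that pushing $i$ out along $W\rtarr 0$ exhibits the map $0\rtarr Y/W$ as a pushout of $i$, hence as a $q$-cofibration; thus $Y/W$ is $q$-cofibrant. It remains to see that $i$ is a monomorphism. Here I would invoke the fact (from \myref{qmodelA}, via \myref{enrichedboxslashclosure} and the discussion following \myref{SOA}) that $i$ is a retract of a relative $\bF\cI_R$-cell complex; since every map in $\bF\cI_R$ is a monomorphism and $\sM_A$ is an abelian category in which the relevant colimits (pushouts along monomorphisms, transfinite composites) preserve monomorphisms, every relative $\bF\cI_R$-cell complex is a monomorphism, and monomorphisms are closed under retracts. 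Hence $i$ is a monomorphism.

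For the ``if'' direction, suppose $i\colon W\rtarr Y$ is a monomorphism with $q$-cofibrant cokernel $X = Y/W$. I would argue that $i$ is a $q$-semi-projective extension in the sense of the definition preceding \myref{qcharcofibration0}: by \myref{qcharcofibrant} the cokernel $X$ is $q$-semi-projective, so the monomorphism $i$ with $q$-semi-projective cokernel is exactly a $q$-semi-projective extension. Then \myref{qcharcofibration0} directly gives that $i$ is a $q$-cofibration. (One small point to check is that the definition of $q$-semi-projective extension as stated requires $X$ to be $q$-semi-projective, which is precisely condition (i) of \myref{qcharcofibrant}, so no gap arises; the parenthetical remark there that the extension is automatically $A$-split follows since $X$ is a projective $A$-module, making the underlying short exact sequence of $A$-modules split.)

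The main obstacle I anticipate is the monomorphism claim in the ``only if'' direction: one must be slightly careful that retracts of relative $\bF\cI_R$-cell complexes really are monomorphisms, which relies on the fact that in the abelian category $\sM_A$ filtered colimits are exact and pushouts along monomorphisms remain monomorphisms (so each stage $F_nY\rtarr F_{n+1}Y$ of a cell complex is a monomorphism, whence so is the transfinite composite, whence so is any retract). This is routine but should be stated. Everything else is a direct appeal to \myref{qcharcofibrant} and \myref{qcharcofibration0}. Note also that this theorem is the promised $\sM_A$-analogue of \myref{qcofibration}, and the proof structure is parallel: identify cofibrations with semi-projective extensions, then translate that into the monomorphism-plus-cofibrant-cokernel language.
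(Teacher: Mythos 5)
Your proof is correct and follows essentially the same route as the paper: the reverse direction via \myref{qcharcofibrant} (cofibrant $\Rightarrow$ $q$-semi-projective) and \myref{qcharcofibration0}, and the forward direction (which the paper dismisses as evident) by the pushout along $W\rtarr 0$ plus the retract-of-cell-complex argument for the monomorphism claim. Your added care about monomorphisms being preserved by pushouts, sequential colimits, and retracts is a reasonable expansion of what the paper leaves implicit, but it is not a different approach.
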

\begin{proof} The forward implication is evident and the reverse implication holds
by Theorems \ref{qcharcofibration0} and \ref{qcharcofibrant}. 
\end{proof}

\subsection{Characterization of $r$-cofibrant objects and $r$-cofibrations}\label{rchar}

This section is parallel to \S\ref{qchar}.  Its goal is to give more explicit descriptions of the $r$-cofibrant objects and $r$-cofibrations in $\sM_A$.  By \myref{enrichedDGAcomparison}, these are retracts of enriched 
$r$-cell complexes, but we want a more tractable characterization.

\begin{defn}\mylabel{rsemifree} A DG $A$-module $X$
is \emph{$r$-semi-projective} if its underlying $A$-module is relatively projective and if 
$\Hom_A(X,Z)$ is a $q$-acyclic DG $R$-module for any $r$-acyclic DG $A$-module $Z$.  
\end{defn}

\begin{defn} 
An $R$-split monomorphism $i\colon W\rtarr X$ of DG $A$-modules is an \emph{$r$-semi-projective extension}
if $X/W$ is $r$-semi-projective. By \myref{proj}, the extension is then $A$-split.  
\end{defn}

\begin{lem} A retract of an $r$-semi-projective $A$-module is $r$-semi-projective.
A retract of an $r$-semi-projective extension is an $r$-semi-projective extension.
\end{lem}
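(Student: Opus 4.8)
The statement to prove is the retract-closure lemma for $r$-semi-projective $A$-modules and $r$-semi-projective extensions, the exact analogue of \myref{semifreeretract}. This should be a short formal argument mirroring the proof one would give in the $q$-setting.

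\medskip

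The plan is to handle the two assertions separately, in each case using that the two defining conditions of $r$-semi-projectivity are each closed under retracts. First I would treat the object case. Suppose $X$ is a retract of an $r$-semi-projective $A$-module $Y$, with retraction data $X \xrightarrow{\,s\,} Y \xrightarrow{\,t\,} X$, $ts = \id_X$, all maps of DG $A$-modules. I must check the two conditions of \myref{rsemifree}. For the first, the underlying $A$-module of $X$ is a retract of the underlying $A$-module of $Y$, which is relatively projective; a retract (as an $A$-module) of a relatively projective $A$-module is relatively projective, essentially by definition (a summand of a summand of a relatively free module is again such a summand), so $X$ is relatively projective. For the second, let $Z$ be any $r$-acyclic DG $A$-module. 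The functor $\Hom_A(-,Z)$ is contravariant and additive, so it sends the retract diagram $X \to Y \to X$ to a retract diagram $\Hom_A(X,Z) \to \Hom_A(Y,Z) \to \Hom_A(X,Z)$ of DG $R$-modules, exhibiting $\Hom_A(X,Z)$ as a retract of $\Hom_A(Y,Z)$. Since $Y$ is $r$-semi-projective, $\Hom_A(Y,Z)$ is $q$-acyclic, i.e.\ has zero homology; a retract of a DG $R$-module with zero homology has zero homology, since $H_*$ is an additive functor and a retract of the zero object is zero. Hence $\Hom_A(X,Z)$ is $q$-acyclic, and $X$ is $r$-semi-projective.

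\medskip

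Next I would treat the extension case. Suppose $i' \colon W' \rtarr X'$ is a retract of an $r$-semi-projective extension $i \colon W \rtarr X$; concretely this means we have a commuting diagram in the arrow category exhibiting $i'$ as a retract of $i$, so that in particular $X'/W'$ is a retract, as a DG $A$-module, of $X/W$ (retracts in $\sM_A^{\mathbf 2}$ induce retracts on cokernels, since cokernel is a functor that preserves retracts). By hypothesis $X/W$ is $r$-semi-projective, so by the object case just proved $X'/W'$ is $r$-semi-projective. It remains only to note that $i'$ is an $R$-split monomorphism. One sees this directly: $i'$ is a retract of the $R$-split monomorphism $i$, and being an $R$-split monomorphism is a retract-closed property in $\sM_R$ — equivalently, $i'$ has the left lifting property (in $\sM_R$) against the $r$-acyclic $r$-fibrations by \myref{reassure}, \myref{rhdefns}, and the $q$/$r$ comparison, and retracts of such maps again have that lifting property. (Alternatively, an explicit splitting of $\bU i'$ is obtained by conjugating a splitting of $\bU i$ with the retract data.) Thus $i'$ is an $r$-semi-projective extension.

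\medskip

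I do not anticipate a genuine obstacle here: every ingredient is a standard retract-closure fact about additive functors, kernels/cokernels, homology, and lifting properties. The only point requiring a moment's care is the bookkeeping that a retract in the arrow category $\sM_A^{\mathbf 2}$ of the inclusion $i$ really does induce a retract of $X/W$ as a DG $A$-module — this follows because passing to the cokernel of a monomorphism is functorial on the relevant subcategory of $\sM_A^{\mathbf 2}$ — and, symmetrically, that the $R$-split monomorphism condition transfers along the retract; both are routine. No delicate homological-algebra input (no spectral sequences, no boundedness hypotheses) is needed, in contrast with the harder characterization theorems that follow.
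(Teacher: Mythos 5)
Your proof is correct; the paper offers no argument for this lemma at all (like its $q$-analogue \myref{semifreeretract}, it is treated as immediate from the definitions), and your verification — retracts preserve relative projectivity of the underlying $A$-module, acyclicity of $\Hom_A(-,Z)$, the cokernel, and the $R$-splitting (by conjugating with the retract data) — is exactly the routine check the authors intend. Nothing further is needed.
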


\begin{prop}\mylabel{rcharcofibration0}  If a map $i\colon W\rtarr X$ of DG $A$-modules 
is an $r$-semi-projective extension, then it is an $r$-cofibration. In particular, an $r$-semi-projective
$A$-module $X$ is $r$-cofibrant.
\end{prop}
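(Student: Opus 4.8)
The plan is to transpose the proof of \myref{qcharcofibration0} almost verbatim, replacing its three uses of absolute data---$A$-projectivity of the quotient, $R$-linear sections of epimorphisms of $A$-modules, and $q$-acyclicity of $\ker p$---by the relative analogues supplied by \myref{proj} and by the structure of $r$-acyclic $r$-fibrations. Write $P = X/W$, which is $r$-semi-projective by hypothesis, and let $p\colon E\rtarr B$ be an $r$-acyclic $r$-fibration; one must produce a lift $\la$ in every commutative square
\[ \xymatrix{ W \ar[r]^-g \ar[d]_i & E \ar[d]^p \\ X \ar@{-->}[ur]^-{\la} \ar[r]_-f & B. } \]

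First I would record the two structural inputs that differ from the $q$-case. Since $p$ is an $r$-fibration, $\bU p$ is an $R$-split epimorphism, so $p$ is a degreewise epimorphism of $A$-modules and $Z := \ker p$ is a DG $A$-module sitting in an $R$-split short exact sequence $0\rtarr Z\rtarr E\rtarr B\rtarr 0$; since $p$ is also an $r$-equivalence, \myref{Rsplitrh} forces $\bU Z$ to be contractible, that is, $Z$ is an $r$-acyclic DG $A$-module in the sense of \myref{rsemifree}. Moreover, since $P$ is relatively projective the $r$-semi-projective extension $i$ is $A$-split (by \myref{proj}), so $X\iso W\oplus P$ as graded $A$-modules and the differential of $X$ has the form $d(w,x) = (d(w)+t(x),d(x))$ for a degree $-1$ map of $A$-modules $t\colon P\rtarr W$ with $dt+td = 0$; writing $f = f_1 + f_2$ with $f_1 = fi$ and $f_2$ the restriction of $f$ to the summand $P$, the identity $fd = df$ becomes $df_2 = f_1 t + f_2 d$ and commutativity of the square gives $pg = f_1$, exactly as in \myref{qcharcofibration0}.

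The construction of the lift then runs exactly as in \myref{qcharcofibration0}. Because $p$ is an $R$-split epimorphism of $A$-modules and $P$ is relatively projective, the lifting property of the projective class $(\sE,\sP)$ of \myref{proj} produces a map $\tilde f_2\colon P\rtarr E$ of graded $A$-modules with $p\tilde f_2 = f_2$. Set $k = d\tilde f_2 - \tilde f_2 d - gt\colon P\rtarr E$, a degree $-1$ map of graded $A$-modules; then $pk = df_2 - f_2 d - f_1 t = 0$, so $k$ factors through $Z$, and $dk + kd = -g(dt + td) = 0$ exhibits $k$ as a degree $-1$ cycle in the DG $R$-module $\Hom_A(P,Z)$. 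Since $P$ is $r$-semi-projective and $Z$ is $r$-acyclic, $\Hom_A(P,Z)$ is $q$-acyclic, so $k$ is a boundary: $k = d\ell - \ell d$ for some degree $0$ map of $A$-modules $\ell\colon P\rtarr Z\subset E$. Then $\la_2 := \tilde f_2 - \ell$ satisfies $p\la_2 = f_2$ and $d\la_2 - \la_2 d = gt$, so the map $\la\colon X\rtarr E$ given by $g$ on $W$ and $\la_2$ on $P$ is a map of DG $A$-modules with $\la i = g$ and $p\la = f$. Hence $i$ has the left lifting property against every $r$-acyclic $r$-fibration, i.e.\ $i$ is an $r$-cofibration; taking $W = 0$ yields the final assertion.

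The only genuine departure from the $q$-case---and the step deserving care---is the identification in the second paragraph: that an $r$-acyclic $r$-fibration is an $R$-split epimorphism of $A$-modules whose kernel is $R$-contractible rather than merely $R$-acyclic, so that $Z = \ker p$ may legitimately be fed into \myref{rsemifree}, together with the observation that the lifting property built into the projective class $(\sE,\sP)$ of \myref{proj} is precisely the relative substitute for ``$P$ projective and $p$ epi'' used in \myref{qcharcofibration0}. Once these substitutions are in place, the sign bookkeeping in $\Hom_A$ and the verifications $pk = 0$, $dk + kd = 0$, and $d\la_2 - \la_2 d = gt$ are formally identical to those in the proof of \myref{qcharcofibration0}, so I would present them only briefly.
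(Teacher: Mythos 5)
Your proposal is correct and is essentially the paper's own proof: the paper disposes of this proposition by saying the argument of \myref{qcharcofibration0} goes through verbatim with $q$ replaced by $r$ and ``projective'' by ``relatively projective'', which is exactly the transposition you carry out (your identification of $\ker p$ as $r$-acyclic and your use of \myref{proj} to produce $\tilde f_2$ are precisely the intended substitutions).
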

\begin{proof}
Changing $q$ to $r$ and projective to relatively projective, the argument is the same as 
the proof of \myref{qcharcofibration0}.
\end{proof}

Just as $r$-cell complexes generalize $q$-cell complexes, we have the following generalization
of a $q$-split filtration.

\begin{defn} An {\em $r$-split filtration} of a DG $A$-module $X$ is an increasing sequence of $R$-split inclusions $F_{p-1} X\rtarr F_{p}X$ of DG $A$-submodules such that $F_{-1}X = 0$,  $X = \cup_p  F_p X$, 
and each $F_pX/F_{p-1}X$ is isomorphic as a DG $A$-module to a direct summand of $A\otimes K_p$ for 
some DG $R$-module $K_p$. By \myref{proj} applied to the $R$-split quotient maps $F_{p}X\rtarr F_pX/F_{p-1}X$, the inclusions $F_{p-1} X\rtarr F_{p}X$ are $A$-split (but not DG $A$-split). The filtration is 
{\em cellularly $r$-split} if the differential on each $K_p$ is zero.
\end{defn}  

By the same proof as that of \myref{cellqsplit}, this generalizes $r$-cell complexes.

\begin{lem}\mylabel{cellrsplit} The cellular filtration of an $r$-cell complex is cellularly $r$-split.
\end{lem}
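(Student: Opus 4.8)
The plan is to mimic the proof of Lemma~\ref{cellqsplit} almost verbatim, adjusting only the places where the word ``free'' is replaced by ``relatively projective.'' First I would recall that an $r$-cell complex is, by the conventions of \myref{cellob2} and the notation fixed for enriched $\bF\cI_R$-cell complexes, the colimit of a sequence $F_0X\rtarr F_1X\rtarr\cdots$ in which $F_0X = 0$ and each $F_pX$ is obtained from $F_{p-1}X$ by a pushout along a coproduct of maps of the form $(\bF S^{n-1}_R \rtarr \bF D^n_R)\otimes V$ for $R$-modules $V$; that is, along maps of the form $\bF S^{n-1}_R\otimes V \rtarr \bF D^n_R\otimes V$. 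Taking the quotient by $F_{p-1}X$ of such a pushout square, and using that $\bF$ is a left adjoint (so commutes with colimits, in particular with cofibers of coproducts of the attaching maps), one finds that the filtration quotient $F_pX/F_{p-1}X$ is a direct sum of DG $A$-modules of the form $\bF(S^n_R)\otimes V \iso A\otimes(S^n_R\otimes V)$, since the cofiber of $\bF S^{n-1}_R\otimes V\rtarr \bF D^n_R\otimes V$ is $\bF S^n_R\otimes V$, exactly as in the unenriched case where the cofiber of $S^{n-1}_R\rtarr D^n_R$ is $S^n_R$.

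The second step is to identify this quotient with $A\otimes K_p$ for a DG $R$-module $K_p$ with zero differential, thereby verifying the ``cellularly $r$-split'' conclusion. Set $K_p = \bigoplus_\alpha S^{n_\alpha}_R\otimes V_\alpha$, where the sum runs over the cells attached at stage $p$. Each $S^n_R\otimes V$ is the $R$-module $V$ placed in degree $n$ with zero differential, so $K_p$ has zero differential, and $F_pX/F_{p-1}X \iso A\otimes K_p$ as DG $A$-modules. This exhibits $\{F_pX\}$ as a cellularly $r$-split filtration; note that $F_pX/F_{p-1}X$ is not merely a direct summand of $A\otimes K_p$ but actually isomorphic to it, so in particular it is an $r$-split filtration as well, and the inclusions $F_{p-1}X\rtarr F_pX$ are $A$-split by \myref{proj} (or directly, since they are coproduct summand inclusions after forgetting differentials).

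There is essentially no obstacle here: the only thing to be careful about is bookkeeping with the conventions on filtration indexing introduced after \myref{projcofA} (the shift so that $F_{-1}X = 0$ and the nontrivial terms begin with $F_0X$), and the elementary verification that the cofiber of the enriched generating cofibration $\bF S^{n-1}_R\otimes V\rtarr \bF D^n_R\otimes V$ is $\bF S^n_R\otimes V$ with zero differential — this is just $(D^n_R/S^{n-1}_R)\otimes V$ tensored up along $\bF$, and $D^n_R/S^{n-1}_R \iso S^n_R$. So the proof is a one-line reference: ``By the same proof as that of \myref{cellqsplit}, replacing free $R$-modules by the $R$-modules $V_q$ allowed in enriched cell complexes, the filtration quotients $F_pX/F_{p-1}X$ are direct sums of DG $A$-modules $A\otimes S^n_R\otimes V$ and are thus of the form $A\otimes K_p$ for a DG $R$-module $K_p$ with zero differential.'' If one wanted to spell it out, the display of the relevant pushout square and the computation of its cofiber would be the only content.
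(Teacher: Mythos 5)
Your proposal is correct and is essentially the paper's argument: the paper proves this lemma by the same one-line computation as \myref{cellqsplit}, observing that the filtration quotients of an enriched cell complex are direct sums of modules $A\otimes S^n_R\otimes V$ and hence of the form $A\otimes K_p$ with $K_p$ having zero differential. Your extra bookkeeping (cofibers of the enriched generating cofibrations, the filtration-indexing convention) only spells out what the paper leaves implicit.
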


The following result is considerably stronger than its analogue \myref{qcharcofibrant0}.

\begin{thm}\mylabel{rcharcofibrant0} If a DG $A$-module $X$ admits an $r$-split 
filtration then $X$ is $r$-semi-projective.
\end{thm}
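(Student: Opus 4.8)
The plan is to mimic the proof of Proposition~\ref{qcharcofibrant0}, the crucial simplification being that an $r$-acyclic DG $A$-module $Z$ is contractible as a DG $R$-module; this is far stronger than $q$-acyclicity and is exactly what lets us dispense with both the cellular hypothesis and the bounded below hypothesis needed in the $q$-case. There are two things to check: that the underlying $A$-module of $X$ is relatively projective, and that $\Hom_A(X,Z)$ is $q$-acyclic for every $r$-acyclic DG $A$-module $Z$.

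For the first point, note that since each inclusion $F_{p-1}X \rtarr F_pX$ in an $r$-split filtration is $A$-split, the underlying graded $A$-module of $X$ is isomorphic to $\bigoplus_{p\geq 0} F_pX/F_{p-1}X$. By hypothesis each summand is a direct summand, as a DG $A$-module, of $A\otimes K_p$ for some DG $R$-module $K_p$, so the underlying $A$-module of $X$ is a direct summand of $\bigoplus_p (A\otimes K_p) \iso A\otimes\bigl(\bigoplus_p K_p\bigr) = \bF\bigl(\bigoplus_p K_p\bigr)$. Thus it is a direct summand of a relatively free $A$-module and is therefore relatively projective.

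For the second point, let $Z$ be $r$-acyclic. The $A$-split short exact sequences $0\rtarr F_pX/F_{p-1}X \rtarr X/F_{p-1}X \rtarr X/F_pX \rtarr 0$ give, upon applying $\Hom_A(-,Z)$, short exact sequences of DG $R$-modules
\[ 0 \rtarr \Hom_A(X/F_pX,Z) \rtarr \Hom_A(X/F_{p-1}X,Z) \rtarr \Hom_A(F_pX/F_{p-1}X,Z) \rtarr 0. \]
Here $\Hom_A(F_pX/F_{p-1}X,Z)$ is a DG $R$-module direct summand of $\Hom_A(A\otimes K_p,Z)\iso \Hom(K_p,\bU Z)$. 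Since $\bU Z$ is contractible, the functor $\Hom(K_p,-)$ carries a contracting homotopy of $\bU Z$ to one on $\Hom(K_p,\bU Z)$, so this complex, and hence its retract $\Hom_A(F_pX/F_{p-1}X,Z)$, is contractible and in particular $q$-acyclic. The long exact homology sequences then show that each map $H_*(\Hom_A(X/F_pX,Z)) \rtarr H_*(\Hom_A(X/F_{p-1}X,Z))$ is an isomorphism.

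It remains to deduce $H_*(\Hom_A(X,Z)) = 0$ from these stepwise isomorphisms, and this last step — the main obstacle, since the filtration is neither finite nor assumed bounded so one cannot merely telescope — is handled by running the Boardman convergence argument from the proof of Proposition~\ref{qcharcofibrant0} verbatim: the long exact sequences assemble into an exact couple with $E^1 \equiv 0$, the associated spectral sequence converges conditionally with $\lim_p D^1_{p,*-p} = 0$ and $RE_\infty = 0$, and \cite[7.2]{Board} then forces $H_*(\Hom_A(X,Z)) = 0$.
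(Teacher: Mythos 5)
Your proof is correct and is essentially the paper's argument: the paper proves this result by running the proof of \myref{qcharcofibrant0} verbatim, with exactly your key change that $\Hom_A(A\otimes K_p,Z)\iso\Hom(K_p,\bU Z)$ is contractible because $\bU Z$ is chain homotopy equivalent to $0$, which removes the projectivity/boundedness hypotheses, and then invoking the same Boardman convergence argument. Your explicit handling of the retract summands and of relative projectivity of the underlying $A$-module just spells out details the paper leaves implicit.
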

\begin{proof}
The argument is exactly like the proof of \myref{qcharcofibrant0}.
The key change is that $\Hom_A(A\otimes K,Z)\iso \Hom(K,\bU Z)$ is 
$q$-acyclic for any DG $R$-module $K$, not necessarily degreewise $R$-projective, 
since $\bU Z$ is $r$-acyclic and thus chain homotopy equivalent to $0$.
This eliminates the need for a bounded below hypothesis.
\end{proof} 

\begin{thm} \mylabel{rcharcofibrant}  The following conditions on a DG $A$-module $X$ are equivalent.
\begin{enumerate}[(i)] 
\item $X$ is $r$-semi-projective.
\item $X$ is $r$-cofibrant.
\item $X$ is a retract of a DG $A$-module that admits a cellularly $r$-split filtration.
\item $X$ is a retract of a DG $A$-module that admits an $r$-split filtration.
\end{enumerate}
\end{thm}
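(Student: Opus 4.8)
The plan is to prove the cycle of implications (i) $\Rightarrow$ (ii) $\Rightarrow$ (iii) $\Rightarrow$ (iv) $\Rightarrow$ (i), exactly paralleling the argument for the $q$-model structure in \myref{qcharcofibrant}, but now exploiting the crucial simplification that $r$-acyclic DG $A$-modules are contractible as DG $R$-modules. The implication (i) $\Rightarrow$ (ii) is \myref{rcharcofibration0}, applied to the extension $0\rtarr X$. For (ii) $\Rightarrow$ (iii): by \myref{enrichedDGAcomparison} an $r$-cofibrant object is a retract of an enriched $\bF\cI_R$-cell complex, i.e.\ an $r$-cell complex, and \myref{cellrsplit} says its cellular filtration is cellularly $r$-split; this gives (iii). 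The implication (iii) $\Rightarrow$ (iv) is trivial, since a cellularly $r$-split filtration is in particular an $r$-split filtration.

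The substance is in (iv) $\Rightarrow$ (i). Here I would first handle the case where $X$ itself admits an $r$-split filtration: that is \myref{rcharcofibrant0}. For the general case, suppose $X$ is a retract of such a $Y$. Then $\Hom_A(X,Z)$ is a retract of $\Hom_A(Y,Z)$ for any DG $A$-module $Z$, and by \myref{rcharcofibrant0} the latter is $q$-acyclic whenever $Z$ is $r$-acyclic; a retract of a $q$-acyclic DG $R$-module is $q$-acyclic (indeed $\sW_q$ is closed under retracts), so $\Hom_A(X,Z)$ is $q$-acyclic. It remains to see that the underlying $A$-module of $X$ is relatively projective: the underlying $A$-module of $Y$ is a successive $A$-split extension of the summands of the $A\otimes K_p$, hence (by successive splittings, as in the proof of \myref{rcharcofibrant0}) a direct summand of a relatively free $A$-module, hence relatively projective; and a retract of a relatively projective $A$-module is relatively projective, by \myref{proj} (it is the statement that $\sP$ is closed under retracts, which is immediate from condition (ii) of a projective class). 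Thus $X$ is $r$-semi-projective, giving (i).

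I do not expect a genuine obstacle here, since every ingredient has already been assembled: the only real input, \myref{rcharcofibrant0}, replaces the delicate Boardman spectral sequence convergence argument of \myref{qcharcofibrant0} with the trivial observation that $\Hom_A(A\otimes K,Z)\iso\Hom(K,\bU Z)$ is $q$-acyclic for \emph{every} DG $R$-module $K$ when $\bU Z$ is contractible, so no bounded-below hypothesis is needed anywhere. The mild care required is simply to confirm that each of the four classes --- $r$-semi-projective $A$-modules, $r$-cofibrant objects, retracts of cellularly $r$-split things, retracts of $r$-split things --- is manifestly closed under retracts, so that the retract clauses in (iii) and (iv) cause no trouble; this is recorded in the lemma preceding \myref{rcharcofibration0} for the first, is automatic for the second and third, and follows for the fourth from the argument just sketched.

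In short, the proof is: (i) $\Rightarrow$ (ii) by \myref{rcharcofibration0}; (ii) $\Rightarrow$ (iii) by \myref{enrichedDGAcomparison} and \myref{cellrsplit}; (iii) $\Rightarrow$ (iv) trivially; and (iv) $\Rightarrow$ (i) by \myref{rcharcofibrant0} together with the fact that $r$-semi-projectivity passes to retracts, using that $\sW_q$ and the class of relatively projective $A$-modules are both retract-closed.
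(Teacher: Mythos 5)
Your proposal is correct and follows essentially the same route as the paper: (i) $\Rightarrow$ (ii) by \myref{rcharcofibration0}, (ii) $\Rightarrow$ (iii) by \myref{cellrsplit} (with the retract-of-$r$-cell-complex description from \myref{enrichedDGAcomparison}), (iii) $\Rightarrow$ (iv) trivially, and (iv) $\Rightarrow$ (i) by \myref{rcharcofibrant0}. Your explicit handling of the retract step in (iv) $\Rightarrow$ (i) is exactly the content of the unlabeled lemma on retracts of $r$-semi-projective modules that the paper invokes implicitly, so there is no substantive difference.
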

\begin{proof}
\myref{rcharcofibration0} shows that (i) implies (ii), \myref{cellrsplit} implies
that (ii) implies (iii), and (iii) trivially implies (iv).  By \myref{rcharcofibrant0}, 
(iv) implies (i). 
\end{proof}

\begin{thm}\mylabel{rcharcofibration} A map $W\rtarr Y$ of DG $A$-modules is an $r$-cofibration
if and only if it is an $R$-split monomorphism with $r$-cofibrant cokernel.
\end{thm}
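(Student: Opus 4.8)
The plan is to mirror the proof of the analogous statement for $q$-cofibrations, \myref{qcharcofibration}: the reverse implication will follow formally from \myref{rcharcofibration0} and \myref{rcharcofibrant}, while the forward implication will be handled by two standard closure arguments.

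For the reverse implication, suppose $i\colon W\rtarr Y$ is an $R$-split monomorphism whose cokernel $Y/W$ is $r$-cofibrant. By the equivalence of (i) and (ii) in \myref{rcharcofibrant}, $Y/W$ is then $r$-semi-projective, so by definition $i$ is an $r$-semi-projective extension, and \myref{rcharcofibration0} gives that $i$ is an $r$-cofibration. This direction is just bookkeeping with the definitions of \S\ref{rchar}.

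For the forward implication, suppose $i\colon W\rtarr Y$ is an $r$-cofibration. That $Y/W$ is $r$-cofibrant is immediate: cofibrations are stable under pushout in any model category, and pushing $i$ out along $W\rtarr 0$ exhibits $0\rtarr Y/W$ as an $r$-cofibration. The substantive point is that $i$ is an $R$-split monomorphism. I would deduce this by inspecting the generators. Each building block $(A\otimes S^{n-1}_R\rtarr A\otimes D^n_R)\otimes V$ of an enriched relative $\bF\cI_R$-cell complex is the inclusion of a graded $R$-module summand, since $S^{n-1}_R\rtarr D^n_R$ is such an inclusion and tensoring preserves split sequences. The class of $R$-split monomorphisms of graded $R$-modules is closed under coproducts, pushouts, and sequential colimits; combined with the fact that $\bU$ creates colimits in $\sM_A$ (as in the proof of \myref{FJacyclicity}), this shows that every enriched relative $\bF\cI_R$-cell complex is an $R$-split monomorphism, hence in particular a monomorphism. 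By \myref{enrichedDGAcomparison} and \myref{enrichedboxslashclosure}, $i$ is therefore an arrow-category retract of such a cell complex, and a retract of an $R$-split monomorphism is again an $R$-split monomorphism.

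The one place where a little care is needed is the closure of $R$-split monomorphisms under the transfinite (here, sequential) composites appearing in enriched cell complexes: one observes that a countable composite of split inclusions of graded modules splits because the colimit decomposes as the source together with the direct sum of the successive cokernels. Everything else is formal once the characterizations of \S\ref{rchar} are in hand. As an alternative route to the $R$-splitting, one could note that an $r$-cofibration is an $h$-cofibration by \myref{hqrSum3} and check that the underlying-module functor $\bU$ carries $h$-cofibrations in $\sM_A$ to $h$-cofibrations in $\sM_R$, which by \myref{reassure} are precisely the $R$-split monomorphisms; but the retract-of-cell-complex argument has the advantage of staying entirely within results already established here.
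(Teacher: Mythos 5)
Your proposal is correct and follows the paper's route: the reverse implication is exactly the paper's appeal to \myref{rcharcofibration0} and \myref{rcharcofibrant}, while the forward implication, which the paper dismisses as evident, is the expected argument you spell out (pushout stability for the cokernel, and $R$-splitness via retracts of enriched $\bF\cI_R$-cell complexes, whose generating maps, coproducts, pushouts, and sequential composites are $R$-split monomorphisms). Your closure checks, including the sequential-colimit splitting, are sound, so this is a faithful, more detailed version of the paper's proof.
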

\begin{proof} The forward implication is evident and the reverse implication holds
by Theorems \ref{rcharcofibration0} and \ref{rcharcofibrant}. 
\end{proof}

\subsection{From $r$-cell complexes to split DG $A$-modules}\label{splitone}

The following definition combines \cite[1.2 and 1.4]{GM}. It is implicit in 
\cite{M}.\footnote{\cite{M} was submitted in 1967, the same year that model categories first appeared \cite{Quillen}.}  It specifies a generalized variant of the notion of an $r$-split 
filtered DG $A$-module, as we shall see. The generalization will allow explicit descriptions of
cofibrant approximations that do not come in nature as retracts of $q$- or $r$-cell complexes.
We now focus more on the splitting than the filtration since that gives us more precise calculational
control. Up to minor streamlining, we adopt the terminology of \cite{GM}.

\begin{defn}\mylabel{ancdef} A DG $A$-module $X$ is {\em split} if the following properties hold. As an $A$-module,
\[ X = \sum_{p\geq 0}  A\otimes \bar{X}_{p,*} \]
for a sequence of graded $R$-modules $\bar{X}_{p,*}$ (not DG $R$-modules) graded so that the component of 
$\bar{X}_{p,*}$ in degree $p+q$ is $\bar{X}_{p,q}$.  Then 
\[  X_n = \sum_{i+p+j =n} A_i\otimes \bar{X}_{p,j}.  \]
We view $\bar{X}$ as a bigraded $R$-module, and then $X$ itself is bigraded by
\[  X_{p,q} = \sum_{i+j = q} A_i\otimes \bar{X}_{p,j}. \]
We require $X$ to be a filtered DG $A$-module with 
\[ F_pX = \sum_{0\leq k\leq p} A\otimes \bar{X}_{k,*}. \]
Then the differential on $X$ necessarily has the form
\begin{equation}\label{diff1}
  d = \sum_{r\geq 0} d^r, \ \  d^r\colon X_{p,q} \rtarr X_{p-r,q+r-1}, \ \ \text{where} \sum_{i+j=r} d^id^j = 0. 
\end{equation}
Since $X$ is a DG $A$-module, it follows that
\begin{equation}\label{diff2}
d^0(ax) = d(a)x + (-1)^{deg\, a}ad^0(x)  \ \ \text{and} \ \ d^r(ax) = (-1)^{deg\, a} ad^r(x) \ \ \text{for}\ \ r\geq 1, 
\end{equation}
where $a\in A$ and $x\in \bar{X}$. We say that $X$ is \emph{cell-like} if $d^0= 0$ on $\bar{X}$.  We say that $X$ is {\em distinguished} if it is cell-like and each $\bar{X}_{p,q}$ is a free $R$-module.
\end{defn}

\begin{ex}
The total complex $\TotP$ of a projective resolution in the sense of \S\ref{versustoo} is a split DG $A$-module, 
but it is not cellular in general. 
\end{ex}

It is no accident that the $d^r$ look like differentials in a spectral sequence, as we shall see in \S\ref{EMSS}.
It is tempting to require $d^0(\bar{X})=0$ in the definition of split, but that would rule out the bar construction and projective resolutions; see \S\ref{barsec} and
\S\ref{versustoo}. We cannot resist inserting the following quotes from \cite[pp.~3--4]{GM} about split DG $A$-modules.
``These objects are precisely the most general filtered DG $A$-modules that can be expected to be
of computational value. $\dots$ For historical reasons, differential homological algebra has been developed
using only those split objects such that $d^r = 0$ for $r>1$ ($d^0$ and $d^1$ are usually called the
`internal' and `external' differentials).  This restriction is unnecessary and, in our view, undesirable.''  
We now see that use of multicomplexes, as defined by Wall \cite{Wall}, is dictated by model category theory.

\begin{prop}\mylabel{neatojet} An $r$-cell complex $X$ in $\sM_A$ has a canonical structure 
of a cell-like split DG $A$-module.
The $q$-cell complexes $X$ are characterized as those $r$-cell complexes that are distinguished when 
considered as split DG $A$-modules.
\end{prop}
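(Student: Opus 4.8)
The plan is to unwind the definitions on both sides and exhibit the required splitting data directly. First I would recall that, by \myref{cellob2} with $\sV = \sV_R$ and $\cI = \bF\cI_R$, an $r$-cell complex $X$ is built as $X = \colim_p F_pX$ where $F_0X = 0$ and $F_{p}X$ is obtained from $F_{p-1}X$ by a pushout of a coproduct of maps of the form $(\bF S^{n-1}_R)\otimes V \rtarr (\bF D^n_R)\otimes V$ for $R$-modules $V$. Using $\bF S^{n-1}_R = A\otimes S^{n-1}_R$, $\bF D^n_R = A\otimes D^n_R$, and the isomorphisms $(A\otimes S^{n-1}_R)\otimes V \cong A\otimes (S^{n-1}_R\otimes V)$, and similarly for $D^n_R$, each attaching pushout glues on a copy of $A\otimes (D^n_R\otimes V)$ along $A\otimes (S^{n-1}_R\otimes V)$. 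Since $D^n_R\otimes V$ as a graded $R$-module is $V$ in degrees $n-1$ and $n$ with $S^{n-1}_R\otimes V$ the copy in degree $n-1$, forming the pushout over $A$ adds, as an $A$-module summand, a free module $A\otimes W_p$ where $W_p$ is the (graded) direct sum of the degree-$n$ copies $V$ over the cells attached at stage $p$. Thus setting $\bar X_{p,*}$ equal to this graded $R$-module $W_p$ (reindexed via the refiltration convention $F_{-1}X = 0$, $F_0X$ possibly nonzero, so that the cell attached at the $p$-th stage sits in filtration degree $p$), one gets $X = \sum_{p\geq 0} A\otimes \bar X_{p,*}$ as an $A$-module, with $F_pX = \sum_{k\leq p} A\otimes \bar X_{k,*}$, exactly as required by \myref{ancdef}. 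The bigrading $X_{p,q} = \sum_{i+j=q} A_i\otimes \bar X_{p,j}$ is then forced, and the differential on $X$, being a degree $-1$ $A$-module map that preserves $F_pX$ only up to the filtration (since attaching maps may hit earlier stages), automatically decomposes as $d = \sum_{r\geq 0} d^r$ with $d^r\colon X_{p,q}\rtarr X_{p-r,q+r-1}$; the relations $\sum_{i+j=r} d^id^j = 0$ are just the components of $d^2 = 0$, and \eqref{diff2} follows from $d$ being an $A$-module map of degree $-1$ together with the fact that the generators $\bar X$ consist of cells coming from $S^n_R$-type data. To see that $X$ is \emph{cell-like}, i.e.\ $d^0 = 0$ on $\bar X$: the component $d^0$ preserves filtration degree and sends $\bar X_{p,*}$ into $X_{p,*}$; but $\bar X_{p,*}$ arises precisely from the degree-$n$ copies of $V$ in $D^n_R\otimes V$, on which the internal differential of $D^n_R\otimes V$ lands in the boundary copy $S^{n-1}_R\otimes V$, which under the cellular gluing is identified with part of the attaching map into $F_{p-1}X$ — hence that contribution is part of $d^r$ for $r\geq 1$, not $d^0$. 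So $d^0 = 0$ on $\bar X$, giving cell-likeness. This establishes the first sentence; I would note that the structure is "canonical" in that it depends only on the presentation of $X$ as an $r$-cell complex, which is the sense intended.

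For the second sentence, I would argue both implications. If $X$ is a $q$-cell complex, then it is in particular an $r$-cell complex (take all $V = R$, as in \myref{cellob} versus \myref{cellob2}), so it carries the split structure above; and by \myref{cellqsplit} its cellular filtration quotients are direct sums of sphere modules $A\otimes S^n_R$, so each $\bar X_{p,*}$ is a direct sum of free $R$-modules $R$ in various degrees, hence free, and $d^0 = 0$ on $\bar X$ by cell-likeness — so $X$ is distinguished in the sense of \myref{ancdef}. Conversely, suppose an $r$-cell complex $X$ is distinguished when regarded via its canonical split structure, so each $\bar X_{p,*}$ is a free $R$-module with zero internal differential. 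Then in the pushout presentation of $F_pX$ from $F_{p-1}X$, the $R$-module $V = W_p = \bar X_{p,*}$ may be taken to be free, so $(A\otimes D^n_R)\otimes W_p$ is a coproduct of copies of $A\otimes D^n_R$ (one for each basis element of $W_p$ in the appropriate degree), and likewise the attaching map is glued along a coproduct of copies of $A\otimes S^{n-1}_R$; that is, the stage-$p$ attachment is a pushout of a coproduct of maps in $\bF\cI_R$ itself, with no nontrivial $\sV_R$-tensor. Assembling these over all $p$ exhibits $X$ as an ordinary (unenriched) $\bF\cI_R$-cell complex, i.e.\ a $q$-cell complex. The one technical point to check is that the chosen presentation of $X$ as a distinguished split module can genuinely be converted back into a cellular presentation with free generating $R$-modules; here the freeness of $\bar X_{p,*}$ together with $d^0|_{\bar X} = 0$ is exactly what is needed to pick bases and read off the sphere/disk cells and their attaching maps.

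The main obstacle I anticipate is the bookkeeping in the first part: keeping the filtration-index/internal-degree conventions straight (especially after the refiltration convention $F_{-1}X = 0$), correctly identifying which piece of the internal differential of $D^n_R\otimes V$ becomes $d^0$ versus higher $d^r$, and verifying \eqref{diff2} with the correct Koszul signs — essentially checking that the $A$-module structure and the differential interact exactly as \myref{ancdef} demands. None of this is deep, but it is the sort of sign- and index-chasing that is easy to get subtly wrong; I would organize it by first treating a single cell $A\otimes(D^n_R\otimes V)$ attached along $A\otimes(S^{n-1}_R\otimes V)$, checking the claim there, and then observing that coproducts and sequential colimits preserve everything in sight. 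The converse direction is comparatively routine once the first part's correspondence is set up, since "distinguished" is precisely the syntactic condition ("$d^0 = 0$ on $\bar X$, each $\bar X_{p,q}$ free") that characterizes when the enriched cellular data degenerates to unenriched cellular data.
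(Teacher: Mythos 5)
Your proposal is correct and takes essentially the same approach as the paper: the canonical $A$-module splitting is extracted from the canonically split inclusions $S^{n-1}_R\otimes V\subset D^n_R\otimes V$ in each cellular pushout, cell-likeness holds because the cell boundaries attach into lower filtration so $d^0$ vanishes on $\bar{X}$, and the converse is obtained by choosing an $R$-basis $\{x_i\}$ of $\bar{X}$ and defining attaching maps by $y_i\mapsto d(x_i)$ to rebuild a $q$-cell presentation, exactly as in the paper. Two cosmetic points only: $d$ is a Leibniz derivation rather than an $A$-module map (so \eqref{diff2} is automatic from \myref{ancdef}, as you in effect use), and $A\otimes W_p$ is relatively free rather than free; neither affects the argument.
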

\begin{proof}  Our convention is that $F_{-1}X=0$. We first note that the splitting
\[ F_{p+1} X \iso F_{p} X \oplus F_{p+1}X/F_{p}X, \ \ p\geq 0, \]
of underlying $A$-modules is canonical, although not functorial.  The inclusions 
$i_n\colon S^{n-1}_R\subset D^{n}_R$ 
have the obvious retractions $r_n$ of graded $R$-modules that send the copy of $R$ in 
degree $n$ to $0$.  Applying $\bF$ and tensoring with
$R$-modules $V_i$, there result canonical retractions of all of the canonical inclusions
\[   \sum_i  \bF S^{n_i-1}_R \otimes V_i \rtarr \sum_i  \bF D^{n_i}_R \otimes V_i.  \]
For $q$-cell complexes, we take all $V_i$ to be $R$. For each $p$, we have such a
canonical inclusion $i_p\colon C_p\rtarr D_p$ with a retraction $r_p$ and, for some
attaching map $j_p$ of DG $A$-modules, we have a
pushout square in the diagram  
\[ \xymatrix{
C_p \ar[d]_{i_p} \ar[r]^{j_p}  &  F_{p}X \ar[d] \ar@{=}[ddr] & \\
D_p \ar[r] \ar[drr]_{j_pr_p} & F_{p+1} X \ar@{-->}[dr] & \\
& & F_{p} X.\\} \]
The dotted arrow is given by the universal property of pushouts, and its kernel maps isomorphically 
onto $F_{p+1}X/F_{p}X$.  This gives the promised canonical splitting. 

Since $F_{p}X/F_{p-1}X$ is relatively $A$-free for $p\geq 0$, we can write it as $A\otimes \bar{X}_{p,*}$ 
as an $A$-module, ignoring the differential.  Specifying the bigrading as in \myref{ancdef}, we see that $X$ is 
indeed a split DG $A$-module.   To see that it is cell-like, consider
the generating $R$-module $D^{n}_R\otimes V$ of a cell mapping into $F_{p+1}X$.  Since its boundary 
$S^{n-1}_R\otimes V$ maps into $F_{p}X$, $d$ sends the image of $D^{n}_R\otimes V$ into $F_{p}X$, 
hence $d^0(\bar{X}) =0$. 

Now suppose given a distinguished split DG $A$-module $X$, so that each $\bar{X}_{p,q}$ a free $R$-module.
Let $\{x_i\}$ be an $R$-basis for $\bar{X}$.  For $x_i\in \bar{X}_{p+1,q}$, let $y_i$ be a basis element
for a copy of $S^{p+q}_R$, and let $C_p$ be the direct sum of the $\bF S^{p+q}_R$ for those 
$y_i$ of bidegree $(p,q)$ for some $q$.  Define $j_p\colon C_p\rtarr F_{p} X$ by $j_p(y_i) = d(x_i)$.
Then it is easy to see that the diagram
\[ \xymatrix{
C_p \ar[d]_{i_p} \ar[r]^{j_p}  &  F_{p}X \ar[d]  \\
D_p \ar[r] & F_{p+1} X  \\} \]
is a pushout, showing that $X$ is a $q$-cell complex with the $j_p$ as attaching maps.  The converse is clear.
\end{proof}

It is not clear whether or not every cell-like split DG $A$-module arises this way from an 
$r$-cell complex, but we expect not.

\subsection{From relative cell complexes to split extensions}\label{splittwo}
In a less obvious way, \cite{GM} also considers relative
cell complexes  $W\rtarr Y$.  In effect, it shows that they are essentially the same 
thing as maps $X\rtarr M$ out of cell complexes.  To see this, we first extend our two 
notions of semi-projective extensions.

\begin{defn}\mylabel{splitext} A \emph{split extension} is an $R$-split monomorphism 
$i\colon W\rtarr Y$ of DG $A$-modules such that $X=Y/W$ is a cell-like split DG $A$-module. 
 Then the quotient map  $Y\rtarr X$ is $R$-split, hence $i$ is $A$-split by \myref{proj}.
Therefore the underlying $A$-module of $Y$ is isomorphic to $W\oplus X$.  
Fixing the splitting, the differential 
on $Y$ necessarily has the form
\[ d(w,x) = (d(w) + \be(x), d(x)) \ \ \text{for} \ \ w\in W\ \ \text{and} \ \ x\in X, \]
where $\be\colon X\rtarr W$ is a degree $-1$ map of DG $A$-modules, meaning that $\be$ maps
$X_n$ to $W_{n-1}$ and satisfies 
\begin{equation}\label{signsick}
 d\be = -\be d\ \ \text{and} \ \ \be(ax) = (-1)^{deg\, a}a\be(x) \ \ \text{for} \ \ a\in A\ \  \text{and} \ \  x\in X.
\end{equation}
These formulas are forced by $d^2=0$ and the Leibniz formula
\[ d(aw) = d(aw,ax) = d(a)(w,x) + (-1)^{deg\, a}ad(w,x). \]
Moreover, $Y$ is a filtered DG $A$-module with
\[ F_{-1} Y = W \ \ \text{and} \ \ F_pY = W\oplus F_pX \ \ \text{for} \ \ p\geq 0.\]
Observe that $i\colon W\rtarr Y$ determines and is determined by $\be\colon X \rtarr W$.  
 We call $Y$ the split extension determined by $\be$.
\end{defn}

The following model theoretic interpretation is immediate from the definitions,
Theorems \ref{qcharcofibration} and \ref{rcharcofibration}, and
Theorems \ref{qcharcofibrant} and \ref{rcharcofibrant}.  

\begin{prop} Let $i\colon W\rtarr Y$ be a split extension with quotient $X$. If $X$ is an $r$-cell complex, 
then $i$ is an $r$-semi-projective extension and is thus an $r$-cofibration.  If $X$ is a $q$-cell $R$-module, 
then $i$ is a $q$-semi-projective extension and is thus a $q$-cofibration.  
\end{prop}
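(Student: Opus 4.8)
The plan is to read off both assertions directly from the definitions of a split extension and of $q$- and $r$-semi-projective extensions, combined with the characterizations of cofibrant objects already in hand. By definition a split extension $i\colon W\rtarr Y$ is an $R$-split monomorphism whose cokernel $X = Y/W$ is a cell-like split DG $A$-module; in particular $i$ is a monomorphism, and an $R$-split one. Thus in each case it suffices to show that the hypothesis on $X$ forces $X$ to be $q$-semi-projective (respectively $r$-semi-projective): once this is known, $i$ is by definition a $q$-semi-projective (respectively $r$-semi-projective) extension, and then \myref{qcharcofibration0} --- equivalently \myref{qcharcofibration} --- (respectively \myref{rcharcofibration0}, equivalently \myref{rcharcofibration}) identifies $i$ as a $q$-cofibration (respectively $r$-cofibration).

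For the $r$-case I would argue as follows. If $X$ is an $r$-cell complex, i.e.\ $0\rtarr X$ is an enriched relative $\bF\cI_R$-cell complex in the sense of \myref{cellob2}, then by \myref{enrichedboxslashclosure} the map $0\rtarr X$ lies in the left class of the $\sV_R$-enriched weak factorization system generated by $\bF\cI_R$, which by \myref{enrichedDGAcomparison} is precisely the class of $r$-cofibrations; hence $X$ is $r$-cofibrant. The equivalence of conditions (i) and (ii) in \myref{rcharcofibrant} then shows that $X$ is $r$-semi-projective, and the general remark above completes this case. The $q$-case is entirely parallel: a $q$-cell complex $X$ is $q$-cofibrant by \myref{qmodelA} (with $\bF\cI_R$ as generating $q$-cofibrations), hence $q$-semi-projective by the equivalence of (i) and (ii) in \myref{qcharcofibrant}, so that $i$ is a $q$-semi-projective extension and therefore a $q$-cofibration.

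There is no real obstacle here beyond bookkeeping. The only points to keep straight are: the notational convention fixed after \myref{cellob2}, that ``$r$-cell complex'' and ``$q$-cell complex'' abbreviate (enriched, respectively ordinary) relative $\bF\cI_R$-cell complexes under $0$, so that the cited generation statements genuinely apply; and the fact, from \myref{neatojet}, that such a cell complex is cell-like when regarded as a split DG $A$-module, so that it is a legitimate cokernel for a split extension. With these identifications the proposition is indeed immediate, as claimed.
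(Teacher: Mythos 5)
Your argument is correct and is essentially the paper's own: the paper declares the proposition immediate from the definitions together with Theorems \ref{qcharcofibrant}, \ref{rcharcofibrant}, \ref{qcharcofibration}, and \ref{rcharcofibration}, which is exactly the chain (cell complex $\Rightarrow$ cofibrant $\Rightarrow$ semi-projective $\Rightarrow$ semi-projective extension $\Rightarrow$ cofibration) that you spell out. Your extra bookkeeping (the enriched WFS citations and the remark via \myref{neatojet}) is harmless and just unpacks what the paper leaves implicit.
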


To relate split extensions to maps $X\rtarr M$ (of degree $0$), we use a construction suggested by (\ref{signsick}).  
For any integer $q$, we have the usual $q$th suspension functor  $\SI^q\colon \sM_A\rtarr\sM_A$
defined by $\SI^q M = M\otimes S^q_R$.  It is an isomorphism of categories with inverse $\SI^{-q}$. 
We introduce a signed variant of $\SI^{-1}$. 

\begin{defn}  Define an isomorphism of categories $\UP\colon \sM_A\rtarr \sM_A$ by letting 
$(\UP M)_n = M_{n+1}$, writing elements in the form $\overline{m} \in \UP M$ for $m\in M$.  Define 
$d(\overline m) = - \overline{d(m)}$ and define the action of $A$ by $a\overline{m} = (-1)^{deg\, a}\overline{am}$.  
A quick check of signs shows that $\UP M$ is a DG $A$-module.  For a map $\ph\colon M\rtarr N$ of DG $A$-modules, 
define a map $\UP \ph\colon \UP M\rtarr \UP N$ of DG $A$-modules by $(\UP f)_n = f_{n+1}$. 
\end{defn}

Observe that a map $\al\colon X\rtarr M$ of DG $A$-modules can be identified with the degree $-1$ 
map $\be \colon X\rtarr \UP M$ of DG $A$-modules specified 
by $\be(x) = \overline{\al(x)}$.  The following is now a conceptual version of 
\cite[1.1]{GM}, which was the ad hoc starting point of \cite{GM}.  It constructs a split
extension from a map with domain $X$. Since every DG $A$-module $W$ is of the from $\UP M$
for some $M$, \myref{splitext} gives the inverse construction of a map $\al$ with 
domain $X$ from a split extension and thus from a relative cell complex.

\begin{defn}\mylabel{ancdef2}  For a map $\al\colon X\rtarr M$ of DG $A$-modules, where $X$ is a cell-like split
DG $A$-module, let $i\colon \UP M\rtarr X^{\al}$ denote the split extension determined by $\be \colon X\rtarr \UP M$, as specified in \myref{splitext} (thus $X^\alpha$ here corresponds to $Y$ there).
We extend (\ref{diff1}) and (\ref{diff2}) by defining 
\begin{equation}\label{diff3}
d^0 = -d: M\rtarr M \ \ \text{and} \ \ d^{p+1} = \al\colon X_{p,q} \rtarr M_{p+q}.
\end{equation}
Setting $X_{-1,q} = (\UP M)_{q-1} = M_q$, the equation 
\[ d\al = \al d\colon X_{p,q} \rtarr M_{p+q-1} = X_{-1,p+q}\] 
becomes 
\[ -d^0d^{p+1} = \sum_{0\leq j\leq p} d^{p+1-j}d^j, \ \ \text{hence}\ \  \sum_{i+j=p+1}d^id^j = 0.\]
\end{defn}

\begin{rmk}  The notation $F\al$ for $X^{\al}$ might be reasonable\footnote{$X^{\al}$ was 
misleadingly called a mapping cylinder in \cite{GM}.} since we have a rough analogy with
topological fiber sequences
\[ \xymatrix@1{ 
\OM M \ar[r]^-{i} & F\al \ar[r] & X \ar[r]^-{\al} & M.\\} \]
\end{rmk}

\section{From homological algebra to model category theory}\label{GMStuff1} 

Calculationally, our work begins with the Eilenberg-Moore spectral 
sequence, abbreviated EMSS.  Split DG $A$-modules give rise to spectral sequences that 
are candidates for the EMSS.  We define resolutions $\al\colon X\rtarr M$ and in 
particular distinguished and K\"unneth resolutions of DG $A$-modules $M$ in \S\ref{EMSS}.  
Distinguished resolutions are particularly nice $q$-cofibrant approximations, whereas
K\"unneth resolutions are tailored to give the weakest data sufficient to construct 
the EMSS with the correct $E^2$-term and the correct target. The target is given by 
differential torsion products. Even weaker kinds of 
resolutions, namely semi-flat resolutions, give the correct target even though they need 
not give the correct $E^2$-term, and these too are defined in \S\ref{EMSS}. 

In \S\ref{barsec}, we show that the classical bar construction gives $r$-cofibrant approximations 
of all DG $A$-modules $M$ for any DG $R$-algebra $A$, even though the bar construction is never itself 
an $r$-cell complex when the differential on $A$ is non-zero.  Under mild hypotheses, the bar
construction also gives semi-flat resolutions, which means that it behaves homologically as if
it were a $q$-cofibrant rather than just an $r$-cofibrant approximation.  This implies that
$\Tor$ ($=q\Tor$) and $r\Tor$ agree far more often than one would expect from model considerations alone. 

The fact that our preferred resolutions are given by multicomplexes and not just bicomplexes has 
structural implications for the EMSS in terms of matric Massey products.  We indicate briefly how 
that works in \S\ref{matric}.

\subsection{Split, K\"unneth, and semi-flat DG $A$-modules and the EMSS}\label{EMSS}

A filtered DG $R$-module $Y$ gives rise to a spectral sequence $E^rY$ of DG $R$-modules 
starting from 
\[ E^0_{p,q}Y = (F_pY/F_{p-1}Y)_{p+q}. \]
We are interested in the cases $Y= X$ and $Y=X^{\al}$ for a map $\al\colon X\rtarr M$ of DG $A$-modules,
where $X$ is split.  In the latter case, we have
\[ E^1_{-1,q}X^{\al} = H_q(M) \ \ \text{and} \ \ E^1_{p,q}X^{\al} = E^1_{p,q}X \ \ \text{for}\ \ p\geq 0. \]
The differentials are of the form $d^r\colon E^r_{p,q}\rtarr E^r_{p-r,q+r-1}$, and $d^0$ is given
by the summand $d^0$ of $d$.  The complex $E^1_{*,*}X^{\al}$ takes the form
\begin{equation}\label{Eone}
 \cdots \rtarr E^1_{p,*}X \rtarr E^1_{p-1,*}X \rtarr \cdots \rtarr E^1_{0,*}X \rtarr H_*(M) \rtarr 0.
\end{equation}
\begin{defn}[{\cite[1.1]{GM}}]\mylabel{resdef}  We say that $\al\colon X\rtarr M$ is a \emph{resolution} of $M$ if 
the sequence (\ref{Eone}) is exact. We say that $\al$ is a {\em distinguished resolution} of $M$ if $\al$ is a 
resolution and $X$ is a distinguished DG $A$-module, that is, a $q$-cell complex.
\end{defn}  
Since $\{E^r_{*,*}X^{\al}\}$ is a right-half plane spectral sequence with homological grading, there is no convergence problem \cite[\S6]{Board}.  Filtering $M$ itself by $F_{-1}M=0$ and $F_pM = M$ for $p\geq 0$, we can reinterpret (\ref{Eone}) as the 
$E^1$-term of a map of spectral sequences $E^rX\rtarr E^rM$ induced by $\al$.  Using the convergence \cite[7.2]{Board}, we have the following result.

\begin{prop}\mylabel{seecof}  If $\al\colon X\rtarr M$ is a resolution of $M$, then $\al$ is a $q$-equivalence. \end{prop}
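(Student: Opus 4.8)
The plan is to exploit the comparison of spectral sequences $E^r X^\al \to E^r M$ induced by $\al$, where $M$ is filtered trivially with $F_{-1}M = 0$ and $F_p M = M$ for $p \geq 0$. The $E^1$-term of this map of spectral sequences is precisely the chain map of complexes displayed in \eqref{Eone}, regarded as a map from the complex $\cdots \to E^1_{1,*}X \to E^1_{0,*}X \to 0 \to \cdots$ augmented to $H_*(M)$, to the complex concentrated in filtration degree $-1$ on $H_*(M)$. Saying that $\al$ is a resolution means exactly that \eqref{Eone} is exact, i.e.\ that this map of complexes is a quasi-isomorphism in the filtration direction.

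First I would set up both spectral sequences carefully. Since $X$ is split (in the sense of \myref{ancdef}) and $X^\al$ is the split extension of \myref{ancdef2}, both $E^r X$ and $E^r X^\al$ are right-half-plane spectral sequences with homological grading, so there is no convergence problem by \cite[\S 6]{Board}; moreover they converge strongly to $H_*(X)$ and $H_*(X^\al)$ respectively by \cite[7.2]{Board}. The same applies trivially to $E^r M$, which collapses: $E^1_{-1,q}M = H_q(M)$ and $E^1_{p,q}M = 0$ for $p \geq 0$, so $E^1 M = E^\infty M$ and it converges to $H_*(M)$. Next I would pass from $E^1$ to $E^2$: exactness of \eqref{Eone} says precisely that $E^2_{p,q}X^\al = 0$ for all $p \geq 0$, while $E^2_{-1,q}X^\al = H_q(M)$ (there is nothing in filtration $\geq 0$ at $E^1$ that can map into filtration $-1$, and $d^1$ out of filtration $-1$ lands in filtration $-2 = 0$ which is zero). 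Hence the map $E^2 X^\al \to E^2 M$ is an isomorphism in every bidegree.

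Then the conclusion is a formal consequence of the comparison theorem for conditionally convergent spectral sequences, \cite[7.2]{Board}: an isomorphism on $E^2$ (hence on every subsequent page, and on $E^\infty$) between two strongly convergent right-half-plane spectral sequences forces an isomorphism on the associated graded of the abutments, and since the filtrations are exhaustive and the spectral sequences converge strongly, this gives an isomorphism $H_*(X^\al) \to H_*(M)$. The only remaining point is to identify this map with $H_*(\al)$ composed with the identification $H_*(X^\al) \cong H_*(X)$: the inclusion $\UP M = F_{-1}X^\al \hookrightarrow X^\al$ is an $h$-equivalence precisely when $X = X^\al/\UP M$ is contractible, which is not what we want; instead one observes directly that the composite $X \xrightarrow{\nu} X^\al$ followed by the quotient map onto $X^\al/F_{-1}X^\al \cong X$ recovers the structure, and that under $H_*(X^\al)\cong H_*(M)\oplus(\text{acyclic part})$ the map induced by $\al$ on homology is exactly the projection; chasing the filtration shows the induced map on abutments is $H_*(\al)$ up to the sign-shift of $\UP$, which is irrelevant to being an isomorphism.

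The main obstacle I anticipate is not the spectral sequence comparison itself, which is routine given \cite[7.2]{Board}, but rather the bookkeeping needed to identify the abstractly-produced isomorphism on abutments with the concrete map $H_*(\al)\colon H_*(X) \to H_*(M)$ — one must track the relationship between $H_*(X^\al)$, $H_*(X)$, and $H_*(\UP M) = H_{*-1}(M)$ through the filtration on $X^\al$, being careful about the degree shift and sign built into $\UP$ and into the differentials $d^0 = -d$ on $M$ and $d^{p+1} = \al$ in \eqref{diff3}. A clean way to finesse this is to note that $X^\al$ fits in a short exact sequence of DG $A$-modules $0 \to \UP M \to X^\al \to X \to 0$, write down the associated long exact homology sequence, and observe that the connecting homomorphism $H_n(X) \to H_{n-1}(\UP M) = H_{n-1+1}(M) = H_n(M)$ is, by inspection of the differential on $X^\al$, exactly (a sign times) $H_n(\al)$; then exactness of \eqref{Eone}, which forces $H_*(X^\al)$ to vanish in the appropriate range, makes this connecting map an isomorphism, which is the claim.
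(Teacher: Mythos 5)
Your closing paragraph contains the germ of a correct proof, but the main line of your argument (the first two paragraphs) has genuine errors. First, there is no map of spectral sequences $E^rX^{\al}\rtarr E^rM$ ``induced by $\al$'': $\al$ is a map $X\rtarr M$, not $X^{\al}\rtarr M$, and the comparison the paper actually makes is $E^rX\rtarr E^rM$, with $M$ given the trivial filtration. Second, your computation of $E^2X^{\al}$ is wrong. The differential $d^1\co E^1_{0,q}X^{\al}\rtarr E^1_{-1,q}X^{\al}=H_q(M)$ is precisely the augmentation in \eqref{Eone} (it comes from the component $\be$ of the differential on $X^{\al}$, i.e.\ from $\al$), so it is emphatically not zero; exactness of \eqref{Eone} at the $H_*(M)$ spot says this $d^1$ is surjective, whence $E^2_{-1,q}X^{\al}=0$, not $H_q(M)$. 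Thus being a resolution forces $E^2X^{\al}=0$ in \emph{every} bidegree. Your claim that ``$E^2X^{\al}\rtarr E^2M$ is an isomorphism in every bidegree'' is therefore false, and it could not hold anyway on bidegree grounds (your purported $E^2X^{\al}$ sits in filtration $-1$, while $E^2M$ sits in filtration $0$); it also contradicts your own final paragraph, which needs $H_*(X^{\al})=0$.

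Once the middle of the argument is replaced by the correct statement $E^2X^{\al}=0$, your last paragraph does give a complete proof, by a route genuinely different from the paper's: the spectral sequence of the filtered complex $X^{\al}$ is right half-plane with exiting differentials, hence converges, so $E^2=0$ gives $H_*(X^{\al})=0$; then in the long exact homology sequence of $0\rtarr \UP M\rtarr X^{\al}\rtarr X\rtarr 0$ the connecting homomorphism $H_n(X)\rtarr H_{n-1}(\UP M)\iso H_n(M)$ is $\pm H_n(\al)$ (lift a cycle $x$ to $(0,x)$ and compute $d(0,x)=(\be(x),0)$), so $\al$ is a $q$-equivalence. The paper never passes through $H_*(X^{\al})$: it observes that \eqref{Eone} is the $E^1$-term of the map $E^rX\rtarr E^rM$, so exactness says $E^2(\al)$ is an isomorphism, and it then quotes Boardman's comparison theorem \cite[7.2]{Board}. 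Your cocone argument trades the comparison theorem for the (routine but necessary) identification of the connecting map with $H(\al)$; either works, but as submitted your write-up asserts a false $E^2$ computation and is internally inconsistent, so the middle step must be fixed before the proof stands.
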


\begin{rmk}  We emphasize that we do not have a converse to \myref{seecof}.  In particular, we have no reason to
believe that a general $q$-cofibrant approximation $\al\colon X\rtarr M$ is a resolution.  We shall be giving
three different homological constructions of resolutions that have $q$- or $r$-cofibrant domains.
\end{rmk}

\begin{defn}[{\cite[1.1]{GM}}]\mylabel{Kunndef}  For a right DG $A$-module $N$ and a split DG $A$-module $X$, 
give $N\otimes_A X$ the induced filtration,
$F_p(N\otimes_A X) = N\otimes_A F_pX$.  There is an evident K\"unneth map
\[ \ka\colon  HN\otimes_{HA} E^1X \rtarr E^1(N\otimes_A X). \]
A split DG $A$-module $X$ is {\em K\"unneth} if each $E^1_{p,*}X$ is a flat
$HA$-module and $\ka$ is an isomorphism for every $N$. We say that $\al\colon X\rtarr M$ is a
{\em K\"unneth resolution} of $M$ if $\al$ is a resolution and $X$ is K\"unneth.  When this holds, 
(\ref{Eone}) is a resolution of $HM$ by flat $HA$-modules and therefore
\[ E^2_{p,q} (N\otimes_A X) = \Tor^{HA}_{p,q}(HN, HM). \]
\end{defn}

\begin{ex}
The total complex $\TotP$ of a projective resolution in the sense of \S\ref{versustoo} is K\"unneth. 
\end{ex}

\begin{defn}  If $\al\colon X\rtarr M$ is a K\"unneth resolution, we call the spectral sequence 
$\{E^r_{p,q}(N\otimes_A X)\}$ an {\em Eilenberg-Moore spectral sequence} (EMSS).
\end{defn}

We have identified the $E^2$-term as a classical $\Tor$ functor.  The target is the differential $\Tor$ functor of \myref{DTordefn}.  Similarly, if $X$ is distinguished and $N$ is a left DG $A$-module, then
\[ H^{*,*}\Hom_{HA}(E^1_{*,*}X,HN) = \Ext^{*,*}_{HA}(HM,HN) \]
and we have a cohomological EMSS whose target is the differential $\Ext$ functor.  We shall not consider it in any detail here.

In what follows, we repeatedly use the isomorphism of DG $R$-modules
\begin{equation}\label{obvious}
E^1_{p,*} = (N\otimes_A X_{p,*}; d^0) \iso (N\otimes \bar{X}_{p,*}; d\otimes \id + \id\otimes d^0),
\end{equation}
for a split DG $A$-module $X$, where we do not always assume that $d^0=0$ on $\bar{X}$ but we do assume that $d^0(\bar X) \subset \bar{X}$.  The $d^0$ on the left is the differential on $N\otimes_A X_{p,\ast}$ viewed as the $E^0$ term of the spectral sequence. On the right, the $d$ is the differential on $N$ and the $d^0$ is the differential on $\bar{X}$ under our assumption that $d^0(\bar X) \subset \bar{X}$. 

\begin{lem}\mylabel{ofcourse}  If $X$ is a cell-like DG $A$-module such that each $\bar{X}_{p,q}$ is a flat $R$-module,
then $X$ is a K\"unneth DG $A$-module. In particular, distinguished DG $A$-modules are K\"unneth.
\end{lem}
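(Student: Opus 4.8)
The plan is to unwind the definitions and reduce the assertion to the Künneth theorem over $R$ applied in each filtration degree $p$. Recall from \myref{Kunndef} that $X$ being K\"unneth means that each $E^1_{p,*}X$ is a flat $HA$-module and that the K\"unneth map $\ka\colon HN\otimes_{HA} E^1X \rtarr E^1(N\otimes_A X)$ is an isomorphism for every right DG $A$-module $N$. So there are two things to check: the flatness of $E^1_{p,*}X$ over $HA$, and the fact that $\ka$ is an isomorphism. The hypotheses are that $X$ is cell-like (so $d^0 = 0$ on $\bar X$) and that each $\bar X_{p,q}$ is a flat $R$-module.

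First I would identify $E^1_{p,*}X$ explicitly. By the isomorphism \eqref{obvious} with $N = A$, and using $d^0 = 0$ on $\bar X$, we get $E^1_{p,*}X \iso (A \otimes \bar X_{p,*}; d\otimes\id)$, so $E^1_{p,*}X \iso HA \otimes (\bar X_{p,*}; 0) = HA \otimes \bar X_{p,*}$ as graded $HA$-modules (there is no differential on $\bar X_{p,*}$, so its homology is itself). Since each $\bar X_{p,q}$ is a flat $R$-module, $HA \otimes_R \bar X_{p,*}$ is a flat $HA$-module: tensoring up a flat $R$-module along the ring map $R \rtarr HA$ (here $HA$ is an $R$-algebra, $R$ being central) yields a flat $HA$-module, because $(HA\otimes_R \bar X_{p,*})\otimes_{HA} (-) \iso \bar X_{p,*}\otimes_R(-)$ is exact on $HA$-modules. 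That settles flatness.

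Next I would verify that $\ka$ is an isomorphism for arbitrary $N$. Again using \eqref{obvious}, but now being careful: for general $N$ we only have $d^0 = 0$ on $\bar X$, so the right-hand side of \eqref{obvious} is $(N\otimes\bar X_{p,*}; d\otimes\id)$, whose homology is $HN \otimes_R \bar X_{p,*}$ by the classical K\"unneth theorem over $R$ --- and here is where flatness of $\bar X_{p,q}$ over $R$ is needed, to kill the $\Tor_1^R$ correction term and make $H(N\otimes_R \bar X_{p,*}) \iso HN\otimes_R \bar X_{p,*}$. On the other hand $HN\otimes_{HA} E^1_{p,*}X \iso HN\otimes_{HA}(HA\otimes_R\bar X_{p,*}) \iso HN\otimes_R\bar X_{p,*}$. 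Tracing through the definition of $\ka$ (it is induced by the evident map $N\otimes_A X_{p,*} \to$ the $E^0$ complex, compatibly with the $HA$-action), these two identifications match up, so $\ka$ is an isomorphism. Thus $X$ is K\"unneth.

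Finally, the special case: a distinguished DG $A$-module is by \myref{ancdef} cell-like with each $\bar X_{p,q}$ a \emph{free} $R$-module, hence in particular flat, so the first statement applies. The main obstacle, such as it is, is bookkeeping: making sure the differential-tracking in \eqref{obvious} is applied with the correct hypothesis ($d^0=0$ on $\bar X$, which is exactly ``cell-like''), and confirming that the two sides of $\ka$ really are identified compatibly rather than just abstractly isomorphic --- but this is a routine naturality check, since both sides are built from the same bifunctorial data.
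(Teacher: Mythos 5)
Your argument is correct and is essentially the paper's own proof, just written out in more detail: the paper likewise invokes the isomorphism \eqref{obvious}, notes that cell-like means $d^0=0$ on $\bar X$, and concludes from the fact that homology commutes with tensoring with the flat $R$-modules $\bar X_{p,q}$ (the identifications $E^1_{p,*}X\iso HA\otimes\bar X_{p,*}$, its $HA$-flatness by base change, and the matching of $\ka$ being exactly the details the paper leaves implicit). Nothing further is needed.
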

\begin{proof}
Here $d^0=0$ on the right side of (\ref{obvious}).  Since homology commutes with tensor
products with flat $R$-modules, the conclusion is immediate.
\end{proof}

We next describe a more general kind of resolution that will lead to surprising
invariance properties of the target of the EMSS.  We need the generality to deal with the 
bar construction in \S\ref{barsec}. 
When $A=R$, the following definition is due to \cite{Danes}. Recall that, ignoring differentials,
a graded $A$-module $X$ is $A$-flat if the functor $(-)\otimes_A X$ on graded right $A$-modules $N$ is exact.

\begin{defn}  A DG $A$-module $X$ is {\em semi-flat} if the underlying $A$-module
of $X$ is $A$-flat and the functor $(-)\otimes _A X$ on right DG $A$-modules 
preserves $q$-equivalences.   We say that $\al\colon X\rtarr M$ is a
{\em semi-flat resolution} of $M$ if $\al$ is a resolution and $X$ is semi-flat.
\end{defn}

\begin{rmk}\mylabel{Notsemi}  Degreewise free DG $R$-modules need not be semi-flat. If $R = \bZ/4$ and
$X$ is the degreewise free DG $R$-module of \myref{projectives}, then $X$ is not semi-flat.   In fact if 
$\epz\colon P\rtarr \bZ/2$ is a classical $R$-projective resolution, then $H_*(P\otimes_R X) = 0$
but $H_*(\bZ/2\otimes_R X)$ is $\bZ/2$ in every degree.
\end{rmk}

The following result shows that split DG $A$-modules are very often K\"unneth or semi-flat 
DG $A$-modules even when they are not cellular.  

\begin{prop}\mylabel{KunnethRec} Let $X$ be a split DG $A$-module such that $d^0(\bar{X})\subset \bar{X}$
and each $\bar{X}_{p,q}$ is $R$-flat. Then $X$ is semi-flat under either of the following hypotheses:
\begin{enumerate}[(i)]
\item  $R$ is a PID
\item  $A$ and each $\bar{X}_{p,\ast}$ is bounded below.
\end{enumerate}
If, further, each $H_{p,q}(\bar{X},d^0)$ is $R$-flat, then $X$ is K\"unneth.
\end{prop}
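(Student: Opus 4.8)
The plan is to analyze the spectral sequence $E^rX$ together with its behavior under the functor $N \otimes_A (-)$, using the isomorphism \eqref{obvious} to reduce all computations to the underlying DG $R$-module level. First I would establish the semi-flatness claim. Since $X$ is $A$-flat by hypothesis (each $\bar X_{p,q}$ is $R$-flat, so each $A \otimes \bar X_{p,*}$ is $A$-flat, and $X$ is built from these by the split filtration), it remains to show that $(-) \otimes_A X$ preserves $q$-equivalences. Given a $q$-equivalence $f \colon N \rtarr N'$ of right DG $A$-modules, I would filter $N \otimes_A X$ and $N' \otimes_A X$ by $F_p(N \otimes_A X) = N \otimes_A F_pX$ and compare the induced map of spectral sequences. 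By \eqref{obvious}, $E^1_{p,*}(N\otimes_A X) \iso H_*(N \otimes \bar X_{p,*}; d \otimes \id + \id \otimes d^0)$. Under hypothesis (i), $R$ a PID, the universal coefficient / Künneth theorem applies since $\bar X_{p,*}$ is $R$-flat (hence has flat, and over a PID this gives enough control on Tor terms) to identify this with a functor of $H_*N$ that preserves isomorphisms; under hypothesis (ii), the bounded-below assumption on $A$ and each $\bar X_{p,*}$ ensures the relevant convergence and a standard comparison-of-spectral-sequences argument (filtering by internal degree) shows $f \otimes \id$ induces an isomorphism on $E^1$, hence on $E^\infty$, hence on homology. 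In both cases, once $f \otimes \id$ is a $q$-equivalence at the $E^1$-level, conditional convergence of the right-half-plane spectral sequence \cite[7.2]{Board} gives that $f \otimes \id \colon N \otimes_A X \rtarr N' \otimes_A X$ is a $q$-equivalence.

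For the Künneth claim, assume additionally that each $H_{p,q}(\bar X, d^0)$ is $R$-flat. I would need to show that each $E^1_{p,*}X$ is a flat $HA$-module and that the Künneth map $\ka \colon HN \otimes_{HA} E^1X \rtarr E^1(N \otimes_A X)$ is an isomorphism. Using \eqref{obvious} with $N = A$, we get $E^1_{p,*}X \iso H_*(\bar X_{p,*}, d^0) \otimes_R$ something — more precisely $E^1_{p,*}X \iso HA \otimes_R H_*(\bar X_{p,*}, d^0)$ once one untangles that $X_{p,*} = A \otimes \bar X_{p,*}$ with differential $d^0$ acting as the sum of $d_A$ and $d^0$ on $\bar X$, and applies the flatness of $H_*(\bar X_{p,*}, d^0)$ to commute homology past the tensor. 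This exhibits $E^1_{p,*}X$ as an extended (relatively free) $HA$-module on a flat $R$-module, hence $HA$-flat. Then for general $N$, $E^1_{p,*}(N \otimes_A X) \iso H_*(N \otimes_R \bar X_{p,*}) \iso HN \otimes_R H_*(\bar X_{p,*}, d^0)$, again by flatness of $H_*(\bar X_{p,*},d^0)$ (Künneth with a flat factor), and one checks directly that $\ka$ is precisely the resulting isomorphism $HN \otimes_{HA} (HA \otimes_R H_*(\bar X_{p,*},d^0)) \iso HN \otimes_R H_*(\bar X_{p,*}, d^0)$.

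The main obstacle I anticipate is the semi-flatness argument under hypothesis (i): over a general PID, preserving $q$-equivalences is genuinely delicate because $\bar X_{p,*}$ being $R$-flat is weaker than being $R$-projective, and the relevant spectral sequence can have both entering and exiting differentials in a way that obstructs naive convergence. The example in \myref{Notsemi} (with $R = \bZ/4$, which is not a PID) shows this is not a formality — one really must use that $R$ is a PID, presumably via the fact that over a PID, flat equals torsion-free and submodules of free modules are free, so that the relevant Tor-terms in the Künneth spectral sequence for $N \otimes_R \bar X_{p,*}$ vanish and $E^1$ is computed cleanly. Getting the convergence bookkeeping right — i.e., verifying the hypotheses $\lim_p D^1 = 0$ and $RE_\infty = 0$ of \cite[7.2]{Board} in the PID case without a bounded-below assumption — is the step I would expect to require the most care, and it may be cleanest to handle (i) and (ii) by somewhat different routes, (ii) by a direct comparison of internal-degree filtrations and (i) by the structure theory of modules over a PID applied degreewise.
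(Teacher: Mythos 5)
Your plan is correct and is essentially the paper's own proof: filter $N\otimes_A X$ by the split filtration, compute $E^1$ via \eqref{obvious}, use the classical K\"unneth theorem in case (i) and the K\"unneth spectral sequence \myref{KunnSS} in case (ii) to see that $E^1((-)\otimes_A X)$ depends naturally on $HN$, then conclude by convergence and comparison as in Boardman, with the final K\"unneth claim obtained exactly as you say by taking $N=A$ when each $H(\bar X;d^0)$ is $R$-flat. Your one anticipated obstacle is not actually an issue: the spectral sequence of the increasing, exhaustive, bounded-below split filtration is a right-half-plane spectral sequence with exiting differentials, so it converges unconditionally, and in case (i) the PID hypothesis enters only through the degreewise classical K\"unneth exact sequence (no internal-degree spectral sequence is needed), so no extra convergence bookkeeping arises.
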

\begin{proof}  Since $N\otimes_A X \iso N\otimes_R \bar{X}$ as graded $R$-modules and exactness is
seen degreewise, it is clear that $X$ is $A$-flat.  We use (\ref{obvious}) to see that the functor
$(-)\otimes_A X$ preserves $q$-equivalences.  The classical K\"unneth theorem in case (i) and the 
K\"unneth spectral sequence, \myref{KunnSS}, in case (ii) ensure that $E^1(N\otimes_A X)$ depends functorially on $HN$ 
and $H\bar{X}$, although it need not reduce to  $HN\otimes H\bar{X}$ in general.  By the naturality 
of the K\"unneth theorem or the naturality and convergence of the K\"unneth spectral sequence, 
together with the convergence of the spectral sequence $\{E^r(N\otimes_A X)\}$, we conclude (as in \cite[7.2]{Board})
that the functor $E^1((-)\otimes_A X)$ and therefore the functor $(-)\otimes_A X$ 
preserve $q$-equivalences if (i) or (ii) holds.  When $H\bar{X}$ is degreewise $R$-flat, 
$E^1(N\otimes_A X)\iso HN\otimes H\bar{X}$. Taking $N=A$ we see that $E^1X$ is $HA$-flat and that
\[  HN\otimes_{HA} E^1X = HN\otimes_{HA} (HA\otimes H(\bar{X};d^0)) = E^1(N\otimes_A X).  \qedhere \]
\end{proof}

\begin{rmk} If $R$ is a Noetherian ring and $C$ is a projective $R$-module, then $\Hom_R(C,R)$ is a flat $R$-module, 
but it need not be projective. For example, each $C^q(X;R)$ is a flat $R$-module for any space $X$.  
Since examples of the form $A=C^*(X;R)$ appear naturally in algebraic topology, this 
gives concrete motivation for considering degreewise $R$-flat DG $R$-algebras $A$ and DG $A$-modules $M$; see \S\ref{TopEMSS}.
\end{rmk}

\begin{defn}\mylabel{gamma}  Let $\ga\colon \Tor^A(N,M)\rtarr  H(N\otimes_A M)$ denote the natural
map induced by $\be\otimes \al$  or, equivalently, $\be\otimes \id$ or $\id\otimes \al$, as in \myref{inv1}.
\end{defn} 

Classically, when there are no differentials on $A$, $M$, and $N$, $\ga$ reduces to the
natural isomorphism 
\[   \Tor^A_{0,*}(N,M) = N\otimes_A M.\]  
In the absence of differentials,  we also have that  $\Tor^A_{p,*}(N,M) =0$ if $N$ or $M$ is $A$-flat. 
The following direct consequence of the definition of a semi-flat DG $A$-module is the closest 
we can get to these assertions in the differential graded case. 

\begin{prop}  If $M$ or $N$ is semi-flat, then $\ga\colon \Tor^A_*(N,M)\rtarr H(N\otimes_A M)$
is an isomorphism.
\end{prop}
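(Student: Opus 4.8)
The plan is to reduce the statement to \myref{inv1} together with the definition of $\ga$ given in \myref{gamma}. Fix $q$-cofibrant approximations $\al\colon X\rtarr M$ and $\be\colon Y\rtarr N$, the latter of right DG $A$-modules, so that $\Tor^A_*(N,M)=H(N\otimes_A X)$; under this identification $\ga$ is, by \myref{gamma}, the homology map induced by $\id_N\otimes\al\colon N\otimes_A X\rtarr N\otimes_A M$, and equally the composite of the isomorphisms of \myref{inv1} with the homology map induced by $\be\otimes\id_M\colon Y\otimes_A M\rtarr N\otimes_A M$. The single organizing observation is the commutative square of DG $R$-modules
\[ \xymatrix{ Y\otimes_A X \ar[r]^-{\id\otimes\al} \ar[d]_{\be\otimes\id} & Y\otimes_A M \ar[d]^{\be\otimes\id} \\ N\otimes_A X \ar[r]_-{\id\otimes\al} & N\otimes_A M, } \]
both of whose composites equal $\be\otimes\al$; by \myref{inv1} its top and left edges become isomorphisms after applying $H$, and its bottom edge becomes $\ga$.

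First I would treat the case in which $M$ is semi-flat. Then $(-)\otimes_A M$ preserves $q$-equivalences, so $\be\otimes\id_M$ is a $q$-equivalence and the right vertical edge of the square is a homology isomorphism. Since three of the four edges of the square induce isomorphisms on homology, so does the fourth, namely $\ga$.

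Next I would treat the case in which $N$ is semi-flat, interpreting semi-flatness in its evident right-module form (the mirror image of the notion used in \myref{KunnethRec}). Then $N\otimes_A(-)$ preserves $q$-equivalences, so $\id_N\otimes\al$ is a $q$-equivalence; as this map induces $\ga$ on homology, $\ga$ is an isomorphism. (One could alternatively observe that the left vertical edge $\be\otimes\id_X$ of the square is a homology isomorphism by \myref{inv1} and run the same four-edges argument.)

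I do not expect a serious obstacle here: the argument is entirely formal once \myref{inv1} is in hand, and no boundedness or convergence hypotheses enter. The only points that need care are the bookkeeping that the square commutes with both composites equal to $\be\otimes\al$ — immediate from bifunctoriality of $\otimes_A$ — and the identification of its bottom edge (together with the \myref{inv1} isomorphisms) with $\ga$, which is precisely the content of the ``equivalently $\be\otimes\al$, $\be\otimes\id$, or $\id\otimes\al$'' clause of \myref{gamma}.
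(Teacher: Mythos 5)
Your argument is correct and is exactly the reasoning the paper has in mind: it states this proposition as a ``direct consequence of the definition'' of semi-flatness, the content being precisely that semi-flatness makes $\be\otimes\id_M$ (resp.\ $\id_N\otimes\al$) a $q$-equivalence, which combined with \myref{inv1} and the definition of $\ga$ in \myref{gamma} gives the isomorphism. Your commutative square is just a careful spelling-out of the ``equivalently'' clause of \myref{gamma}, so there is nothing to add.
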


Of course, we can compute $\Tor^A_*(N,M)$ using arbitrary K\"unneth resolutions,
as reflection on the EMSS makes clear.  But in fact we have the following more general result,
which will become relevant when we consider the bar construction in \S\ref{barsec}.  

\begin{prop}\mylabel{inv2}  If $\al\colon X\rtarr M$ is a $q$-equivalence, where $X$ is semi-flat,
then $\Tor^A_*(N,M)$ can be computed as $H(N\otimes_A X)$.
\end{prop}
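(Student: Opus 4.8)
The plan is to reduce the statement to the defining case of a $q$-cofibrant approximation by inserting a cofibrant replacement of $X$ and comparing tensor products through a commutative square. First I would use the (enriched or classical) small object argument to choose a $q$-cofibrant approximation $q\colon \tilde X\rtarr X$, with $\tilde X$ a $q$-cell complex. Since $\sW_q$ satisfies the two out of three property and both $q$ and $\al$ lie in $\sW_q$, the composite $\al q\colon \tilde X\rtarr M$ is again a $q$-cofibrant approximation of $M$; hence by \myref{DTordefn} we have $\Tor^A_*(N,M)\iso H(N\otimes_A\tilde X)$. It therefore suffices to show that $\id_N\otimes_A q\colon N\otimes_A\tilde X\rtarr N\otimes_A X$ is a $q$-equivalence.

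To do this I would pick a $q$-cofibrant approximation $\be\colon P\rtarr N$ of the right DG $A$-module $N$, with $P$ a $q$-cell complex, and form the commutative square whose vertices are $P\otimes_A\tilde X$, $P\otimes_A X$, $N\otimes_A\tilde X$, $N\otimes_A X$, with horizontal edges induced by $q$ and vertical edges induced by $\be$. Three of the four edges are $q$-equivalences: the top edge $P\otimes_A q$ because $P$ is $q$-cofibrant, so $P\otimes_A(-)$ preserves $q$-equivalences (the left--right mirror of the fact used for \myref{inv1}, coming from \myref{qmodelA}); the left edge $\be\otimes_A\tilde X$ because $\tilde X$ is $q$-cofibrant, so $(-)\otimes_A\tilde X$ preserves $q$-equivalences; and the right edge $\be\otimes_A X$ because $X$ is semi-flat, so by definition $(-)\otimes_A X$ preserves $q$-equivalences. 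By two out of three the remaining edge $\id_N\otimes_A q$ is a $q$-equivalence, and then $\Tor^A_*(N,M)\iso H(N\otimes_A\tilde X)\iso H(N\otimes_A X)$, as desired.

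I do not expect a genuine obstacle here: that $\al q$ is a $q$-cofibrant approximation, that the square commutes, and the bookkeeping with the mirrored tensor functors are all routine. The one essential input is the semi-flatness of $X$, which is exactly what allows the right-hand vertical edge to be controlled even though $N$ itself carries no flatness or cofibrancy hypothesis; without it the square argument collapses. In this sense the proposition is little more than the observation that semi-flatness of $X$ is precisely the hypothesis needed for $N\otimes_A X$ to compute the derived tensor product, so the ``hard part'' is only to have the mirrored versions of the preservation-of-$q$-equivalences facts available, which they are.
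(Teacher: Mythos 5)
Your argument is correct: every step goes through, and the essential input is the same as in the paper, namely a $q$-cofibrant approximation $\be\colon Y\rtarr N$ of $N$ together with the fact that semi-flatness of $X$ makes $\be\otimes_A\id_X$ a $q$-equivalence, while tensoring a $q$-equivalence with a $q$-cofibrant object is again a $q$-equivalence. The difference is one of economy: the paper never replaces $X$ at all. It invokes \myref{inv1} to identify $\Tor^A_*(N,M)$ with $H(Y\otimes_A M)$ (deriving the variable $N$ rather than $M$) and then uses the two-map zigzag $H(Y\otimes_A M)\ltarr H(Y\otimes_A X)\rtarr H(N\otimes_A X)$, the left map an isomorphism because $Y$ is $q$-cofibrant and $\al$ is a $q$-equivalence, the right one by semi-flatness. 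Your route instead computes $\Tor$ from \myref{DTordefn} directly via a cofibrant replacement $\tilde X\rtarr X\rtarr M$ of $M$ and then transfers across the commutative square with $P\rtarr N$; this is fine, but the square and the extra replacement $\tilde X$ are doing the work that \myref{inv1} already packages, so your proof is a slightly longer path through the same facts. (One small point of bookkeeping: $\al q$ is a $q$-equivalence from a $q$-cofibrant object but need not be a $q$-fibration; as the paper notes, any such map computes $\Tor$ since it is $h$-equivalent over $M$ to a genuine $q$-cofibrant approximation, so this does not affect your argument.)
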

\begin{proof}
Let $\be\colon Y\rtarr N$ be a $q$-cofibrant approximation. By \myref{inv1} and the definition
of semi-flat, $\al$ and $\be$ induce isomorphisms
\[   H(Y\otimes_A M)  \ltarr H(Y\otimes_A X) \rtarr H(N\otimes_A X). \qedhere\]
\end{proof} 

\begin{rmk}\mylabel{paradox}
In this generality, we do not even know that $X$ is a split DG $A$-module, although
we do not know examples where that fails.  Even when that holds, we cannot expect 
$E^2(N\otimes_A X)$ to be $\Tor^{HA}_{*,*}(HN,HM)$.  However, in view of the existence of K\"unneth resolutions of any $M$, we conclude from
\myref{inv2} that we do have an EMSS with that $E^2$-term that converges
to $H(N\otimes_A X)$.
\end{rmk}

\subsection{The bar construction and the $r$-model structure}\label{barsec}
We assume familiarity with the two-sided bar construction $B(N,A,M)$ for a DG algebra $A$ and right and left $A$-modules $N$ and $M$. It is the total complex associated to the evident simplicial DG $R$-module with $p$-simplices $N\otimes A^{\otimes p}\otimes M$; see for example \cite[App A]{GM}.  The following result is a reinterpretation of \cite[A.8]{GM}.  Let $JA$ denote the
cokernel $A/R$ of the unit of $A$; it is a quotient DG $R$-module of $A$.  Usually $A$ is augmented, and then $JA$ may be identified with the augmentation ideal $IA$.

\begin{prop}\mylabel{bar}  For any DG $R$-algebra $A$ and DG $R$-module $M$, 
the standard map $\epz\colon B(A,A,M)\rtarr M$ is an $r$-cofibrant 
approximation of $M$. It is functorial in both $A$ and $M$.
\end{prop}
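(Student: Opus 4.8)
The statement asserts two things: that $\epz\colon B(A,A,M)\rtarr M$ is an $r$-equivalence, and that $B(A,A,M)$ is $r$-cofibrant; functoriality is evident from the construction. The plan is to deduce the first from the standard extra-degeneracy argument, and the second from the characterization of $r$-cofibrant objects as retracts of $r$-split filtered DG $A$-modules (\myref{rcharcofibrant}), exhibiting the bar filtration as an honest $r$-split filtration.

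\emph{First}, I would recall that $B(A,A,M)$ is the total complex of the simplicial DG $R$-module $[p]\mapsto A\otimes A^{\otimes p}\otimes M$ with the usual face and degeneracy operators, and that $\epz$ is induced by the last face map $d_0$ together with the augmentation. The classical simplicial identity here is that the inclusion of $M = A\otimes A^{\otimes 0}\otimes M$ (via the unit $R\rtarr A$ in the outer slot, or rather the extra degeneracy $s_{-1}$) provides a contracting homotopy for the augmented simplicial object $B(A,A,M)\rtarr M$ as a simplicial DG $R$-module. Explicitly, $s_{-1}\colon A\otimes A^{\otimes p}\otimes M\rtarr A\otimes A^{\otimes p+1}\otimes M$ inserts the unit $1\in A$ in the first slot after the outer $A$. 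Passing to total complexes, this extra degeneracy gives an explicit chain homotopy $h$ of DG $R$-modules (not of DG $A$-modules) with $\epz\circ(\text{unit inclusion}) = \id_M$ and $(\text{unit inclusion})\circ\epz \simeq \id$ on $B(A,A,M)$. Hence $\epz$ is a homotopy equivalence of underlying DG $R$-modules, i.e.\ an $r$-equivalence. Care with signs in forming the total complex differential from the alternating sum of faces plus the internal differentials on $A$ and $M$ is the only fussy point, and it is entirely standard (see \cite[App.\ A]{GM}).

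\emph{Second}, for $r$-cofibrancy I would use the skeletal filtration $F_p B(A,A,M) = $ the image of the total complex of the $p$-skeleton, so that $F_p/F_{p-1}\iso A\otimes (\bar{A}^{\otimes p}\otimes M)$ as DG $A$-modules, where $\bar A = JA = A/R$ and the bar generators are nondegenerate simplices. This is exactly the shape required in \myref{rsemifree}'s notion of an $r$-split filtration: each subquotient is of the form $A\otimes K_p$ with $K_p = \bar A^{\otimes p}\otimes M$ a DG $R$-module, and the inclusions $F_{p-1}\rtarr F_p$ are $R$-split by the evident splitting that selects nondegenerate simplices (the normalized versus unnormalized bar complex comparison). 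By \myref{proj} the inclusions are then $A$-split. Therefore $B(A,A,M)$ admits an $r$-split filtration, so by \myref{rcharcofibrant0} it is $r$-semi-projective, hence $r$-cofibrant by \myref{rcharcofibrant}. Combining the two halves, $\epz$ is an $r$-cofibrant approximation.

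\emph{Main obstacle.} The genuinely substantive point is verifying that the bar filtration is \emph{$R$-split}, not merely a filtration with relatively free subquotients: one must produce $R$-linear (degreewise) splittings of $F_{p-1}\rtarr F_p$, and the natural candidate is the decomposition of each bar-degree piece into nondegenerate simplices plus the span of degenerate ones, i.e.\ the classical normalization isomorphism $A\otimes A^{\otimes p}\otimes M \iso \bigoplus (\text{degeneracies of lower nondegenerate pieces})$. Making this decomposition compatible with the total-complex differential requires knowing that the differential respects the filtration (clear) and that the splitting of underlying graded $R$-modules can be chosen, which it can since we only need a splitting of graded $R$-modules, not of DG $R$-modules—and that is exactly what $r$-split demands. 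Once this is in hand the cited theorems do all the work; note in particular that when $A$ has nonzero differential $B(A,A,M)$ is \emph{not} cell-like (the internal differential on $A$ contributes to $d^0$ on $\bar X$), which is why we invoke the $r$-split rather than cellular version of \myref{rcharcofibrant}.
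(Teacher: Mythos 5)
Your proposal is correct and is essentially the paper's own proof: give $B(A,A,M)$ its simplicial filtration, observe that as an $A$-module it is $A\otimes\bar{B}$ with $\bar{B}_{p,*}=(JA)^{\otimes p}\otimes M$ (so the filtration is $r$-split and \myref{rcharcofibrant} gives $r$-cofibrancy), and use the standard extra-degeneracy homotopy to see that $\epz$ is an $r$-equivalence. The only point worth adding is that, in this paper's usage, a cofibrant approximation is also required to be a fibration; the unit inclusion $\io\colon M\rtarr A\otimes M=F_0$ that you already produce (with $\epz\io=\id$) exhibits $\epz$ as an $R$-split epimorphism and hence as an $r$-fibration, which completes the verification exactly as in the paper.
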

\begin{proof}  Give $B(A,A,M)$ its simplicial filtration. Its filtration quotient $F_p/F_{p-1}$ is the 
relatively free $A$-module $A\otimes (JA)^p\otimes M$.  As an $A$-module, $B(A,A,M)$ is 
$A\otimes \bar{B}$, where $\bar{B}_{p,*} = (JA)^{\otimes p}\otimes M$, and it is the direct sum
of its filtration quotients $F_p/F_{p-1}$.  Thus the filtration is $r$-split and $B(A,A,M)$ is $r$-cofibrant
by \myref{rcharcofibrant}.  Moreover, $\epz\colon B(A,A,M)\rtarr M$ is an $R$-split epimorphism and thus an 
$r$-fibration; the unit $\io \colon M\rtarr A\otimes M = F_0$ gives the splitting. The standard homotopy between 
the identity and $\io \com \epz$ shows that $\epz$ is an $r$-equivalence.  
\end{proof}

Since $B(A,A,M)$ is $r$-cofibrant, it is a retract of an $r$-cell complex.  While that is obvious 
from our model categorical work, it is nevertheless a little mysterious: we have no direct way of seeing 
it using homological methods and the simplicial filtration.  The following result complements the previous one.

\begin{prop}\mylabel{semibar}  If $JA$ and $M$ are degreewise $R$-flat, then $B(A,A,M)$ is semi-flat
under either of the following hypotheses:
\begin{enumerate}[(i)]
\item  $R$ is a PID,
\item  $A$ and $M$ are bounded below.
\end{enumerate}
If, further, $HJA$ and $HM$ are degreewise $R$-flat, then $B(A,A,M)$ is K\"unneth and
$\epz\colon B(A,A,M)\rtarr M$ is a K\"unneth resolution of $M$. 
\end{prop}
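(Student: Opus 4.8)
The plan is to verify that $B(A,A,M)$ fits the hypotheses of \myref{KunnethRec} under the stated conditions, since \myref{bar} already tells us that $B(A,A,M)$ is a split DG $A$-module: it is of the form $A \otimes \bar{B}$ with $\bar{B}_{p,*} = (JA)^{\otimes p} \otimes M$, filtered by simplicial degree, and its differential has the standard internal-plus-simplicial decomposition $d = \sum_{r\geq 0} d^r$. First I would check the condition $d^0(\bar{B}) \subset \bar{B}$: the internal differential $d^0$ is induced by the differentials on $JA$ (via the differentials on $A$, which pass to the quotient $JA = A/R$ since $R$ sits in degree $0$ with zero differential) and on $M$, tensored together with appropriate signs; none of the simplicial face maps are part of $d^0$, so $d^0$ indeed preserves $\bar{B}$. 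The simplicial (external) differential is $d^1$, and all higher $d^r$ vanish for the bar construction, so it is in fact a bicomplex, but that is irrelevant---what matters is that it is split with $d^0(\bar{B})\subset\bar{B}$.

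Next I would check the flatness hypothesis of \myref{KunnethRec}: each $\bar{B}_{p,q}$ must be $R$-flat. Since $\bar{B}_{p,*} = (JA)^{\otimes p} \otimes M$ and tensor products (over $R$) of degreewise $R$-flat modules are degreewise $R$-flat---each degree $\bar{B}_{p,q}$ being a finite direct sum of tensor products of the flat modules $(JA)_{i_1}, \dots, (JA)_{i_p}, M_j$ with $i_1 + \cdots + i_p + j = q$---the hypothesis that $JA$ and $M$ are degreewise $R$-flat gives exactly this. For case (ii), I also need each $\bar{B}_{p,*}$ to be bounded below; this follows since $JA$ is bounded below (as $A$ is) and $M$ is bounded below, and $p$-fold tensor products of bounded-below complexes are bounded below, with the bound possibly depending on $p$ but that is all \myref{KunnethRec}(ii) requires. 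Thus \myref{KunnethRec} applies and yields that $B(A,A,M)$ is semi-flat under hypothesis (i) or (ii).

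For the final K\"unneth assertion, \myref{KunnethRec} tells us that if, in addition, each $H_{p,q}(\bar{B}, d^0)$ is $R$-flat, then $B(A,A,M)$ is K\"unneth. Here $(\bar{B}_{p,*}, d^0) = ((JA)^{\otimes p} \otimes M, d^0)$ with $d^0$ the internal differential, so I would identify $H_{*}(\bar{B}_{p,*}, d^0)$ via the K\"unneth theorem (case (i), $R$ a PID, where one needs the $\mathrm{Tor}$ terms to vanish---they do once $HJA$ and $HM$ are $R$-flat, so the homology is just the tensor product of homologies) or the K\"unneth spectral sequence of \myref{KunnSS} (case (ii)) to get $H_*(\bar{B}_{p,*}, d^0) \cong (HJA)^{\otimes p} \otimes HM$; the hypothesis that $HJA$ and $HM$ are degreewise $R$-flat then makes each $H_{p,q}(\bar{B}, d^0)$ a degreewise $R$-flat module, being a finite direct sum of tensor products of flat modules. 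Hence \myref{KunnethRec} gives that $B(A,A,M)$ is K\"unneth, and combined with \myref{bar}, which says $\epz\colon B(A,A,M) \rtarr M$ is an $r$-cofibrant approximation and in particular a $q$-equivalence (hence a resolution in the relevant sense---here one should note $\epz$ is a resolution in the sense of \myref{resdef} because the simplicial filtration exhibits $E^1$ as the normalized bar complex computing $HM$, i.e. \eqref{Eone} is exact), we conclude $\epz$ is a K\"unneth resolution. The main obstacle I anticipate is bookkeeping: making sure the degreewise-flatness and bounded-below conditions genuinely transfer through the $p$-fold tensor products with the bounds allowed to depend on $p$, and confirming that the splitting data from \myref{bar} matches the hypothesis format of \myref{KunnethRec} on the nose (in particular that $d^0(\bar B)\subset\bar B$ rather than merely $d^0$ being filtration-internal); everything else is a direct invocation of \myref{KunnethRec}.
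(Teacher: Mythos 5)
Your proposal is correct and takes essentially the same route as the paper: the semi-flatness claim is reduced to \myref{KunnethRec} via the split structure $B(A,A,M)=A\otimes\bar{B}$ with $\bar{B}_{p,*}=(JA)^{\otimes p}\otimes M$ supplied by \myref{bar}, checking $d^0(\bar B)\subset\bar B$ and the degreewise flatness (and, in case (ii), boundedness below) of the $\bar B_{p,*}$. For the final clause the paper likewise uses the K\"unneth theorem to identify $H(\bar B,d^0)$ and hence \eqref{Eone} with the flat $HA$-resolution $B(HA,HA,HM)$ of $HM$, which is exactly your argument for the K\"unneth and resolution properties.
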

\begin{proof}  Observe that $A$ is $R$-flat if $JA$ is $R$-flat and $HA$ is $R$-flat if
$HJA$ is $R$-flat. Except for the last clause, this is immediate from the proofs of Propositions \ref{KunnethRec} and \ref{bar}. 
The differential $d^0$ on $\bar{B}$ is the internal differential induced by the 
differentials on $JA$ and $M$. The K\"unneth theorem gives that if
$JA$, $M$, $HJA$, and $HM$ are degreewise $R$-flat, then (\ref{Eone}) for $X=B(A,A,M)$ is the 
flat $HA$-resolution $B(HA,HA,HM)$ of $HM$.
\end{proof} 

Since $\epz\colon B(A,A,M)\rtarr M$ is an $r$-equivalence and thus a $q$-equivalence, \myref{inv2}
gives that 
\[  \Tor^A_*(N,M) = H(N\otimes_A B(A,A,M)) = HB(N,A,M) \]
whenever $B(A,A,M)$ is semi-flat.  By \myref{paradox}, we then have an EMSS converging from 
$E^2= \Tor^{HA}_{*,*}(HN,HM)$ to $HB(N,A,M)$, even though it may not come from the simplicial
filtration of $B(N,A,M)$.  However, when $B(A,A,M)$ is K\"unneth, the spectral sequence does
come from that filtration, which then gives the correct $E^2$-term.  

Observe that \myref{bar} gives
\[ r\Tor^A_*(N,M) = HB(N,A,M) \]
for a right DG $A$-module $N$ and
\[ r\Ext_A^*(N,M) = H\Hom_A(B(A,A,M),N) \]
for a left DG $A$-module $N$.  The simplicial filtration gives a spectral sequence converging to 
$r\Tor^A(N,M)$.  By \cite[\S IX.8]{MacHom}, we can define relative classical $\Tor$ functors 
$r\Tor^{HA}_{*,*}(HN,HM)$ starting from \myref{proj}.  We do \emph{not} have an identification 
of $E^2(N,A,M)$ with $r\Tor^{HA}(HN,HM)$ in general.  However, it is now clear that $r\Tor$ and $\Tor$
agree under surprisingly mild hypotheses.

\begin{thm}\mylabel{Yesflat}  Assume that $A$ and $M$ are degreewise $R$-flat and (i) or (ii)
of \myref{semibar} holds.  Then
\[  \Tor^A(N,M) = r\Tor^A(N,M) \]
for all right DG $A$-modules $N$.
\end{thm}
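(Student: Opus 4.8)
The plan is to chain together the results already assembled about the bar construction. First, \myref{bar} gives that $\epz\colon B(A,A,M)\rtarr M$ is an $r$-cofibrant approximation, so by \myref{DTordefntoo} we have, tautologically,
\[ r\Tor^A_*(N,M) = H(N\otimes_A B(A,A,M)). \]
Next, under the standing hypotheses---$A$ and $M$ degreewise $R$-flat together with (i) or (ii)---\myref{semibar} applies: the cokernel $JA$ is $R$-flat because $A$ is (the unit $R\rtarr A$ is degreewise split since $R$ is projective over itself, or at any rate the short exact sequence $0\rtarr R\rtarr A\rtarr JA\rtarr 0$ shows $JA$ is $R$-flat whenever $A$ is in the PID case, and in the bounded-below case one argues degreewise), so $B(A,A,M)$ is semi-flat. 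The only mild point to check here is that the flatness hypothesis on $A$ transfers to $JA$ in the form demanded by \myref{semibar}; this is exactly the parenthetical remark at the start of the proof of \myref{semibar}, so it may simply be cited.

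Now invoke \myref{inv2}. Since $\epz\colon B(A,A,M)\rtarr M$ is an $r$-equivalence, hence a $q$-equivalence, and its source $B(A,A,M)$ is semi-flat, \myref{inv2} gives
\[ \Tor^A_*(N,M) = H(N\otimes_A B(A,A,M)). \]
Comparing the two displays identifies $\Tor^A_*(N,M)$ with $r\Tor^A_*(N,M)$, and the comparison is natural because every construction in sight (the bar construction, the map $\epz$, and the isomorphisms of \myref{inv1} and \myref{inv2}) is natural in $N$. This yields the stated equality $\Tor^A(N,M) = r\Tor^A(N,M)$ for all right DG $A$-modules $N$.

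I do not anticipate a genuine obstacle: the theorem is a corollary of \myref{bar}, \myref{semibar}, and \myref{inv2}, and the proof is essentially bookkeeping. The one place that warrants a sentence of care is the passage from ``$A$ is $R$-flat'' to ``$JA$ is $R$-flat'' so that the hypotheses of \myref{semibar} are literally met; since \myref{semibar} itself already records this implication, the cleanest route is to state the theorem's hypotheses in terms of $A$ (as is done) and cite \myref{semibar} for the reduction. If one wanted to be maximally explicit one could also remark that $H(N\otimes_A B(A,A,M)) = HB(N,A,M)$ by the standard identification $N\otimes_A B(A,A,M)\iso B(N,A,M)$, making both sides visibly equal to $HB(N,A,M)$, but this is not needed for the statement as phrased.
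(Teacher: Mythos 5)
Your route is the paper's own: exactly as in the discussion preceding \myref{Yesflat}, the theorem falls out of \myref{bar} (so $r\Tor^A_*(N,M)=HB(N,A,M)$), \myref{semibar} (so $B(A,A,M)$ is semi-flat), and \myref{inv2} (so $\Tor^A_*(N,M)=H(N\otimes_A B(A,A,M))=HB(N,A,M)$), and that chain is correct.

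The one step you single out for care, however, is not justified by what you say. The observation opening the proof of \myref{semibar} goes the \emph{other} way ($A$ is $R$-flat if $JA$ is), and the implication you want --- $A$ degreewise $R$-flat forces $JA$ degreewise $R$-flat --- is false in general: for $R=\mathbb{Z}$ and $A=\mathbb{Q}$ concentrated in degree $0$, $A$ is flat but $JA=\mathbb{Q}/\mathbb{Z}$ is not. This also defeats your PID argument (over a PID flat means torsion-free, and a quotient of a torsion-free module need not be torsion-free), the appeal to projectivity of $R$ (which gives no splitting of the monomorphism $R\rtarr A$), and the bounded-below remark (the issue sits in degree $0$). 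What is true, and covers the intended examples such as $A=C^*(X;R)$, is that $JA$ is an $R$-module direct summand of $A$, hence flat, whenever the unit is $R$-split, e.g.\ whenever $A$ is augmented. The paper itself states \myref{Yesflat} with flatness of $A$ while \myref{semibar} asks for flatness of $JA$, so this imprecision is inherited from the source; but the correct repair is the split-unit (or simply ``assume $JA$ flat'') remark, not the justifications you propose.
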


Under these hypotheses, we can use the bar construction just as if $B(A,A,M)$ were a 
$q$-cofibrant approximation of $M$, even though $B(A,A,M)$ is not cell-like and need not be $q$-cofibrant. Of course, by the SOA, $B(A,A,M)$ admits a $q$-cofibrant approximation $\ze\colon X\rtarr B(A,A,M)$.  When we can find such a $\ze$ which is an $r$-equivalence over $M$, we can conclude that $B(A,A,M)$ is  $h$-equivalent to $X$ and is thus a $(q,h)$-cofibrant approximation of $X$. That is presumably not possible  in general. However, when $R$ is a field (or semi-simple), the $q$-, $r$-, and $h$-model structures on $\sM_R$ coincide, hence the $q$- and $r$-model structures on $\sM_A$ coincide.  In that case, $\epz\colon B(A,A,M)\rtarr M$ is a $q$-cofibrant approximation of $M$ even though $B(A,A,M)$ is not cell-like, hence not distinguished and not a $q$-cell complex.

\subsection{Matric Massey products and differential torsion products}\label{matric}

Let us return to the map $\ga\colon \Tor^A_*(N,M)\rtarr H(N\otimes_A M)$ of \myref{gamma}.
It is not an isomorphism in general. The following curious substitute for this isomorphism
relies on matric Massey products, as defined in \cite{MayMat} and recalled in \cite[\S5]{GM}.

\begin{thm}[{\cite[5.9]{GM}}]\mylabel{matrix1} The image of $\ga$ is the set $D(N,A,M)$ of all elements of all 
matric Massey products
$\langle V_0,V_1,\cdots,V_p, V_{p+1}\rangle$, $p\geq 0$, where $V_0$ is a row matrix in $HN$, the $V_i$ for
$1\leq i\leq p$ are matrices with entries in $HA$, and $V_{p+1}$ is a column matrix with entries in $HM$.
\end{thm}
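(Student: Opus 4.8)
The statement identifies the image of the edge map $\ga\colon \Tor^A_*(N,M)\rtarr H(N\otimes_A M)$ with the submodule $D(N,A,M)$ of $H(N\otimes_A M)$ generated by all matric Massey products of the indicated shape. The plan is to compute $\ga$ using the specific model of $\Tor$ afforded by a split $q$-cofibrant resolution $\al\colon X\rtarr M$, exploiting the fact that the components $d^r$ of the differential on $X$ (and on $X^\al$) are literally the data out of which matric Massey products are built --- a point already flagged in the paragraph preceding \myref{resdef} and made precise in \cite[\S5]{GM}.

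First I would fix a distinguished (hence $q$-cofibrant and K\"unneth, by \myref{ofcourse}) resolution $\al\colon X\rtarr M$, so that $\bar X$ is bigraded free over $R$ with $d^0=0$ on $\bar X$, and write the differential on $X^\al$ as $d=\sum_{r\ge 0}d^r$ with $d^{p+1}=\al$ on $X_{p,*}$ as in \myref{ancdef2}. By \myref{DTordefn} and \myref{inv1}, $\Tor^A_*(N,M)=H(N\otimes_A X)$ with $N\otimes_A X\iso N\otimes\bar X$ as in \eqref{obvious}, and $\ga$ is induced by $\id\otimes\al$, i.e.\ by the ``edge'' component $d^{p+1}=\al$ of the total differential landing in filtration $-1$. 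Second, I would spell out what a homology class of $N\otimes_A X$ looks like: choosing a cycle, its components in the various filtrations $p=0,1,\dots$ assemble, via the relations $\sum_{i+j=p+1}d^id^j=0$ coming from $d^2=0$, into exactly a defining system for a matric Massey product $\langle V_0,\dots,V_{p+1}\rangle$ with $V_0$ a row vector over $HN$, the intermediate $V_i$ matrices over $HA$ recording the $d^1$-component of the differential on $\bar X$ (the ``external'' differential), and $V_{p+1}$ a column over $HM$ recording $\al$. Applying $\ga$, i.e.\ reading off the filtration $-1$ part, produces precisely an element of that matric Massey product. This gives the inclusion $\operatorname{im}\ga\subset D(N,A,M)$.

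For the reverse inclusion, given any defining system for a matric Massey product $\langle V_0,\dots,V_{p+1}\rangle$ as in the statement, I would build a finite-dimensional split sub-DG-$A$-module of a distinguished resolution $X^\al$ realizing that defining system --- concretely, take $\bar X$ with basis dual to the matrices $V_1,\dots,V_p$, equip it with the filtration-lowering differentials dictated by the defining system, and check that the axioms \eqref{diff1}, \eqref{diff2}, \eqref{signsick} hold because the defining-system relations are exactly $\sum_{i+j=r}d^id^j=0$. Since any such partial split object embeds in (a resolution mapping to) $M$ by the lifting properties of $q$-cofibrations and the exhaustiveness in \myref{resdef}, the resulting class in $H(N\otimes_A X)$ maps under $\ga$ to the chosen Massey product element. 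I expect the main obstacle to be bookkeeping: getting the signs and the matching between "defining system for a matric Massey product" in the sense of \cite{MayMat} and "$d^2=0$ for a split multicomplex" exactly right, and handling the indeterminacy (different defining systems / different choices of $d^r$-components) so that one genuinely lands on \emph{all} elements of \emph{all} such products and nothing more. This is precisely the content of \cite[5.9]{GM}, so I would organize the proof as a translation of that argument into the present model-categorical framework, citing \cite[\S5]{GM} for the combinatorial identification of the $d^r$ with matric Massey product operations and supplying only the verification that the relevant resolutions are $q$-cofibrant approximations, which is now \myref{seecof} together with \myref{ofcourse}.
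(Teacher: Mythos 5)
Your proposal matches the paper's treatment: the paper gives no independent argument, but simply observes that the relation $d^2=0$ for the multicomplex $N\otimes_A X$ built from a distinguished (or any $q$-cell) resolution mirrors the defining-system conditions for matric Massey products, and defers the combinatorial identification to \cite[\S 5, 5.9]{GM}, which is exactly how you organize your argument (with \myref{seecof} and \myref{ofcourse} supplying the model-theoretic input). Your extra sketch of the two inclusions is a reasonable expansion of that same citation-based route, not a different approach.
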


The letter $D$ stands for ``decomposable.''  When $p=0$, we understand $\langle V_0,V_1 \rangle$ to be the image 
(up to signs) of $V_0\otimes V_1$ in $H(N\otimes_A M)$.    The proof uses nothing but the homological material 
we have summarized.  The essential point, explained in detail in \cite[pp 49--57]{GM}, is that the formula $d^2=0$
for the differentials of the multicomplex $N\otimes_A X$ is so similar to the boundary conditions that specify
defining systems for matric Massey products that the entire spectral sequence $\{E^r(N\otimes_A X)\}$ can be described
in terms of matric Massey products.  That discussion starts from a distinguished resolution $X$ of $M$,
but it applies to any $q$-cell approximation.

When $A$ has an augmentation $\epz\colon A\rtarr R$, so that $R$ is a DG $A$-module, the 
special cases $M=R$ (or $N=R$) and $M=N=R$ are of particular importance in the applications.  
We then let $IA = \ker\, \epz$ and $IHA = \ker\, H\epz$.   The inclusion $\io\colon IA\rtarr A$ induces 
\[ H(\io\otimes \id)\colon H(IA\times_A M) \rtarr H(A\otimes_A M) = HM \]
and we let $D(HA;HM)$ denote the image of $D(IA,A,M)$ in $HM$.   We have a natural map
\[ \pi\colon HM \rtarr \Tor^{A}_*(R,M), \]
namely the ``edge homomorphism''
\[ R = R\otimes HM \rtarr R\otimes_{HA}HM = E^2_{0,*} \rtarr E^{\infty}_{0,*} = F_0\Tor^A_*(R,M). \]
By \cite[5.12]{GM}, \myref{matrix1} implies the following special case.

\begin{cor}\mylabel{matrix2} The kernel of $\pi\colon HM\rtarr \Tor^{A}_*(R,M)$ is $D(HA;HM)$. 
\end{cor}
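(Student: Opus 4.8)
The plan is to deduce \myref{matrix2} from \myref{matrix1} by specializing the general statement to the DG $A$-module $M$ (with $A$ augmented) and the right DG $A$-module $N = IA$, keeping careful track of the two maps $\gamma\colon \Tor^A_*(IA,M)\rtarr H(IA\otimes_A M)$ and the map $H(\iota\otimes\id)\colon H(IA\otimes_A M)\rtarr HM$ induced by $\iota\colon IA\rtarr A$. First I would record the commutative triangle relating $\gamma$, $H(\iota\otimes\id)$, and the edge homomorphism $\pi\colon HM\rtarr \Tor^A_*(R,M)$: because $A\cong R\oplus IA$ as DG $R$-modules (the augmentation splits the unit), we have $H(A\otimes_A M) = HM$ and $IA\otimes_A M$ is a summand of $A\otimes_A M$, and the composite $H(IA\otimes_A M)\xrightarrow{H(\iota\otimes\id)} HM \xrightarrow{\pi}\Tor^A_*(R,M)$ should be identified, via a diagram chase on the spectral sequence $\{E^r(IA\otimes_A X)\}$ versus $\{E^r(R\otimes_A X)\}$ for a $q$-cell resolution $X$ of $M$, with the zero map — or more precisely, its image must be shown to land in $F_{-1} = 0$ of the target filtration, so that $\ker\pi \supseteq \operatorname{im} H(\iota\otimes\id)$.

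Next I would combine this with \myref{matrix1}. By \myref{matrix1} applied to $N = IA$, the image of $\gamma\colon \Tor^A_*(IA,M)\rtarr H(IA\otimes_A M)$ is exactly $D(IA,A,M)$, the set of elements of matric Massey products $\langle V_0,\dots,V_{p+1}\rangle$ with $V_0$ a row matrix over $HIA = IHA$, the middle matrices over $HA$, and $V_{p+1}$ a column matrix over $HM$. Pushing this forward along $H(\iota\otimes\id)$ gives $D(HA;HM)$ by definition. So the image of the composite $H(\iota\otimes\id)\circ\gamma$ is precisely $D(HA;HM)$. The statement then reduces to the two inclusions $D(HA;HM)\subseteq \ker\pi$ and $\ker\pi\subseteq D(HA;HM)$. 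The first follows from the triangle above: every element of $D(HA;HM)$ is in the image of $H(\iota\otimes\id)$, hence in $\ker\pi$. For the reverse inclusion I would argue that an element $x\in\ker\pi$ is precisely a class of positive filtration in the convergent spectral sequence $\{E^r(R\otimes_A X)\}$ computing $\Tor^A_*(R,M)$; such a class is represented in $F_0 X / F_{-1}X$ by a cycle that becomes a boundary modulo higher filtration, and unwinding this via the multicomplex differentials $d = \sum d^r$ exhibits $x$ as (a sum of) matric Massey products of the required shape — exactly the translation, spelled out in \cite[pp.~49--57]{GM}, between the defining-system conditions for $\langle V_0,\dots,V_{p+1}\rangle$ and the condition $d^2 = 0$ on $IA\otimes_A X$.

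Concretely, the body of the proof is just the invocation: ``Apply \myref{matrix1} with $N = IA$; by definition $D(HA;HM)$ is the image of $D(IA,A,M)$ under $H(\iota\otimes\id)$, and a diagram chase identifies the kernel of the edge homomorphism $\pi$ with this image, giving the result.'' I would present it in roughly that form, since all the substantive homological content is already available: \myref{matrix1} supplies the matric Massey product description of $\operatorname{im}\gamma$, and the identification of $\ker\pi$ with $\operatorname{im} H(\iota\otimes\id)$ is a formal consequence of the filtration structure of the EMSS together with the splitting $A\cong R\oplus IA$.

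The main obstacle I expect is the bookkeeping in the diagram chase identifying $\ker\pi$ with the image of $H(\iota\otimes\id)\colon H(IA\otimes_A M)\rtarr HM$: one must use the short exact sequence $0\rtarr IA\rtarr A\rtarr R\rtarr 0$ of DG $A$-modules, tensor with a $q$-cell resolution $X$ of $M$ (so that the sequence stays exact, $X$ being degreewise relatively free), and then compare the resulting long exact sequence in homology with the filtration edge maps of $\{E^r(R\otimes_A X)\}$ and $\{E^r(IA\otimes_A X)\}$ — taking care that the filtration index shift $X_{-1,q} = M_q$ from \myref{ancdef2} lines up so that ``$\ker\pi$'' really is ``positive-filtration classes'' and hence the image of the connecting map, which is $H(\iota\otimes\id)$ up to sign. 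The signs and the precise filtration degree matching are the only genuinely fiddly points; everything else is a direct appeal to \myref{matrix1} and \cite[5.12]{GM}.
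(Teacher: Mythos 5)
Your overall strategy is the right one, and indeed the paper itself gives no argument beyond citing \cite[5.12]{GM}: specialize \myref{matrix1} to $N=IA$, tensor the $R$-split exact sequence $0\rtarr IA\rtarr A\rtarr R\rtarr 0$ over $A$ with a $q$-cell resolution $X\rtarr M$ (exact since $X$ is relatively free), and identify $\pi$ with the map $HM\iso H(X)\rtarr H(R\otimes_A X)$ induced by $A\otimes_A X\rtarr R\otimes_A X$. But as written there is a genuine gap: you repeatedly conflate the module-level group $H(IA\otimes_A M)$ with the resolution-level group $\Tor^A_*(IA,M)=H(IA\otimes_A X)$. Your first step asserts that $\pi\circ H(\io\otimes\id)=0$ on \emph{all} of $H(IA\otimes_A M)$, to be verified ``by a diagram chase on $\{E^r(IA\otimes_A X)\}$ versus $\{E^r(R\otimes_A X)\}$''; those spectral sequences live at the resolution level and only see classes in the image of $\ga$, so the proposed chase cannot reach an arbitrary class of $H(IA\otimes_A M)$. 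In fact that global vanishing is equivalent (granting the corollary) to the assertion that the image of $H(\io\otimes\id)\colon H(IA\otimes_A M)\rtarr HM$ lies in $D(HA;HM)$, a statement strictly stronger than the corollary and not at all formal, precisely because $\ga$ need not be surjective --- which is the whole point of \myref{matrix1}. Similarly, your closing claim that ``$\ker\pi$ is the image of $H(\io\otimes\id)$'' as ``a formal consequence of the filtration structure'' is not what the long exact sequence gives (and the relevant map is the one induced by the inclusion $IA\otimes_A X\rtarr A\otimes_A X$, not the connecting homomorphism, which shifts degree the other way). What the long exact sequence does give, after the edge-homomorphism identification, is
\[ \ker\pi \;=\; \mathrm{im}\bigl(H(IA\otimes_A X)\rtarr H(X)\iso HM\bigr). \]

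The repair is short and settles both inclusions at once: this composite factors through $H(IA\otimes_A M)$, namely as $H(\io\otimes\id)\circ\ga$ with $\ga\colon \Tor^A_*(IA,M)\rtarr H(IA\otimes_A M)$, so its image is $H(\io\otimes\id)(\mathrm{im}\,\ga)$, which by \myref{matrix1} with $N=IA$ equals $H(\io\otimes\id)(D(IA,A,M))=D(HA;HM)$ by definition. In particular the module-level vanishing you assert is never needed, and your separate argument for $\ker\pi\subseteq D(HA;HM)$ (unwinding the multicomplex differentials into defining systems) is really a re-derivation of the mechanism behind \myref{matrix1} rather than an application of it; with the factorization above it becomes superfluous. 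The only genuine bookkeeping left is the one you flag: since $E^2_{p}(X)=0$ for $p>0$ in the resolution filtration of $X$, every class of $H(X)$ is represented by a cycle in $F_0X$, so the induced map $HM\rtarr H(R\otimes_A X)$ lands in $F_0\Tor^A_*(R,M)$ and agrees with the edge homomorphism $\pi$.
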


Specializing further, we have a suspension homomorphism \cite[3.7]{GM}
\[ \si\colon IHA \rtarr E^2_{1,*} \rtarr E^{\infty}_{1,*}\subset \Tor^A_*(R,R).\]
The inclusion results from the fact that $R = F_0\Tor^A_*(R,R)$ is a direct summand of $\Tor^A_*(R,R)$.  
The inclusion $\io\otimes \io\colon IA\otimes_A IA\rtarr A\otimes_A A$ induces a map 
\[ H(IA\otimes_{A}IA)\rtarr HA, \]
and we let $DHA$ denote the image of $D(IA,A,IA)$ in $HA$.  By \cite[5.13]{GM}, \myref{matrix2} implies the 
following further special case.

\begin{cor}\mylabel{matrix3} The kernel of $\si\colon IHA\rtarr \Tor^A_*(R,R)$ is $DHA$. 
\end{cor}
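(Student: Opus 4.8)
The plan is to deduce this from \myref{matrix2} by specializing to $M = IA$, after identifying $H(IA)$ with $IHA$ and matching the suspension $\si$ with the edge homomorphism $\pi$ of that corollary.

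First I would record the identification $H(IA)\iso IHA$. The short exact sequence $0\rtarr IA\rtarr A\xrightarrow{\ \epz\ } R\rtarr 0$ of DG $R$-modules is split by the unit $\io\colon R\rtarr A$, which is a chain map; hence $H(\epz)\colon HA\rtarr R$ is a split epimorphism of $R$-modules, the connecting map $H_0(R)\rtarr H_{-1}(IA)$ vanishes, and the long exact homology sequence collapses to isomorphisms $H_n(IA)\iso HA_n$ for $n\neq 0$ and $H_0(IA)\iso\ker(H_0\epz)$, i.e. $H_*(IA)\iso IHA$. Under this identification the map $H(\io\otimes\io)\colon H(IA\otimes_A IA)\rtarr H(A\otimes_A A)=HA$ factors through $H(IA\otimes_A IA)\rtarr H(A\otimes_A IA)=H(IA)\iso IHA\subset HA$, so its image $DHA$ coincides with the image $D(HA;H(IA))$ of $D(IA,A,IA)$ in $H(IA)$ that appears in \myref{matrix2} applied with $M=IA$.

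Next I would set up the long exact $\Tor$ sequence. The short exact sequence $0\rtarr IA\rtarr A\rtarr R\rtarr 0$ of left DG $A$-modules induces, by the second-variable analogue of \myref{Torles} (proved the same way, using a $q$-cell approximation $Y\iso \bar Y\otimes_R A$ of the right DG $A$-module $R$ with $\bar Y$ degreewise $R$-free, so that $Y\otimes_A(-)\iso\bar Y\otimes_R(-)$ is exact, together with \myref{inv1} and \myref{DTordefn}), a long exact sequence
\[ \cdots \rtarr \Tor^A_n(R,A)\rtarr \Tor^A_n(R,R)\xrightarrow{\ \partial\ }\Tor^A_{n-1}(R,IA)\rtarr \Tor^A_{n-1}(R,A)\rtarr\cdots. \]
Since the underlying $A$-module of $A$ is free, $A$ is $q$-semi-projective, hence $q$-cofibrant by \myref{qcharcofibrant}, so $\Tor^A_*(R,A)=H_*(R\otimes_A A)=H_*(R)$, which is $R$ in degree $0$ and $0$ otherwise. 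Thus $\partial\colon\Tor^A_n(R,R)\rtarr\Tor^A_{n-1}(R,IA)$ is an isomorphism for $n\geq 2$ and a monomorphism for $n=1$; in particular it is injective on the image of $\si$, which lies in total degree $\geq 1$.

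Finally, the heart of the matter is the compatibility $\partial\circ\si=\pm\pi$, where $\pi\colon H(IA)\rtarr\Tor^A_0(R,IA)$ is the edge homomorphism of \myref{matrix2} for $M=IA$ and $\si\colon IHA\rtarr\Tor^A_*(R,R)$ is the suspension; this is a matter of degree-compatible bookkeeping since $\si$ raises total degree by one and $\partial$ lowers it by one. It is verified by unwinding the definition of $\si$ from \cite[3.7]{GM} on chains: taking a distinguished resolution $X\rtarr R$ with $F_0X=A$ on which $\epz$ restricts to the augmentation, the DG $A$-submodule $\ker(X\rtarr R)$ is a split extension resolving $IA$, and the suspension isomorphism is exactly the filtration shift implementing $\partial$, so tracking it through the associated spectral sequences matches $\si$ with $\partial^{-1}\pi$ up to sign. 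Granting this, $\ker\si=\ker\pi$ because $\partial$ is injective there, and \myref{matrix2} gives $\ker\pi=D(HA;H(IA))=DHA$. The main obstacle is precisely this last compatibility — the filtration and sign bookkeeping identifying the suspension homomorphism with the connecting homomorphism of the $\Tor$ sequence — which is the computation carried out in \cite[3.7, 5.13]{GM}.
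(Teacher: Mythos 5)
Your outline is sound, and it is worth noting that the paper itself gives no independent argument here: it simply cites \cite[5.13]{GM} and presents the corollary as a further specialization of \myref{matrix2}. Your proposal reconstructs that deduction: identify $H(IA)\cong IHA$ via the splitting $A\cong IA\oplus R$ given by the unit, observe that $DHA$ coincides with $D(HA;H(IA))$ because $\io\otimes\io$ factors through $A\otimes_A IA\cong IA$ and $H(IA)\to HA$ is injective, apply \myref{matrix2} with $M=IA$, and transport along the connecting map of the second-variable long exact sequence, using that $A$ is $q$-cofibrant over itself so $\Tor^A_*(R,A)\cong R$ concentrated in degree $0$. All of those steps are correct, and the second-variable analogue of \myref{Torles} does go through exactly as you say, via a $q$-cell approximation of the right module $R$.

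Two caveats. First, the one step carrying real content, the identity $\partial\circ\sigma=\pm\pi$ relating the suspension (defined through $E^2_{1,*}\to E^\infty_{1,*}$ of the EMSS) to the edge homomorphism for $M=IA$, is asserted and sketched but not proved; your sketch via $\ker(X\to R)$ for a distinguished resolution $X$ with $F_0X=A$ needs justification (that $\alpha$ can be taken surjective with kernel an appropriate resolution of $IA$), and in the end you defer the filtration and sign bookkeeping to \cite{GM} --- which is precisely what the paper does wholesale, so this is a deferral rather than an error, but it is the heart of the matter. Second, your claim that the image of $\sigma$ lies in total degree $\geq 1$ is only valid for connective $A$; in the $\bZ$-graded setting of the paper the correct fix is that, by construction, $\sigma$ lands in the complement of $F_0\Tor^A_*(R,R)=R$, while $\ker\partial$ in total degree $0$ is exactly that copy of $R$ (the image of $\Tor^A_0(R,A)$), so $\ker(\partial\circ\sigma)=\ker\sigma$ still holds. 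With those repairs your argument is a legitimate, essentially self-contained route to the corollary, more detailed than the citation the paper offers.
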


\subsection{Massey products and the classical $\Ext$ functor}\label{ExtA}

We record an application of \S\ref{matric}.  We show that all elements of the $\Ext$ groups of a connected 
algebra $A$ over a field are decomposable in terms of matric Massey products, starting from the indecomposable 
elements of $A$ itself.  An analogous result holds for $A$-modules.  Thus we assume here that $R$ is a field 
and we consider a connected graded $R$-algebra $A$ (so that $A_n = 0$ for $n<0$ and $A_0 = R$) and an $A$-module $M$, 
both of finite type over $R$.  These do not have differentials. We are thinking, for example, of the Steenrod 
algebra $A$ and the cohomology $M$ of a spectrum.  The augmentation $\epz\colon A\rtarr R$ makes $R$ an $A$-module,
and we have the bar construction $B(R,A,M)$.  We write $B(A) = B(R,A,R)$.  The dual of $B(A)$ is the cobar construction 
$C(A)$, which is a DG $R$-algebra, and we write $C(A;M)$ for the dual of $B(R,A,M)$, which is a (left) DG $C(A)$-module.   
Then

\[ HC(A) = \Ext_{A}^{*,*}(R,R) \ \ \text{and}\ \ HC(A;M) = \Ext_{A}^{*,*}(M,R).\]

The $R$-module $\Ext_A^{1,*}(R,R)$ is dual to the $R$-module $IA/(IA)^2$of indecomposable elements of $A$, 
and the $R$-module $\Ext_A^{0,*}(M,R)$ is dual to the $R$-module  $M/(IA)M$ of indecomposable elements of 
the $A$-module $M$.  We sketch how a version of the EMSS proves the following result \cite[5.17]{GM}.

\begin{thm}\mylabel{wowsy}  $\Ext_A^{*,*}(R,R)$ is generated by $\Ext_A^{1,*}(R,R)$ under matric Massey products.
$\Ext_A^{*,*}(M,R)$ is generated by $\Ext_A^{0,*}(M,R)$ under matric Massey products.
\end{thm}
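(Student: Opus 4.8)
The plan is to run the Eilenberg--Moore spectral sequence of \myref{Kunndef} for a K\"unneth resolution of $R$ as an $A$-module and read off the multiplicative/Massey-product consequences by degreewise induction. First I would take the bar resolution: since $R$ is a field, every $R$-module is flat, so by \myref{semibar} the augmentation $\epz\colon B(R,A,R)\rtarr R$ (more precisely $B(A,A,R)\rtarr R$, tensored over $A$ with $R$) is a K\"unneth resolution, and dually $C(A) = B(A)^\vee$ computes $\Ext_A^{*,*}(R,R)$ with $HC(A) = E^1$-term the classical one. The key structural input is that the multicomplex differentials $d^r$ on $N\otimes_A X$ (here $N=R$, $X$ a distinguished/K\"unneth resolution of $R$) obey exactly the boundary relations $\sum_{i+j=r} d^id^j = 0$ of \eqref{diff1}, which, as recalled in \S\ref{matric} following \cite[pp.~49--57]{GM}, are formally the defining-system equations for matric Massey products. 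Thus \myref{matrix1} and its corollaries \myref{matrix2} and \myref{matrix3} dualize: \myref{matrix3} says the kernel of the suspension $\si\colon IHA\rtarr \Tor^A_*(R,R)$ is $DHA$, which dually says that $\Ext_A^{1,*}(R,R)$ surjects onto, and is detected by, the indecomposables of $A$ modulo matric Massey products.

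The main step is then an induction on homological (resolution) degree $p$. One sets up a distinguished resolution $\al\colon X\rtarr R$ in $\sM_A$ with $X$ a $q$-cell complex (these exist by \myref{neatojet} and the SOA, refiltered per our conventions) and analyzes $E^1_{*,*}(R\otimes_A X)$, which is the normalized bar complex computing $\Tor^A_{*,*}(R,R) = \Ext_A^{*,*}(R,R)^\vee$. The cells of $X$ in filtration $p$ correspond, via the defining systems, to $p$-fold matric Massey products of classes already constructed in filtrations $<p$; since $F_0$ contributes only the unit and $F_1$ contributes precisely the indecomposables of $IA$ (by the identification of $\Ext^{1}$ above), every class in $E^\infty_{p,*}$, hence every element of the associated graded of $\Ext_A^{*,*}(R,R)$, lies in the submodule generated under matric Massey products by $\Ext_A^{1,*}(R,R)$. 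A filtration argument (the filtration is complete and exhaustive, convergence by \cite[\S6]{Board} since the spectral sequence is right-half-plane with homological grading) upgrades this from the associated graded to $\Ext_A^{*,*}(R,R)$ itself, using that matric Massey products of lower-filtration classes land in the appropriate filtration stage and that the relevant indeterminacies are controlled by lower-degree brackets. The module case is identical: replace $X\rtarr R$ by a distinguished resolution $X\rtarr M$ in $\sM_A$, so $E^1_{0,*}(R\otimes_A X)$ is $M/(IA)M$, dual to $\Ext_A^{0,*}(M,R)$, and the same induction shows $\Ext_A^{*,*}(M,R)$ is generated over the Massey-product operations by $\Ext_A^{0,*}(M,R)$, with the $A$-algebra $\Ext_A(R,R)$ acting and the brackets mixing module classes with algebra classes as in the mixed matric Massey products $\langle V_0,\dots,V_{p+1}\rangle$ of \myref{matrix1}.

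The hard part will be making the passage from the associated-graded statement to the actual statement about $\Ext_A^{*,*}(R,R)$ precise: matric Massey products are only partially defined and carry indeterminacy, so one must check that a defining system for a bracket of classes living in filtrations $p_1,\dots,p_k$ can always be chosen compatibly with the filtration so that the resulting element has the expected filtration degree, and that the ambiguity does not spill into higher-generated classes. This is exactly the bookkeeping carried out in \cite[\S5]{GM}, so in the write-up I would state the induction cleanly, invoke \myref{matrix1}, \myref{matrix2}, \myref{matrix3} (suitably dualized using that $R$ is a field of finite type, so dualization is exact and involutive), and refer to \cite[5.17]{GM} for the detailed verification of the Massey-product compatibilities, since reproducing that argument in full would be a lengthy digression and the model-theoretic content---existence of the distinguished resolution $X$ and convergence of its EMSS---is already in hand from Part 1 and \S\ref{EMSS}.
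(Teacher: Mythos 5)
There is a genuine gap, and it is not the one you flag at the end. The matric Massey products in \myref{wowsy} live in $\Ext_A^{*,*}(R,R)=HC(A)$ and $\Ext_A^{*,*}(M,R)=HC(A;M)$, i.e.\ in the homology of the cobar construction, so \myref{matrix1}--\myref{matrix3} have to be applied with the DG algebra $C(A)$ and the DG $C(A)$-module $C(A;M)$ in the roles of ``$A$'' and ``$M$''. Your induction instead runs the machinery over $A$ itself, via a distinguished (or bar) resolution $X\rtarr R$ in $\sM_A$. But $A$ has zero differential, so the EMSS of \myref{Kunndef} for such a resolution collapses at $E^2$ and carries no Massey-product structure on $\Ext$; applied at that level, \myref{matrix3} only recovers the classical fact that $\Ext_A^{1,*}(R,R)$ is dual to the indecomposables of $A$, which is the base case and not an inductive mechanism. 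Your central step --- ``the cells of $X$ in filtration $p$ correspond, via the defining systems, to $p$-fold matric Massey products of classes already constructed in filtrations $<p$'' --- is precisely the statement to be proved; neither \myref{matrix1} nor the multicomplex identity $\sum_{i+j=r}d^id^j=0$ delivers it for a resolution over $A$, and deferring to \cite[5.17]{GM} at that point defers the entire proof rather than the indeterminacy bookkeeping.

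What actually drives the paper's argument is a computation absent from your sketch: because $C(A;M)$ is free as a right $C(A)$-module once differentials are ignored, the differential torsion product $\Tor^*_{C(A)}(C(A;M),R)$ collapses to $M^*$ concentrated in homological degree $0$ (proved by filtering $B(C(A;M),C(A),R)$ by simplicial degree, so that the $E^1$-term is a classical $\Tor$ over the underlying algebra of $C(A)$, which trivializes). With this target in hand, \myref{matrix2} and \myref{matrix3}, applied to the DG algebra $C(A)$, show that everything in $HC(A;M)=\Ext_A^{*,*}(M,R)$ above homological degree $0$, respectively everything in $IHC(A)=I\Ext_A^{*,*}(R,R)$ above degree $1$, lies in the kernels $D(HC(A);HC(A;M))$ and $DHC(A)$, i.e.\ is matric Massey decomposable; a downward induction on degree then gives generation by $\Ext_A^{0,*}(M,R)$ and $\Ext_A^{1,*}(R,R)$, and the case $M=R$ gives the first statement. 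To repair your write-up, replace the induction over a distinguished $A$-resolution of $R$ by this cobar-level argument: state and justify the collapse of $\Tor_{C(A)}(C(A;M),R)$, and only then invoke \myref{matrix2} and \myref{matrix3} for $C(A)$.
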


We use $\Tor_{C(A)}^{*}(C(A;M),R)$ to prove this; the special case $R=M$ leads to the first statement.  
We have the algebraic EMSS converging from $\Tor_{HA}^{*,*}(HC(A;M),R)$ to $\Tor_{C(A)}^*(C(A;M),R)$.  A
standard relation between the bar and cobar constructions evaluates the target \cite[5.16]{GM}. Let $M^*$
denote the dual of $M$.

\begin{prop}  $\Tor^0_{C(A)}(C(A;M),R) = M^*$ and $\Tor^n_{C(A)}(C(A;M),R) = 0$ for $n\neq 0$.
\end{prop}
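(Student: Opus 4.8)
The plan is to identify $\Tor_{C(A)}^*(C(A;M),R)$ with the homology of a very explicit small complex. Recall that $C(A)=B(A)^*=B(R,A,R)^*$ is the cobar construction, which as a graded $R$-module is the tensor algebra on $\SI^{-1}(IA)^*$, and $C(A;M)=B(R,A,M)^*$ is the free $C(A)$-module on $M^*$. Since $R$ is a field and everything is of finite type, dualization is exact and turns the acyclic augmented complex $B(R,A,M)\rtarr M$ into a coacyclic augmented cocomplex; equivalently, $C(A;M)$ is a $q$-cofibrant (in fact semi-free) $C(A)$-module with a canonical filtration whose associated graded is free over $C(A)$. Concretely, $C(A;M)\iso C(A)\otimes M^*$ as a $C(A)$-module, with a perturbed differential. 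Thus $C(A;M)$ is its own $q$-cofibrant approximation, and by \myref{DTordefn} we may compute $\Tor_{C(A)}^*(C(A;M),R)=H_*(R\otimes_{C(A)}C(A;M))$.

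The key computation is then that $R\otimes_{C(A)}C(A;M)\iso R\otimes_{C(A)}(C(A)\otimes M^*)\iso M^*$ with \emph{zero} differential. The point is that the internal differential of the cobar construction is built entirely out of the product of $A$ (raising cobar degree by one) and, on $C(A;M)$, the action of $A$ on $M$; after applying $R\otimes_{C(A)}(-)$, i.e.\ killing the positive cobar-degree part and the $C(A)$-action, every term of the differential lands in the image of $IC(A)$ and hence dies. This is the classical ``a free resolution tensored down to the coefficients has zero differential'' phenomenon, and it gives $\Tor_{C(A)}^0(C(A;M),R)=M^*$ and $\Tor_{C(A)}^n(C(A;M),R)=0$ for $n\neq 0$, with the cobar (homological) grading being the relevant grading. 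I would phrase the bookkeeping using the standard isomorphism $B(R,A,M)\iso B(R,A,R)\otimes_R M$ of left $B(R,A,R)$-comodules, dualize, and note that $R\square_{B(A)}(B(A)\otimes M)\iso M$ with trivial induced structure; dualizing back gives the claim. Alternatively, one can invoke the well-known duality $\Tor_{C(A)}^*(R,R)\iso (\Cotor^{B(A)}_*(R,R))^*\iso (\Ext_A^{*,*}(R,R))^{**}$ reasoning, but the direct ``tensor down the free resolution'' argument is cleaner and needs no convergence.

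The main obstacle is purely organizational rather than mathematical: one must keep straight three gradings (the internal grading of $A$, the bar/cobar homological grading, and the sign conventions from passing between $B$ and its dual $C$), and one must make sure the unbounded-above nature of $A$ (with the paper's conventions $A$ is typically bounded above, so $C(A)$ is a potentially ill-behaved infinite product/sum) does not create convergence issues. This is handled by the finite-type hypothesis, which makes $C(A;M)$ degreewise finitely generated and the relevant completions harmless, and by the observation that we never need a spectral sequence here — the computation of $H_*(R\otimes_{C(A)}C(A;M))$ is immediate once the differential is seen to vanish. So the proof is essentially: (1) $C(A;M)$ is $q$-cofibrant over $C(A)$ and free as a $C(A)$-module on $M^*$; (2) $R\otimes_{C(A)}C(A;M)\iso M^*$ with zero differential; (3) read off the answer.
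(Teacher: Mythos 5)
Your strategy is genuinely different from the paper's, and its second half is right, but as written the key first step has a gap. The paper never claims that $C(A;M)$ is $q$-cofibrant over $C(A)$: it computes $\Tor_{C(A)}^*(C(A;M),R)$ as $HB(C(A;M),C(A),R)$ --- legitimate because over a field the bar construction gives a $q$-cofibrant approximation, as noted at the end of \S\ref{barsec} --- and then filters this bar construction so that $d_0$ is the simplicial differential, so that the $E_1$-term is the classical $\Tor$ of the underlying graded modules. Only the freeness of $C(A;M)$ over $C(A)$ with differentials ignored is used; this collapses everything onto $C(A;M)\otimes_{C(A)}R\iso M^*$ in homological degree $0$, and the vanishing of the induced differential there is exactly your step (2), which is correct (every component of the cobar differential applied to $M^*$ lands in $IC(A)\cdot C(A;M)$). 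Your route instead derives the first variable (fine, by \myref{inv1}) using the stronger assertion that $C(A;M)$ is itself semi-free over $C(A)$.

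That assertion needs an actual argument, and the one you sketch does not supply it. First, $B(R,A,M)\rtarr M$ is not an acyclic augmented complex (that is $B(A,A,M)$; $B(R,A,M)$ computes $\Tor^A(R,M)$), so dualizing it establishes nothing. Second, the ``canonical filtration'' you gesture at --- presumably by cobar word-length --- is not an increasing filtration by DG $C(A)$-submodules exhausting $C(A;M)$ from $0$: both the $C(A)$-action and the differential raise cobar degree. The filtration that does work is by the internal degree of $M$: since $A$ is connected, the twisting term $M^*\rtarr (IA)^*\otimes M^*$ strictly lowers the $M$-degree of the module generators, so $F_pC(A;M)=C(A)\cdot(M_{\le p})^*$ is a cellularly $q$-split filtration in the sense of \myref{qsplit}, whence $C(A;M)$ is $q$-cofibrant by \myref{qcharcofibrant}. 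But this induction needs $M$ bounded below to get started (compare the warning in \myref{projectives}); the paper's spectral-sequence argument sidesteps cofibrancy of $C(A;M)$ altogether and uses only classical freeness. So either add the bounded-below hypothesis (harmless in the intended applications, e.g.\ $M=R$) and spell out this filtration, or resolve $R$ by $B(C(A),C(A),R)$ as the paper does; with the filtration argument supplied, your ``tensor down the free module'' computation is a clean alternative that avoids any spectral sequence.
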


This is a consequence of the fact that, ignoring differentials, $C(A;M)$ is free as a 
right $C(A)$-module.  Filtering $B(C(A;M),C(A),R)$ so that $d_0$ is given by the simplicial (external)
differential, we get a spectral sequence converging from the classical $\Tor$, with internal differentials
ignored, to $\Tor^*_{C(A)}(C(A;M),R)$. It trivializes to give the stated conclusion.  From here, the 
deduction of \myref{wowsy} from \ref{matrix2} and \ref{matrix3} is easy \cite[p.~61]{GM}.  The point is that all elements
except the specified generators are in the kernels identified as matric Massey product decomposables in the cited
results.  In fact, by the precursor \cite{MayOld} to \cite{MayMat}, the EMSS here is itself an algorithm
for the computation of  $\Ext_A^{*,*}(N,R)$ and, in particular,  $\Ext_A^{*,*}(R,R)$.

\section{Distinguished resolutions and the topological EMSS}\label{GMStuff2}

Here we construct distinguished resolutions of arbitrary DG $A$-modules $M$, 
as defined in \myref{resdef}; they are K\"unneth resolutions by \myref{ofcourse}.  
We emphasize that these are generally {\em not} $q$-cofibrant approximations
and that, as far as we know, $q$-cofibrant approximations need {\em not} give 
K\"unneth resolutions.

Resolutions $\al\colon X\rtarr M$ must be $q$-equivalences, but they need not be epimorphisms, 
hence they need not be $q$-cofibrant approximations even when $X$ is $q$-cofibrant, as holds by 
\myref{neatojet} for distinguished resolutions.  On the other hand, $q$-cofibrant approximations
need not have the control over $E^1X$ needed to give resolutions as defined in terms of (\ref{Eone}),
let alone K\"unneth resolutions.

However, if $X$ is $q$-cofibrant and $\ga\colon Y\rtarr M$ is a $q$-cofibrant approximation in 
the usual sense that $\ga$ is a $q$-acyclic $q$-fibration, then we obtain a lift $\la\colon X\rtarr Y$ 
over $M$. Since $\la$ is then a $q$-equivalence between $q$-bifibrant objects, it is an $h$-equivalence.  
Thus we may use distinguished resolutions just as if they were model theoretical $q$-cofibrant approximations.  

As we explain in \S\ref{resolve}, \cite{GM} gives a purely homological construction 
of a distinguished resolution of any $M$. These resolutions can be small enough to actually compute with, 
as we illustrate in \S\ref{cupone} in the case when $H_*(A)$ is a polynomial algebra.  The smallness is 
directly correlated with the fact that $\al$ need not be a $q$-fibration: for calculations, 
that is an advantage rather than a disadvantage.   

We then reap the harvest and show in \S\ref{TopEMSS} how our work, especially \myref{goody},
applies to give explicit calculations in algebraic topology.  In particular, we explain
the proof of \myref{Yeah}.  On a more theoretical level, we show that the kernels of various 
maps of cohomological interest are determined by matric Massey products. 

\subsection{The existence and essential uniqueness of distinguished resolutions}\label{resolve}
We all know how to construct classical projective $HA$-resolutions of $HA$-modules, and there is an ample arsenal of 
known examples.  The following result is an analogue of the classical existence
result for projective resolutions.  It allows us to lift projective $HA$-resolutions to 
distinguished $A$-resolutions. 

\begin{thm}[{\cite[2.1]{GM}}]\mylabel{exist}  Let $M$ be a DG $A$-module and let
\begin{equation}\label{Eone2}
 \cdots \rtarr HA\otimes \bar{X}_{p,*} \rtarr HA\otimes \bar{X}_{p-1,*}  \rtarr \cdots \rtarr
 HA\otimes \bar{X}_{0,*}  \rtarr HM \rtarr 0
  \end{equation}
be a projective $HA$-resolution of $HM$, where each $\bar{X}_{p,q}$ is a projective $R$-module.  Then the filtered $A$-module
$X=A\otimes \bar{X}$ with filtration $F_pX = \sum_{k\leq p} A\otimes \bar{X}_{k,*}$ admits a differential $d$ and a map 
$\al\colon X\rtarr M$ such that $\al$ is a distinguished resolution of $M$ and the complex (\ref{Eone}) coincides with the 
complex (\ref{Eone2}).   
\end{thm}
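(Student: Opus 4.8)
The plan is to construct the differential $d = \sum_{r \geq 0} d^r$ on $X = A \otimes \bar{X}$ and the augmentation $\alpha \colon X \rtarr M$ by induction on the homological filtration degree $p$, exactly mimicking the classical construction of a chain map lifting a resolution, but keeping track of the extra spectral-sequence differentials forced by the multicomplex structure. First I would set $d^0 = 0$ on $\bar{X}$ (so the underlying graded $A$-module structure and the requirement \eqref{diff2} determine $d^0$ on all of $X$ from the differential of $A$), making $X$ cell-like; distinguishedness is then automatic since each $\bar{X}_{p,q}$ is free over $R$. The content is in defining $d^r$ for $r \geq 1$ on $\bar{X}_{p,*}$ and $\alpha$ on $\bar{X}_{p,*}$ so that \eqref{diff1} holds (i.e. $\sum_{i+j=r} d^i d^j = 0$) and so that, after passing to $E^1$, the complex \eqref{Eone} becomes identified with the given projective resolution \eqref{Eone2}.

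Here is the inductive scheme I would carry out. For $p = 0$: the map $HA \otimes \bar{X}_{0,*} \rtarr HM$ from \eqref{Eone2} is a map of $HA$-modules; since each $\bar{X}_{0,q}$ is $R$-free, I lift it degreewise to a map $\alpha \colon \bar{X}_{0,*} \rtarr M$ of $R$-modules landing in cycles representing the prescribed homology classes, then extend $A$-linearly. Here $d^{p+1} = d^1$ on $\bar{X}_{0,*}$ must be such that $d\alpha = \alpha d$, which at this stage reduces to $\alpha$ carrying $\bar{X}_{0,*}$ into cycles — arrangeable because \eqref{Eone2} is a complex over $HM$. For the inductive step, suppose $d^0, \dots, d^p$ and the partial structure through filtration $p$ are defined. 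I need to define $d^r \colon \bar{X}_{p,*} \rtarr X_{p-r,*}$ for $1 \leq r \leq p$ and $d^{p+1} = \alpha|_{\bar{X}_{p,*}}$. The obstruction to extending is an element built from the previously-defined $d^i$'s; the Maurer–Cartan-type relation $\sum_{i+j=r}d^i d^j = 0$ that I must \emph{achieve} says precisely that a certain cycle in $X_{p-r,*}$ (or in $M$, when $r = p+1$) must be hit by $d^r$ (resp.\ by $\alpha$). The key point — this is exactly the mechanism in \cite{Moore} and in the proof of \myref{projcofA} — is that the induction hypothesis guarantees this element is a cycle, and the \emph{exactness of \eqref{Eone2} at spot $p-r$} (after identifying $E^1_{p-r,*}$ with $HA \otimes \bar{X}_{p-r,*}$) together with projectivity of $\bar{X}_{p,*}$ over $R$ provides the required lift. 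One runs $r$ from $1$ up to $p+1$ within the fixed $p$, at each stage the relations for smaller $r$ being what makes the relevant element a cycle.

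The bookkeeping I would isolate as a lemma is the equivalence between the data $(d^1, \dots, d^{p+1})$ on $\bar{X}_{p,*}$ satisfying \eqref{diff1} and a nullhomotopy-type condition relative to the complex $E^1 X^\alpha$; this is where the identification of \eqref{Eone} with \eqref{Eone2} gets forced, since by construction $d^1 \colon E^1_{p,*}X \rtarr E^1_{p-1,*}X$ is induced by the given boundary map of \eqref{Eone2} and $\alpha$ induces the augmentation $E^1_{0,*}X \rtarr HM$. Once the multicomplex differential and $\alpha$ are built, that $\alpha$ is a \emph{resolution} in the sense of \myref{resdef} is immediate: its $E^1$ is exactly \eqref{Eone2}, which is exact by hypothesis. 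That $X$ is distinguished was arranged at the outset, so $\alpha$ is a distinguished resolution; that it is then also a $q$-equivalence follows from \myref{seecof}.

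The main obstacle, and the step I would spend the most care on, is verifying the cocycle condition at each inductive stage — i.e., showing that the element one needs to lift really is a cycle, so that exactness of the given $HA$-resolution applies. This requires carefully expanding $\sum_{i+j=r+1}d^i d^j$ using the already-established relations $\sum_{i+j=s}d^i d^j = 0$ for $s \leq r$, together with the Leibniz-type identities \eqref{diff2} relating the $d^i$ on $A \otimes \bar{X}$ to those on $\bar{X}$, and keeping the signs straight (the $(-1)^q$ conventions as in \S\ref{versustoo}). The structure of this computation is identical in spirit to the lifting argument already given in the proof of \myref{projcofA} — indeed that proof is the special case where the roles of $P$ and a generic module are played by $X$ and $M$ — so I would model the signs and the inductive reduction on that argument. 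The essential-uniqueness statement (that two distinguished resolutions of $M$ lifting comparable $HA$-resolutions are chain homotopy equivalent over $M$) would be handled by the analogous relative lifting argument, or simply deferred to the observation in the surrounding text that any two $q$-cofibrant objects mapping to $M$ by $q$-equivalences are $h$-equivalent over $M$, combined with \myref{neatojet} and \myref{seecof}.
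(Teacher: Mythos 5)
Your proposal is correct and follows essentially the same route as the paper's proof, which itself is a sketch deferring to \cite[pp.~12--15]{GM}: the cell-like structure forces $d^0$, projectivity is used to define $d^1$ so that (\ref{Eone}) agrees with (\ref{Eone2}), and then $d^r$ for $r\geq 2$ together with $\al = d^{p+1}$ on $\bar{X}_{p,*}$ are built by the same double induction on $p$ and, for fixed $p$, on $r$ so that (\ref{diff1}) holds, the cocycle verification at each stage being exactly the ``tedious but straightforward'' part the paper leaves to \cite{GM}.
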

\begin{proof} The paper \cite{GM} works with right rather than left DG $A$-modules (with $A$ denoted $U$) and its signs
and details have several times been checked with meticulous care.  The description of $X^{\al}$ as a bigraded $A$-module
is forced, and so is the definition of $d^0$.  One first uses projectivity to define $d^1$ so that the complexes 
(\ref{Eone}) and (\ref{Eone2}) agree. One then uses projectivity to define the $d^r$ on $\bar{X}_{p,*}$ for $r\geq 2$ 
and $p\geq 1$ by induction on $p$ and, for fixed $p$, by induction on $r$ in such a way that (\ref{diff1}) is satisfied.  
The construction of $d^{p+1}$ on $\bar{X}_{p,*}$ gives $\al$. The details \cite[pp.~12-15]{GM} are a bit tedious,
but they are entirely straightforward.
\end{proof}

To state our result on comparisons of resolutions, we need an implication of Definitions \ref{ancdef}
and \ref{ancdef2}, as in \cite[1.3]{GM}. 

\begin{rmk} Let $\al\colon X\rtarr M$ and $\al'\colon X'\rtarr M'$ be maps of DG $A$-modules where $X$
and $X'$ are split and let 
$g\colon X^{\al}\rtarr (X')^{\al'}$ be a map of filtered DG $A$-modules.  On filtration $-1$, $g$ specifies a
map $k\colon M\rtarr M'$ of DG $A$-modules.  On $X_{p,q}$ for $p\geq 0$, $g$ has components 
$g^r\colon X_{p,q}\rtarr X'_{p-r,q+r}$ for $0\leq r\leq p$ and $t\colon X_{p,q} \rtarr M'_{p+q+1}$.
Let $K = \sum_{0\leq r\leq p} g^r\colon X_{p,*}\rtarr F_pX'$.  Then $K\colon X\rtarr X'$ is a map 
of filtered DG $A$-modules and $dt+td = \al' K - k\al$.  Therefore, $g$ determines and is determined by the
homotopy commutative diagram 
\[  \xymatrix{  X \ar[r]^-{K} \ar[d]_{\al} & X' \ar[d]^{\al'} \\
M \ar[r]_{k} & M' \\} \]
of DG $A$-modules and the specific homotopy $t$.  We write $g = (K,k,t)$. 
\end{rmk}

Again remembering that the classical $\Tor$ functor can be computed by use of flat resolutions,
the remark implies the following result by tensoring with a right DG $A$-module $N$ and
passing to the resulting map of spectral sequences.

\begin{lem}\mylabel{KunnethIso}  Let $\al\colon X\rtarr M$ and $\al'\colon X'\rtarr M'$ be 
K\"unneth resolutions and let $g =(K,k,t)\colon X^{\al}\rtarr  (X')^{\al'}$ be a 
map of filtered DG $A$-modules.   Then, for any right DG $A$-module $N$, 
\[ E^2(\id\otimes K) \colon  E^2(N\otimes_A X) \rtarr E^2(N\otimes_A X') \]
can be identified with
\[  \Tor^{HA}(\id,Hk) \colon \Tor^{HA}(HN,HM)\rtarr \Tor^{HA}(HN,HM'). \]
Therefore, if $Hk\colon HM\rtarr HM'$ is an isomorphism, then 
\[ H(\id\otimes_A K)\colon H(N\otimes_A X) \rtarr H(N\otimes_A X') \]
is an isomorphism.
\end{lem}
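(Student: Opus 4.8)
The plan is to use the map of spectral sequences induced by $\id_N \otimes K \colon N \otimes_A X \rtarr N \otimes_A X'$, where both sides carry the filtrations $F_p(N\otimes_A X) = N\otimes_A F_pX$ inherited from the split DG $A$-modules $X$ and $X'$. Since $K \colon X \rtarr X'$ is a map of filtered DG $A$-modules, tensoring with $N$ over $A$ gives a filtered chain map, and hence an induced map $E^r(\id\otimes K)$ of spectral sequences. The point is to identify this map on $E^1$ and $E^2$ with an explicit map of classical resolutions, and then to invoke the convergence of these right-half-plane spectral sequences (as in \cite[7.2]{Board}, no boundedness needed since the grading is homological and the spectral sequences are first-quadrant in the relevant sense) to pass from an $E^2$-isomorphism to an $H$-isomorphism.

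First I would recall from \myref{Kunndef} that since $X$ and $X'$ are K\"unneth resolutions, the complexes $E^1_{*,*}X^\al$ and $E^1_{*,*}(X')^{\al'}$ — which by \eqref{Eone} are the augmented complexes $\cdots \to E^1_{p,*}X \to \cdots \to E^1_{0,*}X \to HM \to 0$, and similarly for $X'$ — are flat $HA$-resolutions of $HM$ and $HM'$ respectively, and that $E^1_{p,*}(N\otimes_A X) \cong HN \otimes_{HA} E^1_{p,*}X$ via the K\"unneth isomorphism $\ka$, with $E^2_{p,q}(N\otimes_A X) = \Tor^{HA}_{p,q}(HN, HM)$. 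Next I would trace through the decomposition $g = (K,k,t)$ from the preceding remark: the associated graded of $K$ on filtration degree $-1$ is $k \colon M \rtarr M'$, inducing $Hk \colon HM \rtarr HM'$, while on filtration degrees $p \geq 0$ the associated graded of $K$ is $E^0(K) = g^0$, which on passing to $E^1$ gives a map of flat $HA$-resolutions $E^1_{*,*}K \colon E^1_{*,*}X \rtarr E^1_{*,*}X'$ lying over $Hk$. Tensoring this map of resolutions with $HN$ over $HA$ and passing to homology computes precisely $\Tor^{HA}(\id_{HN}, Hk) \colon \Tor^{HA}(HN,HM) \rtarr \Tor^{HA}(HN,HM')$, by the standard fact that $\Tor$ can be computed from flat resolutions and that a chain map of resolutions over a given module map induces the functorial map on $\Tor$. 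Matching this against $E^2(\id\otimes K)$ — which is the homology of $\id_{HN}\otimes_{HA} E^1_{*,*}K$ under the identification $E^1(N\otimes_A X) \cong HN\otimes_{HA} E^1X$ — gives the first identification claimed in the lemma.

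For the final conclusion, if $Hk$ is an isomorphism then $\Tor^{HA}(\id, Hk)$ is an isomorphism in every bidegree, so $E^2(\id\otimes K)$ is an isomorphism. Since the spectral sequences $\{E^r(N\otimes_A X)\}$ and $\{E^r(N\otimes_A X')\}$ are right-half-plane spectral sequences with homological grading (coming from the split filtrations with $F_{-1}=0$), there is no convergence obstruction: both converge strongly to $H(N\otimes_A X)$ and $H(N\otimes_A X')$ respectively. An isomorphism on $E^2$ (and hence on all $E^r$ for $r\geq 2$, since the differentials are natural) therefore induces an isomorphism $H(\id\otimes_A K) \colon H(N\otimes_A X) \rtarr H(N\otimes_A X')$ by the comparison theorem for convergent spectral sequences (cf.\ \cite[7.2]{Board}).

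The main obstacle I anticipate is not conceptual but bookkeeping: verifying carefully that the associated graded of $K$ in each filtration degree $p$ really is, after passing from $E^0$ to $E^1$, a chain map of the flat $HA$-resolutions $E^1_{*,*}X \to E^1_{*,*}X'$ compatible with the augmentations over $Hk$ — in other words, that the components $g^r$ for $r > 0$ and the homotopy piece $t$ do not interfere with the $E^1$-level identification. This is essentially the content of the relation $dt + td = \al' K - k\al$ and the compatibility of $K$ with the filtrations, and it is exactly the kind of sign-and-index check that \cite{GM} carries out; here I would cite the structure $g = (K,k,t)$ and the preceding remark and indicate that the $E^1$-identification follows because only the filtration-preserving part $g^0$ survives to the associated graded, with the lower-filtration components $g^r$ ($r\geq 1$) and $t$ contributing to higher differentials rather than to $E^1(K)$ itself.
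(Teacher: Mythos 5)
Your proposal is correct and follows essentially the same route as the paper's own (very terse) proof: tensor the map $g$ of the preceding remark with $N$, pass to the induced map of spectral sequences, identify $E^1$ via the K\"unneth isomorphism and $E^2$ via the fact that classical $\Tor$ is computable from flat resolutions with a map of resolutions over $Hk$, and conclude by strong convergence of these right-half-plane spectral sequences with exiting differentials. (One cosmetic slip: the filtration $-1$ component equal to $k$ belongs to $g\colon X^{\al}\rtarr (X')^{\al'}$ rather than to $K\colon X\rtarr X'$, since $F_{-1}X=0$; the substance of your argument is unaffected.)
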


The following result is the analogue of the comparison result between projective complexes 
and resolutions in classical homological algebra.  It allows us to compare distinguished 
resolutions to general resolutions.  

\begin{thm}[{\cite[1.7]{GM}}] \mylabel{compare} Let $\al\colon X\rtarr M$ be a map of DG $A$-modules, where $X$ is distinguished, 
let $\al'\colon X'\rtarr M'$ be a resolution of a DG $A$-module $M'$, and let $k\colon M\rtarr M'$ be a map of DG $A$-modules.  
Then there is a map
$g = (K,k,t) \colon X^{\al}\rtarr (X')^{\al'}$ of filtered DG $A$-modules.  If $g' = (K',k,t')$ is another such map, then
there is a homotopy $s\colon g\htp g'$ of DG $A$-modules such that $s(M) = 0$ and $s(F_pX^{\al}) \subset F_{p+1} (X')^{\al'}$
for $p\geq 0$. 
\end{thm}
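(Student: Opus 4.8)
The plan is to construct the map $g = (K, k, t)$ by induction on filtration degree, exactly as one constructs a chain map lifting along a projective resolution in classical homological algebra, using the translation of Remark~\ref{} that encodes such a $g$ as a homotopy commutative square $(K, k, t)$ together with the coherence data built into the multicomplex differentials. The key structural input is that $X$ is distinguished, meaning $X = A \otimes \bar{X}$ with each $\bar{X}_{p,q}$ a \emph{free} $R$-module and $d^0 = 0$ on $\bar{X}$, so that defining a map out of $X$ amounts to choosing images of $R$-basis elements of $\bar{X}_{p,*}$ subject to the relations coming from \eqref{diff1}. On the target side, the crucial input is that $\al' \colon X' \rtarr M'$ is a resolution, so the complex
\[ \cdots \rtarr E^1_{p,*}X' \rtarr E^1_{p-1,*}X' \rtarr \cdots \rtarr E^1_{0,*}X' \rtarr H_*(M') \rtarr 0 \]
of \eqref{Eone} is exact; this exactness is what lets the inductive construction proceed, playing the role that surjectivity onto cycles plays classically.

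First I would set up the induction. On filtration $-1$ we must take $K$ (in its filtration $-1$ component) to be $k \colon M \rtarr M'$; this is forced. For the base case $p = 0$: on $\bar{X}_{0,*}$ the differential of $X^{\al}$ reduces to $d^1 = \al$ landing in $M$, and since $d^0 = 0$ on $\bar{X}_{0,*}$ each element of $\bar{X}_{0,*}$ is a cycle modulo lower filtration. Using that $\bar{X}_{0,q}$ is free and that $E^1_{0,*}X' \rtarr H_*(M')$ is surjective (exactness of \eqref{Eone} at the last spot), I would lift the induced map on $E^1$-level, then correct by a chain homotopy $t$ on $\bar{X}_{0,*}$ to make the square commute on the nose at filtration $0$; this uses that $\al'$ itself is surjective onto cycles in the appropriate sense, or more precisely that we only need the square to commute up to the specified homotopy $t$, which is exactly the data $g$ records. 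For the inductive step at filtration $p \geq 1$: assuming $g$ has been defined compatibly on $F_{p-1}X^{\al}$, the obstruction to extending over a free basis element $x \in \bar{X}_{p,q}$ is a cycle in $F_{p-1}(X')^{\al'}$ of the appropriate bidegree (built from the already-defined components $g^1, \dots, g^{r}$ applied to $d^{\geq 1}(x)$ and from $t$); exactness of \eqref{Eone} for $X'$ kills its class in $E^1$, and one then inductively solves for the components $g^r(x)$ and the value $t(x)$ using freeness of $\bar{X}_{p,q}$. This is the standard ``lift obstruction cycles through an exact complex'' argument, complicated only by the presence of the higher differentials $d^r$, $r \geq 2$, which is precisely the complication handled in \cite[pp.~12--15]{GM} for \myref{exist}.

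For the uniqueness clause, given two such lifts $g = (K,k,t)$ and $g' = (K',k,t')$ over the \emph{same} $k$, I would construct the homotopy $s \colon g \htp g'$ again by induction on filtration degree, requiring $s(M) = 0$ (the normalization on filtration $-1$) and $s(F_p X^{\al}) \subset F_{p+1}(X')^{\al'}$. The difference $g - g'$ restricted to each filtration level is a cycle in the relevant hom-complex, and since $X'$ is a resolution the same exactness of \eqref{Eone} (now one homological degree up, which is why $s$ shifts filtration by $+1$) allows us to write it as a boundary, i.e.\ to produce $s$ level by level; freeness of the $\bar{X}_{p,q}$ again lets us make the choices $R$-linearly. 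The filtration-shift $s(F_pX^{\al}) \subset F_{p+1}(X')^{\al'}$ is the natural output of the construction: killing a filtration-$p$ obstruction requires a primitive one filtration higher.

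The main obstacle I anticipate is bookkeeping the signs and the interaction of the higher differentials $d^r$ ($r \geq 2$) with the components $g^r$ of the lift and with the homotopy data $t$ and $s$ --- the equation $dt + td = \al' K - k\al$ and its analogue for $s$ must be verified component by component against \eqref{diff1} and \eqref{diff2}, and the obstruction cycles at filtration $p$ genuinely involve contributions from all lower $g^r$. This is exactly the place where \cite{GM} invested the effort (\cite[1.7]{GM}, whose proof \cite[pp.~12--15]{GM} cross-references the proof of \myref{exist}), so the honest thing to do is to carry out the induction skeleton as above and then cite \cite{GM} for the verification that the sign conventions make everything consistent, noting as elsewhere in this paper that \cite{GM} works with right modules and that the translation introduces no essential change.
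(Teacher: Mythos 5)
Your proposal is correct and follows essentially the same route as the paper, whose proof is exactly the induction on filtration $p$ driven by the requirement $dg=gd$, with the sign and component bookkeeping deferred to \cite{GM}. One small slip: the relevant argument in \cite{GM} for this comparison theorem is the one on pp.~7--8 (the proof of \cite[1.7]{GM}); pp.~12--15 is the construction underlying \myref{exist}.
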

\begin{proof} The proof is by induction on $p$, using the requirement that $dg = gd$.
It can be better written than the argument of \cite[pp.~7-8]{GM}, but it is straightforward.
\end{proof}

\begin{cor}  If $\al\colon X\rtarr M$ and $\al'\colon X'\rtarr M$ are distinguished resolutions
of $M$, then $X$ and $X'$ are $h$-equivalent over $M$.
\end{cor}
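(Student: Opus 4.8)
The plan is to apply \myref{compare} twice, in opposite directions, and then use the standard homotopy-theoretic trick for turning a pair of ``almost inverse'' maps into a genuine homotopy equivalence. First I would feed the identity map $k = \id_M$ into \myref{compare} with $\al\colon X\rtarr M$ the distinguished resolution and $\al'\colon X'\rtarr M$ the target resolution, obtaining a map $g = (K,\id_M,t)\colon X^{\al}\rtarr (X')^{\al'}$ of filtered DG $A$-modules, which in particular gives a map $K\colon X\rtarr X'$ of DG $A$-modules with $\al' K \htp \al$ (the homotopy being recorded by $t$). Symmetrically, since $X'$ is also distinguished, I apply \myref{compare} again with the roles reversed to obtain $g' = (K',\id_M,t')\colon (X')^{\al'}\rtarr X^{\al}$, so $K'\colon X'\rtarr X$ satisfies $\al K' \htp \al'$.

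Next I would show that $K' K \htp \id_X$ over $M$ and $K K' \htp \id_{X'}$ over $M$. For this I consider the two composites $g' g$ and $(\id_{X^\al})$ as maps $X^{\al}\rtarr X^{\al}$ of filtered DG $A$-modules, both lying over $\id_M$ on filtration $-1$. By the uniqueness clause of \myref{compare}, applied now with $X$ distinguished, $\al'$ replaced by the resolution $\al\colon X\rtarr M$ itself, and $k = \id_M$, any two such maps are homotopic through a homotopy $s$ of DG $A$-modules with $s(M) = 0$ and $s(F_p X^{\al})\subset F_{p+1}X^{\al}$; in particular the underlying map $K'K\colon X\rtarr X$ is homotopic to $\id_X$ by a homotopy compatible with the augmentations, i.e. a homotopy over $M$. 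Running the same argument for $(X')^{\al'}$ gives $KK'\htp \id_{X'}$ over $M$. Hence $K\colon X\rtarr X'$ is a homotopy equivalence of DG $A$-modules with homotopy inverse $K'$, and all homotopies respect the maps to $M$, so $X$ and $X'$ are $h$-equivalent over $M$ as claimed.

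The routine points to check are that \myref{compare} genuinely applies in the ``uniqueness'' invocations: we need both the source $X^\al$ to be of the form $Y^\alpha$ for a distinguished $Y$ (it is, with $Y = X$) and the target $\al\colon X\rtarr M$ to qualify as a resolution in the sense of \myref{resdef}, which holds because a distinguished resolution is by definition a resolution. There is also a small bookkeeping matter in translating between the ``filtered map of $X^\al$'s'' language of \myref{compare} and plain maps $X\rtarr X'$ together with chain homotopies over $M$; this is exactly the content of the remark preceding \myref{KunnethIso}, where a filtered map $g = (K,k,t)$ is unwound into the data of $K$, $k$, and the homotopy $t$ with $dt + td = \al' K - k\al$.

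The main obstacle I anticipate is purely organizational rather than mathematical: making sure the homotopies one extracts are honestly homotopies \emph{over} $M$ (i.e. compatible with $\al$ and $\al'$), since \myref{compare} phrases its uniqueness statement in terms of a homotopy $s$ with $s(M) = 0$, and one has to unpack that $s(M) = 0$ together with $s(F_p X^\al)\subset F_{p+1}X^\al$ precisely says that the induced chain homotopy on $X$ does not interact with the filtration-$(-1)$ part $M$, hence descends to a homotopy of maps $X\rightrightarrows X'$ commuting with the structure maps to $M$. Once that translation is in hand, everything else is the familiar ``two-out-of-six for homotopy equivalences'' bookkeeping, and no DG-algebra-specific subtlety is needed.
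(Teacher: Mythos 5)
Your argument is correct and is exactly how the paper intends the corollary to follow from \myref{compare}: existence of comparison maps over $\id_M$ in both directions, then the uniqueness-up-to-homotopy clause applied to the composites versus the identities, unwound via the remark preceding \myref{KunnethIso}. The paper writes out no details, only adding the alternative observation that the result is also immediate from model category theory because distinguished resolutions are $q$-cofibrant $q$-equivalences to $M$, so your route matches the intended one.
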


Of course, since distinguished DG $A$-modules are $q$-cofibrant, the corollary is also 
immediate from model category theory.

\subsection{A distinguished resolution when $H_*(A)$ is a polynomial algebra}\label{cupone}

Many of the applications of \cite{GM, M, MN} are based on an explicit example of \myref{exist}.
We assume in this section that $HA$ is a polynomial algebra on generators $x_i$ indexed on
some ordered set $I$.  Since $2x^2 = 0$ if $x$ has odd degree, the $x_i$ must have even degree 
unless $R$ has characteristic $2$. When $HA$ is commutative, it usually is so because 
$A$ is chain homotopy commutative via a homotopy $\cup_1\colon A\otimes A\rtarr A$. Very often 
$\cup_1$ satisfies the Hirsch formula, which means that it is a graded derivation.  We assume that 
we have such a ``$\cup_1$-product'' on $A$. Explicitly, for
$a\in A_p$, $b\in A_q$, and $c\in A_r$, we require
\[ d(a\cup_1 b) = ab-(-1)^{pq}ba - d(a)\cup_1 b- (-1)^{p} a\cup_1 d(b) \]
and
\[ (ab)\cup_1c = (-1)^p a(b\cup_1 c) + (-1)^{qr}(a\cup_1 c)b. \]
We also assume that we have an augmentation $A\rtarr R$ that induces the
standard augmentation $\epz\colon HA\rtarr R$, $\epz(x_i) = 0$. 

We have the Koszul resolution $K(HA)$ of $R$. It is the differential $HA$-algebra $HA\otimes E\{y_i\}$, where 
the bidegree of $y_i$ is $(1,deg\, x_i)$.  Here $E$ denotes an exterior algebra and $d(y_i) = x_i$. Let 
$K(A) = A\otimes E\{y_i\}$ and let $\epz\colon K(A)\rtarr R$ be the evident augmentation.  \myref{exist} gives 
a differential $d$ on $K(A)$, but in this case we do not need to rely on that result: we can construct the 
differential explicitly so that $\epz$ is a distinguished resolution of $R$.  We shall not give full details,
since the only problem is to get the signs right and that was done with care in \cite[pp.~16-17]{GM}, although
working with right rather than left modules.\footnote{One
lengthy check of signs was left to the reader, but the senior author still has handwritten full details. Using the
transposition isomorphism 
$$t\colon E\{y_i\}\otimes A \rtarr A\otimes E\{y_i\}, \ \  t(x\otimes a) = (-1)^{deg\, a\, deg\,x}(a\otimes x),$$
and defining $d = tdt$ on $A\otimes E\{y_i\}$ gives correct signs for our left $A$-module resolution.}   Let 
$a_i\in A_i$ be a representative cycle of $x_i$.   For
an ordered sequence of indices $S=\{i_1 <\cdots < i_p\}$, let $\ell(S)=p$ and define $a_S$ and $y_S$ by induction on $p$.
If $S=\{i\}$, then  $a_S = a_i$ and $y_S= y_i$.  If $S=\{i,T\}$, then $a_S = a_i\cup_1 a_T$ and $y_S = y_iy_T$.  We require 
$K(A)$ to be a DG $A$-algebra, hence to define $d$ on $K(A)$, we need only define the $d(y_S)$.  We consider all partitions of
$S$ as $S = U\cup V$ where $U\cap V=\emptyset$ and $U$ and $V$ are nonempty.  Then 
\begin{equation}\label{Koszul}
d(y_S) = \sum_{U,V}\si(U,V)\, a_U \otimes y_V
\end{equation}
for appropriate signs $\si(U,V)$, so chosen that $dd=0$ and $\si(U,V)$ is as dictated by $E^1_{*,*}K(A) = K(HA)$ when $\ell(U)=1$. 

Now assume that $N$ is another augmented DG $R$-algebra that is homotopy commutative via a $\cup_1$-product satisfying the Hirsch
formula. Let $f\colon A\rtarr N$ be a map of DGAs that commutes with the $\cup_1$-product and give $N$ a structure of right DG 
$A$-module via $f$.  We have the following result \cite[2.3]{GM}. Give $HN$ the zero differential.

\begin{thm}\mylabel{goody}  Suppose there is a map $g\colon N\rtarr HN$ of DG $R$-algebras such that
$Hg\colon HN\rtarr HN$ is the identity map and $g$ annihilates all $\cup_1$-products.  Then
\[ \Tor^A_*(N,R) = \Tor^{HA}_*(HN,R), \]
where $\Tor^{HA}_*(HN,R)$ is graded by total degree.
\end{thm}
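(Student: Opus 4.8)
The plan is to compare the complex computing $\Tor^A_*(N,R)$ directly with the one computing $\Tor^{HA}_*(HN,R)$, using the explicit Koszul resolution $K(A)=A\otimes E\{y_i\}$ of \S\ref{cupone} together with the hypothesis on $g$. Since $K(A)$ is a distinguished resolution of $R$, it is $q$-cofibrant by \myref{neatojet}, and $\epz\colon K(A)\rtarr R$ is a degreewise epimorphism (because $A$ is augmented), hence a $q$-fibration; so $\epz$ is a $q$-cofibrant approximation and $\Tor^A_*(N,R)=H(N\otimes_A K(A))$ by \myref{DTordefn}. As graded $R$-modules $N\otimes_A K(A)\iso N\otimes E\{y_i\}$, and by \eqref{Koszul} the differential $\partial$ sends $n\otimes y_S$ to $d_N(n)\otimes y_S+(-1)^{\deg n}\sum_{U,V}\si(U,V)\,(nf(a_U))\otimes y_V$. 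On the other side, $K(HA)=HA\otimes E\{y_i\}$ is a flat $HA$-resolution of $R$, so $\Tor^{HA}_*(HN,R)$ with its total-degree grading is $H(HN\otimes_{HA}K(HA))=H(HN\otimes E\{y_i\},\bar\partial)$, where $\bar\partial$ is the Koszul differential for the right $HA$-module $HN$.

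The first main step is to verify that $\Phi=g\otimes\id\colon (N\otimes E\{y_i\},\partial)\rtarr (HN\otimes E\{y_i\},\bar\partial)$ is a chain map. Applying $g$ to $d_N(n)\otimes y_S$ gives $d_{HN}(g(n))\otimes y_S=0$, since $g$ is a chain map and $HN$ has zero differential. Since $g$ is an algebra map, $g(nf(a_U))=g(n)g(f(a_U))$; for $\ell(U)=1$ the element $a_U=a_i$ is a cycle, so $g(f(a_i))$ is the class $Hf(x_i)\in HN$ because $Hg=\id$, whereas for $\ell(U)\ge 2$ the element $a_U$ is an iterated $\cup_1$-product in $A$, so $f(a_U)$ is a $\cup_1$-product in $N$ (as $f$ commutes with $\cup_1$), and therefore $g(f(a_U))=0$ by hypothesis. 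Thus only the length-one summands survive under $\Phi$, and since $\si(U,V)$ for $\ell(U)=1$ is by construction the Koszul sign, one gets $\Phi\partial=\bar\partial\Phi$. Conceptually, this is where the passage from multicomplexes back to bicomplexes takes place: the higher components $d^r$ ($r\ge 2$) of the differential on $K(A)$, built from iterated $\cup_1$-products, are exactly what $g$ destroys.

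Finally I would filter both complexes by the number of exterior generators, declaring $F_p$ to be the span of the $n\otimes y_S$ (resp.\ $h\otimes y_S$) with $\ell(S)\le p$; $\Phi$ preserves this filtration. On the source this is precisely the EMSS of \S\ref{EMSS} for the K\"unneth resolution $K(A)$, and $E^1_{p,*}\iso HN\otimes\langle y_S:\ell(S)=p\rangle$. On the target, $HN$ carries the zero differential and $\bar\partial$ lowers filtration by exactly one, so the $E^0$-differential vanishes and again $E^1_{p,*}\iso HN\otimes\langle y_S:\ell(S)=p\rangle$; the induced map $E^1(\Phi)$ is $Hg\otimes\id=\id$. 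Both are right-half-plane, homologically graded spectral sequences with $F_{-1}=0$, so there is no convergence obstruction and the comparison theorem applies \cite[\S 6, 7.2]{Board}; hence $\Phi$ is a quasi-isomorphism, and $\Tor^A_*(N,R)=H(N\otimes_A K(A))\iso H(HN\otimes_{HA}K(HA))=\Tor^{HA}_*(HN,R)$ with the total-degree grading. The main obstacle is the sign bookkeeping: identifying $\partial$ from \eqref{Koszul} precisely and checking that the surviving length-one part of $\Phi\partial$ really is the Koszul differential $\bar\partial$ --- exactly the verifications carried out with care in \cite{GM} --- together with the (routine) confirmation that the two filtration spectral sequences converge and can be compared.
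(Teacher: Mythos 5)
Your proof is correct and takes essentially the same route as the paper: both compute $\Tor^A_*(N,R)$ from the explicit distinguished Koszul resolution $K(A)=A\otimes E\{y_i\}$ of \S\ref{cupone} and use that $f$ preserves and $g$ kills $\cup_1$-products, so that after applying $g$ only the $\ell(U)=1$ terms of \eqref{Koszul} survive and the complex becomes $HN\otimes_{HA}K(HA)$. The only difference is in how the comparison map $\Phi=g\otimes\id$ is seen to be a quasi-isomorphism: the paper regards $HN$ as a DG $A$-module via $gf$ and cites the invariance of $\Tor^A$ in the $N$-variable (the functor $(-)\otimes_A K(A)$ preserves $q$-equivalences since $K(A)$ is $q$-cofibrant, cf.\ \myref{inv1}), whereas you reprove this special case by hand via the filtration spectral sequence comparison, which is a perfectly sound substitute.
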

\begin{proof}  Regard $HN$ as a DG $A$-module via $gf\colon A\rtarr HN$. The map 
\[ Tor^A_*(g,\id)\colon \Tor^{A}_*(N,R)\rtarr \Tor^{A}_*(HN,R) \]
is an isomorphism.  We may compute the target by use of the DG $HN$-algebra
\[ HN\otimes_A K(U) = HN\otimes E\{y_i\} \]
with differential 
\[d(n\otimes y_S) = (-1)^{deg\, n}n\otimes d(y_S) = \sum_{U,V}(-1)^{deg\, n}\si(U,V)\, ngf(a_U)\otimes y_V. \]
Since $f$ commutes with $\cup_1$ and $g$ annihilates $\cup_1$, the only non-zero terms occur with $\ell(U)=1$,
so that $d = \id\otimes d^1$ on $HN\otimes_A KA$.  Therefore
\[ HN\otimes_A K(A) = HN \otimes_A K(HA)\]
as DG $R$-modules, and the conclusion follows.
\end{proof}

As emphasized in \cite{GM,MN}, this is not merely a
statement about the EMSS.  Of course, it implies that
$E^2 = E^{\infty}$, but it also implies that there are
no non-trivial additive extensions from $E^{\infty}$ to $\Tor^A_*(N,R)$.
A spectral sequence argument would leave open the possibility of 
such extensions.

\subsection{The topological Eilenberg-Moore spectral sequence}\label{TopEMSS}
We briefly indicate how \myref{goody} applies to algebraic topology.  Here we assume that our commutative
ring $R$ is Noetherian and that all spaces in sight have integral homology of finite type.  

Consider a pullback square
\[  \xymatrix{
D\ar[d] \ar[r] \ar[d] & E \ar[d]^{p} \\
X\ar[r]_f & Y,\\} \]
where $p$ is a $q$-fibration with fiber $F$ and and $\pi_1(Y)$ acts trivially on $F$.  Eilenberg and Moore 
\cite{EM2} 
prove that  
\[  H^*(D;R) \iso \Tor^{*}_{C^*(Y;R)}(C^*(X;R),C^*(E;R)),\]
where $\Tor$ is regraded cohomologically; see also \cite[3.3]{GM}. Here $C^*$ is the (normalized) singular
cochain functor. It takes values in DG $R$-algebras with a $\cup_1$-product satisfying the Hirsch formula.
The associated EMSS is a spectral sequence of DG $R$-algebras which converges to the algebra $H^*(D;R)$
\cite[3.5]{GM}.  The hypothesis on $N$ in \myref{goody} is satisfied by $C^*(X;R)$ for certain products
of Eilenberg Mac\, Lane spaces $X$ and in particular for $X=BT^n$, the classifying space of the $n$-torus $T^n$
\cite[4.1, 4.2]{GM}.  This leads to the following corollary of \myref{goody} \cite[4.3]{GM},
of which
\myref{Yeah} is a special case.  The essential additional ingredient is that, if $T^n$ is a torus, then
there is a $q$-equivalence $C^*(BT^n;R)\rtarr H^*(BT^n;R)$ that annihilates $\cup_1$-products \cite[4.1]{GM}. 

We assume that $E$ is contractible, so that $D$ is homotopy equivalent to the fiber $Ff$ of $f$.

\begin{thm}\mylabel{goodytoo} Assume that $H^*(Y;R)$ is a polynomial algebra and that there is a map $e\colon BT^n\rtarr X$ such
that $H^*(BT^n;R)$ is a free $H^*(X;R)$ module via $e^*$.  Then for any map $f\colon X\rtarr Y$,
\[  H^*(Ff;R) \iso \Tor^*_{H^*(Y;R)}(H^*(X;R),R) \]
as a graded $R$-module, and $H^*(Ff;R)$ admits a filtration such that its associated graded algebra is 
isomorphic to  $\Tor^{*,*}_{H^*(Y;R)}(H^*(X;R),R)$ as a bigraded $R$-algebra.
\end{thm}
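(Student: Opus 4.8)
The plan is to reduce the topological statement, via the Eilenberg--Moore theorem, to a purely algebraic assertion about differential $\Tor$ over the cochain algebra $C^*(Y;R)$, and then to apply the collapse result \myref{goody}; the only genuine work is the verification of the hypotheses of \myref{goody} for the cochains in sight. Throughout I keep the standing notation of the Eilenberg--Moore setup recalled above, so $p$ is a $q$-fibration, $\pi_1(Y)$ acts trivially on the fiber, $D$ is the displayed pullback, and $E$ is contractible, whence $D\simeq Ff$. By \cite{EM2} (see also \cite[3.3, 3.5]{GM}) there is an isomorphism $H^*(D;R)\iso \Tor^*_{C^*(Y;R)}(C^*(X;R),C^*(E;R))$, regraded cohomologically, realized as the edge of a multiplicative Eilenberg--Moore spectral sequence of $R$-algebras converging to the algebra $H^*(D;R)$. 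Since $E$ is contractible, $H^*(E;R)=R$ and $C^*(E;R)$ is $q$-equivalent to $R$ as a left $C^*(Y;R)$-module; because differential $\Tor$ is invariant under $q$-equivalences in each variable (\myref{inv1}), we may replace $C^*(E;R)$ by $R$. Thus $H^*(Ff;R)\iso \Tor^*_{C^*(Y;R)}(C^*(X;R),R)$, and the EMSS has $E^2_{*,*}=\Tor^{*,*}_{H^*(Y;R)}(H^*(X;R),R)$ and converges multiplicatively to the algebra $H^*(Ff;R)$. Both assertions of the theorem will follow once this spectral sequence is shown to collapse at $E^2$ with no nontrivial additive extensions.

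Next I would set $A=C^*(Y;R)$ and $N=C^*(X;R)$, regarding $N$ as a right $A$-module along the DGA map $C^*(f)\colon C^*(Y;R)\rtarr C^*(X;R)$, and verify the hypotheses of \myref{goody}. Singular cochains carry a natural $\cup_1$-product satisfying the Hirsch formula, so $C^*(f)$ automatically commutes with $\cup_1$; a basepoint of $Y$ furnishes an augmentation $A\rtarr R$, and since $HA=H^*(Y;R)$ is a polynomial algebra this is precisely its standard augmentation. The remaining --- and crucial --- hypothesis of \myref{goody} is the existence of a DG $R$-algebra map $g\colon N\rtarr HN$, i.e.\ $g\colon C^*(X;R)\rtarr H^*(X;R)$, with $Hg=\mathrm{id}$ that annihilates all $\cup_1$-products, and this is exactly where the map $e\colon BT^n\rtarr X$ enters.

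To produce $g$ I would start from the $q$-equivalence $\bar g\colon C^*(BT^n;R)\rtarr H^*(BT^n;R)$ of DG $R$-algebras annihilating $\cup_1$-products \cite[4.1]{GM}: precomposing with $C^*(e)$ gives a DG $R$-algebra map $C^*(X;R)\rtarr H^*(BT^n;R)$ that still annihilates $\cup_1$ and realizes the monomorphism $e^*$ on homology. Using that $e^*$ exhibits $H^*(BT^n;R)$ as a free --- hence faithfully flat --- $H^*(X;R)$-module, with the unit belonging to a basis, one then descends this to the desired $g\colon C^*(X;R)\rtarr H^*(X;R)$. I expect \emph{this} to be the main obstacle: keeping the multiplicative structure and the vanishing of $\cup_1$-products under control while performing the descent is delicate, and the careful argument, with all signs, is \cite[4.1, 4.2]{GM}; everything else in the proof is formal or cited.

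With the hypotheses of \myref{goody} in hand, that theorem yields $\Tor^A_*(N,R)\iso \Tor^{HA}_*(HN,R)$, i.e.\ $\Tor_{C^*(Y;R)}(C^*(X;R),R)\iso \Tor_{H^*(Y;R)}(H^*(X;R),R)$ with total-degree grading on the right, and combined with the cohomological regrading of the first paragraph this gives the claimed isomorphism of graded $R$-modules. For the algebra statement I would unwind the proof of \myref{goody}: after replacing $N$ by $HN$, the differential on the Koszul complex $HN\otimes_A K(A)=HN\otimes E\{y_i\}$ loses every summand except $\mathrm{id}\otimes d^1$, so this complex is literally $HN\otimes_A K(HA)$; in terms of the spectral sequence above this forces $E^2=E^\infty$ and, since $H^*(Ff;R)$ is computed on the nose rather than merely as an associated graded object, there are no additive extensions. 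The multiplicativity of the EMSS then identifies the associated graded algebra of $H^*(Ff;R)$ with $E^2_{*,*}=\Tor^{*,*}_{H^*(Y;R)}(H^*(X;R),R)$ as a bigraded $R$-algebra, completing the argument. Specializing to $Y=BG$, $X=BH$, $e$ the map of classifying spaces induced by the maximal torus inclusion, and $E=EG$ recovers \myref{Yeah}.
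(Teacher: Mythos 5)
There is a genuine gap, and it sits exactly where you flag ``the main obstacle'': the descent of the multiplicative splitting from $BT^n$ to $X$. The freeness hypothesis gives only an $H^*(X;R)$-\emph{module} splitting of $e^*\colon H^*(X;R)\to H^*(BT^n;R)$; the retraction $H^*(BT^n;R)\to H^*(X;R)$ is not an algebra map, so composing $\rho\circ\bar g\circ C^*(e)$ produces at best an additive chain map inducing the identity on cohomology, not a map of DG $R$-algebras annihilating $\cup_1$-products, and \myref{goody} needs exactly that (multiplicativity is what makes $gf$ give $HN$ a DG $A$-module structure and what kills the terms $gf(a_U)$ with $\ell(U)\geq 2$). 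Nothing in \cite[4.1, 4.2]{GM} supplies such a descent: those results construct the splitting for $BT^n$ and for certain products of Eilenberg--Mac\,Lane spaces only. Moreover, if such a $g$ existed for every $X$ admitting $e$ as in the theorem, then the map $e$ and the freeness hypothesis would play no role beyond producing $g$, whereas the whole point of the hypothesis (and of the argument in \cite{GM}) is that a multiplicative splitting $C^*(X;R)\to H^*(X;R)$ is generally \emph{not} available for the spaces $X=BH$ of interest.

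The paper's proof goes the other way around: \myref{goody} is applied only where its hypothesis is known to hold, namely with $N=C^*(BT^n;R)$ viewed as a $C^*(Y;R)$-module via $f\circ e$, giving $\Tor^*_{C^*(Y;R)}(C^*(BT^n;R),R)=\Tor^*_{H^*(Y;R)}(H^*(BT^n;R),R)$. The freeness hypothesis is then used to \emph{reduce} the statement for $X$ to this case: writing $H^*(BT^n;R)$ as a free $H^*(X;R)$-module (with the unit split off), both differential $\Tor$'s are computed by the same explicit Koszul-type DG $R$-modules $C^*(-;R)\otimes E\{y_i\}$, and the resulting split comparison transfers the collapse and the absence of additive and multiplicative extensions from $BT^n$ down to $X$; this is what the paper means by ``they are computed by the same DG $R$-modules.'' Your first paragraph (Eilenberg--Moore identification, replacement of $C^*(E;R)$ by $R$, multiplicativity of the EMSS) and your verification of the hypotheses of \myref{goody} for $A=C^*(Y;R)$ are fine; the argument fails at the point where you apply \myref{goody} with $N=C^*(X;R)$, because its key hypothesis for that $N$ has not been, and in general cannot be, established.
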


The proof proceeds by reduction to the case $X = BT^n$, where one shows that 
\[ \Tor^{*}_{C^*(Y;R)}(C^*(X;R),R) = \Tor^{*}_{H^*(Y;R)}(H^*(X;R),R) \]
using that they are computed by the same DG $R$-modules.  The hypothesis on $X$ is often satisfied when 
$X=BG$ for a compact Lie group $G$ with maximal torus $T^n$. It holds if $H_*(G;\bZ)$ has no $p$-torsion 
for any prime $p$ which divides the order of $R$ by \cite[4.5, 4.6]{GM}.  In particular, it holds for any 
$R$ if $G=U(n)$, $SU(n)$, $Sp(n)$ and, if $R$ has odd characteristic, $O(n)$ and $SO(n)$.  It also often
holds when $G$ is a suitable finite $H$-space \cite{MN}.  Therefore the theorem has many applications \cite{GM,MN}.  
 
\begin{rmk}  The explicit construction (\ref{Koszul}) of the differential in terms of $\cup_1$-products
on the distinguished resolution in \S\ref{cupone} allows it to be used to obtain explicit 
calculations even when \myref{goodytoo} does not apply.  In  \cite{Schochet}, Schochet used 
it to exhibit a two-stage Postnikov system with non-trivial differentials in its Eilenberg-Moore 
spectral sequence.
\end{rmk}

The relationship between $\Tor$ and matric Massey products in \myref{matrix1} leads to the following applications to 
special cases of our pullback diagram.  

\begin{cor}  If $i\colon F\rtarr E$ is the inclusion of the fiber of $p\colon E\rtarr Y$, then 
$\ker\, i^* = D(H^*(E;R);H^*(Y;R))$. 
The kernel of the suspension 
$$\si^*\colon \tilde{H}^*(Y;R)\rtarr H^{*-1}(\OM Y;R)$$ 
is $DH^*(Y;R)$. 
\end{cor}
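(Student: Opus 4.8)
The plan is to obtain both assertions by feeding the Eilenberg--Moore isomorphism, specialized to two pullback squares, into Corollaries~\myref{matrix2} and~\myref{matrix3}. Throughout set $A = C^*(Y;R)$, a degreewise $R$-flat DG $R$-algebra (each $C^q(Y;R)$ is $R$-flat under our standing Noetherian, finite-type hypotheses), equipped with the $\cup_1$-product satisfying the Hirsch formula and augmented by restriction along the basepoint, $\epz\colon A\rtarr C^*(\ast;R) = R$; here $H\epz$ is the augmentation of $HA = H^*(Y;R)$, so that $IHA = \tilde{H}^*(Y;R)$. With these flatness hypotheses the topological EMSS of \cite{EM2} (see also \cite[\S3]{GM}) is available and convergent.

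For the first statement, apply the Eilenberg--Moore isomorphism to the pullback square
\[ \xymatrix{ F \ar[r]^-{i} \ar[d] & E \ar[d]^{p} \\ \ast \ar[r] & Y,} \]
in which $F$ is the fiber of $p$ and $C^*(\ast;R) = R$ is $R$ regarded as an $A$-module via $\epz$. This gives a natural isomorphism $H^*(F;R)\iso \Tor^*_A(R, C^*(E;R))$, regraded cohomologically. Writing $M = C^*(E;R)$, so that $HM = H^*(E;R)$, the crucial point is that under this isomorphism the edge homomorphism $\pi\colon HM \rtarr \Tor^A_*(R,M)$ of \myref{matrix2} is identified with the restriction map $i^*\colon H^*(E;R)\rtarr H^*(F;R)$. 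Granting this, \myref{matrix2} identifies $\ker i^* = \ker\pi$ with the set $D(H^*(E;R);H^*(Y;R))$ of matric Massey product decomposables.

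For the suspension, take $E = PY$ the based path space, which is contractible so that $C^*(PY;R)\simeq R$, and $p\colon PY\rtarr Y$ the endpoint evaluation, whose fiber is $\OM Y$; the standing triviality of the $\pi_1(Y)$-action holds (as it always does for the path--loop fibration when $Y$ is simply connected). The Eilenberg--Moore isomorphism for the pullback of $p$ along $\ast\rtarr Y$ now reads $H^*(\OM Y;R)\iso \Tor^*_A(R,R)$, regraded cohomologically. One then identifies $IHA = \tilde{H}^*(Y;R)$ with the domain of the algebraic suspension $\si\colon IHA\rtarr \Tor^A_*(R,R)$ of \myref{matrix3}, and identifies $\si$ itself with the topological cohomology suspension $\si^*\colon \tilde{H}^*(Y;R)\rtarr H^{*-1}(\OM Y;R)$; then \myref{matrix3} gives $\ker\si^* = DHA = DH^*(Y;R)$.

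The only genuine work is the pair of compatibility statements: that the edge homomorphism of the topological EMSS coincides with $i^*$, and that the algebraic suspension coincides with $\si^*$. Both are standard naturality properties of the Eilenberg--Moore machine---the first because the bottom filtration stage $F_0$ of the EMSS is detected by restriction to the fiber and the edge quotient $E^{\infty}_{0,*}$ receives $HM = H^*(E;R)$ through the map $R\otimes_{HA} HM$, the second because, in the bar-type resolution underlying the $\cup_1$-construction, the augmentation ideal sits in filtration degree $1$ in exactly such a way that the resulting degree-lowering map is the loop suspension---but they do require tracing the comparison in \cite{EM2} (and are implicit in \cite[\S3]{GM}); this is the step I expect to be the main obstacle. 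Beyond it nothing new is needed: the algebra is supplied verbatim by Corollaries~\myref{matrix2} and~\myref{matrix3}, which rest only on \myref{matrix1} and the homological summary preceding it, and the $R$-flatness of $C^*(Y;R)$ (hence of the relevant $A$-modules $C^*(E;R)$ and $R$) serves only to ensure that the differential torsion products in play are the ones those corollaries compute.
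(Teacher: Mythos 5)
Your proposal is correct and follows exactly the route the paper intends: the corollary is stated as a direct application of Corollaries \ref{matrix2} and \ref{matrix3}, fed by the Eilenberg--Moore isomorphism of \cite{EM2} (and \cite[\S3]{GM}) specialized to the fiber square ($X=\ast$) and to the path--loop fibration, with the edge homomorphism identified with $i^*$ and the algebraic suspension with $\si^*$ as in \cite[3.7]{GM}. The compatibility statements you flag as the remaining work are precisely the standard identifications the paper implicitly takes from \cite{EM2, GM}, so there is no substantive divergence.
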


There is a conceptually dual application to the calculation of $H_*(B(Y,G,X);R)$ for a topological
group $G$, a right $G$-space $Y$, and a left $G$-space X, where $B(Y,G,X)$ is the topological 
two-sided bar construction (e.g \cite[3.9]{GM}).   Here we have a dual to the last result.

\begin{cor} For a topological group $G$, the kernel of the suspension 
$$\si_*\colon \tilde{H}_*(G;R)\rtarr H_{*+1}(BG;R)$$
is $DH_*(G;R)$. 
\end{cor}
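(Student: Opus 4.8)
The plan is to reduce this to the algebraic identification of $\ker\sigma$ in \myref{matrix3}, applied to the chains on $G$. Set $A = C_*(G;R)$, the normalized singular chains of $G$, a DG $R$-algebra under the Pontryagin product (multiplication on $G$ composed with an Eilenberg--Zilber equivalence), augmented by $\epz\colon A\rtarr C_*(\mathrm{pt};R)=R$ via $G\rtarr\mathrm{pt}$. Since singular chains are degreewise free over $R$ and $A$ is bounded below, the simplicial filtration of the two-sided bar construction $B(A,A,R)$ (with $R$ the trivial $A$-module) is $q$-split: its subquotients are $A\otimes (IA)^{\otimes p}$ with $(IA)^{\otimes p}=\tilde C_*(G;R)^{\otimes p}$ degreewise free. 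Hence $B(A,A,R)$ is $q$-cofibrant by \myref{qcharcofibrant}, and since $\epz\colon B(A,A,R)\rtarr R$ is an $r$-equivalence by \myref{bar} and a fortiori a $q$-equivalence, it is a $q$-cofibrant approximation of $R$. Therefore, by \myref{DTordefn},
\[ \Tor^A_*(R,R)=H_*\bigl(R\otimes_A B(A,A,R)\bigr)=H_*\bigl(B(R,A,R)\bigr). \]
On the other hand, $BG$ is the realization of the simplicial space $[p]\mapsto G^p$, and iterating the natural Eilenberg--Zilber quasi-isomorphisms $C_*(G^p;R)\simeq C_*(G;R)^{\otimes p}$ identifies $C_*(BG;R)$ with $B(R,A,R)$ up to natural quasi-isomorphism, as in \cite{GM}; no flatness is needed because the chains are free. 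Thus $H_*(BG;R)\iso\Tor^A_*(R,R)$, and the $R$-split sequence $0\rtarr IA\rtarr A\rtarr R\rtarr 0$ gives $IHA=H(IA)=\tilde H_*(G;R)$.

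Next I would check that the topological suspension $\sigma_*\colon\tilde H_n(G;R)\rtarr H_{n+1}(BG;R)$ corresponds, under these identifications, to the algebraic suspension $\sigma\colon IHA\rtarr E^2_{1,*}\rtarr E^\infty_{1,*}\subset\Tor^A_*(R,R)$ used in \myref{matrix3}, where $E^r=E^r(R\otimes_A B(A,A,R))$ is the Eilenberg--Moore spectral sequence. Indeed, by the standard description of $\sigma_*$ via the path--loop fibration $G\simeq\OM BG$ (equivalently, via the counit $\SI\OM BG\rtarr BG$), it is induced on homology by the inclusion of the $1$-skeleton of the simplicial space $[p]\mapsto G^p$ into $BG$; under the comparison of the previous paragraph this inclusion becomes the inclusion of the first bar-filtration stage $F_1 B(R,A,R)$, i.e.\ the map carrying a cycle $a\in IA$ of degree $n$ to $[a]$ of degree $n+1$. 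That is precisely the suspension $\sigma$ attached to the split DG $A$-module $B(A,A,R)$.

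Granting this, \myref{matrix3} identifies $\ker\sigma$ with $DHA$, the image of $D(IA,A,IA)$ under $H(IA\otimes_A IA)\rtarr H(A\otimes_A A)=HA$; for $A=C_*(G;R)$ this is exactly the submodule of matric Massey product decomposables denoted $DH_*(G;R)$. Combining with the two identifications above yields $\ker\sigma_*=DH_*(G;R)$. The hard part is the middle step: matching the geometric two-sided bar model of $BG$ with the algebraic bar construction $B(R,A,R)$ carefully enough — compatibly with the bar filtration and with the suspension maps, and with correct signs — so that the topological $\sigma_*$ really is the algebraic $\sigma$ of \myref{matrix3}; everything after that is a direct appeal to \myref{matrix3}.
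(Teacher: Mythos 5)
Your argument is essentially the paper's intended one: the paper offers no proof beyond observing that the homology of the topological bar construction $B(Y,G,X)$ is computed by differential $\Tor$ over $C_*(G;R)$ (citing \cite[3.7--3.9]{GM}), so that $H_*(BG;R)\iso\Tor^{C_*(G;R)}_*(R,R)$ with the homology suspension matching the algebraic suspension, and then \myref{matrix3} applies — exactly your route. Your added details (the bar construction on the degreewise free, bounded below $A=C_*(G;R)$ is $q$-cofibrant via \myref{qcharcofibrant}, and the topological suspension is the inclusion of the first bar-filtration stage) are correct fleshings-out of that same approach.
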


\end{document}